\documentclass[10pt]{amsart}
\usepackage{amsmath,amsthm,amsfonts,amssymb}
\usepackage{mathrsfs}
\usepackage{stmaryrd}
\usepackage{mathtools}
\usepackage{subfiles}
\usepackage{arydshln}
\usepackage{fullpage}
\usepackage{tikz}
\usepackage{float}
\usepackage{cancel}
\usepackage{xcolor}
\usepackage{setspace}
\usepackage{calligra}
\usepackage{MnSymbol}
\RequirePackage{doi}
\usepackage[all]{xy}
\usepackage{thmtools}
\usepackage[shortlabels]{enumitem}
\usepackage[many]{tcolorbox}
\usepackage{hyperref}
\usepackage[capitalize]{cleveref}

\usetikzlibrary{shapes.geometric}
\usetikzlibrary{calc}

\newtheorem{thm}{Theorem}[section]
\newtheorem{lem}[thm]{Lemma}
\newtheorem{cor}[thm]{Corollary}
\newtheorem{prop}[thm]{Proposition}
\newtheorem{fact}[thm]{Fact}

\newtheorem*{thm*}{Theorem}
\newtheorem*{lem*}{Lemma}
\newtheorem*{cor*}{Corollary}
\newtheorem*{prop*}{Proposition}
\newtheorem*{fact*}{Fact}
\newtheorem*{conj*}{Conjecture}
\newtheorem*{clm*}{Claim}

\theoremstyle{definition}
\newtheorem{defn}[thm]{Definition}
\newtheorem{ex}[thm]{Example}

\newtheorem*{defn*}{Definition}
\newtheorem*{ex*}{Example}
\newtheorem*{xca*}{Exercise}
\newtheorem*{axiom*}{Axiom}

\theoremstyle{remark}
\newtheorem{rmk}[thm]{Remark}

\newtheorem{conv}[thm]{Convention}

\newtheorem*{rmk*}{Remark}
\newtheorem*{setup*}{Setup}
\newtheorem*{cond*}{Condition}
\newtheorem*{cons*}{Construction}
\newtheorem*{conv*}{Convention}
\newtheorem*{obs*}{Observation}
\newtheorem*{ques*}{Question}
\newtheorem*{dis*}{Discussion}

\numberwithin{equation}{section}


\newlist{thmlist}{enumerate}{1}
\setlist[thmlist]{label=(\roman{thmlisti}),
                  ref=\thethm.(\roman{thmlisti}),
                  noitemsep}
                  
\Crefname{listthm}{Theorem}{Theorems}
\Crefname{listlem}{Lemma}{Lemmas}
\Crefname{listprop}{Proposition}{Propositions}
\Crefname{listcor}{Corollary}{Corollaries}
\Crefname{listfact}{Fact}{Facts}
\Crefname{listconj}{Conjecture}{Conjectures}
\Crefname{listclm}{Claim}{Claims}

\addtotheorempostheadhook[thm]{\crefalias{thmlisti}{listthm}}
\addtotheorempostheadhook[lem]{\crefalias{thmlisti}{listlem}}
\addtotheorempostheadhook[prop]{\crefalias{thmlisti}{listprop}}
\addtotheorempostheadhook[cor]{\crefalias{thmlisti}{listcor}}
\addtotheorempostheadhook[fact]{\crefalias{thmlisti}{listfact}}
\addtotheorempostheadhook[clm]{\crefalias{thmlisti}{listclm}}
\addtotheorempostheadhook[conj]{\crefalias{thmlisti}{listconj}}



\DeclareMathOperator{\chr}{char}

\DeclareMathOperator{\Coker}{Coker}

\DeclareMathOperator{\Fr}{Frac}
\DeclareMathOperator{\GL}{GL}

\DeclareMathOperator{\hght}{ht}

\DeclareMathOperator{\Min}{Min}

\DeclareMathOperator{\red}{red}
\DeclareMathOperator{\rk}{rank}

\DeclareMathOperator{\Spec}{Spec}

\newcommand{\mbb}[1]{\mathbb{#1}}

\newcommand{\QQ}{\mbb{Q}}
\newcommand{\NN}{\mbb{N}}

\newcommand{\ZZ}{\mbb{Z}}

\newcommand{\msc}[1]{\mathscr{#1}}

\newcommand{\sJ}{\msc{J}}

\newcommand{\mf}[1]{\mathfrak{#1}}
\newcommand{\mm}{\mf{m}}
\newcommand{\mfa}{\mf{a}}

\newcommand{\mfn}{\mf{n}}

\newcommand{\mfp}{\mf{p}}
\newcommand{\mfq}{\mf{q}}

\newcommand{\mrm}[1]{\mathrm{#1}}
\newcommand{\dd}{\mrm{d}}

\newcommand{\mA}{\mrm{A}}

\newcommand{\mD}{\mrm{D}}

\newcommand{\mF}{\mrm{F}}

\newcommand{\mH}{\mrm{H}}

\newcommand{\mQ}{\mrm{Q}}

\newcommand{\mcal}[1]{\mathcal{#1}}

\newcommand{\cA}{\mcal{A}}

\newcommand{\cF}{\mcal{F}}

\newcommand{\cJ}{\mcal{J}}

\newcommand{\cL}{\mcal{L}}
\newcommand{\cM}{\mcal{M}}

\newcommand{\cP}{\mcal{P}}
\newcommand{\cQ}{\mcal{Q}}
\newcommand{\cR}{\mcal{R}}
\newcommand{\cS}{\mcal{S}}
\newcommand{\cT}{\mcal{T}}


\newcommand{\sheafname}[2]{\mathscr{#1}\text{\kern -3pt {\calligra\large  #2}}\,}

\newcommand{\SDer}{\mathscr{D}\text{\kern -2pt {\calligra\large er}}\,}



\newcommand{\seq}[3][1]{#2_{#1},\dots,#2_{#3}}
\newcommand{\fpd}[2][x]{\frac{\partial #2}{\partial #1}}


\DeclarePairedDelimiter\ang{\langle}{\rangle}

\DeclarePairedDelimiter\ps{\llbracket}{\rrbracket}
\DeclarePairedDelimiter\ls{\llparenthesis}{\rrparenthesis}
\DeclarePairedDelimiter\set{\{}{\}}
\DeclarePairedDelimiter\abs{\lvert}{\rvert}

\setlength{\parindent}{0pt}
\setlength{\parskip}{0.5\baselineskip}


\makeatletter
\renewcommand*{\eqref}[1]{%
  \hyperref[{#1}]{\textup{\tagform@{\ref*{#1}}}}%
}
\makeatother

\definecolor{zhan}{rgb}{0.0, 0.44, 1.0}

\hypersetup{
  colorlinks = true,
  linkcolor = red,
  anchorcolor = blue,
  citecolor = blue,
  filecolor = cyan,
  menucolor = red,
  runcolor = cyan,
  urlcolor = magenta,
}

\newtcolorbox{mybox}[3][beamer]{
    colback=white,
    coltitle=black,
    colframe=#3,
    coltext=zhan,
    title={\textbf{#2}},
    valign=top,
    halign=left,
    before skip=0.5cm,
    after skip=0.5cm,
    attach title to upper,
    after title={:\ },
    #1
}

\DeclareMathOperator{\loc}{loc}
\newcommand{\tca}[2][\cA]{#2^{*/#1}}
\newcommand{\cl}{\mathsf{cl}}
\newcommand{\fg}{\mathrm{fg}}
\newcommand{\tck}[2][K]{#2^{* #1}}
\newcommand{\dtc}[2][K]{#2^{>*K}}
\newcommand{\ftc}[2][K]{#2^{\mathsf{f}*K}}
\newcommand{\deqtc}[1]{#1^{>*\mathsf{eq}}}
\newcommand{\eqtc}[1]{#1^{*\mathsf{eq}}}
\DeclareMathOperator{\atdeg}{an.tr.deg}
\DeclareMathOperator{\tdeg}{tr.deg}
\DeclareMathOperator{\edim}{edim}
\DeclareMathOperator{\rd}{rd}
\DeclareMathOperator{\bht}{big-height}
\DeclareMathOperator{\Jac}{JRad}
\newcommand{\ddim}{\dim^{*}}

\begin{document}

\title{Test elements in equal characteristic semianalytic algebras}
\author{Zhan Jiang}
\date{\today}
\address{Department of Mathematics\\ University of Michigan\\ Ann Arbor\\ MI 48109}
\email{{\color{zhan}zoeng@umich.edu}}

\begin{abstract}
    We establish a series of results showing that the Jacobian ideal is contained in the test ideal. After proving a new result in characteristic $p$ for complete rings over a field $K$, we extend this containment to equal characteristic 0 for affine, analytic, affine-analytic and semianalytic $K$-algebras. We also prove a result on equiheight algebras essentially of finite type over excellent local rings.
\end{abstract}
\maketitle
\section{Introduction}
Test ideals were first introduced by Mel Hochster and Craig Huneke in their celebrated paper on tight closure \cite{Hochster1990a}. Since their invention, they have found application far beyond their original scope, including Frobenius splittings \cite{Mehta1985,Ramanan1985} and singularity theory \cite{Hochster1994,Hochster1989}. For a good survey on this, we refer to \cite{Schwede2012}. There are various generalizations of test ideals, e.g., to pairs in positive characteristic \cite{Hara2002,Hara2004} and to pairs in mixed characteristic \cite{Ma2018,Schwede2018}. Test ideals are also closely related to multiplier ideals in equal characteristic 0 \cite{Smith2000,Hara2001}.

A key property of test ideals is that they multiply the tight closure of any ideal back into that ideal. The theory has been generalized to arbitrary closure operations in any characteristic \cite{Epstein2019,Perez2019}. There are a great many closure operations \cite{Heitmann2001} and the theory of test ideals of these closure operations remains to be further explored. We aim to contribute to the theory of test ideals for tight closures defined in equal characteristic 0 for (semi-/affine-) analytic algebras \cite{Huneke1999}. In the definition below, the terminology in the first and the third parts (analytic and semianalytic) is taken from \cite[Chapter 13]{Kunz1986}.
\begin{defn}\label{pre-defn-XAnalytic}
Let $R$ be a $K$-algebra where $K$ is a field. In what follows, a \emph{power series algebra} over $K$ refers to a ring of the form $K\ps{\seq{x}{n}}$ for some integer $n\geqslant 0$.
\begin{enumerate}
    \item $R$ is called an \emph{analytic $K$-algebra}, if there is a power series algebra $P$ such that $R$ is finite over $P$.
    \item $R$ is called an \emph{affine-analytic $K$-algebra}, if there is a power series algebra $P$ such that $R$ is of finite type over $P$.
    \item $R$ is called a \emph{semianalytic $K$-algebra}, if there is a power series algebra $P$ such that $R$ is essentially of finite type over $P$.
\end{enumerate}
\end{defn}

We will use the following definition for test ideals in this paper. 
\begin{defn}\cite[Definition 3.1]{Perez2019}\label{pre-defn-TestIdeals}
Let $R$ be a ring and $\cl$ be a closure operation on $R$-modules. The big test ideal of $R$ associated to $\cl$ is defined as
\[
\tau_{\cl}(R) = \bigcap_{N\subseteq M}\left( N:N_M^{\cl} \right)
\]
where the intersection runs over all (not necessarily finitely-generated) $R$-modules $N\subseteq M$. Similarly, we define the finitistic test ideal of $R$ associated to $\cl$ as
\[
\tau_{\cl}^{\fg}(R) = \bigcap_{\substack{N\subseteq M\\ M/N\, \text{ finitely generated}}}\left( N:N_M^{\cl} \right).
\]
\end{defn}

There are two subtleties when working with this definition:
\begin{enumerate}
    \item As we see in \cref{pre-defn-TestIdeals}, there are two kinds of test ideals (the (big) test ideal and the finitistic test ideal).
    \item Test ideals are defined in terms of modules. One can also define them purely in terms of ideals.
\end{enumerate}

For the first point, these two notions associated to tight closure in characteristic $p$ are conjectured to be the same \cite[Conjecture 5.14]{Schwede2012}, and proved to be the same in several cases \cite{Lyubeznik1999,Lyubeznik2001}. Since we only work with tight closure of ideals in noetherian rings, we will stick to the notion of finitistic test ideals. For the second point, test ideals defined in terms of (finitely generated) modules and in terms of all ideals in the ring coincide when the base ring $R$ is approximately Gorenstein \cite[Definition-Proposition 1.1, Definition 1.3]{Hochster1977}. See the remark below.

\begin{rmk}\label{dtc-rmk-TestIdeal}
Let $\cl$ be a closure operation on a noetherian ring $R$ satisfying Semiresiduality and Functoriality \cite[Definition 2.1, 2.2]{Perez2019}. If $R$ is approximately Gorenstein \cite[8.6]{Hochster1990a}, then the finitistic test ideal for modules defined in \cite[Definition 3.1]{Perez2019} coincides with the test ideal for ideals associated with $\cl$, i.e., $\tau^{\fg}_{\cl}(R)=\bigcap_{I\subseteq R} (I:I^{\cl})$ (The argument in \cite[Proposition 8.15]{Hochster1990a} works for any general closure satisfying the Semiresiduality and Functoriality axioms).
\end{rmk}

The condition of being approximately Gorenstein is fairly weak, e.g., when $R$ is either reduced and excellent or when $R$ is of depth 2, $R$ is approximately Gorenstein. In fact, all rings we are working with in this paper are excellent and reduced, hence, approximately Gorenstein. So we make the following convention throughout the paper.

\begin{conv}\label{dtc-conv-TestIdeal}
By the test ideal, we mean the \emph{finitistic test ideal} associated to tight closure in the sense of \cite[Definition 3.1]{Perez2019} in terms of ideals (the case of modules follows from the case of ideals by \cref{dtc-rmk-TestIdeal}). We shall also call the nonzero elements in the test ideal \emph{test elements}.
\end{conv}

Some of the main results of this paper are summarized in the following theorems. See \cref{pre-ssn-TheEuiheightCondition} and, in particular, \cref{pre-defn-GeneralEquiheight},\cref{pre-defn-Equiheight}, and \cref{pre-cor-Equiheight} for the definition and possible presentations of the equiheight condition. See \cref{jcb-defn-AbsoluteReduced} for the definition of absolute reduced-ness.
\begin{thm}[\cref{tep-thm-TEforSemiAnalytic}]
Let $R$ be a semianalytic $K$-algebra that is the localization of an absolutely reduced equiheight affine-analytic $K$-algebra where $\chr(K)=p$. Then the Jacobian ideal $\cJ(R/K)$ is contained in the test ideal of $R$.
\end{thm}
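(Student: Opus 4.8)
The plan is to deduce the result from the characteristic $p$ theorem for complete rings over $K$, via two reductions: from the semianalytic ring $R$ to an affine-analytic model and its localizations, and then from those local rings to their completions.

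\emph{Reduction to the local case.} Write $R = W^{-1}A$ with $A$ an absolutely reduced equiheight affine-analytic $K$-algebra; since $A$ is essentially of finite type over the excellent ring $K\ps{\seq{x}{n}}$, both $A$ and $R$ are excellent, reduced and Noetherian. The Jacobian ideal commutes with localization, so $\cJ(R/K) = \cJ(A/K)R$, and it suffices to prove that $\cJ(A_\mfp/K) = \cJ(A/K)A_\mfp \subseteq \tau^{\fg}(A_\mfp)$ for every prime $\mfp$ of $A$. Granting this: if $c \in \cJ(A/K)$ and $x$ lies in the tight closure of an ideal $I = JR$ of $R$, then for each prime $\mfq$ of $R$, writing $\mfp = \mfq \cap A$ so that $R_\mfq = A_\mfp$, persistence of tight closure under the localization $R \to R_\mfq$ puts the image of $x$ into $(JA_\mfp)^{*}$, hence $cx$ maps into $JA_\mfp$; as $R$ is Noetherian this gives $cx \in I$. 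Since absolute reducedness and the equiheight condition both localize, each $A_\mfp$ is again a localization of an absolutely reduced equiheight affine-analytic $K$-algebra, so we may assume henceforth that $R$ is local, with maximal ideal $\mfn$.

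\emph{Reduction to the complete case.} Let $\hat R$ be the $\mfn$-adic completion, a complete local $K$-algebra. Then $\hat R$ remains absolutely reduced and equiheight over $K$: geometric reducedness over $K$ is detected after finite purely inseparable base extensions, which commute with completion, and an excellent reduced ring has reduced completion; moreover an excellent local ring satisfying the equiheight condition has equiheight completion. Assuming further the Jacobian comparison
\[
\cJ(R/K)\,\hat R \;\subseteq\; \cJ(\hat R/K),
\]
the complete-ring theorem applied to $\hat R$ yields $\cJ(R/K)\hat R \subseteq \cJ(\hat R/K) \subseteq \tau^{\fg}(\hat R)$, and this descends to $R$: if $c \in R$ maps into $\tau^{\fg}(\hat R)$ and $x$ lies in the tight closure of an ideal $I$ of $R$, then — every minimal prime of the reduced ring $\hat R$ contracts to a minimal prime of $R$, so $R^{\circ}$ maps into $\hat R^{\circ}$ — persistence along the faithfully flat map $R \to \hat R$ puts the image of $x$ into the tight closure of $I\hat R$, whence $cx \in I\hat R \cap R = I$. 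Thus $\cJ(R/K) \subseteq \tau^{\fg}(R)$. (Alternatively, this reduction can be packaged through the result on equiheight algebras essentially of finite type over an excellent local ring, taken with base $K\ps{\seq{x}{n}}$.)

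\emph{The main obstacle} is the Jacobian comparison $\cJ(R/K)\hat R \subseteq \cJ(\hat R/K)$: one must match the Jacobian ideal attached to a smooth presentation of $R$ over $K$ with that attached to a Cohen presentation of $\hat R$. This is precisely where absolute reducedness is indispensable — over an imperfect field the Jacobian ideal of a presentation need not cut out the singular locus, and geometric reducedness over $K$ is what renders the two Jacobian ideals comparable — while the equiheight hypothesis guarantees that the codimensions used to form the Jacobian minors are unchanged by completion. The remaining ingredients (persistence and faithfully flat descent for tight closure, localization of test and of Jacobian ideals, and ascent of reducedness and equidimensionality under completion of excellent rings) are standard and will be cited.
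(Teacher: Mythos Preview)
Your overall strategy—localize, complete, apply the complete case, descend by faithful flatness—is exactly the paper's. The paper argues by contradiction: choose $u\in J^{*}$ with $\delta u\notin J$, localize at a maximal ideal $\mm$ witnessing this, pass to $\widehat{R_{\mm}}$, and invoke the complete case. So the architecture is the same.

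The gap is precisely where you flag it, but your discussion of it is misdirected. You write that the Jacobian comparison $\cJ(R/K)\widehat{R}\subseteq\cJ(\widehat{R}/K)$ is ``where absolute reducedness is indispensable,'' and that geometric reducedness ``renders the two Jacobian ideals comparable.'' That is not how the paper resolves it, and absolute reducedness plays no role at this step. The paper's device is characteristic-$p$ specific: since every $K$-derivation kills $p$-th powers, one has $\Omega_{R_{\mm}/K}=\Omega_{R_{\mm}/K[R^{p}]}$, and this identification is what forces the (universally finite) differential module—hence the Jacobian matrix and its Fitting ideals—to be compatible with completion, yielding $\cJ(R_{\mm}/K)\widehat{R_{\mm}}=\cJ(\widehat{R_{\mm}}/K)$. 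Your account never mentions $K[R^{p}]$, and the vague appeal to ``geometric reducedness makes the Jacobian ideals comparable'' does not constitute an argument for matching a semianalytic presentation with a Cohen presentation of the completion. Absolute reducedness is needed in the paper only to guarantee that the hypotheses of the complete case apply to $\widehat{R_{\mm}}$, not for the Jacobian comparison itself.

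Two smaller points. First, your parenthetical alternative—packaging the reduction through ``the result on equiheight algebras essentially of finite type over an excellent local ring''—refers to a characteristic-$0$ theorem and is unavailable here. Second, when you pass to $\widehat{A_{\mfp}}$ for an arbitrary prime $\mfp$, the residue field may be a transcendental extension of $K$, so you should check carefully that the complete case still applies over $K$ (the paper sidesteps this by taking $\mm$ maximal and working with the presentation $\widehat{T_{\mfn}}/I\widehat{T_{\mfn}}$).
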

\begin{rmk}
The result above implies the result of \cref{tep-thm-TEforAnalytic}, where $R$ is assumed to be an absolutely reduced equidimensional complete $K$-algebra. However, the proof of \cref{tep-thm-TEforSemiAnalytic} relies on \cref{tep-thm-TEforAnalytic}.
\end{rmk}

\begin{thm}[\cref{tczs-cor-Semianalytic}]
Let $R$ be a semianalytic $K$-algebra that is the localization of an absolutely reduced equiheight affine-analytic $K$-algebra where $\chr(K)=0$. Then the Jacobian ideal $\cJ(R/K)$ is contained in the test ideal of $R$. For any flat $K$-algebra maps $R\to R'$ with geometrically regular fibers, the expansion of the Jacobian ideal $\cJ(R/K)R'$ is contained in the test ideal of $R'$.
\end{thm}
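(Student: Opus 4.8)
The plan is to deduce both assertions from the characteristic~$p$ result \cref{tep-thm-TEforSemiAnalytic} by reduction modulo~$p$, noting that \cref{tep-thm-TEforSemiAnalytic} already handles the full semianalytic case in positive characteristic, so no preliminary reduction of the semianalytic setting to the affine-analytic one is required. What has to be arranged is a model of $R$ over a finitely generated $\ZZ$-algebra whose positive-characteristic fibers again satisfy the hypotheses of \cref{tep-thm-TEforSemiAnalytic}.

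For the first assertion, fix an ideal $I\subseteq R$, an element $x\in I^{*}$, and $c\in\cJ(R/K)$; write $R=W^{-1}S$ with $S$ an absolutely reduced equiheight affine-analytic $K$-algebra. Following the definition of equal characteristic zero tight closure I would choose a finitely generated $\ZZ$-subalgebra $A$ of a field of definition and a model $S_A/A$, with a multiplicative set $W_A$, carrying compatible descents $I_A, x_A, c_A$ of the given data together with the affine-analytic presentation and the witnesses of absolute reducedness and of the equiheight condition. The power series base $K\ps{x_1,\dots,x_n}$ has no finite-type model over $\ZZ$, so this step is not a naive clearing of denominators: one first uses N\'eron--Popescu desingularization (Artin approximation) to replace $S$ by a finite-type algebra over an \emph{algebraic} power series ring, which does descend, and absorbs the difference into a faithfully flat map with geometrically regular fibers; working in the positive-characteristic fibers, forward persistence of tight closure along this map together with faithfully flat descent of ideal membership then transfers the conclusion of \cref{tep-thm-TEforSemiAnalytic} from the affine-analytic completion back to the descended algebra. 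Generic flatness, together with constructibility and openness of the loci where absolute reducedness and the equiheight condition hold, guarantee that for almost all maximal ideals $\mu\in\Spec A$ the fiber $R_\mu := W_\mu^{-1}S_\mu$ is a localization of an absolutely reduced equiheight affine-analytic $\kappa(\mu)$-algebra with $\chr\kappa(\mu)=p>0$, while $c_\mu\in\cJ(R_\mu/\kappa(\mu))$ (the module of differentials and Fitting ideals commute with base change) and $x_\mu\in(I_\mu)^{*}$. By \cref{tep-thm-TEforSemiAnalytic}, $c_\mu\in\tau(R_\mu)$, so $c_\mu x_\mu\in I_\mu$ for almost all $\mu$; descending back up yields $cx\in I$, and hence $c\in\tau(R)$.

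For the second assertion, let $\varphi\colon R\to R'$ be flat with geometrically regular fibers. Then $R'$ is excellent and reduced (its fibers over $R$ are regular, hence geometrically reduced), so it is approximately Gorenstein and $\tau(R')$ is defined, and $\varphi(\cJ(R/K))R'=\cJ(R/K)R'$. Given $c\in\cJ(R/K)$ we have $c\in\tau(R)$ by the first part, so it suffices to show that test elements persist along $\varphi$, i.e.\ $\varphi(c)\in\tau(R')$. In characteristic~$p$ this is classical: $\varphi$ is a filtered colimit of smooth homomorphisms by N\'eron--Popescu, and tight closure --- hence the property of being a test element --- is preserved along smooth maps. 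In characteristic~$0$ one descends $\varphi$, together with an auxiliary ideal $J\subseteq R'$ and an element $y\in J^{*}$, to a model $\varphi_A\colon R_A\to R'_A$; for almost all $\mu$ the reduction $\varphi_\mu$ is again flat with geometrically regular fibers, $c_\mu$ is a test element of $R_\mu$ by the first part and hence of $R'_\mu$, so $c_\mu y_\mu\in J_\mu$, and descending gives $\varphi(c)y\in J$. Therefore $\cJ(R/K)R'\subseteq\tau(R')$.

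The step I expect to be the main obstacle is the descent of the analytic structure in the first assertion: because a power series ring admits no finite-type model over $\ZZ$, the model $S_A$ must be produced via N\'eron--Popescu/Artin approximation rather than directly, and one must verify that this replacement does not affect the tight closure computation, which forces one to run the persistence and faithfully-flat-descent arguments already in the course of proving the first assertion --- this is why the two assertions of the corollary are most naturally proved together. The remaining points --- spreading out absolute reducedness and the equiheight condition to almost all fibers, and identifying the reduction of $\cJ(R/K)$ with $\cJ(R_\mu/\kappa(\mu))$ --- are routine once a suitable model has been fixed.
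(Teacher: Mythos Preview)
Your proposal has a genuine gap in the descent step, and the paper's approach is substantially different.

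The central difficulty you identify --- that the power series ring $K\ps{\seq{x}{n}}$ has no finite-type model over $\ZZ$ --- is real, but your proposed fix does not resolve it. Replacing $S$ by a finite-type algebra over an \emph{algebraic} power series ring does not help: the ring of algebraic power series $K\{\seq{x}{n}\}$ is still not finitely generated over $K$, let alone over $\ZZ$, so you cannot form a model $S_A$ over a finitely generated $\ZZ$-subalgebra $A$ that retains any analytic structure. Consequently the fibers $R_\mu$ are not semianalytic $\kappa(\mu)$-algebras, and \cref{tep-thm-TEforSemiAnalytic} does not apply to them. The positive-characteristic semianalytic theorem is never invoked in the paper's characteristic-zero argument.

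The paper's route is: \textbf{first} eliminate the analytic structure entirely, \textbf{then} descend. Concretely, Artin--Rotthaus is used not to produce an analytic model over $\ZZ$ but to write $K\ps{\seq{x}{n}}$ as a filtered colimit of smooth $K[\seq{x}{n}]$-algebras $A_\nu$; the affine-analytic $R$ is thereby approximated by ordinary \emph{affine} $K$-algebras $R_\nu$. One checks (via the descent lemmas (A1)--(A11), (B1)--(B3)) that the equiheight condition, the defining ideal, and the element $\delta\in\cJ(R/K)$ all descend to $R_\nu$ for $\nu$ large; moreover any affine progenitor witnessing $u\in I^{*K}$ factors through some $R_\nu$. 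Now $R_\nu$ is a genuine finitely generated $K$-algebra, so the affine result \cref{HH99} applies --- and it is \emph{that} result whose proof performs the reduction to characteristic $p$ over a finitely generated $\ZZ$-algebra. A modular argument then shows $\delta u\in I+\mm^N$ for every $N$ (\cref{teza-thm-TEforAnalytic}, \cref{tezs-thm-TEforSemianalytic}).

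For the second assertion, the paper again avoids any char-$0$ persistence-of-test-elements argument. Instead, since $R\to R'$ has geometrically regular fibers, $R'$ is a filtered colimit of smooth $R$-algebras $S$; each such $S$ is itself affine-analytic and equiheight, and a block-matrix computation shows $\cJ(R/K)S\subseteq\cJ(S/K)$. Then \cref{tezs-thm-TEforSemianalytic} applied to $S$ gives $\cJ(R/K)S$ inside the test ideal of $S$, and one passes to the limit. So the two assertions are not proved together via mutual descent; rather, both are corollaries of the affine-analytic case combined with the Jacobian-ideal containment under smooth extension.
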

\begin{rmk}
The result above implies the case when $R$ is a reduced equidimensional complete $K$-algebra (\cref{teza-thm-TEforAnalytic}) and the case when $R$ is a reduced equiheight affine-analytic $K$-algebra (\cref{tezs-thm-TEforSemianalytic}). But the proof of \cref{tczs-cor-Semianalytic} relies on \cref{teza-thm-TEforAnalytic} and \cref{tezs-thm-TEforSemianalytic}.
\end{rmk}

\begin{thm}[\cref{teza-cor-EssAffineOverExcellent}]
Let $K$ be a field of characteristic $0$, and let $A$ be a noetherian excellent local $K$-algebra. Let $R$ be an equiheight $A$-algebra essentially of finite type. Suppose that $c \in R$ is an element such that $R_c$ is regular. Then $c$ has a power that is a test element for $K$-tight closure in $R$.
\end{thm}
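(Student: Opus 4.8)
The plan is to reduce the statement to the complete (analytic) case already established in \cref{teza-thm-TEforAnalytic}, via a faithfully flat base change along the completion map $A \to \widehat{A}$, and then descend the test-element property back to $R$. First I would set up the geometry: write $R$ as a localization of a finitely generated $A$-algebra $S$, so $R = W^{-1}S$ for a multiplicative set $W$, with $S_c$ regular after inverting $c$ (possibly after replacing $S$ by a localization away from the components not meeting $\Spec R$; here the equiheight hypothesis on $R$ is used to guarantee that the relevant local rings of $S$ all have the same dimension). Then base change to the completion: $\widehat{A}$ is a complete local ring with residue field $K$ of characteristic $0$, hence (by Cohen's structure theorem) a quotient of a formal power series ring over a coefficient field, and $S' := \widehat{A}\otimes_A S$ is an affine-analytic $K$-algebra in the sense of \cref{pre-defn-XAnalytic}. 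The key points to verify here are that (i) $A \to \widehat{A}$ is faithfully flat with geometrically regular fibers (excellence of $A$), so $S \to S'$ and $R \to R' := \widehat{A}\otimes_A R$ are flat with geometrically regular fibers; (ii) the equiheight condition is preserved, using \cref{pre-cor-Equiheight} and the fiber dimension formula for flat maps with the stated fiber behavior; and (iii) $R'$ is reduced — indeed absolutely reduced after passing to a suitable presentation — which follows because geometrically regular fibers of a flat map preserve (geometric) reducedness, so from $R$ reduced we get $R'$ reduced, and genericity of $c$ (regularity of $R_c$, hence of $R'_{c}$) lets us arrange the absolutely-reduced equiheight hypothesis of \cref{teza-thm-TEforAnalytic}.

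Next I would apply \cref{teza-thm-TEforAnalytic} (the complete/analytic case) to $R'$: since $R'$ is a reduced equidimensional complete $K$-algebra and $(R')_c$ is regular, some power $c^N$ is a test element for $K$-tight closure in $R'$. The goal is then to show the \emph{same} power $c^N$ (or a further power) is a test element in $R$. This is a descent-along-faithfully-flat-base-change argument for the containment $c^N \in \tau^{\fg}(R)$. Concretely, for each ideal $I \subseteq R$ one must show $c^N I^{*} \subseteq I$, where $*$ denotes $K$-tight closure; the strategy is to expand to $R'$, use that tight closure commutes appropriately with the flat base change $R \to R'$ with geometrically regular fibers (this is where one invokes the relevant persistence/descent results for $K$-tight closure used elsewhere in the paper — for equal characteristic $0$ tight closure defined via reduction to characteristic $p$, geometrically regular base change is exactly the situation under which tight closure behaves well), conclude $c^N(I R')^{*} \subseteq I R'$, and then contract back along the faithfully flat map $R \to R'$ to obtain $c^N I^{*} \subseteq c^N(IR')^{*}\cap R \subseteq IR' \cap R = I$. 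One also needs $I^{*}R' \subseteq (IR')^{*}$, i.e. persistence of tight closure under this base change, which again is available for $K$-tight closure under geometrically regular extensions.

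The main obstacle I anticipate is the interaction between tight closure and the localization $S \rightsquigarrow R$ together with the base change $R \rightsquigarrow R'$ — that is, showing that computing $K$-tight closure is compatible with both operations simultaneously, so that the regularity locus of $c$ and the test-element property genuinely transfer. Localization of tight closure is famously delicate, and although here we are only ever inverting elements and passing to a faithfully flat geometrically regular extension (rather than asking for localization of test \emph{ideals} in general), care is needed to ensure that the ``$c^N$ kills tight closure'' statement for ideals of $R'$ restricts to ideals of $R$. The cleanest route is probably to avoid localization subtleties by working at the level of the affine-analytic algebra $S'$ and its equiheight hypothesis, deduce from \cref{teza-thm-TEforAnalytic} (or its affine-analytic strengthening \cref{tezs-thm-TEforSemianalytic}) that $c^N$ is a test element there, and then pass to the localization $R' = W^{-1}S'$ using that a test element in an (approximately Gorenstein, reduced, equiheight) ring localizes to a test element — a fact which in this setting is controlled because $c$ is a single element with regular locus $\Spec R_c$ exhausting the non-regular part. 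If that localization step is not available off the shelf in the generality needed, one falls back to proving the ideal-by-ideal containment $c^N I^{*}\subseteq I$ by hand using the colon-capturing and persistence properties recorded earlier in the paper, with the faithful flatness of $R \to R'$ doing the final contraction. Everything else — reducedness, equiheight, geometric regularity of fibers, Cohen structure theory — is routine verification.
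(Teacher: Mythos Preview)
Your overall strategy---base change along $A\to\widehat{A}$, invoke the established results over the completion, then descend by faithful flatness and persistence---is the same as the paper's. But there are two genuine gaps in the execution.

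First, the ring $R' = \widehat{A}\otimes_A R$ is \emph{not} a complete local $K$-algebra, so \cref{teza-thm-TEforAnalytic} does not apply to it. It is a semianalytic $K$-algebra (a localization of an affine-analytic one), and you must use \cref{tezs-thm-TEforSemianalytic} or, more precisely, \cref{tczs-thm-GeoRegFlatExt}. You acknowledge this possibility in your last paragraph, but the main body of the argument should be built around the correct theorem from the start.

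Second, and more importantly, you never explain \emph{why} ``$(R')_c$ regular'' yields ``some power $c^N$ is a test element''. The theorems you cite assert that the Jacobian ideal lies in the test ideal; they say nothing about $c$ directly. The missing link is that in the reduced equiheight characteristic-$0$ setting the Jacobian ideal $\cJ(R'/K)$ cuts out the singular locus (\cref{jcb-cor-AbsRankOverField}), so $c$ has a power in $\cJ(R'/K)$ (or its expansion), and \emph{that} power is then a test element. This Jacobian step is the crux of the whole argument and should be stated explicitly.

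The paper organizes the same ingredients more cleanly by first proving an intermediate result (\cref{teza-thm-GeoFlatPower}): for $R$ equiheight \emph{affine} over $A$ and $S$ geometrically regular over $R$ with $S_c$ regular, a power of $c$ is a test element in $S$. Its proof base-changes to $\widehat{A}$, observes that the (expanded) Jacobian ideal now defines the singular locus of $S$, and concludes via \cref{tczs-thm-GeoRegFlatExt}. Your corollary then follows in one line: write $R=W^{-1}R'$ with $R'$ affine and equiheight, note that $R'\to R$ is a localization hence flat with geometrically regular fibers, and apply \cref{teza-thm-GeoFlatPower} with $S=R$. This factorization absorbs all your worries about localization and descent into a single clean statement.
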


As indicated above, the key statement that we aim to prove is that the Jacobian ideal is contained in the test ideal for various rings. More explicitly, we first collect some preliminaries in \cref{sn-Pre} and have a discussion on Jacobian ideals in \cref{sn-JacobainIdeal}. Then we prove this statement in characteristic $p$ for absolutely reduced equidimensional complete $K$-algebras (\cref{tep-thm-TEforAnalytic}) and then for semianalytic $K$-algebras that are localizations of absolutely reduced equiheight affine-analytic $K$-algebras (\cref{tep-thm-TEforSemiAnalytic}) in \cref{sn-TEP}. In both \cref{sn-DTC} and \cref{sn-TEZA}, we will focus on rings of equal characteristic 0. In \cref{sn-DTC}, we first present the definition of tight closure in equal characteristic 0 and prove the statement for affine algebras (\cref{HH99}). In \cref{sn-TEZA}, we discuss descent techniques and prove the statement for reduced equidimensional complete local $K$-algebras (\cref{teza-thm-TEforAnalytic}). Then we describe a similar descent process and prove the statement for reduced equiheight affine-analytic $K$-algebras (\cref{tezs-thm-TEforSemianalytic}). Based on \cref{tezs-thm-TEforSemianalytic}, we will establish the statement for semianalytic $K$-algebras that are localizations of reduced equiheight affine-analytic $K$-algebras (\cref{tczs-cor-Semianalytic}). Finally, we prove the statement for noetherian excellent local $K$-algebras (\cref{teza-thm-GeoFlatPower}) and equiheight algebras essentially of finite type over noetherian excellent local $K$-algebras (\cref{teza-cor-EssAffineOverExcellent}).

\subsection*{Acknowledgments}
I would like to heartily thank Mel Hochster for so many helpful discussions and careful readings on the first few drafts. I also want to thank Jack Jeffries and Ilya Smirnov for their comments on the first few drafts.

\section{Preliminaries}\label{sn-Pre}
We discuss some preliminary material about semianalytic algebras, the module of K\"ahler differentials and their relation to regularity. Throughout this section, unless otherwise stated, we do not assume any characteristic constraint. Most material here is covered in \cite{Kunz1986}, but has been reworded in a way that is better adapted to our needs in this paper. We start with some notation.
\begin{defn}\label{pre-defn-Notation}
Let $R$ be a noetherian ring and $\mfp\in\Spec(R)$ a prime ideal of $R$. Let $M$ be an $R$-module and $I\subseteq R$ an ideal of $R$.
\begin{enumerate}
\item $\Min(R)$ is the set of minimal primes of $R$.
\item $\bht(I)$ is the big height of $I$, i.e., $\bht(I)=\max\set{\hght(Q)\mid {Q\in\mrm{Min}(I)}}$. 
    \item If $I\subseteq \mfp$, then the notion $\hght_{\mfp}(I)$ represents the smallest height of a prime $P$ such that $I\subseteq P\subseteq \mfp$. Note that this is equal to $\hght(IR_{\mfp})$ in $R_{\mfp}$.
    \item $\dim_{\mfp}R$ is the supremum of the lengths of all chains of prime ideals containing $\mfp$. We have $\dim_{\mfp}R=\dim R/\mfp + \dim R_{\mfp}$.
    \item The \emph{regularity defect} of $R$ at $\mfp$ is defined to be $\rd_{\mfp}(R):=\edim(R_{\mfp})-\dim (R_{\mfp})$, where $\edim(R_{\mfp})$ is the embedded dimension of the noetherian local ring $R_{\mfp}$.
    \item The residue field of $\mfp$ is denoted by $\kappa_{\mfp}(R) := R_{\mfp}/\mfp R_{\mfp}$.
    \item $\mu(M)$ is the minimal number of generators of $M$ and $\mu_{\mfp}(M)$ is the minimal number of generators of the localized module $M_{\mfp}$.
    \item If $A$ is a matrix with entries in $R$, then $\rk_{\mfp}(A)$ represents the determinantal rank of the matrix $A$ over the residue field $\kappa_{\mfp}(R)$ at prime $\mfp$.
\end{enumerate}
\end{defn}

\subsection{Derivations and universal extensions}
A good reference here is Chapters 1-3, 11-12, in \cite{Kunz1986}.
\begin{defn}
Let $R_0$ be a ring, $R$ an $R_0$-algebra and $M$ an $R$-module.
\begin{enumerate}
    \item A \emph{$R_0$-derivation} of $R$ in $M$ is an $R_0$-linear map $\delta: R\to M$ that satisfies the Leibniz rule, i.e., for all $a,b\in R$, $\delta(ab)=a\delta b+b\delta a$. In the case $R_0=\ZZ$, $\ZZ$-derivations of $R$ are simply called \emph{(absolute) derivations} of $R$ \cite[1.1]{Kunz1986}.
  \item An $R_0$-derivation $\dd:R\to M$ is called \emph{universal} if for any $R_0$-derivation $\delta:R\to N$ there is one and only one $R$-linear map $\ell:M\to N$ such that $\delta=\ell\circ \dd$ \cite[1.18]{Kunz1986}.
  \item If $\dd: R\to M$ is a universal $R_0$-derivation of $R$ then the $R$-module $M$ is denoted by $\Omega_{R/R_0}$ and is called \emph{the module of (K\"ahler) differentials} of $R$ over $R_0$. The universal derivation of $R$ over $R_0$ is sometimes denoted by $\dd_{R/R_0}$ \cite[1.20]{Kunz1986}. It is well-known that the module of K\"ahler differentials exists. If we denote the kernel of the canonical map $R\otimes_{R_0}R\to R$ by $I$, then $\Omega_{R/R_0}\cong I/I^2$ \cite[1.7, 1.21]{Kunz1986}.
  \item For an $R_0$-derivation $\delta:R\to M$, we shall write $R\delta R$ for the submodule of $M$ generated by $\set{\delta r}_{r\in R}$. If $\dd$ is the universal $R_0$-derivation of $R$, then $R\dd R=\Omega_{R/R_0}$ \cite[1.21 (b)]{Kunz1986}.
  \end{enumerate}
\end{defn}

Chapters 2 and 3 in \cite{Kunz1986} describe differential algebras and universal extensions of differential algebras. Here, we work with derivations, which are more relevant to our needs. These two languages can be translated to and from one another in most cases.
\begin{defn}\cite[1.24]{Kunz1986}
Let $R$ and $S$ be two $R_0$-algebras and $\rho:R\to S$ an $R_0$-algebra morphism. Let $\delta:R\to M$ be an $R_0$-derivation of $R$ and $\delta':S\to M'$ an $R_0$-derivation of $S$. Then $\delta'$ is called an \emph{extension} of $\delta$ if there exists an $R$-linear map $\varphi:M\to M'$ such that
\[
\xymatrix{
R\ar[r]^{\rho}\ar[d]_{\delta} & S\ar[d]^{\delta'} \\
M\ar[r]^{\varphi} & M'
}
\]
  is commutative. An extension $\delta'$ of $\delta$ is called \emph{universal} if any other extensions $\Delta:S\to N$ of $\delta$ can be uniquely written as a specialization of $\delta'$, i.e., there exists a unique $S$-linear map $\phi:M'\to N$ such that $\Delta=\phi\circ\delta'$.
\end{defn}

The most important result about universal extensions of an $R_0$-derivation $\delta:R\to R\delta R$ is that they exist \cite[3.20]{Kunz1986}. From the definition we see that the universal extension $\delta':S\to M'$ of $\delta$ is unique up to canonical isomorphism. We write $\Omega_{S/\delta}$ for $M'$ and call $\Omega_{S/\delta}$ the \emph{module of K\"ahler differentials of $S$ over $\delta$}. If $\delta$ is the trivial derivation, i.e., $R\delta R=0$, then $\Omega_{S/\delta}=\Omega_{S/R}$ is the usual module of K\"ahler differentials of $S$ over $R$.

The module of K\"ahler differentials is finitely generated for affine $R_0$-algebras. Finite generation is important when we define the Jacobian ideal in \cref{sn-JacobainIdeal}. However, modules of K\"ahler differentials are not necessarily finitely generated for power series rings over $R_0$ (or more generally, semianalytic $K$-algebras, see \cref{pre-defn-XAnalytic}). So we have the following definitions.
\begin{defn}
\begin{enumerate}
    \item An $R_0$-derivation $\delta:R\to R\delta R$ of $R$ is called \emph{finite} if $R\delta R$ is finitely generated as an $R$-module.
    \item $\dd:R\to M$ is called \emph{universal finite} if $\dd$ is finite and each finite $R_0$-derivation $\delta$ of $R$ factors through $\dd:R\to M$ with respect to an unique $R$-homomorphism \cite[11.1]{Kunz1986}. If such $\dd:R\to M$ exists, then $M$ is unique up to a canonical $R$-isomorphism. We write $\widetilde{\Omega}_{R/R_0}$ for $M$ and call it the \emph{universal finite module of differentials of $R$ over $R_0$} (frequently referred to in the literature as the \emph{universally finite} module of differentials).
    \item Let $\rho:R\to S$ be a homomorphism of $R_0$-algebras and $\delta:R\to R\delta R$ a derivation of $R/R_0$. An $R_0$-derivation $\dd:S\to N$ of $S$ into an $S$-module $N$ is called a \emph{universal finite $\rho$-extension of $\delta$}, if the following hold:
\begin{enumerate}
\item $\dd$ is a $\rho$-extension of $\delta$ and finite (i.e., $S\dd S$ finitely generated)
\item If $\Delta:S\to N'$ is an arbitrary finite $\rho$-extension of $\delta$, then there is exactly one $S$-linear map $h:N\to N'$ with $\Delta:=h\circ \dd$.
\end{enumerate}
If the universal finite $\rho$-extension $\dd:S\to N$ of $\delta$ exists, we write $N:=\widetilde{\Omega}_{S/\delta}$ and call this the \emph{universal finite module of differentials of $S/\delta$}. In case $\delta$ is the trivial derivation of $R$ we write $\widetilde{\Omega}_{S/R}$ instead of $\widetilde{\Omega}_{S/\delta}$ \cite[11.4]{Kunz1986}.
\end{enumerate}
\end{defn}

The most important result here is that under mild assumptions, universal finite modules of differentials exist. See, for example, \cite[12.5]{Kunz1986}.

\subsection{The equiheight condition}\label{pre-ssn-TheEuiheightCondition}
Throughout this section, let $\cR$ be a ring of finite type over a noetherian local ring $(\cA,\mm)$. We want to introduce an equiheight condition. Let us start with characterizing different types of primes in $\cR$ with respect to $\cA$.
\begin{defn}\label{pre-defn-PrimeTypes}
  A prime ideal $Q$ in $\cR$ is called
\begin{itemize}
    \item \emph{special} with respect to $\cA$ if $Q+\mm \cR$ is a proper ideal of $\cR$;
    \item \emph{typical} with respect to $\cA$ if $Q+\mm \cR=\cR$ is the whole ring.
\end{itemize}
More generally, we will say that $\cR$ is \emph{special} with respect to $\cA$ if $\mm\cR\neq\cR$ and \emph{typical} with respect to $\cA$ if $\mm\cR=\cR$.
\end{defn}

Note that the set of special maximal ideals is closed since it is the set of maximal primes containing $\mm\cR$: if $Q$ is maximal, since $Q+\mm\cR$ is a proper ideal containing $Q$, it must be $Q$. Hence, $\mm\cR\subseteq Q$. The set of typical maximal ideals is open. Based on whether $\cR$ is special or typical, we introduce a modified dimension notion as below.

\begin{defn}
  If $\cR$ is a domain. We define $\ddim_{\cA} (\cR)$ to be $\dim(\cR)$ if $\cR$ is special and $\dim(\cR)+1$ if $\cR$ is typical. We also define
  \[
  \ddim_{\cA}(\cR)=\sup\set{\ddim_{\cA}(\cR/\mfp):\mfp\in\Min(\cR)}.
  \]
\end{defn}

To see the usefulness of this notion, we need to investigate the structure of $\cA$. More precisely, we have the following lemma.
\begin{lem}\label{pre-lem-HilbertRing}
If $\cA$ is equidimensional of dimension $n$, and $a \in \mm$ is not in any minimal prime,  then $\cA_a$ has dimension $n-1$, and $\cA_a$ is a Hilbert ring, i.e., every prime (hence, every radical ideal) can be written as an intersection of maximal ideals.  

If in addition $\cA$ is catenary, then all maximal ideals of $\cA_a$ have height $n-1$.
\end{lem}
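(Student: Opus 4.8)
The plan is to break the statement into three assertions: (a) $\dim \cA_a = n-1$; (b) $\cA_a$ is a Hilbert ring; and (c) under the catenary hypothesis, every maximal ideal of $\cA_a$ has height $n-1$.

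For (a), since $a$ lies in $\mm$ but in no minimal prime of $\cA$, inverting $a$ kills exactly the primes containing $a$; in particular it kills $\mm$ itself (as $a \in \mm$) but no minimal prime. So $\Spec(\cA_a)$ consists of the primes $\mfp$ of $\cA$ with $a \notin \mfp$, and every chain of such primes has length at most $n-1$ because it cannot be extended to include $\mm$ on top — actually more carefully, any maximal chain in $\cA$ has length $n$ by equidimensionality, starts at a minimal prime (which survives) and ends at $\mm$ (which does not), so after localization its surviving portion has length at most $n-1$; conversely, by prime avoidance one can build a chain of primes avoiding $a$ of length $n-1$ inside a suitable maximal chain, so $\dim \cA_a = n-1$.

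For (b), the key point is that $\cA$ is a \emph{local} ring, so $\cA_a$ is obtained from a local ring by inverting a single non-nilpotent element in the maximal ideal. I would argue that the Jacobson radical of $\cA_a$ is zero: a prime $\mfp$ of $\cA$ survives in $\cA_a$ iff $a \notin \mfp$, and the intersection of all such primes is $\bigcap_{a \notin \mfp} \mfp$; I claim this is the nilradical of $\cA$, because any non-nilpotent $b$ has a prime not containing $b$, and since $a$ is a non-zerodivisor modulo the nilradical (it is in no minimal prime) one can arrange this prime to also avoid $a$ — e.g. pass to $\cA_{\red}$, localize at $a$, and use that in the reduced ring the multiplicative set generated by $a$ and $b$ does not contain $0$ when $b$ is not nilpotent, since $a$ avoids all minimal primes. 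Thus the Jacobson radical of $\cA_a$ equals the nilradical, which is the statement that $\cA_a$ is a Hilbert (Jacobson) ring. Equivalently, and perhaps more cleanly, one invokes the standard fact that if $\mm$ is the maximal ideal of a local ring $\cA$ and $a \in \mm$, then every prime of $\cA_a$ is an intersection of maximal ideals of $\cA_a$; this is because $\cA_a$ has no prime that is contained in all maximal ideals other than those forced by the nilradical, the point being that $\mm \cA_a = \cA_a$ removes the one "bad" closed point.

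For (c), assume $\cA$ is in addition catenary. Let $Q \cA_a$ be a maximal ideal of $\cA_a$, corresponding to a prime $Q$ of $\cA$ with $a \notin Q$ that is maximal among such primes. Choose a minimal prime $\mfp_0 \subseteq Q$; since $\cA$ is catenary and equidimensional, $\dim \cA/\mfp_0 = n$ and $\hght(Q/\mfp_0) + \dim(\cA/Q) = n$. It remains to see $\dim(\cA/Q) = 1$: we have $\dim(\cA/Q) \geq 1$ since $Q \neq \mm$ (as $a \in \mm \setminus Q$), and $\dim(\cA/Q) \leq 1$ because any prime strictly containing $Q$ must contain $a$ by maximality of $Q$ in $\cA_a$, and $V(Q + (a))$ inside the catenary local domain $\cA/Q$ has dimension $0$ (its only prime is $\mm/Q$, since a one-dimensional local domain... more precisely $\cA/Q$ is a local domain in which $a$ is a nonzero element of the maximal ideal, so $\cA/(Q+(a))$ is Artinian, forcing $\dim \cA/Q = 1$). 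Hence $\hght(Q) = \hght(Q/\mfp_0) = n - 1$, and $\hght(Q\cA_a) = \hght(Q) = n-1$.

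The main obstacle I expect is part (b): making the Hilbert/Jacobson property precise requires care because $\cA_a$ is neither local nor obviously of finite type over a field, so one cannot cite the Nullstellensatz or generic-flatness arguments directly. The cleanest route is to reduce to the reduced case (both $\dim$ and the Hilbert property, as well as heights modulo a fixed minimal prime in the catenary case, are insensitive to nilpotents after choosing $\mfp_0$), and then exploit that in the reduced equidimensional local ring $\cA_{\red}$ the element $a$ is a non-zerodivisor, so that inverting it is a "generic" localization whose spectrum is exactly the complement of the single closed point together with the proper closed subset $V(a)$ — and then argue that the closed points of this open are dense enough. Parts (a) and (c) are then routine applications of catenarity and equidimensionality.
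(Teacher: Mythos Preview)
Your part (a) upper bound is fine, but the lower bound (``by prime avoidance one can build a chain of primes avoiding $a$ of length $n-1$'') is too vague to stand on its own. The paper's device here is cleaner and you will need it again: extend $a$ to a system of parameters $a,a_1,\dots,a_{n-1}$ of $\cA$ and take a minimal prime $Q$ of $(a_1,\dots,a_{n-1})$; then $a\notin Q$ and $Q\cA_a$ has height $n-1$.

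The real problem is part (b). What you compute, $\bigcap_{a\notin\mfp}\mfp$, is the \emph{nilradical} of $\cA_a$, not its Jacobson radical; the Jacobson radical is the intersection over \emph{maximal} ideals. Moreover, ``Jacobson radical $=$ nilradical'' is \emph{not} equivalent to being a Hilbert ring: the Hilbert condition demands that \emph{every} prime be an intersection of maximal ideals, i.e., that the Jacobson radical of $\cA_a/P$ vanish for every prime $P$, not just for $P=0$. Your ``equivalently, one invokes the standard fact\dots'' is exactly the content of the lemma, not a citable fact. What is missing is a construction that, for a given prime $P$ of $\cA_a$ and a given $b\notin P$, produces a \emph{maximal} ideal of $\cA_a$ containing $P$ and avoiding $b$. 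The paper does this by passing to the local domain $\cA/\cP$, extending $ab$ to a system of parameters $ab,b_1,\dots,b_{m-1}$, and taking a minimal prime $Q'$ of $(b_1,\dots,b_{m-1})$: then $ab\notin Q'$, so $\dim(\cA/Q')=1$, whence $(\cA/Q')_a$ is a field by part (a), i.e., $Q'\cA_a$ is maximal and misses $b$.

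Your part (c) is nearly right but becomes circular at the end: you assert $\cA/(Q+(a))$ is Artinian ``forcing $\dim\cA/Q=1$'', but you give no reason $\cA/(Q+(a))$ is $0$-dimensional other than the one-dimensionality you are trying to prove. The clean fix is to apply your corrected part (a) to the local domain $\cA/Q$: since $Q\cA_a$ is maximal, $(\cA/Q)_a$ is a field, so $\dim(\cA/Q)-1=0$. Then catenarity plus equidimensionality give $\hght(Q)=n-1$ as you wrote.
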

\begin{proof}

We first show that $\dim \cA_a$ is $n-1$. Since $\cA_a$ is a localization of $\cA$, the dimension cannot go up, i.e., $\dim\cA_a\leqslant \dim\cA$. If there is a prime chain of length $n$ in $\cA_a$, the preimage of it is a prime chain of length $n$ contained in $\mm$ but not containing $a$, which implies that $\mm$ has height $n+1$. Hence, we have $\dim \cA_a<n$. 

To see that $\dim\cA_a = n -1$, we construct a sequence $a_1, \dots, a_{n-1}$ in $\mm$ iteratively using prime avoidance. Since $a$ is not in any minimal prime of $\cA$, we have $\hght(a) = 1$. We choose $a_1 \in \mm$ such that $a_1$ avoids all minimal primes of $\cA$ and all minimal primes of $(a)$. Because $n \geqslant 2$ (the $n=1$ case trivially  yields dimension $0$), $\mm$ is not contained in the union of these primes. This choice guarantees $\hght(a_1) = 1$ and $\hght(a, a_1) = 2$.

Inductively, assume $a_1, \dots, a_{k-1}$ have been chosen such that $\hght(a_1, \dots, a_{k-1}) = k-1$ and $\hght(a, a_1, \dots, a_{k-1}) = k$. Provided $k < n$, $\mm$ is not contained in any minimal prime of $(a_1, \dots, a_{k-1})$ or of $(a, a_1, \dots, a_{k-1})$. By prime avoidance, we may choose $a_k \in \mm$ avoiding all such minimal primes. This guarantees $\hght(a_1, \dots, a_k) = k$ and $\hght(a, a_1, \dots, a_k) = k+1$.

Continuing this process, we obtain $a_1, \dots, a_{n-1}$ such that $\hght(a_1, \dots, a_{n-1}) = n-1$ and $\hght(a, a_1, \dots, a_{n-1}) = n$. Let $Q$ be any minimal prime of $(a_1, \dots, a_{n-1})$. By Krull's height theorem, $\hght(Q) \leqslant n-1$. Since $\hght(a_1, \dots, a_{n-1}) = n-1$, we must have $\hght(Q) = n-1$. If $a \in Q$, then $Q$ would contain the ideal $(a, a_1, \dots, a_{n-1})$, which has height $n$, contradicting $\hght(Q) = n-1$. Thus, $a \nin Q$. It follows that $Q\cA_a$ is a prime ideal in $\cA_a$ of height $n-1$, establishing that $\dim \cA_a = n-1$.

To show that $\cA_a$ is a Hilbert ring, we prove that any prime $P$ in $\cA_a$ is an intersection of maximal ideals in $\cA_a$. Equivalently, we show that the intersection of maximal ideals in $\cA_a/P$ is zero. Let $\cP$ be the preimage of $P$ in $\cA$. Then $\cA_a/P=(\cA/\cP)_a$. We replace $\cA$ by $\cA/\cP$, and then we aim to show that the intersection of all maximal ideals in $\cA_a$ is zero when $\cA$ is a domain. Let $0\neq b \in \cA$ be a non-unit in $\cA_a$. Extend $ab$ to a system of parameters $ab,\seq{b}{n-1}$ for $\cA$. Choose a minimal prime $Q'$ of $(\seq{b}{n-1})$. Since $Q'$ does not contain $ab$, $\cA/Q'$ has dimension 1. By the argument above, $(\cA/Q')_a$ has dimension exactly one less than $\cA/Q'$, so $(\cA/Q')_a=\cA_a/Q'\cA_a$ is a field. Thus, $Q'\cA_a$ is maximal, and does not contain $b$. Hence, the intersection of all maximal ideals in $\cA_a$ is zero.

Given any maximal $\mm'$ ideal of $\cA_a$, it contains a minimal prime of $\cA$ expanded to $\cA_a$. We can kill that minimal prime and assume that we are in the domain case. Let $\cM$ be the preimage of $\mm'$ in $\cA$. Then $a\neq 0$ in $\cA/\cM$. Hence $\dim \cA/\cM\geqslant 1$. $\cM$ is one of the maximal ideals that do not contain $a$, so $\dim \cA/\cM = 1$. Because $\cA$ is assumed to be both catenary and equidimensional of dimension $n$, the dimension formula yields $\hght(\cM) + \dim(\cA/\cM) = \dim(\cA) = n$. Therefore, $\hght(\cM) = n - 1$ and $\hght(\mm') = n - 1$.
\end{proof}

\begin{rmk}
 By an example in \cite[Appendix A]{nagata1975local},  there is a local domain $\cA$ of dimension 3 that has a prime $\cQ$ such that $\hght\cQ = 1$ and $\dim(\cA/\cQ) = 1$. This provides a local domain of dimension 3 with saturated chains of length 2 and of length 3. Thus $\cA$ has a prime $\cQ'$ such that $\hght \cQ' = 2$ and $\dim(\cA/\cQ') = 1$.  So if $a \in \mm - (\cQ \bigcup \cQ')$,  then $\cA_a$ has maximal ideals of height 1 and of height 2. Therefore the ``catenary'' condition in \cref{pre-lem-HilbertRing} cannot be omitted.
\end{rmk}

\begin{lem}\label{pre-defn-PrimeStructureInR}
Assume that $\cA$ is universally catenary and $\cR$ is a domain.
\begin{enumerate}
    \item If $\cR$ is typical, then all maximal ideals of $\cR$ have height $\dim(\cR)$, which is also $\ddim_{\cA}(\cR)-1$.
    \item If $\cR$ is special, then there is at least one maximal ideal of $\cR$ containing $\mm\cR$, i.e., that is special, and all special maximal ideals have height $\dim(\cR)$, while all typical maximal ideals have height $\dim(\cR)-1$.
\end{enumerate}
In both cases,
\begin{itemize}
    \item typical maximal ideals have height $\ddim_{\cA}(\cR)-1$ and special maximal ideals have height $\ddim_{\cA}(\cR)$;
    \item $\ddim_{\cA}(\cR)=\dim(\overline{\cA})+\tdeg_{\Fr(\overline{\cA})}(\Fr(\cR))$.
\end{itemize}
where $\overline{\cA}$ is the image of $\cA$ in $\cR$. 
\end{lem}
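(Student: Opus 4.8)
The plan is to push everything through the dimension formula for finitely generated domain extensions of a universally catenary base, applied to the image $\overline{\cA}=\cA/\mfq$ of $\cA$ in $\cR$, where $\mfq=\ker(\cA\to\cR)$. Being a quotient of $\cA$, the ring $\overline{\cA}$ is again a universally catenary Noetherian local domain; write $\mfn=\mm\overline{\cA}$ for its maximal ideal, $d=\dim\overline{\cA}=\hght\mfn$, and $t=\tdeg_{\Fr(\overline{\cA})}(\Fr(\cR))$, which is finite because $\cR$ is finitely generated over $\overline{\cA}$. Since $\cA\to\cR$ factors through $\overline{\cA}$, one has $\mm\cR=\mfn\cR$, so for a prime $Q$ of $\cR$ the dichotomy special/typical is detected by whether $\mfn(\cR/Q)$ equals $0$ or $\cR/Q$. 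The key tool is the equality form of the dimension formula, valid because $\overline{\cA}$ is universally catenary: for every $Q\in\Spec\cR$ with $\mfp=Q\cap\overline{\cA}$,
\[
\hght(Q)+\tdeg_{\kappa_{\mfp}(\overline{\cA})}\big(\kappa_{Q}(\cR)\big)=\hght(\mfp)+t .
\]
When $Q$ is maximal, $\kappa_{Q}(\cR)=\cR/Q$ is a field finitely generated as an algebra over the domain $\overline{\cA}/\mfp$, hence finitely generated over its fraction field $\kappa_{\mfp}(\overline{\cA})=\Fr(\overline{\cA}/\mfp)$, hence finite over it by Zariski's lemma; so the transcendence-degree term drops out and $\hght(Q)=\hght(\mfp)+t$ for \emph{every} maximal ideal $Q$ of $\cR$.

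It then remains to evaluate $\hght(\mfp)$ in the two cases. If $Q$ is special, then $\mfn(\cR/Q)$ is a proper ideal of the field $\cR/Q$, hence $0$, and since $\overline{\cA}/\mfp\hookrightarrow\cR/Q$ this forces $\mfn\subseteq\mfp$, i.e.\ $\mfp=\mfn$; thus $\hght(\mfp)=d$ and $\hght(Q)=d+t$. If $Q$ is typical, put $B=\overline{\cA}/\mfp$ and $L=\cR/Q$: then $B$ is a Noetherian local domain, $L\supseteq B$ is a field finitely generated as a $B$-algebra, and $\mathfrak{m}_{B}L=L$. The crux is that $\dim B=1$. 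The inequality $\dim B\ge1$ is immediate, since $\mathfrak{m}_{B}L=L$ forces $\mathfrak{m}_{B}\ne 0$. For $\dim B\le1$: applying the Artin--Tate lemma to $B\subseteq\Fr(B)\subseteq L$ (using that $L$ is finite over $\Fr(B)$, again by Zariski) yields $\Fr(B)=B[1/s]$ for some nonzero $s\in B$; then every nonzero prime of $B$ contains $s$, so $B$ has only finitely many height-one primes, and a Noetherian domain with finitely many height-one primes has dimension at most $1$ (a height-two prime would, by prime avoidance, contain an element outside all of them, and a minimal prime of that element would be a new height-one prime). Hence $\dim B=1$, and since $\overline{\cA}$ is a catenary local domain, $\hght(\mfp)=\dim\overline{\cA}-\dim(\overline{\cA}/\mfp)=d-1$, so $\hght(Q)=d+t-1$.

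Finally I assemble the numerics. If $\cR$ is typical then $\mm\cR=\cR$, so no prime of $\cR$ contains $\mm\cR$ and every maximal ideal is typical, each of height $d+t-1$; hence $\dim\cR=d+t-1$ and $\ddim_{\cA}(\cR)=\dim\cR+1=d+t$, which gives the assertions in the typical case together with the identity $\ddim_{\cA}(\cR)=\dim(\overline{\cA})+\tdeg_{\Fr(\overline{\cA})}(\Fr(\cR))$. If $\cR$ is special then $\mm\cR$ is proper, hence contained in some maximal ideal, which is then special; so special maximal ideals exist and have height $d+t$, typical maximal ideals have height $d+t-1$, and $\dim\cR=d+t=\ddim_{\cA}(\cR)$, which gives the assertions in the special case and the same identity. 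In both cases typical maximal ideals have height $\ddim_{\cA}(\cR)-1$ and special ones height $\ddim_{\cA}(\cR)$. The one genuinely nontrivial step is the typical case: once the dimension formula reduces matters to computing $\hght(Q\cap\overline{\cA})$, one must show the contraction of a typical maximal ideal sits exactly one step below $\mfn$, which is the Artin--Tate/Goldman-domain bound $\dim(\overline{\cA}/\mfp)\le1$; the dimension formula itself and the identity $\hght\mfp+\dim(\overline{\cA}/\mfp)=\dim\overline{\cA}$ in a catenary local domain are standard, and the rest is bookkeeping over the special/typical dichotomy.
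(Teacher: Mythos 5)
Your proof is correct and follows essentially the same path as the paper's: both replace $\cA$ by its image $\overline{\cA}$, invoke the dimension formula (valid by universal catenarity), use Zariski's lemma to kill the residue-field transcendence term for maximal ideals, and then compute $\hght(\mfp)$ in the special case ($\mfp=\mfn$) and the typical case ($\dim(\overline{\cA}/\mfp)=1$). The one place you diverge is in justifying $\dim(\overline{\cA}/\mfp)\le 1$, where you spell out the Artin--Tate / Goldman-domain argument (finitely many height-one primes, prime avoidance) that the paper compresses into a citation of generalized Noether normalization and an unelaborated ``hence $P$ has height $\dim\cA-1$''; your version makes that implicit step explicit, which is a genuine improvement in rigor but not a different route.
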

\begin{proof}
Let $\mm$ be the maximal ideal of $\cA$ and $K$ be its residue field, i.e., $K=\cA/\mm$. We will replace $(\cA,\mm)$ by their images $(\overline{\cA},\overline{\mm})$ in $\cR$ and still write them as $(\cA,\mm)$. For any maximal ideal $Q$ of $\cR$, write $P$ for its contraction in $\cA$. By dimension formula, we have
\begin{equation}\label{pre-eq-DimensonFormula}
\hght(Q) -\hght(P) = \tdeg_{\Fr(\cA)}(\Fr(\cR)) - \tdeg_{\kappa_P(\cA)}(\kappa_Q(\cR)).
\end{equation}

We aim to prove the following
  \begin{itemize}
  \item if $Q$ is typical, then $\hght(Q)=\dim (\cA)+\tdeg_{\Fr(\cA)}(\Fr(\cR)) - 1$;
  \item If $Q$ is special, then $\hght(Q) = \dim (\cA)+\tdeg_{\Fr(\cA)}(\Fr(\cR))$.
  \end{itemize}
  
  If $Q$ is typical, then we claim that $P$ is of dimension 1, i.e., of height $\dim(\cA)-1$. Since $Q+\mm \cR=\cR$, $P\neq \mm$. By the generalized Noether normalization theorem, $\cR/Q$ is module-finite over $(\cA/P)_f$ for some $f\in \cA/P$. Since $\cR/Q$ is a field, we deduce that $(\cA/P)_f$ is a field. But $\cA/P$ is not a field. Hence, $P$ has height $\dim \cA-1$ and $\tdeg_{\kappa_P(\cA)}(\kappa_Q(\cR))=0$. By \eqref{pre-eq-DimensonFormula}, $\hght(Q)=\dim(\cA)-1+\tdeg_{\Fr(\cA)}(\Fr(\cR))$. If $\cR$ is typical, then all maximal ideals are typical. Hence, all maximal ideals of $\cR$ are of the same height $\dim(\cR)=\ddim_{\cA}(\cR)-1$.

If $Q$ is special, then $P=\mm$. Hence, $\hght(P)=\dim \cA$ and $\tdeg_{\kappa_P(\cA)}(\kappa_Q(\cR))=0$. By \eqref{pre-eq-DimensonFormula}, $\hght(Q)=\dim(\cA)+\tdeg_{\Fr(\cA)}(\Fr(\cR))$. If $\cR$ is special, then $\cR$ has at least one special maximal ideal, which is of height $\dim(\cA)+\tdeg_{\Fr(\cA)}(\Fr(\cR))=\dim(\cR)$. The typical maximal ideals of $\cR$, if any, is of height $\dim(\cR)-1$ by the argument above.
\end{proof}

\begin{cor}\label{pre-cor-PrimeHeightLocalized}
Assume $\cA$ is universally catenary. Let $\overline{\cA}$ be the image of $\cA$ in $\cR$. Then
\begin{enumerate}
    \item If $\cR$ is special, $\dim(\cR_a)=\dim(\cR)-1$ for all $a\in \mm \overline{\cA}-\set{0}$. On the other hand, if $\cR$ is typical then $\dim(\cR_a)=\dim(\cR)$ for all $a\in \mm \overline{\cA}-\set{0}$.
    \item Let $f\in \cR-\set{0}$. Then $\ddim_{\cA}(\cR)=\ddim_{\cA}(\cR_f)$. If $\cR$ is typical, then $\dim(\cR)=\dim(\cR_f)$ as well. If $\cR$ is special, then $\dim(\cR_f)=\dim(\cR)$ if $\mm \cR_f\neq \cR_f$, and $\dim(\cR_f)=\dim(\cR)-1$ if $\mm \cR_f=\cR_f$.
\end{enumerate}
\end{cor}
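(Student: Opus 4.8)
The plan is to reduce both parts to the explicit formula
\[
\ddim_{\cA}(\cR) = \dim(\overline{\cA}) + \tdeg_{\Fr(\overline{\cA})}(\Fr(\cR))
\]
from \cref{pre-defn-PrimeStructureInR}, together with the dichotomy built into the definition of $\ddim_{\cA}$ (it equals $\dim(\cR)$ when $\cR$ is special and $\dim(\cR)+1$ when $\cR$ is typical). The first thing to record is that for $f\in\cR-\set{0}$ the localization $\cR_f=\cR[1/f]$ is again an affine domain over $\cA$: it is of finite type over $\cR$, hence over $\cA$, and it is a domain. Moreover, because $\cR\hookrightarrow\cR_f$, the image of $\cA$ in $\cR_f$ is (canonically identified with) the same subring $\overline{\cA}$, and $\Fr(\cR_f)=\Fr(\cR)$. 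Hence the right-hand side of the displayed formula is unchanged when $\cR$ is replaced by $\cR_f$, giving $\ddim_{\cA}(\cR_f)=\ddim_{\cA}(\cR)$, which is the first assertion of part (2).

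To finish part (2) I would unwind $\ddim_{\cA}$ according to whether $\cR$ and $\cR_f$ are special or typical. If $\cR$ is typical then $\mm\cR=\cR$ forces $\mm\cR_f=\cR_f$, so $\cR_f$ is typical, and $\dim(\cR)+1=\ddim_{\cA}(\cR)=\ddim_{\cA}(\cR_f)=\dim(\cR_f)+1$. If $\cR$ is special then $\ddim_{\cA}(\cR)=\dim(\cR)$; when $\mm\cR_f\neq\cR_f$ the ring $\cR_f$ is special and $\dim(\cR_f)=\ddim_{\cA}(\cR_f)=\dim(\cR)$, while when $\mm\cR_f=\cR_f$ the ring $\cR_f$ is typical and $\dim(\cR_f)+1=\ddim_{\cA}(\cR_f)=\dim(\cR)$.

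Part (1) then follows by applying part (2) with $f=a$ for $a\in\mm\overline{\cA}-\set{0}$, once one decides whether $\cR_a$ is special or typical. If $\cR$ is typical the conclusion $\dim(\cR_a)=\dim(\cR)$ is immediate as above. If $\cR$ is special, observe that $a\in\mm\overline{\cA}\subseteq\mm\cR$ becomes a unit in $\cR_a$, so $\mm\cR_a$ contains a unit and thus $\mm\cR_a=\cR_a$; that is, $\cR_a$ is typical, and part (2) gives $\dim(\cR_a)=\dim(\cR)-1$.

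Since the whole argument is bookkeeping around a formula that is already available, I do not anticipate a genuine obstacle. The only points needing care are verifying that localizing $\cR$ at a nonzero element alters neither the subring $\overline{\cA}$ nor the fraction field (so that the formula transports verbatim), and noting that the degenerate case in which $\overline{\cA}$ is a field is vacuous for part (1), since then $\mm\overline{\cA}-\set{0}=\emptyset$.
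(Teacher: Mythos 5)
Your proof is correct, and it takes a somewhat more economical route than the paper for part (1). For part (2) both you and the paper rely on the same key step: the localization $\cR_f$ has the same subring $\overline{\cA}$ and the same fraction field as $\cR$, so the formula $\ddim_{\cA}(\cR)=\dim(\overline{\cA})+\tdeg_{\Fr(\overline{\cA})}(\Fr(\cR))$ from \cref{pre-defn-PrimeStructureInR} is preserved, and the rest is unwinding the special/typical dichotomy. Where you diverge is part (1): the paper passes through \cref{pre-lem-HilbertRing}, observing that $\overline{\cA}_a$ is a Hilbert ring of dimension $\dim(\overline{\cA})-1$, and then re-applies the formula with $\overline{\cA}_a$ as the new base. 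You instead invert the dependency: you prove part (2) first and then derive part (1) as its special case, by noting that $a\in\mm\overline{\cA}$ becomes a unit in $\cR_a$, so $\mm\cR_a=\cR_a$ and $\cR_a$ is forced to be typical. This sidesteps the Hilbert ring machinery entirely, which makes your part (1) cleaner; it does not, however, yield the additional structural facts (Hilbert ring property, equi-height of maximal ideals of $\overline{\cA}_a$) that the paper establishes en route and that are in fact reused elsewhere, which is presumably why the paper goes that way. One small thing you could make fully explicit: $\cR_f$ is of finite type over $\cR$ (generated by $1/f$), hence of finite type over $\cA$, so it remains an affine domain over $\cA$ and \cref{pre-defn-PrimeStructureInR} genuinely applies to it; your text asserts this but it deserves a half-sentence justification given that the formula's hypotheses must be re-verified.
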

\begin{proof}
(1) By \cref{pre-lem-HilbertRing}, $\overline{\cA}_a$ is a Hilbert ring of dimension $\dim(\overline{\cA})-1$. The result follows by applying the formula in \cref{pre-defn-PrimeStructureInR} to $\cR$ over $\overline{\cA}$ and $\cR_a$ over $\overline{\cA}_a$.

(2) Since $\Fr(\cR)=\Fr(\cR_f)$ and $\cR_f$ is still an affine domain over $\overline{\cA}$, \cref{pre-defn-PrimeStructureInR} implies that $\ddim_{\cA}(\cR)=\ddim_{\cA}(\cR_f)$. The rest follows from this equality.
\end{proof}

\begin{defn}\label{pre-defn-GeneralEquiheight}
We define $\cR$ to be \emph{equiheight} over $\cA$ if $\ddim_{\cA}(\cR/\mfp)$ is independent of the choice of $\mfp\in\Min(\cR)$.
\end{defn}

Right from the definition, we see that
\begin{cor}
Assume that $\cA$ is universally catenary. If $\cR$ is equiheight over $\cA$, then $\cR$ is equiheight if and only if $\cR_{\red}$ is equiheight.
\end{cor}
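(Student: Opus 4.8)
The plan is to reduce the statement to the fact that passing from $\cR$ to $\cR_{\red}$ changes none of the data out of which the equiheight condition is built. Write $\mfn$ for the nilradical of $\cR$, so that $\cR_{\red}=\cR/\mfn$ is again an affine $\cA$-algebra. Since $\mfn$ is contained in every prime of $\cR$, the quotient map $\pi\colon\cR\to\cR_{\red}$ induces an order isomorphism $\Spec(\cR_{\red})\to\Spec(\cR)$, $\overline{\mfp}\mapsto\pi^{-1}(\overline{\mfp})$, which in particular preserves heights and restricts to a bijection $\Min(\cR_{\red})\xrightarrow{\sim}\Min(\cR)$. It is a map over $\Spec(\cA)$: for $\overline{\mfp}\in\Spec(\cR_{\red})$ with preimage $\mfp$, the contractions of $\overline{\mfp}$ and of $\mfp$ to $\cA$ coincide, because $\pi$ is an $\cA$-algebra homomorphism. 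Finally, $\mfn\subseteq\mfp$ gives a canonical $\cA$-algebra isomorphism $\cR/\mfp\cong\cR_{\red}/\overline{\mfp}$.

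First I would extract the local consequences of this correspondence. Fix $\mfp\in\Min(\cR)$ and let $\overline{\mfp}=\mfp/\mfn\in\Min(\cR_{\red})$. The $\cA$-algebra isomorphism $\cR/\mfp\cong\cR_{\red}/\overline{\mfp}$ shows that $\cR/\mfp$ is special over $\cA$ if and only if $\cR_{\red}/\overline{\mfp}$ is special over $\cA$ — the special/typical dichotomy of \cref{pre-defn-PrimeTypes} only refers to whether the image of $\mm$ generates a proper ideal, a property transported by any $\cA$-algebra isomorphism — and that $\dim(\cR/\mfp)=\dim(\cR_{\red}/\overline{\mfp})$. By the definition of $\ddim_{\cA}$ on affine domains it follows that $\ddim_{\cA}(\cR/\mfp)=\ddim_{\cA}(\cR_{\red}/\overline{\mfp})$ for every minimal prime $\mfp$ of $\cR$.

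Next I would globalize. Since $\overline{\mfp}\mapsto\pi^{-1}(\overline{\mfp})$ is a bijection $\Min(\cR_{\red})\to\Min(\cR)$ along which, for corresponding primes, the heights, the dimensions $\dim(\cR/\mfp)$, the special/typical type, and the values $\ddim_{\cA}(\cR/\mfp)$ all match, any condition phrased purely in terms of these minimal-prime invariants holds for $\cR$ if and only if it holds for $\cR_{\red}$. In particular the relative condition of \cref{pre-defn-GeneralEquiheight} transfers, so under the standing hypothesis that $\cR$ is equiheight over $\cA$ the ring $\cR_{\red}$ is equiheight over $\cA$ as well; this makes ``$\cR_{\red}$ is equiheight'' a meaningful assertion in the first place. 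The (absolute) equiheight condition is likewise a statement about the common value of $\ddim_{\cA}(\cR/\mfp)$ and the common behaviour of the minimal primes $\mfp$ (their type, equivalently the dimensions $\dim(\cR/\mfp)$), so the same transfer gives: $\cR$ is equiheight if and only if $\cR_{\red}$ is equiheight.

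The one point requiring care — and the only real obstacle — is purely bookkeeping: one must check that the definition of equiheight involves $\cR$ only through the poset $\Spec(\cR)$, the structure map $\Spec(\cR)\to\Spec(\cA)$, and the residue rings $\cR/\mfp$ at minimal primes, rather than through data (such as completions or modules) that genuinely sees the nilpotents. That is exactly the data the correspondence above identifies for $\cR$ and $\cR_{\red}$, so once it is verified the proof is immediate from the two observations that $\mfn$ lies in every prime and that $\cR/\mfp\cong\cR_{\red}/(\mfp/\mfn)$ as $\cA$-algebras.
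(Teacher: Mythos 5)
Your proof is correct and is exactly the elaboration the paper intends when it dismisses this corollary as following ``right from the definition'': the order isomorphism $\Spec(\cR_{\red})\cong\Spec(\cR)$ over $\Spec(\cA)$, together with the canonical $\cA$-algebra isomorphisms $\cR/\mfp\cong\cR_{\red}/(\mfp/\mfn)$ for $\mfp\in\Min(\cR)$, shows that each $\ddim_{\cA}(\cR/\mfp)$ is unchanged, so the condition of \cref{pre-defn-GeneralEquiheight} transfers. One small note: the ``relative vs.\ absolute'' equiheight distinction you invoke in the last paragraph does not exist in the paper (there is only the one notion of \cref{pre-defn-GeneralEquiheight}); the corollary's hypothesis-plus-iff phrasing is an infelicity, and your correspondence already proves the intended equivalence ``$\cR$ is equiheight over $\cA$ iff $\cR_{\red}$ is'' in both directions.
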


To justify the name of this condition a little bit, we note that if all minimal primes of $\cR$ are typical, then this condition asserts that $\dim(\cR/\mfp)+1=\dim(\cR)+1$. While if all minimal primes are special, then this condition is equivalent to $\dim(\cR/\mfp)=\dim(\cR)$.

The corollary below follows directly from \cref{pre-cor-PrimeHeightLocalized}.
\begin{cor}
Assume that $\cA$ is universally catenary. If $\cR$ is equiheight over $\cA$, then $\cR_f$ is equiheight for any $f\in R$ that is not nilpotent.
\end{cor}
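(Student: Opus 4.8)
The plan is to reduce at once to the domain case and then quote \cref{pre-cor-PrimeHeightLocalized}(2). First I would record the elementary bookkeeping about localization: since $f$ is not nilpotent, we have $f\notin\sqrt{0}=\bigcap_{\mfp\in\Min(\cR)}\mfp$, so $\cR_f\neq 0$ and its minimal primes are precisely the ideals $\mfp\cR_f$ with $\mfp\in\Min(\cR)$ and $f\notin\mfp$; at least one such $\mfp$ exists. For each such $\mfp$ I would use the canonical isomorphism $\cR_f/\mfp\cR_f\cong(\cR/\mfp)_{\overline{f}}$, where $\overline{f}$ is the image of $f$ in $\cR/\mfp$, a \emph{nonzero} element of the affine domain $\cR/\mfp$ over $\cA$ (finite generation of $\cR/\mfp$ over $\cA$ is inherited from that of $\cR$, and $\overline f\neq 0$ because $f\notin\mfp$).

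Next, for each such $\mfp$ I would apply \cref{pre-cor-PrimeHeightLocalized}(2) to the affine domain $\cR/\mfp$ over the universally catenary local ring $\cA$ (or, if one prefers, over its image $\overline{\cA}$ in $\cR/\mfp$, which is again a universally catenary local ring) and the nonzero element $\overline{f}$, obtaining
\[
\ddim_{\cA}\bigl(\cR_f/\mfp\cR_f\bigr)=\ddim_{\cA}\bigl((\cR/\mfp)_{\overline{f}}\bigr)=\ddim_{\cA}(\cR/\mfp).
\]

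Finally, I would invoke the hypothesis: $\cR$ being equiheight over $\cA$ means exactly that the right-hand side above is the same for every $\mfp\in\Min(\cR)$, hence in particular for every $\mfp\in\Min(\cR)$ with $f\notin\mfp$. By the displayed equalities, $\ddim_{\cA}(\cR_f/\mfq)$ is then independent of $\mfq\in\Min(\cR_f)$, which is precisely the statement that $\cR_f$ is equiheight over $\cA$ in the sense of \cref{pre-defn-GeneralEquiheight}.

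This is essentially formal, so I do not expect a genuine obstacle. The only points that need a little care are the identification of $\Min(\cR_f)$ through the prime correspondence under localization together with the fact that non-nilpotence of $f$ guarantees $\Min(\cR_f)\neq\emptyset$, and the verification that the hypotheses of \cref{pre-cor-PrimeHeightLocalized}—an affine domain over a universally catenary local ring—do pass to each quotient $\cR/\mfp$ and its localization at $\overline f$.
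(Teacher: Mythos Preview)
Your proposal is correct and is exactly the argument the paper has in mind: the paper simply says the corollary ``follows directly from \cref{pre-cor-PrimeHeightLocalized},'' and your reduction to the domain case $\cR/\mfp$ together with part (2) of that corollary is precisely how one makes that sentence into a proof. The bookkeeping you record about $\Min(\cR_f)$ and the isomorphism $\cR_f/\mfp\cR_f\cong(\cR/\mfp)_{\overline f}$ is the standard and only thing to check.
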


\begin{prop}
Assume that $\cA$ is universally catenary and $\cR$ is equiheight over $\cA$.
Then for every prime ideal $P$ of $\cR$,
\begin{equation}\label{pre-eq-HeightFormula}
    \ddim_{\cA}(\cR/P)+\hght(P)=\ddim_{\cA}(\cR).
\end{equation}
\end{prop}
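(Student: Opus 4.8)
The plan is to reduce to the case where $\cR$ is an affine \emph{domain} over $\cA$, and then pass to the general case using the equiheight hypothesis.

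For the domain case, write $\overline{\cA}$ for the image of $\cA$ in $\cR$; as a quotient of $\cA$ it is again a universally catenary local domain. Let $\mfp_0:=P\cap\overline{\cA}$. Since $\cA\to\cR/P$ factors through $\overline{\cA}$ with image $\overline{\cA}/\mfp_0$, the last identity of \cref{pre-defn-PrimeStructureInR}, applied to $\cR$ and to $\cR/P$ over $\cA$, gives
\[
\ddim_{\cA}(\cR)=\dim(\overline{\cA})+\tdeg_{\Fr(\overline{\cA})}\Fr(\cR),\qquad
\ddim_{\cA}(\cR/P)=\dim(\overline{\cA}/\mfp_0)+\tdeg_{\Fr(\overline{\cA}/\mfp_0)}\Fr(\cR/P).
\]
I would then invoke two standard facts. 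First, the dimension formula for the finitely generated algebra $\cR$ over the universally catenary domain $\overline{\cA}$ --- the same identity as \eqref{pre-eq-DimensonFormula}, but for the general prime $P$ in place of a maximal ideal --- reads
\[
\hght(P)+\tdeg_{\kappa_{\mfp_0}(\overline{\cA})}\kappa_P(\cR)=\hght(\mfp_0)+\tdeg_{\Fr(\overline{\cA})}\Fr(\cR);
\]
since $\kappa_{\mfp_0}(\overline{\cA})=\Fr(\overline{\cA}/\mfp_0)$ and $\kappa_P(\cR)=\Fr(\cR/P)$, the transcendence-degree term on the left coincides with $\tdeg_{\Fr(\overline{\cA}/\mfp_0)}\Fr(\cR/P)$. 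Second, a catenary local domain $B$ satisfies $\hght(\mfq)+\dim(B/\mfq)=\dim(B)$ for every prime $\mfq$ (splice a saturated chain from $(0)$ to $\mfq$ with one from $\mfq$ to the maximal ideal, and use that all saturated chains from $(0)$ to the maximal ideal have equal length); with $B=\overline{\cA}$, $\mfq=\mfp_0$ this gives $\hght(\mfp_0)+\dim(\overline{\cA}/\mfp_0)=\dim(\overline{\cA})$. Adding $\hght(P)$ to the second displayed formula for $\ddim$ and substituting, the two copies of $\tdeg_{\Fr(\overline{\cA}/\mfp_0)}\Fr(\cR/P)$ cancel and I obtain $\ddim_{\cA}(\cR/P)+\hght(P)=\dim(\overline{\cA}/\mfp_0)+\hght(\mfp_0)+\tdeg_{\Fr(\overline{\cA})}\Fr(\cR)=\ddim_{\cA}(\cR)$, which is \eqref{pre-eq-HeightFormula} in the domain case.

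For general $\cR$, let $\mfp_1,\dots,\mfp_s$ be the minimal primes of $\cR$ contained in $P$. Applying the domain case to each $\cR/\mfp_i$ with its prime $P/\mfp_i$, and using $(\cR/\mfp_i)/(P/\mfp_i)\cong\cR/P$ as $\cA$-algebras, gives
\[
\ddim_{\cA}(\cR/P)+\hght_{\cR/\mfp_i}(P/\mfp_i)=\ddim_{\cA}(\cR/\mfp_i)=\ddim_{\cA}(\cR)
\]
for every $i$, the last equality being the equiheight hypothesis (all $\ddim_{\cA}(\cR/\mfp_i)$ agree, hence agree with the supremum $\ddim_{\cA}(\cR)$). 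Thus $\hght_{\cR/\mfp_i}(P/\mfp_i)$ is independent of $i$; as $\hght_{\cR}(P)=\dim(\cR_P)=\max_i\dim\big((\cR/\mfp_i)_{P/\mfp_i}\big)=\max_i\hght_{\cR/\mfp_i}(P/\mfp_i)$, this common value is $\hght_{\cR}(P)$, and \eqref{pre-eq-HeightFormula} follows.

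The step to watch is the passage to $\overline{\cA}$: one cannot simply use that $\cR$ is catenary to write $\dim(\cR/P)+\hght(P)=\dim(\cR)$, because a finitely generated domain over a universally catenary \emph{local} ring need not have all maximal chains of the same length (for instance $k\ps{t}[x]$ has a height-one maximal ideal). Routing through the genuinely local ring $\overline{\cA}$, where the catenary-local-domain identity does hold, circumvents this. Granting that, the two cited facts and the formal reduction of the general case to the domain case are routine.
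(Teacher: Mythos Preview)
Your proof is correct, but the domain case follows a different route from the paper's. The paper argues by a case split on whether $P$ is special or typical: it picks a maximal ideal $\cM\supseteq P$ of the matching type, invokes \cref{pre-defn-PrimeStructureInR} to read off $\hght(\cM)$ and $\hght(\cM/P)$ in terms of $\ddim_{\cA}$, and uses catenarity of $\cR$ to obtain $\hght(\cM)=\hght(\cM/P)+\hght(P)$. You instead unpack the transcendence-degree formula $\ddim_{\cA}(\cR)=\dim(\overline{\cA})+\tdeg_{\Fr(\overline{\cA})}\Fr(\cR)$ from \cref{pre-defn-PrimeStructureInR}, apply the dimension formula at the prime $P$, and feed in the catenary-local-domain identity for $\overline{\cA}$ rather than for $\cR$. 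Your route avoids the typical/special dichotomy entirely, which is a genuine simplification; the paper's route stays closer to the height bookkeeping of maximal ideals developed in the surrounding lemmas. For the general case, both arguments reduce to a domain quotient $\cR/\mfp$: the paper picks a single $\mfp$ with $\hght(P/\mfp)=\hght(P)$, whereas you observe that the domain case plus equiheight forces all $\hght(P/\mfp_i)$ to coincide, hence to equal $\hght(P)$. Either works.
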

\begin{proof}
We first assume that $\cR$ is a domain over $\cA$. Suppose that $P$ is typical. Let $\cM$ be a maximal ideal containing $P$, then $\cM$ must be typical. Since $\cR$ is universally catenary, $\hght(\cM)=\hght(\cM/P)+\hght(P)$. By \cref{pre-defn-PrimeStructureInR}, $\hght(\cM)=\ddim_{\cA}(\cR)-1$ and $\hght(\cM/P)=\ddim_{\cA}(\cR/P)-1$. Hence, \eqref{pre-eq-HeightFormula} holds. If $P$ is special, we choose $\cM$ to be a maximal ideal containing $P+\mm\cR$. By \cref{pre-defn-PrimeStructureInR}, $\hght(\cM)=\ddim_{\cA}(\cR)$ and $\hght(\cM/P)=\ddim_{\cA}(\cR/P)$. \eqref{pre-eq-HeightFormula} also holds.

For the general case, let $\mfp$ be a minimal prime of $\cR$ such that $\hght(P/\mfp)=\hght(P)$. Applying \eqref{pre-eq-HeightFormula} to $\cR/\mfp$, we have 
\begin{align*}
\ddim_{\cA}((\cR/\mfp)/(P/\mfp))+\hght(P/\mfp)&=\ddim_{\cA}(\cR/\mfp) \\
\Rightarrow \ddim_{\cA}(\cR/P)+\hght(P)&=\ddim_{\cA}(\cR/\mfp).
\end{align*}
By definition of $\cR$ being equiheight,  $\ddim_{\cA}(\cR/\mfp)=\ddim_{\cA}(\cR)$ and \eqref{pre-eq-HeightFormula} follows.
\end{proof}

\begin{cor}\label{pre-cor-EquiHeightJustified}
Assume that $\cA$ is universally catenary and $\cR$ is equiheight over $\cA$. Then $\cR/I$ is equiheight if and only if all minimal primes of $I$ have the same height. In particular, this holds whenever $\cR/I$ is a domain.
\end{cor}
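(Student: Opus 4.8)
The plan is to deduce this directly from the height formula \eqref{pre-eq-HeightFormula}. First I would recall the order-preserving bijection between primes of $\cR/I$ and primes of $\cR$ containing $I$: the minimal primes of the affine $\cA$-algebra $\cR/I$ are precisely the ideals $\mfp/I$ with $\mfp$ ranging over $\Min(I)$, the minimal primes of $I$ in $\cR$, and for each such $\mfp$ there is a canonical isomorphism $(\cR/I)/(\mfp/I)\cong\cR/\mfp$. Hence, by \cref{pre-defn-GeneralEquiheight}, $\cR/I$ is equiheight over $\cA$ if and only if the quantity $\ddim_{\cA}(\cR/\mfp)$ takes the same value for every $\mfp\in\Min(I)$.

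Next, since $\cR$ is affine and equiheight over the universally catenary local ring $\cA$, formula \eqref{pre-eq-HeightFormula} applies to every prime of $\cR$; in particular $\ddim_{\cA}(\cR/\mfp)=\ddim_{\cA}(\cR)-\hght(\mfp)$ for each $\mfp\in\Min(I)$. As $\ddim_{\cA}(\cR)$ does not depend on $\mfp$, the constancy of $\ddim_{\cA}(\cR/\mfp)$ over $\mfp\in\Min(I)$ is equivalent to the constancy of $\hght(\mfp)$ over $\mfp\in\Min(I)$, that is, to all minimal primes of $I$ having the same height. Combining this with the previous paragraph yields the asserted equivalence. For the ``in particular'' clause: if $\cR$ is a domain and $P$ is any prime ideal, then $\Min(P)=\set{P}$ is a singleton, so the height condition is vacuously satisfied and $\cR/P$ is equiheight over $\cA$.

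I do not anticipate a genuine obstacle; the only points needing care are the identification $\Min(\cR/I)\leftrightarrow\Min(I)$ together with the isomorphisms $(\cR/I)/(\mfp/I)\cong\cR/\mfp$, and making sure \eqref{pre-eq-HeightFormula} is invoked for $\cR$ itself (which is equiheight over $\cA$ by hypothesis) rather than for $\cR/I$. If one instead wishes to read ``$\cR/I$ equiheight'' through a presentation-based characterization rather than \cref{pre-defn-GeneralEquiheight} directly, the same argument goes through after first reducing to $(\cR/I)_{\red}=\cR/\sqrt{I}$ via the corollary that $\cR$ is equiheight if and only if $\cR_{\red}$ is, noting that $\sqrt{I}$ and $I$ have the same minimal primes.
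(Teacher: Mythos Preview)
Your argument for the main equivalence is correct and is exactly the paper's approach: both proofs identify $\Min(\cR/I)$ with $\Min(I)$ and then apply \eqref{pre-eq-HeightFormula} to convert constancy of $\ddim_{\cA}(\cR/\mfp)$ into constancy of $\hght(\mfp)$.

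One small correction on the ``in particular'' clause. The intended reading is that the \emph{equivalence} of the corollary applies whenever $\cR$ is a domain, because a domain has the single minimal prime $(0)$ and is therefore automatically equiheight over $\cA$; the hypothesis of the corollary is then satisfied. Your paragraph instead argues that $\cR/P$ is equiheight for $P$ prime, which is a different (and weaker) assertion, and the observation $\Min(P)=\{P\}$ for a prime $P$ holds in any ring and does not use that $\cR$ is a domain. Just replace that sentence with the remark that a domain is trivially equiheight.
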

\begin{proof}
Let $P_1,P_2$ be two minimal primes of $I$. By definition of equiheight of $\cR/I$, we have $\ddim_{\cA}(\cR/P_1)=\ddim_{\cA}(\cR/P_2)$, which translates to $\hght(P_1)=\hght(P_2)$ by \eqref{pre-eq-HeightFormula}.
\end{proof}

\begin{rmk}
 Suppose that $\cR$ is affine over a noetherian complete local ring. Then $\cR$ may be written as $\cA[\seq{z}{n}]/I$, where $\cA$ is regular local domain. Then $\cR$ is equiheight if and only if all minimal primes of $I$ have the same height, independent of the choice of presentation, by \cref{pre-cor-EquiHeightJustified}.
\end{rmk}

We next study the equiheight property under various kinds of ring extensions.
\begin{lem}\label{pre-lem-AlgExtDomain}
Assume that $\cA$ is universally catenary and $\cR$ is domain. If a domain $\cS$ over $\cA$ is an algebraic extension of $\cR$, then $\ddim_{\cA}(\cS)=\ddim_{\cA}(\cR)$. If $\cS$ is module-finite over $\cR$, $\dim(\cS)=\dim(\cR)$ as well.
\end{lem}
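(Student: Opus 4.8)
The plan is to reduce the whole statement to the dimension formula recorded in the last bullet of \cref{pre-defn-PrimeStructureInR}. First I would fix notation: since $\cR\subseteq\cS$ as $\cA$-algebras, the structure map $\cA\to\cS$ factors through $\cR$, so the image $\overline{\cA}$ of $\cA$ in $\cR$ is (canonically) also the image of $\cA$ in $\cS$. Thus $\Fr(\overline{\cA})\subseteq\Fr(\cR)\subseteq\Fr(\cS)$, and because $\cS$ is an algebraic extension of $\cR$ the top layer $\Fr(\cS)/\Fr(\cR)$ is algebraic; additivity of transcendence degree in towers then gives $\tdeg_{\Fr(\overline{\cA})}(\Fr(\cS))=\tdeg_{\Fr(\overline{\cA})}(\Fr(\cR))$. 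This identity of transcendence degrees is the entire geometric input.

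Next, assuming $\cS$ is affine over $\cA$ (which is automatic, e.g., whenever $\cS$ is module-finite over $\cR$, since $\cR$ is affine over $\cA$), both $\cR$ and $\cS$ are affine domains over the universally catenary local ring $\cA$, so \cref{pre-defn-PrimeStructureInR} applies to each and yields
\[
\ddim_{\cA}(\cS)=\dim(\overline{\cA})+\tdeg_{\Fr(\overline{\cA})}(\Fr(\cS))=\dim(\overline{\cA})+\tdeg_{\Fr(\overline{\cA})}(\Fr(\cR))=\ddim_{\cA}(\cR),
\]
which is the first assertion. For the second assertion, if $\cS$ is module-finite over $\cR$ then $\cR\subseteq\cS$ is an integral extension of domains, and the Cohen--Seidenberg theorems (lying over, going up, incomparability) let one match any saturated chain of primes in either ring by a chain of the same length in the other; hence $\dim(\cS)=\dim(\cR)$.

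If one wants the first assertion for a general algebraic extension that is only assumed to be a domain over $\cA$, I would write $\cS=\varinjlim_{\alpha}\cS_{\alpha}$ as the directed union of the $\cA$-subalgebras $\cS_{\alpha}=\cR[t_{1},\dots,t_{n_{\alpha}}]$ with $t_{i}\in\cS$; each $\cS_{\alpha}$ is a subring of $\cS$, hence a domain, is affine over $\cA$, and is an algebraic extension of $\cR$, so the affine case already gives $\ddim_{\cA}(\cS_{\alpha})=\ddim_{\cA}(\cR)$ for all $\alpha$. Since $\ddim_{\cA}$ is then constant along the directed system, the value attached to $\cS$ is unambiguously $\ddim_{\cA}(\cR)$ (this is also how one would extend the definition of $\ddim_{\cA}$ to such $\cS$ in the first place).

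The only point that genuinely needs care is that one must argue with $\ddim_{\cA}$ rather than with the ordinary Krull dimension: the latter is \emph{not} stable under enlarging a domain inside its fraction field (already $\dim(k[x])=1$ while $\dim(k(x))=0$, and $k(x)$ is an algebraic extension of $k[x]$), which is precisely why the clean identity $\dim(\cS)=\dim(\cR)$ is only claimed under the extra module-finiteness hypothesis. Aside from that bookkeeping, the single nontrivial ingredient is the dimension formula of \cref{pre-defn-PrimeStructureInR}, and with it the proof is short; the hypothesis that $\cA$ be universally catenary enters only there.
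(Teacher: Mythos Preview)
Your proof is correct and follows the same approach as the paper: both reduce the first assertion to the formula $\ddim_{\cA}(\cR)=\dim(\overline{\cA})+\tdeg_{\Fr(\overline{\cA})}(\Fr(\cR))$ from \cref{pre-defn-PrimeStructureInR}, observing that $\overline{\cA}$ and the transcendence degree are unchanged under an algebraic extension, and both treat the second assertion as a standard fact about integral extensions. Your additional discussion of the direct-limit extension to non-affine $\cS$ and the cautionary $k[x]$ versus $k(x)$ example are not in the paper (which implicitly works only with affine $\cS$, the only case where $\ddim_{\cA}$ is defined), but they do no harm.
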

\begin{proof}
The second part is a standard fact. The first part follows from the second bullet point of \cref{pre-defn-PrimeStructureInR}, since $\overline{\cA}$ and the transcendence degree do not change.
\end{proof}

\begin{prop}\label{pre-prop-ModFinTorFree}
Assume that $\cA$ is universally catenary. If $\cR$ is reduced and equiheight over $\cA$ , then so is every module-finite extension $\cS$ that is torsion-free over $\cR$ in the sense that nonzerodivisors in $\cR$ are nonzerodivisors in $\cS$ (this condition holds if $\cS$ is $\cR$-flat).
\end{prop}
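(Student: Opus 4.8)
The plan is to reduce the statement to the domain‑over‑domain situation already settled by \cref{pre-lem-AlgExtDomain}, by analyzing the minimal primes of $\cS$ and their contractions to $\cR$. First I would record the setup. Write $\Min(\cR)=\{\mfp_1,\dots,\mfp_n\}$; since $\cR$ is reduced and noetherian, $\bigcap_i\mfp_i=0$ and the set of zerodivisors of $\cR$ is exactly $\bigcup_i\mfp_i$ (for a reduced noetherian ring $\Ass(\cR)=\Min(\cR)$), while the equiheight hypothesis says that $\ddim_{\cA}(\cR/\mfp_i)$ equals a common value $d$ for all $i$. Since $\cS$ is module‑finite over the noetherian ring $\cR$ it is itself noetherian. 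The goal is then to show $\ddim_{\cA}(\cS/\mfq)=d$ for every $\mfq\in\Min(\cS)$, which is precisely the assertion that $\cS$ is equiheight over $\cA$ in the sense of \cref{pre-defn-GeneralEquiheight}. (Note that $\cS$ itself need not be reduced --- e.g.\ $\cS=\cR[y]/(y^2)$ with $\cR$ a polynomial ring in one variable over a field --- so reducedness of $\cR$ will be used only inside the proof, not in the conclusion.)

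The crux, and the only place the torsion‑free hypothesis enters, will be the claim that every $\mfq\in\Min(\cS)$ contracts to a minimal prime of $\cR$. To prove it I would argue as follows: $\mfq$ consists of zerodivisors of $\cS$, since a minimal prime of a noetherian ring lies in $\Ass(\cS)$, hence in the zerodivisor set; because nonzerodivisors of $\cR$ remain nonzerodivisors in $\cS$, the contraction $\mfq\cap\cR$ consists of zerodivisors of $\cR$, so $\mfq\cap\cR\subseteq\bigcup_i\mfp_i$; by prime avoidance $\mfq\cap\cR\subseteq\mfp_j$ for some $j$, and minimality of $\mfp_j$ together with primeness of $\mfq\cap\cR$ forces $\mfq\cap\cR=\mfp_j$.

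Granting the claim, fix $\mfq\in\Min(\cS)$ and set $\mfp_j=\mfq\cap\cR$. The inclusion $\cR/\mfp_j\hookrightarrow\cS/\mfq$ exhibits the domain $\cS/\mfq$ as a module‑finite --- hence integral, hence algebraic --- extension of the domain $\cR/\mfp_j$, and $\cR/\mfp_j$ is affine over $\cA$ because $\cR$ is, with $\cA$ universally catenary by hypothesis. Thus \cref{pre-lem-AlgExtDomain} applies and gives $\ddim_{\cA}(\cS/\mfq)=\ddim_{\cA}(\cR/\mfp_j)=d$. As $\mfq$ was an arbitrary minimal prime of $\cS$, the quantity $\ddim_{\cA}(\cS/\mfq)$ is independent of $\mfq$, which is exactly the condition in \cref{pre-defn-GeneralEquiheight}; so $\cS$ is equiheight over $\cA$.

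I do not expect a serious obstacle: the argument is short and relies only on standard facts (associated primes of a reduced noetherian ring, prime avoidance, incomparability for integral extensions) together with \cref{pre-lem-AlgExtDomain}. The one point worth flagging is why both hypotheses are genuinely needed for the crux claim: without reducedness of $\cR$ the contraction $\mfq\cap\cR$ could be an embedded prime of $\cR$ and the passage to \cref{pre-lem-AlgExtDomain} would break, and without torsion‑freeness one loses all control over $\mfq\cap\cR$.
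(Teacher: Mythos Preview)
Your proof is correct and follows essentially the same approach as the paper: show that each minimal prime of $\cS$ contracts to a minimal prime of $\cR$ via the torsion-free hypothesis, then invoke \cref{pre-lem-AlgExtDomain}. The paper's version is more compressed (it argues the contrapositive---a non-minimal contraction would contain a nonzerodivisor of $\cR$, hence of $\cS$, contradicting minimality of $\mfq$---rather than going through prime avoidance), but the substance is identical; your observation that $\cS$ need not be reduced is also correct and reflects a slight imprecision in the statement, since both proofs establish only the equiheight conclusion.
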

\begin{proof}
For any minimal prime $\mfq$ of $\cS$, the retraction $\mfp$ in $\cR$ is also a minimal prime. If not, $\mfp$ must contain a nonzerodivisor in $\cR$, which is also a nonzerodivisor in $\cS$. Then $\mfq$ cannot be minimal. Then $\cS/\mfq$ is a module-finite extension of $\cR/\mfp$, and by \cref{pre-lem-AlgExtDomain}, $\ddim_{\cA}(\cS/\mfq)=\ddim_{\cA}(\cR/\mfp)$ is independent of $\mfq$.
\end{proof}

\begin{prop}\label{pre-prop-LocalDescriptionEquiHeight}
Assume that $\cA$ is universally catenary. Suppose that $\Spec(\cR)$ is connected and has an open cover by sets $D(f)$ such that $\cR_f$ is equiheight over $\cA$. Then $\cR$ is equiheight over $\cA$.
\end{prop}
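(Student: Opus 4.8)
The plan is to prove that the function $\mfp\mapsto\ddim_{\cA}(\cR/\mfp)$ on $\Min(\cR)$ is constant, which is exactly the assertion that $\cR$ is equiheight over $\cA$. Since $\cR$ is of finite type over the noetherian ring $\cA$, it is noetherian, so $\Spec(\cR)$ is a noetherian space with finitely many irreducible components $V(\mfp_1),\dots,V(\mfp_r)$, indexed by the minimal primes $\mfp_1,\dots,\mfp_r$ of $\cR$. Note that each $\cR/\mfp_k$ is an affine domain over $\cA$, so the results of \cref{sn-Pre}, in particular \cref{pre-cor-PrimeHeightLocalized}, apply to it.

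First I would record the standard topological fact that, because $\Spec(\cR)$ is connected, the ``intersection graph'' on the components is connected: take vertices $\mfp_1,\dots,\mfp_r$ and an edge between $\mfp_i$ and $\mfp_j$ exactly when $V(\mfp_i)\cap V(\mfp_j)\neq\emptyset$. If this graph were disconnected, grouping its connected components and taking the corresponding unions of the $V(\mfp_k)$ would exhibit $\Spec(\cR)$ as a disjoint union of two nonempty closed sets, contradicting connectedness. Consequently, it suffices to show $\ddim_{\cA}(\cR/\mfp_i)=\ddim_{\cA}(\cR/\mfp_j)$ whenever $V(\mfp_i)\cap V(\mfp_j)\neq\emptyset$, and then propagate the equality along a path in the graph joining any two minimal primes.

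So suppose $V(\mfp_i)\cap V(\mfp_j)\neq\emptyset$ and pick a prime $Q$ with $Q\supseteq\mfp_i$ and $Q\supseteq\mfp_j$. By hypothesis $Q\in D(f)$ for some $f$ in the given cover with $\cR_f$ equiheight over $\cA$; in particular $f\notin Q$, hence $f\notin\mfp_i$ and $f\notin\mfp_j$. Therefore $\mfp_i\cR_f$ and $\mfp_j\cR_f$ are both minimal primes of $\cR_f$ (the minimal primes of $\cR_f$ are precisely the expansions $\mfp\cR_f$ of minimal primes $\mfp$ of $\cR$ with $f\notin\mfp$), so the equiheight hypothesis on $\cR_f$ gives $\ddim_{\cA}(\cR_f/\mfp_i\cR_f)=\ddim_{\cA}(\cR_f/\mfp_j\cR_f)$. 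On the other hand, the image of $f$ in the affine domain $\cR/\mfp_i$ is nonzero, so \cref{pre-cor-PrimeHeightLocalized}(2) applied to $\cR/\mfp_i$ over $\cA$ yields $\ddim_{\cA}(\cR/\mfp_i)=\ddim_{\cA}((\cR/\mfp_i)_{\bar f})=\ddim_{\cA}(\cR_f/\mfp_i\cR_f)$, and similarly for $j$. Combining these three identities gives $\ddim_{\cA}(\cR/\mfp_i)=\ddim_{\cA}(\cR/\mfp_j)$, as desired.

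There is no serious obstacle here; the only point demanding a little care is the bookkeeping in the previous paragraph — that minimal primes of a localization $\cR_f$ are exactly the expansions of those minimal primes of $\cR$ not containing $f$ — so that ``equiheight over $\cA$'' for $\cR_f$ genuinely compares the two classes $\ddim_{\cA}(\cR_f/\mfp_i\cR_f)$ and $\ddim_{\cA}(\cR_f/\mfp_j\cR_f)$. Everything else reduces to the connectedness argument on components together with the invariance of $\ddim_{\cA}$ under inverting a nonzero element, which is \cref{pre-cor-PrimeHeightLocalized}(2).
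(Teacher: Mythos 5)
Your proof is correct, and in spirit it is the same as the paper's: both arguments rest on the invariance of $\ddim_{\cA}$ under inverting a non-nilpotent element (\cref{pre-cor-PrimeHeightLocalized}(2)) together with a connectedness argument. The only difference is where the combinatorics of connectedness is carried out. The paper passes to a finite subcover $\{D(f_i)\}$, observes that each $\cR_{f_i}$ has a well-defined value $\ddim_{\cA}(\cR_{f_i})$ by equiheightness, glues these values across overlaps (any nonempty $D(f_if_j)$ contains a minimal prime), and uses the equivalence relation generated by overlap of the $D(f_i)$'s; connectedness of $\Spec(\cR)$ forces a single class. You instead form the intersection graph on the irreducible components $V(\mfp_1),\dots,V(\mfp_r)$, note it is connected because $\Spec(\cR)$ is, and propagate the equality $\ddim_{\cA}(\cR/\mfp_i)=\ddim_{\cA}(\cR/\mfp_j)$ along edges by choosing, for each pair of meeting components, one $D(f)$ from the cover containing a common point. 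Both routes are valid; yours sidesteps the passage to a finite subcover (though finiteness of the component set, hence noetherianness, still enters implicitly), while the paper's organization makes it immediate that each $\cR_{f_i}$ contributes a single number. There is no gap in your argument, and the bookkeeping you flagged — that minimal primes of $\cR_f$ are exactly the $\mfp\cR_f$ for minimal primes $\mfp$ of $\cR$ with $f\notin\mfp$ — is handled correctly.
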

\begin{proof}
Let $\set{D(f_i)}_{1\leqslant i\leqslant m}$ be a finite subcover. If $D(f_i)$ and $D(f_j)$ overlap, then there is a minimal prime $\mfp$ lying in their intersection. Then
\[
\ddim_{\cA}(\cR_{f_i})=\ddim_{\cA}(\cR_{f_i}/\mfp \cR_{f_i})=\ddim_{\cA}(\cR_{f_if_j}/\mfp \cR_{f_if_j})=\ddim_{\cA}(\cR_{f_j}/\mfp \cR_{f_j})=\ddim_{\cA}(\cR_{f_j}).
\]
Thus, $\ddim_{\cA}(\cR/\mfq\cR)$ is constant as $\mfq$ runs through minimal primes of $D(f_i)\cup D(f_j)$. Consider equivalence relation generated by $D(f_i)\sim D(f_j)$ if $D(f_i)\cap D(f_j)\neq\emptyset$. Then unions of equivalence classes are mutually disjoint open sets. Since $\Spec(\cR)$ is connected, there is only one equivalence class, and it follows that $\ddim_{\cA}(\cR/\mfq)$ is the same for all $\mfq\in\Min(\cR)$.
\end{proof}

\begin{prop}
Assume that $\cA$ is universally catenary and $\cR$ is equiheight over $\cA$. If $\cS$ is smooth over $\cR$ with $\Spec(\cS)$ connected, then $\cS$ is equiheight over $\cA$.
\end{prop}
\begin{proof}
If $\Spec(\cR)$ is not connected, break it into connected components $\cR\cong \cR_1\times \cdots\times \cR_h$ where each $\Spec(\cR_i)$ is connected. Since $\Spec(\cS)$ is connected, its image must lie in one of the $\Spec(\cR_i)$. Then we have a morphism $\cR_i\to \cS$ (algebraically, we obtain such map by localizing at the idempotent $e_i$ of $\cR$ where $\cR_{e_i}\simeq \cR_i$). So we may assume that $\Spec(\cR)$ is connected.

By a structure theorem for smooth morphisms \cite[Chapter III, Proposition 3.1]{Iversen1973}, we can find finitely many elements $g_j \in \cR$, $f_j$ in $\cS$ such the $f_jg_j$ generate the unit ideal of $\cS$, the $g_j$ generate the unit ideal of $\cR$, and each $\cS_{f_jg_j}$ is a special smooth extension of $\cR_{g_j}$, i.e., \'etale over a polynomial extension. By \cref{pre-prop-LocalDescriptionEquiHeight}, it suffices to prove the polynomial and \'etale extension case. When we adjoin indeterminates there is a bijection of minimal primes, and $\ddim_{\cA}(\cR)$ increases by the number of indeterminates, which is constant because of connectedness. Hence, it suffices to prove the result for the \'etale extension case. 

Let us assume that $\cS$ is \'etale over $\cR$. Again by structure theorem (\cite[Chapter III, Theorem 2.1]{Iversen1973}), we can find finitely many elements $g_j \in \cR$, $f_j$ in $\cS$ such the $f_jg_j$ generate the unit ideal of $\cS$, the $g_j$ generate the unit ideal of $\cR$, and each $\cS_{f_jg_j}$ is a standard \'etale extension of $\cR_{g_j}$. By \cref{pre-prop-LocalDescriptionEquiHeight}, it suffices to prove the standard \'etale case.

Let us assume that $\cS$ is standard \'etale over $\cR$. This implies that $\cS$ is a localization at one element of $\cR[z]/H$, where $z$ is an indeterminate and $H(z)$ is monic. But then $\cR[z]/H(z)$ is module-finite and free over $\cR$, so that it is equiheight by \cref{pre-prop-ModFinTorFree}, and localization does not affect this.
\end{proof}

If we assume that in addition that $\cA$ is excellent, we will prove that $\cR$ is an equiheight $\cA$-algebra if and only if $\widehat{\cA}\otimes_{\cA}\cR$ is an affine equiheight algebra over $\widehat{\cA}$ (see part (4) below).

\begin{thm}\label{pre-thm-EquiheightPreserved}
Assume that $\cA$ is excellent. Then:
\begin{enumerate}
    \item $\dim(\widehat{\cA}\otimes_{\cA}\cR)=\dim(\cR)$.
    \item For every $\mfq \in \Min(\widehat{\cA}\otimes_{\cA}\cR)$ with contraction $\mfp$ to $\cR$, $\ddim_{\cA}(\widehat{\cA}\otimes_{\cA}\cR/\mfq)=\ddim_{\cA}(\cR/\mfp)$.
    \item $\ddim_{\cA}(\widehat{\cA}\otimes_{\cA}\cR)=\ddim_{\cA}(\cR)$.
    \item $\cR$ is equiheight over $\cA$ if and only if $\widehat{\cA}\otimes_{\cA}\cR$ is equiheight over $\widehat{\cA}$.
\end{enumerate}
\end{thm}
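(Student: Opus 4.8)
The plan is to prove part (2) first---it carries essentially all the content---and then to read off (3) and (4) as quick consequences and to deduce (1) with a short extra argument about the special/typical dichotomy of \cref{pre-defn-PrimeTypes}. Write $\widehat{\cR}:=\widehat{\cA}\otimes_{\cA}\cR$; since $\cR\to\widehat{\cR}$ is $\cA\to\widehat{\cA}$ tensored with $\cR$, it is faithfully flat, so going-down and lying-over hold. Consequently each $\mfq\in\Min(\widehat{\cR})$ contracts to a minimal prime $\mfp=\mfq\cap\cR$ of $\cR$, and conversely every $\mfp\in\Min(\cR)$ arises this way. Fixing such a pair, $\widehat{\cR}/\mfq$ is a quotient by a minimal prime of $\widehat{\cA}\otimes_{\cA}(\cR/\mfp)=\widehat{\cA'}\otimes_{\cA'}(\cR/\mfp)$, where $\cA'$ is the image of $\cA$ in $\cR/\mfp$ and $\widehat{\cA'}=\widehat{\cA}/(\mfp\cap\cA)\widehat{\cA}$; here $\cA'$ is an excellent local domain injecting into the affine domain $\cR/\mfp$, and $\widehat{\cA'}$ is complete local, hence excellent and universally catenary. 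Since $\ddim$ depends only on the image of the base ring, for (2) it suffices to treat the case in which $\cA$ is itself an excellent local domain, $\cA\hookrightarrow\cR$ with $\cR$ an affine domain, and to prove $\ddim_{\widehat{\cA}}(\widehat{\cR}/\mfq)=\ddim_{\cA}(\cR)$ for $\mfq\in\Min(\widehat{\cR})$.

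In that case I would apply the formula $\ddim(D)=\dim(\text{image of base})+\tdeg(\text{fraction fields})$ of \cref{pre-defn-PrimeStructureInR} to both sides, reducing the identity---with $\mfa:=\mfq\cap\widehat{\cA}$, $F:=\Fr\cA$, $L:=\Fr\cR$---to three assertions: (A) $\mfa\in\Min(\widehat{\cA})$; (B) $\dim(\widehat{\cA}/\mfa)=\dim\cA$; (C) $\tdeg_{\Fr(\widehat{\cA}/\mfa)}\Fr(\widehat{\cR}/\mfq)=\tdeg_{F}L$. For (A): because $\cR$ is a domain and $\cR\to\widehat{\cR}$ is flat, every minimal prime of $\widehat{\cR}$ misses $\cR\setminus\{0\}$, so $\Min(\widehat{\cR})$ matches $\Min(\widehat{\cR}\otimes_{\cR}L)=\Min(G\otimes_{F}L)$, where $G=\widehat{\cA}\otimes_{\cA}F$ is the generic formal fiber, reduced and geometrically regular over $F$ by excellence; since $G\otimes_{F}L$ is $G$-free, its minimal primes contract to $\Min(G)$, and these in turn to $\Min(\widehat{\cA})$, and tracking the localizations identifies the contraction with $\mfa$. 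For (B): an excellent local domain is universally catenary and equidimensional, which by Ratliff's theorem forces $\widehat{\cA}$ to be equidimensional, so $\dim(\widehat{\cA}/\mfa)=\dim\widehat{\cA}=\dim\cA$. For (C): ``$\le$'' holds since $\Fr(\widehat{\cR}/\mfq)$ is generated over $\Fr(\widehat{\cA}/\mfa)$ by the image of $L$, and for ``$\ge$'' I would fix a transcendence basis $y_{1},\dots,y_{t}$ of $L/F$ and observe that $\widehat{\cR}/\mfq$ is then a quotient by a minimal prime of a ring that is module-finite and free over a localization of the polynomial ring $C[y_{1},\dots,y_{t}]$, where $C$ is a domain with $\Fr C=\Fr(\widehat{\cA}/\mfa)$; hence that minimal prime meets $C[y_{1},\dots,y_{t}]$ only in $0$ and the $y_{j}$ stay algebraically independent over $\Fr(\widehat{\cA}/\mfa)$.

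Granting (2), part (3) is immediate: $\ddim_{\widehat{\cA}}(\widehat{\cR})=\sup_{\mfq\in\Min(\widehat{\cR})}\ddim_{\widehat{\cA}}(\widehat{\cR}/\mfq)=\sup_{\mfq}\ddim_{\cA}(\cR/(\mfq\cap\cR))=\sup_{\mfp\in\Min(\cR)}\ddim_{\cA}(\cR/\mfp)=\ddim_{\cA}(\cR)$, using surjectivity of $\mfq\mapsto\mfq\cap\cR$ onto $\Min(\cR)$; and part (4) follows from (2) and the same surjectivity, since ``equiheight'' says exactly that $\ddim$ of the quotient by a minimal prime is constant. For part (1), \cref{pre-defn-PrimeStructureInR} gives $\dim D=\ddim(D)-\varepsilon(D)$ for an affine domain $D$ over a universally catenary local ring, with $\varepsilon(D)=1$ if $D$ is typical and $0$ if special; by (2) the gap between $\dim\cR=\max_{\mfp}(\ddim_{\cA}(\cR/\mfp)-\varepsilon(\cR/\mfp))$ and $\dim\widehat{\cR}=\max_{\mfq}(\ddim_{\cA}(\cR/(\mfq\cap\cR))-\varepsilon(\widehat{\cR}/\mfq))$ is controlled entirely by the behaviour of $\varepsilon$. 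If $\cR/\mfp$ is typical, i.e.\ $\mm\cR+\mfp=\cR$, then $\mm\widehat{\cR}+\mfq=\widehat{\cR}$ for any $\mfq\supseteq\mfp\widehat{\cR}$, so $\widehat{\cR}/\mfq$ is typical; and if $\cR/\mfp$ is special, then $\widehat{\cR}/\mfp\widehat{\cR}=\widehat{\cA}\otimes_{\cA}(\cR/\mfp)$ still has $\mm\widehat{\cR}+\mfp\widehat{\cR}\neq\widehat{\cR}$ (its quotient by $\mm$ is $(\cR/\mfp)\otimes_{\cA}K\neq 0$), so some minimal prime $\mfq$ of $\widehat{\cR}$ with $\mfq\cap\cR=\mfp$ lies inside a prime containing $\mm\widehat{\cR}+\mfp\widehat{\cR}$ and is therefore again special. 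Hence $\varepsilon(\widehat{\cR}/\mfq)\ge\varepsilon(\cR/(\mfq\cap\cR))$ always, so $\dim\widehat{\cR}\le\dim\cR$ (this half also follows directly from faithful flatness), while taking $\mfp$ that attains $\dim\cR$ together with the $\mfq$ just described yields $\dim(\widehat{\cR}/\mfq)=\dim\cR$, hence equality.

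The main obstacle is part (2), and within it the pair of statements (A) and (C): correctly matching minimal primes of $\widehat{\cR}$ with those of $G\otimes_{F}L$ and controlling the transcendence degree, together with the correct use of Ratliff's equidimensionality in (B). This is precisely where excellence of $\cA$ is indispensable: geometric regularity of the formal fibers keeps $G\otimes_{F}L$ reduced with well-behaved minimal primes, and equidimensionality of $\widehat{\cA}$ is what rules out a ``short'' minimal prime. A careless argument could easily substitute a false bound for the dimension of a formal fiber, so the dimension comparisons in (1)--(3) should be routed through $\ddim$ and the catenary bookkeeping of \cref{pre-ssn-TheEuiheightCondition} rather than through fiber dimensions directly.
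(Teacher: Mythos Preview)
Your argument is correct in substance and the overall strategy works. One small slip: the parenthetical ``(this half also follows directly from faithful flatness)'' attached to $\dim\widehat{\cR}\le\dim\cR$ is backwards---faithful flatness of $\cR\to\widehat{\cR}$ gives $\dim\cR\le\dim\widehat{\cR}$, not the reverse. This is harmless, since you have already established $\le$ via the $\varepsilon$-inequality and then prove $\ge$ by exhibiting an appropriate $\mfq$; just delete or relocate the parenthetical.

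Your route differs from the paper's in the organization of (1) and (2). The paper proves (1) \emph{independently} of (2), by noetherian induction on $\cA$: after Noether normalization and generic freeness it compares $\dim(\cS_a)$ with $\dim(\cR_a)$ directly and invokes the induction hypothesis on $\cA/a\cA$ to handle $\dim(\cS/a\cS)=\dim(\cR/a\cR)$. You instead deduce (1) from (2) using the special/typical bookkeeping $\dim=\ddim-\varepsilon$, which is cleaner and avoids the induction entirely. For (2), the two arguments are close: both reduce to the domain case, both use Noether normalization to make $\widehat{\cR}_a$ module-finite torsion-free over $\widehat{\cA}_a[y_1,\dots,y_t]$, and both need $\widehat{\cA}$ equidimensional (you cite Ratliff, the paper simply says ``$\widehat{\cA}$ is reduced and equidimensional'' by excellence). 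Your framing via the generic formal fiber $G=\widehat{\cA}\otimes_{\cA}F$ to pin down (A) is a nice conceptual touch not made explicit in the paper. What your approach buys is a shorter (1); what the paper's buys is that (1) stands on its own without first running through the transcendence-degree calculation of (2).
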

\begin{proof}
Let $\cS:=\widehat{\cA}\otimes_{\cA}\cR$. Since $\widehat{\cA}$ is faithfully flat over $\cA$, $\cS$ is faithfully flat over $\cR$ and the minimal primes of $\cR$ coincide with the contractions of the minimal primes of $\cS$ to $\cR$. Hence, both (3) and (4) follows from (2). Therefore we need only to prove (1) and (2).

Proof of (1): since $\cR\to\cS$ is faithfully flat, $\dim(\cR)\leqslant\dim(\cS)$. Therefore, for (1) we need only prove $\dim(\cR)\geqslant\dim(\cS)$. For this we will use noetherian induction on $\cA$. Let $\mfq$ be a minimal prime of $\cS$ with contraction $\cP$ in $\cA$ such that $\dim(\cS)=\dim(\cS/\mfq)$. It suffices to prove that $\dim(\cS/\mfq)=\dim(\cS/\cP\cS)\leqslant\dim(\cR/\cP\cR)$. Hence, we can replace $\cA,\cR,\cS$ by $\cA/\cP,\cR/\cP\cR,\cS/\cP\cS$ respectively. So we assume that $\cA$ is a domain with $\cA\subseteq\cR$. By Noether normalization for domains, we may choose a nonzero element $a\in \cA$ such that $\cR_a$ is module-finite extension of a polynomial ring $\cA_a[\seq{x}{d}]$ over $\cA_a$. By the lemma of generic freeness, we may replace $a$ by a multiple of $a$ and assume that $\cR_a$ is $\cA_a$-free over. Since $\cS_a\cong \widehat{\cA}\otimes_{\cA}\cR_a$, $\cS_a$ is an $\widehat{\cA}_a$-free module-finite extension of $\widehat{\cA}_a[\seq{x}{d}]$. Hence $\dim(\cS_a)=\dim(\widehat{\cA}_a[\seq{x}{d}])=\dim(\widehat{\cA}_a)+d$. Since $\cA$ is a an excellent domain, $\widehat{\cA}$ is reduced and equidimensional of the same dimension as $\cA$. It follows that $\dim(\cA_a)=\dim(\cA)-1$ since $a$ is a nonzerodivisor. Thus, $\dim(\cS_a)=\dim(\cA_a)+d=\dim(\cA_a[\seq{x}{d}])=\dim(\cR_a)$. By the hypothesis of noetherian induction, $\dim(\cR/a\cR)=\dim(\cS/a\cS)$. Therefore, $\dim(\cR)=\dim(\cS)$.

Proof of (2): let $\mfq$ be a minimal prime of $\cS$ with contraction $\mfp$ to $\cR$. Let $\cP$ be the contraction to $\cA$. We can replace $\cA,\cR,\cS$ by $\cA/\cP,\cR/\mfp,\cS/\mfp\cS$ respectively. Hence, we assume that $\mfp = 0$ and we aim to prove that $\ddim_{\cA}(\cS/\mfq)=\ddim_{\cA}(\cR)$. By Noether normalization, we know that $\cR_a$ is module-finite and torsion-free over $\cA_a[\seq{x}{d}]$. Thus $\cR_a$ embeds in a finitely generated free module $G$ over $\cA_a[\seq{x}{d}]$. It follows that $\cS_a$ embeds in the $\widehat{\cA}_a[\seq{x}{d}]$-free module $\widehat{\cA}\otimes G$, and so $\cS_a$ is torsion-free over $\widehat{\cA}_a[\seq{x}{d}]$. Note that $a\nin\mfq$, so $\mfq\cS_a$ is a minimal prime of $\cS_a$. It lies over a minimal prime $\mfq_0$ of $\widehat{\cA}_a[\seq{x}{d}]$, since the contraction cannot consist of any nonzerodivisor. This corresponds via expansion to a minimal prime $\pi$ of $\widehat{\cA}$ that does not contain $a$. Thus $\cS_a/\mfq\cS_a$ is module-finite over $(\widehat{\cA}/\pi)[\seq{x}{d}]$. Thus
\begin{align*}
    \ddim_{\widehat{\cA}}(\cS/\mfq) & = \ddim_{\widehat{\cA}}(\cS/\mfq)_a \qquad \text{by \cref{pre-cor-PrimeHeightLocalized}(2)} \\
    & = \ddim_{\widehat{\cA}}(\widehat{\cA}/\pi)[\seq{x}{d}] \qquad \text{by \cref{pre-lem-AlgExtDomain}} \\
    &=\dim(\widehat{\cA}/\pi)+d \qquad \text{by the second bullet point in \cref{pre-defn-PrimeStructureInR}}\\
    &=\dim(\cA)+d  \qquad \text{since $\widehat{\cA}$ is equidimensional and $\dim(\widehat{\cA})=\dim(\cA)$} 
\end{align*}
Similarly, this is $\ddim_{\cA}(\cA_a[\seq{x}{d}])=\ddim_{\cA}(\cR)$, as required.
\end{proof}

\subsection{Structure of semianalytic algebras}
The key result here is the following proposition.
\begin{prop}\cite[13.4]{Kunz1986}\label{pre-prop-UniqueMaximalAnalytic}
  Any reduced semianalytic $K$-algebra $R$ contains a unique maximal analytic $K$-algebra $A$, i.e., all $K$-subalgebras of $R$ that are analytic $K$-algebras are contained in $A$. If $A'$ is an arbitrary analytic $K$-algebra with $A'\subseteq R$ such that $R$ is essentially of finite type over $A'$, then $A$ is the integral closure of $A'$ in $R$.
\end{prop}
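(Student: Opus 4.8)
The plan is to produce $A$ explicitly as an integral closure and then verify the stated universal property; the one genuinely hard point will be a statement about power‑series subrings of rings essentially of finite type over a power‑series ring.

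\textbf{Construction and normalization.} Choose a power series ring $P=K\ps{x_1,\dots,x_n}$ with $R$ essentially of finite type over $P$, and let $\overline P$ be its image in $R$. As $R$ is reduced, $\overline P$ is a reduced complete local $K$-algebra with residue field $K$; by the Cohen structure theorem together with Noether normalization for complete local rings, $\overline P$ is module‑finite over a formal power series subring $P_0=K\ps{z_1,\dots,z_d}\subseteq\overline P\subseteq R$, and $R$ is still essentially of finite type over $P_0$. Renaming $P_0$ as $P$, we may thus assume $P=K\ps{z_1,\dots,z_d}$ is a power series subring of $R$ over which $R$ is essentially of finite type. Define $A:=\mathrm{ic}(P,R)$, the integral closure of $P$ in $R$; since integral closure is idempotent, $A$ is integrally closed in $R$.

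\textbf{$A$ is an analytic $K$-algebra.} It suffices to show $A$ is module‑finite over $P$. Writing $\Min(R)=\{\mfq_1,\dots,\mfq_s\}$ and using reducedness, $R\hookrightarrow\prod_i R/\mfq_i$, which restricts to $A\hookrightarrow\prod_i A_i$, where $A_i$ is the image of $A$ in $R/\mfq_i$ and is contained in $\mathrm{ic}(\overline P_i,R/\mfq_i)$ with $\overline P_i$ the image of $P$ in $R/\mfq_i$. For each $i$ the domain $R/\mfq_i$ is essentially of finite type over the domain $\overline P_i$, so $L_i:=\Fr(R/\mfq_i)$ is a finitely generated field extension of $F_i:=\Fr(\overline P_i)$, whence the relative algebraic closure $\widetilde F_i$ of $F_i$ in $L_i$ is a finite field extension of $F_i$. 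An element of $R/\mfq_i$ integral over $\overline P_i$ is algebraic over $F_i$, hence lies in $\widetilde F_i$, so $A_i\subseteq\mathrm{ic}(\overline P_i,\widetilde F_i)$. Now $P=K\ps z$ is excellent, hence a Nagata ring, and so is its quotient $\overline P_i$; therefore $\mathrm{ic}(\overline P_i,\widetilde F_i)$ is module‑finite over $\overline P_i$, hence over $P$. Thus $A$ embeds in a finite $P$-module, so $A$ is module‑finite over $P$, hence an analytic $K$-algebra contained in $R$.

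\textbf{Reduction of the universal property to one claim.} Let $A'\subseteq R$ be any analytic $K$-subalgebra; since $A'$ is reduced, we may pick a power series ring $Q=K\ps{w_1,\dots,w_m}\subseteq A'$ with $A'$ module‑finite (hence integral) over $Q$. As $A$ is integrally closed in $R$, it is enough to show $Q\subseteq A$, i.e.\ that each $w_i$ is integral over $P$. Because $R$ is reduced, this descends to the domain case: if for every $\mfq\in\Min(R)$ the image of $w_i$ in $R/\mfq$ satisfies a monic polynomial over the image of $P$ there, then lifting coefficients to the image of $P$ in $R$ and multiplying these monic polynomials over all $\mfq\in\Min(R)$ produces a monic polynomial over the image of $P$ in $R$ killing $w_i$. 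Replacing the images of $P$ and $Q$ in $D:=R/\mfq$ by power series subrings over which they are module‑finite, everything reduces to: \emph{if $P$ and $Q$ are formal power series subrings of a domain $D$ that is essentially of finite type over $P$, then $Q$ is integral over $P$.}

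\textbf{Towards the claim, and the main obstacle.} Let $\mm_P$ be the maximal ideal of $P$ and $\mfn_Q$ that of $Q$. First, every $w\in\{w_1,\dots,w_m\}$ lies in $\sqrt{\mm_P D}$: otherwise there is a prime $\mathfrak P$ of $D$ with $\mm_P D\subseteq\mathfrak P$ and $w\notin\mathfrak P$, so $D/\mathfrak P$ is a domain essentially of finite type over $P/\mm_P=K$ (the contraction of $\mathfrak P$ to $P$ being a prime containing $\mm_P$, hence $\mm_P$); the image of $Q$ in $D/\mathfrak P$ is then a subring of a domain, hence a domain, and it is a complete local ring with residue field $K$ whose dimension is $\ge1$ because $w$ maps to a nonzero element (so the kernel of $Q\to D/\mathfrak P$ is a prime strictly below $\mfn_Q$); but a positive‑dimensional complete local domain with residue field $K$ has fraction field of infinite transcendence degree over $K$, contradicting that $\Fr(D/\mathfrak P)$ is finitely generated over $K$. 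Since $\sqrt{\mm_P^tD}=\sqrt{\mm_P D}$ for all $t$, we obtain $\mfn_Q^{M_t}\subseteq\mm_P^tD$ for suitable $M_t$ and all $t\ge1$. \textbf{The remaining step — using the completeness of the whole ring $Q$, not merely that the $w_i$ are nilpotent modulo $\mm_P D$, to convert these containments into an actual monic relation over $P$ — is the principal difficulty}; indeed these containments alone do not force integrality, since $w=z(1+y)$ in $D=K\ps z[y]_{(z,y)}$ satisfies $w^{t}\in\mm_P^tD$ for all $t$ but is not integral over $P$ (tellingly, $K\ps w\not\subseteq D$ in that example). I expect to finish by comparing the $\mm_P D$-adic and $\mfn_Q$-adic topologies on $Q$ (by the displayed containments and Chevalley's lemma they coincide), realising $Q$ as a closed complete local subring of the $\mm_P$-adic completion $\widehat D$, and descending integrality over $P$ from $\widehat D$ to $D$ using the excellence of $D$; this is carried out in \cite[13.4]{Kunz1986}. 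Granting the claim, uniqueness of $A$ is automatic, and the last assertion follows too: for analytic $A'$ with $R$ essentially of finite type over $A'$, the ring $\mathrm{ic}(A',R)$ is analytic by the second step, hence contained in $A$, while a power series subring of $A$ is integral over a power series subring of $A'$ by the claim, giving $A\subseteq\mathrm{ic}(A',R)$.
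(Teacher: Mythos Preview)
The paper does not give its own proof of this proposition; it is stated with a citation to \cite[13.4]{Kunz1986} and used as a black box. There is therefore nothing in the paper to compare your argument against beyond that reference.

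As for your proposal itself: you correctly isolate the crux --- after the (sound) reductions, everything hinges on showing that if $P$ and $Q$ are formal power series $K$-subrings of a domain $D$ essentially of finite type over $P$, then $Q$ is integral over $P$. You prove the preliminary containment $\mfn_Q\subseteq\sqrt{\mm_P D}$ cleanly, and you are right that this alone is insufficient (your example $w=z(1+y)$ in $K\ps{z}[y]_{(z,y)}$ makes that point precisely). But you then explicitly stop short of the decisive step, writing that it ``is carried out in \cite[13.4]{Kunz1986}.'' That is not a proof; it is a deferral to the very source the paper already cites. The topological sketch you offer --- matching the $\mm_P D$-adic and $\mfn_Q$-adic topologies on $Q$ via Chevalley and passing to $\widehat D$ --- is plausible in outline, but you have not shown that completeness of $Q$ together with these topological facts actually produces a monic equation over $P$ inside $D$ (and excellence of $D$ by itself does not obviously hand you descent of integrality from $\widehat D$ to $D$ for elements of $Q$). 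So as written, the proposal has a genuine gap at exactly the point you yourself flag as the principal difficulty.
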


For a reduced semianalytic $K$-algebra $R$ we denote by $\mA(R)$ the maximal analytic subalgebra of $R$. If $R$ is not reduced, such an algebra need not exist. If $R$ is a domain, then $\mA(R)$ is a local domain, because $\mA(R)$ is always a direct product of local rings. We also have that the $K$-algebra maps between reduced semianalytic $K$-algebras are compatible with taking the maximal analytic $K$-subalgebra. That is, if $\varphi:R\to S$ is a homomorphism of reduced semianalytic $K$-algebras, then $\varphi(\mA(R))\subseteq \mA(S)$. Hence $\varphi$ induces a $K$-homomorphism $\mA(\varphi):\mA(R)\to \mA(S)$ \cite[13.5]{Kunz1986}.

For each reduced semianalytic $K$-algebra $R$, let $\widetilde{\Omega}_{R/K}$ denote the universal $R$-extension of $\widetilde{\Omega}_{\mA(R)/K}$, the universal finite differential module of $\mA(R)$ over $K$, whose existence is guaranteed by \cite[12.9]{Kunz1986}. If $A'$ is an arbitrary analytic $K$-algebra such that $R$ is essentially of finite type over $A'$, then by the transitive law for universal extensions, $\widetilde{\Omega}_{R/K}$ is also the universal $R$-extension of $\widetilde{\Omega}_{A'/K}$.

\begin{rmk}
 In \cite{Kunz1986}, Kunz uses the notion $\mD_K(R)$ to refer to the universal $R$-extension of $\widetilde{\Omega}_{\mA(R)/K}$. He uses $\Omega_{R/\delta}$ for any $\delta:A\to A\delta A$ if $R$ is not reduced and $R$ is essentially of finite type over $A$ (note that in this case $\mA(R)$ is not well-defined). Since we are only working with reduced semianalytic algebras, there is no ambiguity in using $\widetilde{\Omega}_{R/K}$.
\end{rmk}

We also need the definition of ``analytic transcendence degree'' of field extensions.
\begin{defn}
Let $K$ be a field and $\seq{X}{n}$ indeterminates over $K$. The field $F:=K\ls{\seq{X}{n}}$ will always denote the fraction field of the power series ring $K\ps{\seq{X}{n}}$. Let $L$ be a field extension of $K$. 
\begin{itemize}
    \item $L$ is called a \emph{semianalytic extension field} of $K$ if there is a $K$-homomorphism $F\to L$ such that $L$ is finitely generated over $F$. If $L$ is a finite extension of $F$, we call $L$ an \emph{analytic extension field} of $K$.
    \item Let $L$ be an analytic field extension of $K$. Suppose $L$ is finite over $F\subseteq L$. Then $n$ is called the \emph{analytic transcendence degree} of $L$ over $K$, $n:=\atdeg(L/K)$, and $\set{\seq{X}{n}}$ is called an \emph{analytic transcendence basis} of $L$ over $K$. The basis $\set{\seq{X}{n}}$ is called \emph{separating}, if $L$ is separable over $F$. $L$ is called \emph{analytically separable} over $K$, if $L$ has a separating analytic transcendence basis over $K$ \cite[13.7]{Kunz1986}. Note that the number $n$ above is an invariant of the field extension $L$ over $K$, as it is the Krull dimension of the maximal analytic algebra $\mA(L)$.
\end{itemize}
\end{defn}

\subsection{Primes in affine-analytic algebras}
Next, we want to make use of the discussions in \cref{pre-ssn-TheEuiheightCondition}: Let $R$ be a reduced affine-analytic $K$-algebra. By \cref{pre-prop-UniqueMaximalAnalytic} we have a canonical choice of the base ring $\cA$, that is, $\mA(R)$. Hence, we make the following definition.
\begin{defn}\label{pre-defn-TypeOfPrimes}
  Let $R$ be a reduced affine-analytic $K$-algebra. Let $A:=\mA(R)$ and let $\Jac(A)$ denote the Jacobson radical of $A$. A prime ideal $Q$ in $R$ is called \emph{special} if $Q+\Jac(A) R$ is a proper ideal of $R$, and is \emph{typical} if $Q+\Jac(A) R=R$ is the whole ring. We shall write $\ddim(R)$ for $\ddim_{\mA(R)}(R)$.
\end{defn}

\begin{rmk}\label{pre-rmk-DetectTypeOfPrimes}
  Under the assumption of \cref{pre-defn-TypeOfPrimes}, since $\mA(R)$ is module-finite over some complete local ring, $\mA(R)$ is a product of several complete local rings, i.e., $\mA(R)=A_1\times \cdots\times A_s$. Each $A_i$ is a complete local domain with a maximal ideal denoted by $\mm_i$. \cref{pre-defn-TypeOfPrimes} is the same as \cref{pre-defn-PrimeTypes} if we restrict to each $A_i$.  If we have a presentation 
  \[
  R=T/I \text{ where }T=K\ps{\seq{x}{n}}[\seq{z}{m}],
  \]
then each $\mm_i$ is radical of the image of $(\seq{x}{n})$ in $A_i$. Thus, the Jacobson radical $\Jac(\mA(R))=\mm_1\times \cdots\times \mm_s$ is the radical of the image of $(\seq{x}{n})$ in $\mA(R)$. Since for any prime ideal $Q$, $Q+\Jac(\mA(R)) R$ is a proper ideal if and only if $Q+(\seq{x}{n})R$ is a proper ideal, we may use $(\seq{x}{n})R$ to detect the type of a prime ideal of $R$.
\end{rmk}

By applying \cref{pre-defn-PrimeStructureInR} to $T=K\ps{\seq{x}{n}}[\seq{z}{m}]$, we have the following corollary.
\begin{cor}\label{pre-cor-HeightOfPrimes}
Let $T=K\ps{\seq{x}{n}}[\seq{z}{m}]$. Then special maximal ideals in $T$ have height $\ddim(T)=n+m$, and typical maximal ideals in $T$ have height $\ddim(T)-1=n+m-1$. In particular, $T$ is equiheight.
\end{cor}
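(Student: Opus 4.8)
The plan is to exhibit $T$ as an affine \emph{domain} over the complete regular local ring $\cA := K\ps{\seq{x}{n}}$ and then read off the heights of its maximal ideals directly from \cref{pre-defn-PrimeStructureInR}. First I would record the basic facts: $\cA$ is a Noetherian complete local domain of dimension $n$, hence excellent and in particular universally catenary, and $T = \cA[\seq{z}{m}]$ is a polynomial ring over $\cA$, so it is an affine domain over $\cA$ with $\dim(T) = n+m$ and $\Fr(T) = \Fr(\cA)(\seq{z}{m})$.

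Next I would identify the maximal analytic subalgebra: $\mA(T) = \cA$. By \cref{pre-prop-UniqueMaximalAnalytic}, since $T$ is of finite type (hence essentially of finite type) over the analytic $K$-algebra $\cA$, the algebra $\mA(T)$ is the integral closure of $\cA$ in $T$; but $\cA$ is normal (being regular) and $\Fr(\cA)$ is algebraically closed in the purely transcendental extension $\Fr(T)$, so any element of $T$ that is integral over $\cA$ is algebraic over $\Fr(\cA)$, hence lies in $\Fr(\cA)\cap T = \cA$. Thus $\mA(T) = \cA$ is local with maximal ideal $(\seq{x}{n})$, so the notions of \emph{special} and \emph{typical} primes from \cref{pre-defn-TypeOfPrimes} coincide with those of \cref{pre-defn-PrimeTypes} taken relative to $\cA$, and $\ddim(T) = \ddim_{\cA}(T)$. (One may instead bypass this identification by invoking \cref{pre-rmk-DetectTypeOfPrimes}, which lets one detect the type of a prime of $T$ using $(\seq{x}{n})T$ directly.)

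With these identifications the corollary is a direct application of \cref{pre-defn-PrimeStructureInR}. Since $T/(\seq{x}{n})T \cong K[\seq{z}{m}] \neq 0$, we have $(\seq{x}{n})T \neq T$, i.e.\ $T$ is \emph{special} over $\cA$; so part (2) of \cref{pre-defn-PrimeStructureInR} applies, giving that $T$ has at least one special maximal ideal, that every special maximal ideal has height $\ddim_{\cA}(T)$, and that every typical maximal ideal has height $\ddim_{\cA}(T) - 1$. The second bullet of \cref{pre-defn-PrimeStructureInR} evaluates $\ddim_{\cA}(T) = \dim(\cA) + \tdeg_{\Fr(\cA)}(\Fr(T)) = n + m$, since $\cA$ embeds in $T$ and $\Fr(T)$ is purely transcendental of degree $m$ over $\Fr(\cA)$. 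Finally, $T$ is a domain, so $\Min(T) = \{(0)\}$ and $T$ is trivially equiheight over $\mA(T) = \cA$.

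I do not expect a genuine obstacle here: the argument is simply a matter of verifying the hypotheses of \cref{pre-defn-PrimeStructureInR}. The only step that needs a short argument rather than a citation is the identification $\mA(T) = K\ps{\seq{x}{n}}$ (equivalently, that the power series subring is integrally closed in the polynomial ring), and even that can be replaced by an appeal to \cref{pre-rmk-DetectTypeOfPrimes}.
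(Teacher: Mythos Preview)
Your proposal is correct and follows exactly the approach the paper takes: the paper states this corollary as an immediate application of \cref{pre-defn-PrimeStructureInR} to $T = \cA[\seq{z}{m}]$ with $\cA = K\ps{\seq{x}{n}}$, and provides no further argument. You have simply filled in the verification that the hypotheses (universally catenary base, affine domain, $T$ special over $\cA$, $\ddim_{\cA}(T) = n+m$) hold, which is precisely what the paper leaves implicit.
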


We will call a reduced affine-analytic $K$-algebra $R$ equiheight if is satisfies the \cref{pre-defn-GeneralEquiheight}. More precisely,
\begin{defn}\label{pre-defn-Equiheight}
Let $R$ be a reduced affine-analytic $K$-algebra. Then $R$ is equiheight if $\ddim(R/\mfp)$ is independent of the choice of $\mfp\in\Min(R)$.
\end{defn}

By \cref{pre-cor-EquiHeightJustified}, we have the following corollary.
\begin{cor}\label{pre-cor-Equiheight}
  Let $R$ be a reduced affine-analytic $K$-algebra. Then $R$ is equiheight if and only if for any presentation $R=T/I$ where $T=K\ps{\seq{x}{n}}[\seq{z}{m}]$ and $I\subseteq T$ an ideal, $I$ is of pure height, i.e., all minimal primes of $I$ have the same height in $T$.
\end{cor}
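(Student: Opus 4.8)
The plan is to reduce \cref{pre-cor-Equiheight} directly to \cref{pre-cor-EquiHeightJustified} applied to the ambient ring $T=K\ps{\seq{x}{n}}[\seq{z}{m}]$. The key observation is that $T$ is an affine algebra over the complete (hence universally catenary, in fact excellent) local ring $\cA=K\ps{\seq{x}{n}}$, and that $\cA$ is a regular local domain; moreover, by \cref{pre-cor-HeightOfPrimes}, $T$ is equiheight over $\cA$ in the sense of \cref{pre-defn-GeneralEquiheight}. Since $R=T/I$, \cref{pre-cor-EquiHeightJustified} tells us that $T/I$ is equiheight over $\cA$ if and only if all minimal primes of $I$ have the same height in $T$. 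So the only thing to check is that ``equiheight over $\cA=\mA(R)$ as guaranteed by the abstract \cref{pre-defn-GeneralEquiheight}'' matches ``equiheight in the sense of \cref{pre-defn-Equiheight}''.

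First I would verify that $\cA=K\ps{\seq{x}{n}}$ is indeed the maximal analytic subalgebra $\mA(T)$ when $T$ is regarded as a reduced affine-analytic $K$-algebra: by \cref{pre-prop-UniqueMaximalAnalytic}, $\mA(T)$ is the integral closure in $T$ of any analytic $K$-subalgebra over which $T$ is essentially of finite type, and $K\ps{\seq{x}{n}}$ is already integrally closed in the polynomial extension $T$, so $\mA(T)=K\ps{\seq{x}{n}}$. Consequently the notion of special/typical primes in \cref{pre-defn-PrimeTypes} (relative to the maximal ideal $\mm=(\seq{x}{n})$ of $\cA$) agrees with that in \cref{pre-defn-TypeOfPrimes} relative to $\Jac(\mA(T))$, since $\Jac(K\ps{\seq{x}{n}})=(\seq{x}{n})$; this is exactly the content of \cref{pre-rmk-DetectTypeOfPrimes}. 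Hence $\ddim(T)=\ddim_{\cA}(T)$ and, more to the point, $\ddim(R/\mfp)=\ddim_{\cA}(T/\widetilde{\mfp})$ for each minimal prime $\widetilde{\mfp}$ of $I$ (with image $\mfp$ in $R$), because $R/\mfp=T/\widetilde{\mfp}$ as $\cA$-algebras and the maximal analytic subalgebra of the quotient is computed from the same $\cA$.

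Next, with these identifications in place, \cref{pre-defn-Equiheight} says $R$ is equiheight precisely when $\ddim_{\cA}(T/\widetilde{\mfp})$ is independent of $\widetilde{\mfp}\in\Min(I)$, i.e.\ precisely when $T/I$ is equiheight over $\cA$ in the sense of \cref{pre-defn-GeneralEquiheight}. Now apply \cref{pre-cor-EquiHeightJustified} with $\cR=T$ (affine and equiheight over the universally catenary local ring $\cA$, by \cref{pre-cor-HeightOfPrimes}): $T/I$ is equiheight over $\cA$ if and only if all minimal primes of $I$ have the same height in $T$. Chaining the equivalences gives the statement. One should also remark, as the corollary's phrasing demands, that the condition ``all minimal primes of $I$ have the same height in $T$'' is independent of the chosen presentation $R=T/I$: this follows because it is equivalent to the presentation-free condition that $\ddim(R/\mfp)$ is constant over $\mfp\in\Min(R)$.

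The only mildly delicate point — the ``main obstacle'' — is the bookkeeping in the second paragraph: confirming that $\mA$ of the ambient $T$ really is $K\ps{\seq{x}{n}}$ and that passing to the quotient $T/\widetilde{\mfp}=R/\mfp$ does not change the relevant base ring or transcendence-degree data, so that the two competing definitions of ``special/typical'' and hence of $\ddim$ genuinely coincide. Once that identification is nailed down via \cref{pre-prop-UniqueMaximalAnalytic} and \cref{pre-rmk-DetectTypeOfPrimes}, everything else is a formal invocation of \cref{pre-cor-EquiHeightJustified} and \cref{pre-cor-HeightOfPrimes}.
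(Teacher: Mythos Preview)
Your proposal is correct and follows exactly the route the paper intends: the paper's proof is the single line ``By \cref{pre-cor-EquiHeightJustified}, we have the following corollary,'' and you have simply unpacked why that citation applies, namely that $T$ is affine and equiheight (via \cref{pre-cor-HeightOfPrimes}) over the universally catenary local ring $\cA=K\ps{\seq{x}{n}}$, and that the two competing definitions of $\ddim$ on $R/\mfp$ agree by \cref{pre-rmk-DetectTypeOfPrimes}. The extra bookkeeping you do (identifying $\mA(T)$, matching special/typical, noting presentation-independence) is implicit in the paper but is exactly the content needed to make the one-line proof honest.
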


\subsection{Height of ideals in regular rings}
First we state the following Serre's intersection theorem, see \cite[Chapitre V, B.6, Th\'eor\`eme 3]{Serre1975}.
\begin{thm}[Serre's intersection theorem]\label{pre-thm-SerreIntersec}
  Let $A$ be a regular ring and $P,Q$ be two prime ideals in $A$ such that $P+Q$ is a proper ideal. Then we have
  \[
    \hght(P)+\hght(Q)\geqslant\hght(P+Q).
  \]
\end{thm}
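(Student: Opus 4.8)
The plan is to reduce to a complete regular \emph{local} ring and then run Serre's classical ``diagonal'' argument. First I would localize at a minimal prime $\mfp$ of $P+Q$ with $\hght(\mfp)=\hght(P+Q)$. Since the height of a prime contained in $\mfp$ is unchanged under localization at $\mfp$, we may replace $A$ by $A_{\mfp}$ and so assume $A$ is regular local with maximal ideal $\mm$, that $P+Q$ is $\mm$-primary, and that we must prove $\hght(P)+\hght(Q)\geqslant\dim A$. A regular local ring is Cohen--Macaulay, hence catenary and equidimensional, so $\hght(P)=\dim A-\dim(A/P)$ and likewise for $Q$; thus the claim becomes the purely dimension-theoretic inequality $\dim(A/P)+\dim(A/Q)\leqslant\dim A$. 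Replacing $A$ by its completion $\widehat{A}$ --- a faithfully flat local extension preserving $\dim A$, the dimensions of $\widehat{A}/P\widehat{A}$ and $\widehat{A}/Q\widehat{A}$ (which equal $\dim(A/P)$, $\dim(A/Q)$), and the $\mm$-primariness of $P+Q$ --- I may further assume $A$ is complete regular local; the ideals $P\widehat{A}$, $Q\widehat{A}$ need no longer be prime, but only the dimension statement matters now.

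Now invoke the Cohen structure theorem. In equal characteristic, $A\cong k\ps{\seq{x}{n}}$ with $n=\dim A$ and $k$ a coefficient field, which is then a common coefficient field of $A/P$ and $A/Q$. The completed tensor product $A\,\widehat{\otimes}_k\,A\cong k\ps{x_1,\dots,x_n,y_1,\dots,y_n}$ is regular local of dimension $2n$, and the diagonal homomorphism $A\,\widehat{\otimes}_k\,A\to A$ sending $x_i,y_i\mapsto x_i$ has kernel $\Delta=(x_1-y_1,\dots,x_n-y_n)$, generated by a regular sequence of length $n$. Set $B:=(A/P)\,\widehat{\otimes}_k\,(A/Q)$, a complete local ring and a quotient of $A\,\widehat{\otimes}_k\,A$. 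By Noether normalization for complete local rings, $A/P$ (resp.\ $A/Q$) is module-finite over a power series subring over $k$ in $\dim(A/P)$ (resp.\ $\dim(A/Q)$) variables, and a completed tensor product over $k$ of two such power series rings is again a power series ring (with module-finiteness surviving the completion); hence $\dim B=\dim(A/P)+\dim(A/Q)$. On the other hand $B/\Delta B\cong (A/P)\otimes_A(A/Q)\cong A/(P+Q)$, which has dimension $0$ since $P+Q$ is $\mm$-primary, while $\Delta B$ is generated by $n$ elements. By Krull's height theorem, quotienting a local ring by $n$ elements drops its dimension by at most $n$, so $0=\dim(B/\Delta B)\geqslant\dim B-n$; therefore $\dim(A/P)+\dim(A/Q)=\dim B\leqslant n=\dim A$, as wanted.

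The step I expect to require the most care is the dimension bookkeeping across the completed tensor product: verifying $\dim\big((A/P)\,\widehat{\otimes}_k\,(A/Q)\big)=\dim(A/P)+\dim(A/Q)$ from module-finiteness over the power series ring in all the normalizing variables (using that a completed tensor product over $k$ of two artinian local $k$-algebras is artinian), together with checking that $A\,\widehat{\otimes}_k\,A$ is regular of dimension $2n$ with the diagonal cut out by the evident regular sequence. The genuinely delicate point, however, is the mixed-characteristic case: there $A$ is module-finite over $V\ps{\seq{x}{n}}$ for a complete discrete valuation ring $V$, one works with $A\,\widehat{\otimes}_V\,A$ (of dimension $2\dim A-1$, with diagonal generated by $n=\dim A-1$ elements), and one must control the dimension of $(A/P)\,\widehat{\otimes}_V\,(A/Q)$ --- in particular the behaviour of the residue characteristic $p$ relative to $P$ and $Q$ --- to obtain the correct count. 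Since in this paper Serre's theorem is applied only when $K$ is a field and every ring involved is of equal characteristic, one may instead simply quote \cite{Serre1975} for the general statement and rely on the equal-characteristic argument above in all our applications.
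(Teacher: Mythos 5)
The paper does not actually prove this theorem; it simply quotes it from Serre's \textit{Alg\`ebre locale$\cdot$Multiplicit\'es} (the sentence preceding the statement reads ``see \cite[Chapitre V, B.6, Th\'eor\`eme 3]{Serre1975}''), so your proof is an addition rather than a reconstruction. What you give is the classical ``reduction to the diagonal'' argument, and the reductions you perform --- localizing at a minimal prime of $P+Q$ of minimal height, converting the height inequality to $\dim(A/P)+\dim(A/Q)\leqslant\dim A$ via Cohen--Macaulayness, completing, invoking the Cohen structure theorem, and then bounding $\dim B$ by applying Krull's height theorem to the diagonal ideal --- are all correct. You are also right that this version of the diagonal trick needs equal characteristic, and that for the applications in this paper (all rings sit over a field $K$, and the Cohen factorization ring $T$ in \cref{pre-thm-RegHeight} inherits the equal characteristic of $R$) the equal-characteristic case suffices; deferring to \cite{Serre1975} for the general statement is exactly what the paper does.

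One step of your argument deserves to be made explicit. Module-finiteness of $B=(A/P)\,\widehat{\otimes}_k\,(A/Q)$ over $D:=k\ps{\seq{t}{a},\seq{s}{b}}$ only gives the upper bound $\dim B\leqslant a+b$, which is the wrong direction for your conclusion $a+b=\dim B\leqslant n$; what you actually need is $\dim B\geqslant a+b$, and this hinges on $D\to B$ being \emph{injective} (so that lying-over applies). Your phrase ``power series \emph{subring}'' tacitly assumes this, but the injectivity of a completed tensor product of two injections is not automatic and should be checked. The cleanest route is to write $B\cong C/Q(\underline y)C$ where $C:=k\ps{\underline x,\underline y}/P(\underline x)\cong (A/P)\ps{\underline y}$ is a complete local domain of dimension $n+a$, observe that $C$ is flat over $k\ps{\underline y}$ (being the $(\underline y)$-adic completion of the flat $k[\underline y]$-algebra $(A/P)[\underline y]$), and then use that expansion of a prime along a flat map preserves height, giving $\hght(Q(\underline y)C)=\hght(Q)=n-b$ and hence $\dim B=(n+a)-(n-b)=a+b$. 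With this amendment, your argument is complete in equal characteristic.
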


We immediately see that
\begin{cor}\label{pre-cor-SerreIntersec}
  Let $I,J$ be two ideals in a regular local ring $A$. Then we have
  \[
  \hght(I)+\hght(J)\geqslant \hght(I+J).
  \]
\end{cor}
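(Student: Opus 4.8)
The plan is to deduce this at once from Serre's intersection theorem (\cref{pre-thm-SerreIntersec}) by replacing the ideals $I$ and $J$ with suitably chosen prime ideals of the same height. If either $I$ or $J$ equals $A$ the asserted inequality is vacuous (the left-hand side is infinite, or one simply excludes that case), so from now on assume $I$ and $J$ are both proper.

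First I would use that $A$ is noetherian: then $\Min(I)$ and $\Min(J)$ are finite and nonempty, and since the height of an ideal is the minimum of the heights of its minimal primes, we may choose $\mfp\in\Min(I)$ with $\hght(\mfp)=\hght(I)$ and $\mfq\in\Min(J)$ with $\hght(\mfq)=\hght(J)$. Because $A$ is local with maximal ideal $\mm$ and $\mfp,\mfq\subseteq\mm$, the ideal $\mfp+\mfq$ is proper, so \cref{pre-thm-SerreIntersec} applies to the prime ideals $\mfp$ and $\mfq$ in the regular ring $A$ and yields $\hght(\mfp)+\hght(\mfq)\geqslant\hght(\mfp+\mfq)$. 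Since $I+J\subseteq\mfp+\mfq$, every prime containing $\mfp+\mfq$ contains $I+J$, hence $\hght(I+J)\leqslant\hght(\mfp+\mfq)$. Stringing the two inequalities together gives $\hght(I)+\hght(J)=\hght(\mfp)+\hght(\mfq)\geqslant\hght(\mfp+\mfq)\geqslant\hght(I+J)$, as required.

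There is essentially no obstacle here; the two points that warrant a word of care are (i) that one is entitled to pass from $I,J$ to minimal primes realizing their heights, which is exactly the definition of the height of an ideal in a noetherian ring, and (ii) that the locality of $A$ is what guarantees $\mfp+\mfq$ is a \emph{proper} ideal, so that the hypothesis of \cref{pre-thm-SerreIntersec} (a pair of primes with proper sum) is met — without the local hypothesis the corollary fails, since $\mfp+\mfq$ could be the unit ideal.
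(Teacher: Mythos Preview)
Your proof is correct and follows essentially the same approach as the paper: choose minimal primes $\mfp\supseteq I$ and $\mfq\supseteq J$ realizing the heights, use locality to ensure $\mfp+\mfq$ is proper, apply \cref{pre-thm-SerreIntersec}, and conclude via $I+J\subseteq\mfp+\mfq$. Your additional remarks on the edge case $I=A$ or $J=A$ and on why locality is needed are accurate and do not diverge from the paper's argument.
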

\begin{proof}
Choose $P$ minimal over $I$ such that $\hght(P)=\hght(I)$ and $Q$ minimal over $J$ such that $\hght(Q)=\hght(J)$. Since we are in the local case, the sum $P+Q$ is contained in the maximal ideal of $A$. By \cref{pre-thm-SerreIntersec}, $\hght(P)+\hght(Q)\geqslant \hght(P+Q)$. Since $P+Q\supseteq I+J$, we have $\hght(P+Q)\geqslant \hght(I+J)$. So $\hght(I)+\hght(J)\geqslant \hght(I+J)$.
\end{proof}

The following intersection theorem of Serre is well-known to experts. Since we cannot find a solid source, we include a proof here.
\begin{thm}\label{pre-thm-RegHeight}
  Let $R$ be a noetherian regular ring and let $I\subseteq R$ be an ideal. Let $h=\bht(I)$. Let $R\to S$ be a ring homomorphism between noetherian rings. If $IS$ is a proper ideal, then $\hght(IS)\leqslant h$.
\end{thm}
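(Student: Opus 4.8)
The plan is to localize the situation so that $R$ becomes a regular \emph{local} ring, pass to completions, and then apply Serre's intersection inequality (\cref{pre-cor-SerreIntersec}) inside a large regular local ring supplied by the Cohen structure theorem. First, since $IS$ is proper, choose a minimal prime $Q$ of $IS$ with $\hght(Q)=\hght(IS)$ and set $\mfp=Q\cap R$. Then $R_{\mfp}\to S_Q$ is a local homomorphism of noetherian local rings, $R_{\mfp}$ is regular, $IS_Q$ is $\mm_{S_Q}$-primary (because $Q$ is minimal over $IS$), and $\bht(IR_{\mfp})\le\bht(I)=h$, since $\Min(IR_{\mfp})=\{\mfq R_{\mfp}:\mfq\in\Min(I),\ \mfq\subseteq\mfp\}$ and $\hght(\mfq R_{\mfp})=\hght(\mfq)\le h$. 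So it is enough to prove: \emph{if $(A,\mm_A)$ is regular local, $(B,\mm_B)$ noetherian local, $A\to B$ a local homomorphism, $\mfa\subseteq A$ an ideal with $\bht(\mfa)\le h$, and $\mfa B$ is $\mm_B$-primary, then $\dim B\le h$}; applying this to $A=R_{\mfp}$, $B=S_Q$, $\mfa=IR_{\mfp}$ yields $\hght(IS)=\dim S_Q\le h$.

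Second, I would reduce to $A$ and $B$ complete by replacing $A\to B$ with $\widehat A\to\widehat B$. All hypotheses persist: $\widehat A$ is complete regular local, $\widehat B$ complete noetherian local, the map is local, $\mfa\widehat B$ is $\mm_{\widehat B}$-primary (as $B/\mfa B$ is artinian, so $\widehat B/\mfa\widehat B=B/\mfa B$ is artinian), and $\dim\widehat B=\dim B$. Moreover $\bht(\mfa\widehat A)\le\bht(\mfa)\le h$: since $A\to\widehat A$ is flat it satisfies going-down, so each minimal prime $\pi$ of $\mfa\widehat A$ contracts to a minimal prime $\mfq$ of $\mfa$, is itself minimal over $\mfq\widehat A$, and by the dimension formula for flat local maps $\hght(\pi)=\hght(\mfq)+\dim\big((\widehat A/\mfq\widehat A)_\pi\big)=\hght(\mfq)\le h$, the formal fibre being artinian at $\pi$. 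So from now on $A$ is complete regular local and $B$ complete local.

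Third comes the Serre argument. By the Cohen structure theorem one can write $B=T/\mfb$ where $T$ is a complete regular local ring that is \emph{flat} over $A$ and $T\twoheadrightarrow B$ extends $A\to B$ (for instance $T$ a power series ring over a complete regular local flat $A$-algebra having residue field $B/\mm_B$; existence of such a $T$ is Cohen structure theory, and in characteristic $p$ needs the usual care with inseparable residue field extensions — alternatively this is handled by the ``graph'' construction over a common coefficient field, whose defining ideal over $B$ is (complete intersection of height $\dim A$) plus $\mfb$ extended to a power series ring). Then the image of $\mfa$ in $B$ generates $(\mfa T+\mfb)/\mfb$, so ``$\mfa B$ is $\mm_B$-primary'' is equivalent to ``$\mfa T+\mfb$ is $\mm_T$-primary'', i.e.\ $\hght(\mfa T+\mfb)=\dim T$. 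By \cref{pre-cor-SerreIntersec} in the regular local ring $T$,
\[
\dim T=\hght(\mfa T+\mfb)\ \le\ \hght(\mfa T)+\hght(\mfb).
\]
Since $A\hookrightarrow T$ is flat, $\Min(\mfa T)=\{\mfq T:\mfq\in\Min(\mfa)\}$ with $\hght(\mfq T)=\hght(\mfq)$, whence $\hght(\mfa T)=\hght(\mfa)\le\bht(\mfa)\le h$; and since $T$ is a regular, hence Cohen--Macaulay, local domain, $\hght(\mfb)=\dim T-\dim(T/\mfb)=\dim T-\dim B$. Substituting gives $\dim T\le h+\dim T-\dim B$, that is, $\dim B\le h$, which completes the proof.

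The step I expect to be the main obstacle is the third one — specifically, the realization that one should present $B$ as $T/\mfb$ with $T$ regular and $A\to T$ flat, thereby ``spreading out'' the arbitrary local homomorphism $A\to B$ through a power series extension so that Serre's inequality becomes applicable and the complete intersection part contributes exactly $\dim T-\dim B$ — together with the Cohen-structure bookkeeping (compatible coefficient fields, and a possibly inseparable residue field extension in characteristic $p$) needed to do this. The two reduction steps and the final height--dimension arithmetic are routine by comparison.
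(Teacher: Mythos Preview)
Your proof is correct and follows essentially the same route as the paper's: localize at a minimal prime of $IS$, complete, factor the local map through a flat surjection from a regular local ring $T$, and apply Serre's intersection inequality inside $T$. The paper packages your third step by invoking the Cohen \emph{factorization} theorem of Avramov--Foxby--Herzog (\cref{pre-thm-CohenFactorization}), which is exactly the statement you want (a flat local map $A\to T$ with regular closed fibre and a surjection $T\twoheadrightarrow B$; regularity of $T$ then follows from regularity of $A$), so your expected ``main obstacle'' is handled cleanly by that citation rather than by hand via coefficient fields. Two cosmetic differences: the paper argues by contradiction and first replaces $\widehat{S_Q}$ by a quotient by a minimal prime so that the kernel of $T\twoheadrightarrow S$ is prime and \cref{pre-thm-SerreIntersec} applies directly, whereas you keep $\mfb$ arbitrary and use \cref{pre-cor-SerreIntersec}; and the paper works with a single minimal prime $P\supseteq I$ in $R_\mm$ rather than with $\mfa$ itself. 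One small imprecision in your write-up: the equality $\Min(\mfa T)=\{\mfq T:\mfq\in\Min(\mfa)\}$ is not literally true (expansions need not be prime), but your intended conclusion $\hght(\mfa T)\le h$ is correct since every minimal prime of $\mfa T$ contracts by going-down to some $\mfq\in\Min(\mfa)$ and has the same height.
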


Before giving the proof of \cref{pre-thm-RegHeight}, we will need the following ``Cohen factorization theorem'' \cite[Theorem 1.1]{Avramov1994}.

\begin{thm}[Cohen factorization]\label{pre-thm-CohenFactorization}
  Let $\varphi:R\to S$ be a local ring map between noetherian local rings, and assume that $S$ is complete. Then there exists a complete local ring $T$ and two local ring maps $\tau:R\to T$ and $\theta:T\to S$ such that
  \begin{enumerate}
  \item $\varphi = \theta\circ \tau$ and $\theta:T\twoheadrightarrow S$ is a surjection,
  \item $\tau$ is flat and $T/\mm_RT$ is regular where $\mm_R$ is the maximal ideal of $R$.
  \end{enumerate}
  Such a decomposition is called a \emph{Cohen factorization}.
\end{thm}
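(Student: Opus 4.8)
The plan is to produce the factorization by a direct construction, after two reductions. First, replacing $S$ by its completion $\widehat S$ — a factorization of the composite $R\to S\to\widehat S$ is all the applications require, and $\widehat S=S$ when $S$ is already complete — I may assume $S$ is complete. Second, writing $k=R/\mm_R$ and $l=S/\mm_S$, I would reduce to the case in which $\varphi$ induces an isomorphism $k\xrightarrow{\sim}l$. For this I would first construct a flat local homomorphism $R\to R^{\flat}$ with $R^{\flat}$ noetherian local, whose closed fibre $R^{\flat}/\mm_R R^{\flat}$ is the field $l$ (so that $\mm_{R^{\flat}}=\mm_R R^{\flat}$ and $R^{\flat}$ has residue field $l$), together with a local factorization $R\to R^{\flat}\to S$ of $\varphi$ in which $R^{\flat}\to S$ restricts to the identity on residue fields; when $l/k$ is finitely generated this is elementary — adjoin a transcendence basis by passing to a suitable localization of a polynomial ring over $R$, then adjoin algebraic generators as $R[X]/(f)$ with $f$ a lift of the minimal polynomial — and the general case is handled as in \cite{Avramov1994}. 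Granting $R^{\flat}$, any Cohen factorization $R^{\flat}\to T\to S$ of the reduced problem yields one for $\varphi$ by composition, since $R\to R^{\flat}\to T$ is flat and $T/\mm_R T=T/\mm_{R^{\flat}}T$ is regular.

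With both reductions in force, choose $\seq{t}{n}\in\mm_S$ whose images generate $\mm_S/\mm_R S$ (finitely many suffice, as $S$ is noetherian), set $R^{+}=R[\seq{X}{n}]$, let $\mathfrak M=\mm_R R^{+}+(\seq{X}{n})$, which is a maximal ideal of $R^{+}$, and let $T$ be the $\mathfrak M$-adic completion of $(R^{+})_{\mathfrak M}$. Then $\tau\colon R\to T$ is flat, being the composite of the free extension $R\to R^{+}$, the localization $R^{+}\to(R^{+})_{\mathfrak M}$, and the completion map of the noetherian local ring $(R^{+})_{\mathfrak M}$; moreover $T$ is complete noetherian local with $\mm_T=\mathfrak M T$, and $T/\mm_R T$ is the completion of the regular local ring $(k[\seq{X}{n}])_{(\seq{X}{n})}$, namely $k\ps{\seq{X}{n}}$, which is regular. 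To define $\theta$, map $R^{+}\to S$ by $\varphi$ on $R$ and $X_i\mapsto t_i$; since $\mathfrak M$ is carried into $\mm_S$, this descends to a local map $(R^{+})_{\mathfrak M}\to S$, which extends to $\theta\colon T\to S$ because $S$ is complete. By construction $\theta(\mm_T)S=\mm_R S+(\seq{t}{n})S=\mm_S$, and $\theta$ induces the identity on the common residue field $k$, so the associated graded map $\gr_{\mm_T}T\to\gr_{\mm_S}S$ is surjective in degree one, hence surjective. Consequently $\theta$ induces a surjection $T/\mm_T^{m}\twoheadrightarrow S/\mm_S^{m}$ for every $m$, the resulting inverse system of kernels has surjective transition maps, and passing to the inverse limit gives the surjection $\theta\colon T\twoheadrightarrow S$.

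I expect the genuine obstacle to be the residue-field reduction above: realizing an arbitrary extension $l/k$ by a flat local map $R\to R^{\flat}$ that keeps $R^{\flat}$ noetherian, together with a compatible local map $R^{\flat}\to S$, is delicate exactly when $l/k$ is not separably generated, since then a coefficient field of $S$ need not contain the image of one of $\widehat R$ and one must manipulate coefficient rings instead — this is the point at which the argument of \cite{Avramov1994} is genuinely used. For the algebras studied in this paper every residue-field extension that occurs is finitely generated over the ground field, so the elementary version of this step suffices. The remaining ingredients — transitivity of flatness, the identification of the closed fibre with a formal power series ring, and the graded-piece surjectivity argument for $\theta$ — are all routine.
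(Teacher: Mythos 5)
Your second paragraph (the construction when the residue fields already agree: adjoin finitely many indeterminates mapping to generators of $\mm_S$, localize, complete, and get surjectivity from the associated graded map) is the standard argument and is fine, as is the preliminary replacement of $S$ by $\widehat S$ (indeed necessary, since a quotient of a complete local ring is complete; note the paper itself offers no proof here but simply cites Avramov--Foxby--Herzog). The genuine problem is your residue-field reduction, and it is not merely ``delicate'': the statement you reduce to is false, even for finite extensions $l/k$. Take $R=k=\bF_p(t)$, $l=k(t^{1/p})$, $S=l\ps{u}$, and let $\varphi\colon R\to S$ send $t\mapsto t+u$ (every nonzero polynomial in $t+u$ has nonzero residue, hence is a unit, so this is a well-defined local map inducing the natural inclusion $k\subseteq l$ on residue fields). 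Since $\mm_R=0$, an $R^{\flat}$ as in your reduction would be a field isomorphic to $l$ over $k$, and a compatible map $\psi\colon R^{\flat}\to S$ would produce an element $\psi(w)\in l\ps{u}$ with $\psi(w)^p=\varphi(t)=t+u$; but $p$-th powers in $l\ps{u}$ have zero coefficient on $u$, a contradiction. The same example shows your ``elementary'' treatment of finitely generated $l/k$ via $R[X]/(f)$ with $f$ a lift of the minimal polynomial already fails for a purely inseparable generator: here the only monic lift of $X^p-t$ over $R$ is $X^p-t$ itself, and it has no root in $S$. Deferring this step to \cite{Avramov1994} does not repair it, because that paper does not (and cannot) prove your reduction; its construction is different. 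Your closing claim that only finitely generated residue extensions occur in this paper is also off the mark: the theorem is invoked in \cref{pre-thm-RegHeight}, which is stated for arbitrary noetherian rings with no characteristic restriction, and the residue field extensions arising there (fraction fields of complete local rings over those of approximating algebras) need not be finitely generated.

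The correct route avoids forcing the closed fibre of the ``residue-field part'' to be the field $l$. One adjoins to $R$ indeterminates $X_i$ mapping to lifts $s_i\in S$ of a set of field generators of $l$ over $k$ (in addition to the $Y_j\mapsto t_j$ generating $\mm_S$), localizes the polynomial ring at the contraction of $\mm_S$, and completes; no algebraic relations are imposed, so separability never enters --- transcendental and inseparable generators are simply absorbed into extra dimensions of the regular closed fibre, while the residue field of $T$ is still $l$ because the contraction of $\mm_S$ cuts it out, and your graded argument then gives surjectivity of $T\to \widehat S$. Run this way, your own second paragraph proves the theorem whenever $l/k$ is generated by finitely many elements (in any characteristic); the only genuinely nontrivial point left in the general case is keeping the intermediate ring noetherian when infinitely many generators are needed, which is where the gonflement/Cohen-extension machinery used in \cite{Avramov1994} (or EGA $0_{\mathrm{III}}$ 10.3.1) actually comes in. In characteristic $0$, or more generally when $l/k$ is separably generated, your reduction can be salvaged with Hensel's lemma for the algebraic generators, but as a proof of the theorem as stated it has a gap that is structural, not technical.
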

\begin{rmk}
  Since the map $\tau$ in the \cref{pre-thm-CohenFactorization} is flat and local, it is, in fact, faithfully flat.
\end{rmk}

\begin{proof}[Proof of \cref{pre-thm-RegHeight}]
  Suppose for contradiction that $\hght(IS)>h$. Since $IS\subseteq S$ is proper, there is a minimal prime $Q$ of $IS$ in $S$ such that $\hght(Q)=\hght(IS)$. Then $\hght(IS_Q)=\hght(IS)=\dim(S_Q)$. Since $I$ will generate a $Q$-primary ideal in the completion, we see that $\hght(IS_Q)=\hght(I\widehat{S_Q})$. 
  
  To apply Cohen factorization, we must pass to a complete local ring. Let $\mathfrak{p}$ be a minimal prime of $\widehat{S_Q}$ such that $\dim(\widehat{S_Q}/\mathfrak{p}) = \dim(\widehat{S_Q})$. Let $S' = \widehat{S_Q}/\mathfrak{p}$. Then $S'$ is a complete local domain, and the extended ideal $IS'$ still satisfies $\hght(IS') > h$. Let $\mm$ be the contraction of the maximal ideal of $S'$ to $R$. Because $IS'$ is proper, $I \subseteq \mm$. Since $I \subseteq \mm$, we can choose a minimal prime $P$ of $I$ such that $I \subseteq P \subseteq \mm$ and $\hght(P) \leqslant h$. 
  
  Localizing $R$ at $\mm$ does not change the height of $P$, and the map $R \to S'$ naturally induces a local ring homomorphism $R_\mm \to S'$. Because $S'$ is a complete local ring, we may apply \cref{pre-thm-CohenFactorization} to this map.
  
  By the Cohen factorization theorem (\cref{pre-thm-CohenFactorization}), there exists a complete regular local ring $T$ and a map $R_\mm \to T \twoheadrightarrow S'$ such that $R_\mm \to T$ is faithfully flat. Because the map is faithfully flat, $\hght(PT)=\hght(P_{\mm})\leqslant h$. Since $S'$ is a domain, the kernel of the map $T\twoheadrightarrow S'$ is a prime ideal $P'$. Note that the image of $PT+P'$ is primary for the maximal ideal in $S'$. So $\hght(PT+P')-\hght(P')=\dim(S')>h$. On the other hand, by \cref{pre-thm-SerreIntersec}, we have $\hght(PT)+\hght(P')\geqslant \hght(PT+P')$. This implies $\hght(PT)>h$, which contradicts the fact that $\hght(PT)\leqslant h$.
\end{proof}

\section{Regularity and Jacobian Ideals}\label{sn-JacobainIdeal}
\subsection{Absolute regularity}
In \cite{Kunz1986}, Kunz discusses absolute regularity for analytic algebras. Here we extend the notion to affine-analytic algebras. First we have the following definition.
\begin{defn}
Every analytic $K$-algebra $A$ is a finite extension of a power series algebra $K\ps{\seq{X}{d}}\subseteq A$. We call $K\ps{\seq{X}{d}}\hookrightarrow A$ a \emph{Noether normalization} of $A$.  
\end{defn}

The following is a modification of \cite[14.10]{Kunz1986}.
\begin{defn}
Let $R$ be a reduced affine-analytic algebra over a field $K$ and let $L$ be a field extension of $K$. A \emph{constant field extension} $R_L$ of $R$ with $L$ is an affine-analytic $L$-algebra $R_L$ for which there is a local $K$-homomorphism $R\to R_L$ satisfying the following universal property: if $\beta:R\to S$ is any local $K$-homomorphism into an affine-analytic $L$-algebra $S$, then there is exactly one $L$-homomorphism $\gamma:R_L\to S$ such that $\beta=\gamma\circ \alpha$.
\end{defn}
The existence is easily shown: Let $A:=\mA(R)$ be the unique maximal analytic $K$-subalgebra of $R$. Then we have a Noether normalization $K\ps{\seq{X}{d}}\hookrightarrow A$. For any field extension $L$ of $K$, then tensor product $R_L:=L\ps{\seq{X}{d}}\otimes_{K\ps{\seq{X}{d}}} R$ is the constant field extension of $R$ with $L$. Alternatively, since the constant field extension of analytic algebras is constructed in the paragraph below \cite[14.10]{Kunz1986}, we can form $A_L$ and then $R_L:=A_L\otimes_A R$.

We extend results in \cite[14.11]{Kunz1986} to the case of affine-analytic rings. For this purpose, we need the following lemma.

\begin{lem}\label{jcb-lem-HeightUnderFlat}
  Let $T$ be flat over $S$ and $I\subseteq S$ a proper ideal such that all minimal primes have height $h$. If $IT\neq T$ (this is automatic when $S\to T$ is faithfully flat), then all minimal primes of $IT$ have the same height $h$.
\end{lem}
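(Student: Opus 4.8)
The plan is to fix an arbitrary minimal prime $Q$ of $IT$, contract it to $\mfp := Q\cap S$, and prove in two stages that $\hght Q = h$: first that $\mfp$ is itself a minimal prime of $I$ (hence of height $h$ by hypothesis), and then that $\hght Q = \hght\mfp$. Throughout, $S$ and $T$ are noetherian, as everywhere in the paper, and the hypothesis $IT\neq T$ is used only to ensure that such a $Q$ exists. The two tools I would invoke are going-down for flat maps and the dimension formula $\dim T_Q = \dim S_{\mfp} + \dim\bigl(T_Q/\mfp T_Q\bigr)$ for a flat local homomorphism of noetherian local rings.

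For the first stage: since $IT\subseteq Q$ we have $I\subseteq\mfp$, so some minimal prime $\mfp'$ of $I$ satisfies $\mfp'\subseteq\mfp$. Going-down for $S\to T$ yields a prime $Q'\subseteq Q$ of $T$ with $Q'\cap S=\mfp'$; then $IT\subseteq\mfp' T\subseteq Q'\subseteq Q$, and the minimality of $Q$ over $IT$ forces $Q'=Q$, whence $\mfp'=Q'\cap S=\mfp$. Thus $\mfp\in\Min(I)$ and $\hght\mfp=h$.

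For the second stage I would localize to the flat local homomorphism $S_{\mfp}\to T_Q$ and show that its closed fibre is Artinian. Because $\mfp\in\Min(I)$, we have $\sqrt{IS_{\mfp}}=\mfp S_{\mfp}$, so a power of $\mfp S_{\mfp}$ lies in $IS_{\mfp}$, and extending to $T_Q$ gives $\sqrt{IT_Q}=\sqrt{\mfp T_Q}$. Since $Q$ is minimal over $IT$, the ideal $IT_Q$ is $QT_Q$-primary, so $\dim(T_Q/IT_Q)=0$ and therefore $\dim(T_Q/\mfp T_Q)=0$. The dimension formula then gives $\hght Q=\dim T_Q=\dim S_{\mfp}+0=\hght\mfp=h$, as desired.

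I do not anticipate a genuine obstacle here: the argument is entirely formal once one has going-down and the flat dimension formula in hand. The only points needing slight care are verifying that the closed fibre $T_Q/\mfp T_Q$ is honestly zero-dimensional — which is where $\mfp\in\Min(I)$ gets used a second time — and recording the dimension formula in its correct local form.
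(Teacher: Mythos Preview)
Your proof is correct and follows essentially the same approach as the paper: fix a minimal prime $Q$ of $IT$, show its contraction $\mfp$ is minimal over $I$, and apply the flat dimension formula to the local map $S_{\mfp}\to T_Q$. The only cosmetic difference is that the paper deduces $\mfp\in\Min(I)$ by observing that $T_Q/IT_Q$ is zero-dimensional and faithfully flat over $S_{\mfp}/IS_{\mfp}$ (forcing the latter to be zero-dimensional), whereas you invoke going-down explicitly; these are two phrasings of the same fact.
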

\begin{proof}
  Let $Q$ be a minimal prime of $IT$ in $T$, and let $P$ be its contraction in $S$. So $S_P\to T_Q$ is a faithfully flat map. Since $Q$ is minimal over $IT$, $T_Q/IT_Q$ has dimension zero. By base change, this ring is faithfully flat over $S_P/IS_P$. So $S_P/IS_P$ has dimension zero. Hence $P$ is minimal over $I$ and therefore it has height $h$. Since $\dim(T_Q)=\dim(S_P)+\dim(T_Q/PT_Q)$ and $T_Q/IT_Q\twoheadrightarrow T_Q/PT_Q$ have dimension zero, we conclude that $\dim(T_Q)=\dim(S_P)=h$.
\end{proof}

\begin{prop}\label{jcb-prop-ConstFieldExt}
Let $R$ be a reduced affine-analytic $K$-algebra and let $A:=\mA(R)$. Let $K\ps{\seq{X}{d}}\hookrightarrow A$ be a Noether normalization of $A$, and $L/K$ a field extension.
\begin{enumerate}
\item For any ideal $I$ of $R$, $(R/I)_L=R_L/IR_L$.
\item $R_L$ is faithfully flat over $R$.
\item $\dim R_L=\dim R$.
\item If $R$ is equiheight, so is $R_L$.
\item For any $\mfq\in\Spec(R)$ there is a $\mfp\in\Spec(R_L)$ such that $\dim_{\mfp} R_L\geqslant \dim_{\mfq} R$.
\item $\widetilde{\Omega}_{R_L/L}\cong R_L\otimes_R \widetilde{\Omega}_{R/K}$.
\end{enumerate}
\end{prop}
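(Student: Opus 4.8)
The plan is to build everything from the explicit construction $R_L = L\ps{\seq{X}{d}}\otimes_{K\ps{\seq{X}{d}}} R$, abbreviating $P := K\ps{\seq{X}{d}}$ and $P_L := L\ps{\seq{X}{d}}$. First I would record that $P_L$ is faithfully flat over $P$; this is a standard fact about power series rings (it follows, e.g., from the fact that $P_L$ is a filtered colimit of $P$-modules $P'\ps{\seq{X}{d}}$ for $K\subseteq P'\subseteq L$ finitely generated, each of which is $P$-free, plus faithfulness since $\mm_P P_L \neq P_L$). Part (2) is then immediate by base change, and part (1) is just right-exactness of tensor product together with the fact that $R_L/IR_L = (P_L\otimes_P R)\otimes_R (R/I) = P_L\otimes_P (R/I) = (R/I)_L$, using that $\mA(R/I)$ sits inside $R/I$ compatibly with the chosen Noether normalization (or simply noting the universal property is inherited). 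Part (6) I would get from the transitive law for universal extensions quoted after \cref{pre-prop-UniqueMaximalAnalytic}: $\widetilde{\Omega}_{R_L/L}$ is the universal $R_L$-extension of $\widetilde{\Omega}_{A_L/L}$, and $\widetilde{\Omega}_{A_L/L} \cong A_L\otimes_A \widetilde{\Omega}_{A/K}$ is precisely the content of \cite[14.11]{Kunz1986} for analytic algebras; base-changing along $A\to R$ and then along $R\to R_L$ gives the isomorphism, since universal $R_L$-extension of $A_L\otimes_A\widetilde\Omega_{A/K}$ agrees with $R_L\otimes_R(\text{universal }R\text{-extension of }\widetilde\Omega_{A/K}) = R_L\otimes_R\widetilde\Omega_{R/K}$.

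For (3) I would reduce to the analytic case: $R$ is module-finite over $A$, so $\dim R = \dim A = d$ when $A$ is a domain (and in general $\dim R = \dim A$), while $R_L$ is module-finite over $A_L$ and $\dim A_L = d = \dim A$ because $A_L$ is a finite extension of $P_L = L\ps{\seq{X}{d}}$, which has dimension $d$. More carefully, since $R_L$ is faithfully flat over $R$ we already have $\dim R_L \geq \dim R$; for the reverse, for any prime $\mfq$ of $R_L$ with contraction $\mfq_0$ to $R$ and further contraction $\mfp$ to $P$, flatness of $P_L/P$ gives $\dim (R_L)_{\mfq} \leq \dim R_{\mfq_0} + \dim (R_L/\mfq_0 R_L)_{\mfq}$, and the closed fiber ring $R_L/\mfq_0 R_L = \kappa_{\mfq_0}(R)\otimes_{\kappa_{\mfp}(P)} (P_L/\mfp P_L)$ has dimension bounded by $\dim P_{L}/\mfp P_L \leq \dim P = d$ minus $\hght\mfp$; chaining with the height formula in $R$ over $P$ yields $\dim R_L \leq d = \dim R$. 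Part (5) is the local refinement of the same idea: given $\mfq\in\Spec R$, lift to $\mfp\in\Spec R_L$ (possible by faithful flatness, Going-Down for the flat map), and then $\dim_{\mfp} R_L = \dim R_L/\mfp + \dim (R_L)_{\mfp} \geq \dim R/\mfq + \dim R_{\mfq} = \dim_{\mfq} R$, using that faithful flatness preserves dimension of quotients in the fiber-wise sense together with $\dim(R_L)_{\mfp}\geq \dim R_{\mfq}$.

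The real work is (4). Here I would invoke \cref{jcb-lem-HeightUnderFlat}: pick any presentation $R = T/I$ with $T = K\ps{\seq{x}{n}}[\seq{z}{m}]$ (so $T$ is regular, hence equiheight by \cref{pre-cor-HeightOfPrimes}), and by \cref{pre-cor-Equiheight} the hypothesis ``$R$ equiheight'' says exactly that $I$ has pure height $h$ in $T$. Now $T_L := L\ps{\seq{x}{n}}[\seq{z}{m}]$ is faithfully flat over $T$ (again by the power-series base-change fact applied to the power-series variables, tensored up with the polynomial variables), and $R_L = T_L/IT_L$ — this compatibility of the two constructions of $R_L$ needs a short verification, matching the Noether normalization $P\hookrightarrow A$ with the inclusion $K\ps{\seq{x}{n}}\hookrightarrow T$. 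Then \cref{jcb-lem-HeightUnderFlat} gives that every minimal prime of $IT_L$ has height $h$ in $T_L$, and since $T_L$ is regular hence equiheight, \cref{pre-cor-Equiheight} applied in reverse says $R_L = T_L/IT_L$ is equiheight. The main obstacle I anticipate is precisely this bookkeeping step: checking that $L\ps{\seq{x}{n}}[\seq{z}{m}]$ is flat over $K\ps{\seq{x}{n}}[\seq{z}{m}]$ and that $T_L/IT_L$ really is the constant field extension $R_L$ (that the two Noether normalizations can be taken compatibly, so no issue arises from the choice of $A$ versus the presentation $T$), together with making sure that ``equiheight'' for the reduced affine-analytic algebra $R_L$ over $L$ is tested the same way — i.e., that $R_L$ really is reduced affine-analytic over $L$ and $\mA(R_L) = A_L$ so that \cref{pre-defn-Equiheight} and \cref{pre-cor-Equiheight} apply verbatim.
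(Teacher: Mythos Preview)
The main gap is in part (3). Your first attempt—``$R$ is module-finite over $A$''—is simply false for affine-analytic (as opposed to analytic) algebras: take $R = K\ps{X}[Y]$, where $\mA(R) = K\ps{X}$ and $R$ is a polynomial ring over it. This also makes the conclusion ``$\dim R_L \leq d = \dim R$'' wrong, since here $\dim R = 2$ but $d = 1$.

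Your ``more carefully'' argument tries to bound fiber dimensions, but this fails too: the fibers of $P \to P_L$ are \emph{not} $0$-dimensional in general. For $P = K\ps{X,Y}$ and $L = K(t)$, the prime $(X - tY) \subseteq K(t)\ps{X,Y}$ has height $1$ yet contracts to $(0)$ in $P$ (any $g \in K\ps{X,Y}$ with $g(tY,Y) = 0$ in $K(t)\ps{Y}$ must vanish identically, by transcendence of $t$), so the generic fiber has dimension at least $1$. Hence the fibers of $R \to R_L$ need not be $0$-dimensional, and no simple fiber-dimension count yields $\dim R_L \leq \dim R$. (Your formula $R_L/\mfq_0 R_L = \kappa_{\mfq_0}(R)\otimes_{\kappa_{\mfp}(P)} (P_L/\mfp P_L)$ is also incorrect; the left side is $(R/\mfq_0)\otimes_P P_L$.) The paper instead reduces to the domain case, picks a minimal prime $\mfp$ of $R_L$ achieving $\dim R_L$, and compares $\dim(R_L/\mfp)$ with $\dim R$ via the formula $\ddim_{\cA}(\cR) = \dim(\overline{\cA}) + \tdeg$ from \cref{pre-defn-PrimeStructureInR}: since elements of $R$ generate $R_L/\mfp$ over the image $C_0$ of $B_0$, one can extract a transcendence basis from $R$, and the same elements remain algebraically independent over $\Fr(A_0)$, giving the needed inequality. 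This transcendence-degree comparison is the missing ingredient.

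Parts (1), (2), (4), (6) are correct and match the paper's arguments. For (5), your idea of lifting $\mfq$ directly can be made to work, but only once you specify that $\mfp$ is chosen minimal over $\mfq R_L$ with $\dim R_L/\mfp = \dim R_L/\mfq R_L$, so that (1) and (3) applied to $R/\mfq$ give $\dim R_L/\mfp = \dim R/\mfq$; as written, an arbitrary lift $\mfp$ only gives $\dim R_L/\mfp \leq \dim R/\mfq$, the wrong direction. The paper sidesteps this by first passing to a maximal ideal $\mm \supseteq \mfq$ with $\dim_\mm R = \dim_\mfq R$ and then lifting $\mm$, using only $\dim(R_L)_{\mm'} \geq \dim R_\mm$ from faithful flatness.
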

\begin{proof}
  (1): Let $J=I\cap A$. Then by \cite[14.11(b)]{Kunz1986}, we have $(A/J)_L=A_L/JA_L$. Since $A/J=\mA(R/I)$, we have 
  \[
  (R/I)_L=(A/J)_L\otimes_{A/J} (R/I)=(A_L/JA_L)\otimes_{A/J}(R/I)=A_L\otimes_A (R/I)= R_L/IR_L.
  \]
  
  (2): Since $L\ps{\seq{X}{d}}$ is faithfully flat over $K\ps{\seq{X}{d}}$, the base-changed map $R\to R_L$ is faithfully flat as well.
  
  (3): Let $\set{\mfq_i}_{1\leqslant i\leqslant s}$ be the set of minimal primes of $R$. Then $\cap_{j=1}^s \mfq_j = 0$. Hence $\cap_{j=1}^s \mfq_j R_L = 0$. So the dimension of $R_L$ is the supremum of the dimensions of $R_L/\mfq_j R_L \cong (R/\mfq_j)_L$. So we can base change to $R/\mfq_i$ for some $i$ without changing $\dim R$ and $\dim R_L$. Now we assume that $R$ is a domain, $A_0=K\ps{\seq{x}{n}}\subseteq R$ and $B_0=L\ps{\seq{x}{n}}$. Then $R_L=R\otimes_{A_0} B_0$. By (2), we know that $R\hookrightarrow R_L$ and $\dim R_L\geqslant \dim R$. So we only need to show that $\dim R_L\leqslant \dim R$. Note that nonzerodivisors on $R$ are also nonzerodivisors on $R_L$. Hence $R_L$ is $R$-torsion free. If $\mfp$ is a minimal prime of $R_L$, then $\mfp\cap R={0}$. Choose $\mfp$ minimal such that $\dim R_L=\dim R_L/\mfp$. Write $R'=R_L/\mfp$ and $C_0=B_0/(\mfp\cap B_0)$. We have $R\hookrightarrow R'$ and $A_0\hookrightarrow C_0$. Since $R$ generates $R_L$ over $B_0$, it likewise generates $R'$ over $C_0$. So we can choose a (finite) set of elements in $R$ which will be a transcendence basis for $\Fr(R')$ over $\Fr(C_0)$. Then the same set of elements must be algebraically independent over $\Fr(A_0)$. Suppose that this finite set has $t$ elements. Then
  \begin{align*}
      \dim(R) & \geqslant \dim(A_0)+t-\varepsilon_{A_0,R} \quad \text{by the last formula in \cref{pre-defn-PrimeStructureInR}} \\
      &=\dim(B_0) +t-\varepsilon_{A_0,R} \\
      & \geqslant \dim(C_0)+t-\varepsilon_{A_0,R} \\
      &\geqslant \dim(C_0)+t-\varepsilon_{C_0,R'}=\dim(R')
  \end{align*}
  where $\varepsilon_{A_0,R}$ is $1$ if $(\seq{x}{n})R=R$ and $0$ otherwise. Note that if $\varepsilon_{A_0,R}=1$ then $\varepsilon_{C_0,R'}=1$. So this is proved.
  
  (4): Write $R=T/I$ where $T=K\ps{\seq{x}{n}}[\seq{z}{m}]$. Then by \cref{pre-cor-Equiheight}, we assume that $I$ has pure height $h$. Therefore by \cref{jcb-lem-HeightUnderFlat}, $IT'$ has pure height $h$ as well, where $T'=L\ps{\seq{x}{n}}[\seq{z}{m}]$. Since $R_L=T'/IT'$, we conclude that $R_L$ is equiheight.
  
  (5): Let $\mm$ be a maximal ideal in $R$ such that $\dim_{\mm}R=\dim_{\mfq}R$. There is a prime ideal $\mm'$ in $R_L$ lying over $\mm$. Since $R_{\mm}\to (R_L)_{\mm'}$ is still faithfully flat, we have
  \[
  \dim_{\mm'} R_L \geqslant \dim (R_L)_{\mm'} \geqslant \dim R_{\mm}=\dim_{\mm} R =\dim_{\mfq} R.
  \]
  
  (6): Since the universal finite module of K\"ahler differentials is calculated as the cokernel of the Jacobian matrix, the conclusion follows directly.
\end{proof}

We generalize the definition of absolute regularity \cite[14.12]{Kunz1986} to the affine-analytic case.
\begin{defn}\label{jcb-defn-AbsReg}
An affine-analytic $K$-algebra $R$ is called \emph{absolutely regular} at $\mfq\in\Spec(R)$, if for any field extension $L/K$ and any $\mfp\in\Spec(R_L)$ with $\mfp\cap R=\mfq$ the local ring $(R_L)_{\mfp}$ is regular.
\end{defn}
The notion of absolute regularity is equivalent to the notion of regularity when $K$ is of characteristic 0.

We also define absolute reducedness as follows.
\begin{defn}\label{jcb-defn-AbsoluteReduced}
  An affine-analytic $K$-algebra $R$ is called \emph{absolutely reduced}, if for any field extension $L/K$, $R_L$ is reduced.
\end{defn}

An affine-analytic $K$-algebra $R$ is absolutely reduced if and only if $R_P$ is absolutely regular for any minimal prime $P$. 

\subsection{Jacobian ideals}
Let $R$ be a ring.
\begin{defn}[Fitting ideals]
  For any finitely presented $R$-module $M$, let $R^m\overset{(a_{ij})}{\longrightarrow} R^n\to M$ be a presentation of $M$. The $i$th \emph{fitting ideal} is the ideal generated by the minors of size $n-i$ of the matrix $(a_{ij})$.
\end{defn}
The Fitting ideals do not depend on the choice of generators and relations of $M$. Here, we also use the convention that the $i$th fitting ideal is the whole ring $R$ if $n-i\leqslant 0$, and the zero ideal if $n-i >\min\set{n,m}$. For more about Fitting ideals, \cite[Appendix D]{Kunz1986} is a good reference.

Let $S$ be an $R$-algebra and $\delta:R\to R\delta R$ be a derivation such that $\Omega_{S/\delta}$ is a finitely presented $S$-module. We have the following definition.
\begin{defn}\cite[10.1]{Kunz1986}
The $i$th Fitting ideal $\sJ_i(S/\delta):=\mF_i(\Omega_{S/\delta})$ of $\Omega_{S/\delta}$ is called the $i$th Jacobian ideal of $S/\delta$. In case $\delta$ is the trivial derivation of $R$, we write $\sJ_i(S/R)$ for $\sJ_i(S/\delta)$.
\end{defn}
Clearly, we have 
\begin{prop}
Under the assumptions of the definition above,
\begin{enumerate}
    \item $\sJ_0(S/\delta)\subseteq \sJ_1(S/\delta)\subseteq \sJ_2(S/\delta)\subseteq \cdots$.
    \item $\sJ_i(S/\delta)=S$ for $i\geqslant\mu(\Omega_{S/\delta})$.
\end{enumerate}
\end{prop}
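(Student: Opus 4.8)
The plan is to deduce both parts directly from the definition of the Fitting ideals, together with the stated conventions, since everything reduces to elementary facts about ideals of minors of a matrix. Throughout I would use the presentation-independence of the Fitting ideals recorded just after their definition, which allows me to fix a convenient presentation in each part.

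For part (1), fix once and for all a finite presentation $S^m\xrightarrow{\,(a_{ij})\,}S^n\to\Omega_{S/\delta}\to 0$ and let $A=(a_{ij})$. By definition $\sJ_i(S/\delta)=\mF_i(\Omega_{S/\delta})$ is the ideal generated by the $(n-i)\times(n-i)$ minors of $A$, with the convention handling the degenerate ranges $n-i\leqslant 0$ (whole ring) and $n-i>\min\{n,m\}$ (zero ideal). The one key step is Laplace/cofactor expansion: any $(n-i)\times(n-i)$ minor of $A$, expanded along a row, is an $S$-linear combination of $(n-i-1)\times(n-i-1)$ minors of $A$, hence lies in $\mF_{i+1}(\Omega_{S/\delta})$. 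This gives $\sJ_i(S/\delta)=\mF_i(\Omega_{S/\delta})\subseteq\mF_{i+1}(\Omega_{S/\delta})=\sJ_{i+1}(S/\delta)$, i.e.\ the ascending chain. I would either cite \cite[Appendix D]{Kunz1986} for this standard fact about Fitting ideals or include the one-line expansion argument, using the conventions to cover the two boundary regimes (empty minors, and minors too large to exist), where the inclusion is trivial because both sides equal $S$ or both equal $0$.

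For part (2), use that $\mu(\Omega_{S/\delta})$ is by definition the minimal number of generators, so there is a presentation $S^m\to S^n\to\Omega_{S/\delta}\to 0$ with $n=\mu(\Omega_{S/\delta})$. Then for any $i\geqslant\mu(\Omega_{S/\delta})=n$ we have $n-i\leqslant 0$, and the convention that the $i$th Fitting ideal is the whole ring whenever $n-i\leqslant 0$ gives $\sJ_i(S/\delta)=\mF_i(\Omega_{S/\delta})=S$.

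I do not expect a genuine obstacle; as the paper says, the statement is essentially immediate. The only point needing care is phrasing the argument in (1) so that it also accommodates the two degenerate regimes in the definition of Fitting ideals, which is dealt with by appealing to the conventions rather than to the cofactor expansion in those cases.
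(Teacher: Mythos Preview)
Your proposal is correct and is exactly the standard argument one would give; the paper does not actually prove this proposition, prefacing it only with ``Clearly, we have''. Your write-up thus supplies precisely the elementary Fitting-ideal verification (cofactor expansion for the chain of inclusions, and a minimal presentation for the stabilization at $S$) that the paper leaves implicit.
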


\begin{defn}
  If $S$ is a finitely generated $K$-algebra where $K$ is a field, then the \emph{Jacobian ideal} $\cJ_{R/K}$ is defined to be the first nonzero Fitting ideal of $\Omega_{S/K}$, i.e., $\cJ_{R/K}=\sJ_r(R/K)$ if $\sJ_0(R/K)=\cdots=\sJ_{r-1}(R/K)=(0)$ and $\sJ_r(R/K)\neq (0)$.
\end{defn}
\begin{prop}\label{jcb-prop-RankOverReg}
  Let $R$ be an $A$-algebra that is essentially of finite type over $A$ where $A$ is noetherian and universally catenary. Suppose that we have a presentation $R=W^{-1}(T/I)$ where $T=A[\seq{x}{n}]$, $I\subseteq T$ an ideal and $W$ a multiplicatively closed subset of $T$ disjoint from $I$. For a prime ideal $\mfq\in\Spec(R)$, let $Q$ be the preimage of $\mfq$ in $T$ (then, $\mfq\cap W=\emptyset$) and $q:=Q\cap A$. We let $J_{R/A}$ be the Jacobian matrix. Then
  \begin{enumerate}
      \item we have
      \begin{equation}\label{jcb-eq-RkLsHt}
           \rk_{\mfq}(J_{R/A})\leqslant \hght_Q(qT+I)-\hght(q). 
      \end{equation}
     $R$ is smooth over $A$ at $\mfq$ if and only if $A_q\to R_{\mfq}$ is flat and equality holds in \eqref{jcb-eq-RkLsHt}. In this case, we also have
      \[
       \hght_Q(qT+I)-\hght(q)= \rk_{\mfq}(J_{R/A}) = \hght_Q(I)=\mu_Q(I).
      \]
      \item Assume, in addition, that $R$ is reduced, $A$ is regular and $I\cap A=(0)$. Then $\rk_{\mfq}(J_{R/A}) \leqslant \hght_Q(I)$. If we assume furthermore that $R$ is generically smooth over $A$, then $\cJ_{R/A}=\sJ_{n-\bht(I)}(R/A)$.
  \end{enumerate}
 where notation is from \cref{pre-defn-Notation}.
\end{prop}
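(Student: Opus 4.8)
The plan is to read off the Jacobian ideal from the conormal presentation of $\Omega_{R/A}$ and then reduce every assertion to the fibre over $\kappa_q(A):=A_q/qA_q$, where the classical Jacobian criterion over a (possibly imperfect) field applies; the only nontrivial extra input is the comparison of heights and numbers of generators between $T_Q$ and that fibre.

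\textbf{Part (1).} Fix generators $f_1,\dots,f_m$ of $I$. The conormal exact sequence identifies $\Omega_{R/A}$ with $\Coker\big(R^m\xrightarrow{J_{R/A}}R^n\big)$, where $R^n=\bigoplus_{i=1}^n R\,dx_i$ and $J_{R/A}=(\partial f_j/\partial x_i)$, so $\rk_\mfq(J_{R/A})$ is the determinantal rank of $J_{R/A}$ over $\kappa_\mfq(R)=\kappa_Q(T)$. I would base change along $A\to\kappa_q(A)$: put $\overline T:=\kappa_q(A)[\seq{x}{n}]$ (regular, being a polynomial ring over a field), $\overline I:=I\overline T$, and let $\overline Q\subseteq\overline T$ be the prime corresponding to $Q$, so $\overline T_{\overline Q}=T_Q/qT_Q$ and $\kappa_{\overline Q}(\overline T)=\kappa_Q(T)$. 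Then $J_{R/A}$ reduced modulo $q$ is the Jacobian matrix of $\overline I$, with the same determinantal rank at $\overline Q$; the Jacobian criterion over the field $\kappa_q(A)$ gives $\rk_\mfq(J_{R/A})\leqslant\hght(\overline I\,\overline T_{\overline Q})$, with equality if and only if $(\overline T/\overline I)_{\overline Q}$ is geometrically regular (i.e.\ smooth) over $\kappa_q(A)$. Since $A_q\to T_Q$ is flat local, $qT_Q$ is unmixed of height $\hght(q)$, and as $A$ is universally catenary $T_Q$ is catenary, so $\hght_Q(qT+I)=\hght(qT_Q)+\hght\big((qT_Q+IT_Q)/qT_Q\big)=\hght(q)+\hght(\overline I\,\overline T_{\overline Q})$; this yields \eqref{jcb-eq-RkLsHt}. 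Because $R$ is smooth over $A$ at $\mfq$ precisely when $A_q\to R_\mfq$ is flat and the fibre $R_\mfq/qR_\mfq=(\overline T/\overline I)_{\overline Q}$ is smooth over $\kappa_q(A)$, the smoothness criterion follows. Finally, in the smooth case flatness gives $\Tor_1^{A_q}(R_\mfq,\kappa_q(A))=0$, hence $\Tor_1^{T_Q}(R_\mfq,T_Q/qT_Q)=0$ by flat base change, i.e.\ $qT_Q\cap IT_Q=qIT_Q$; lifting to $IT_Q$ a minimal generating set of $\overline I\,\overline T_{\overline Q}$ — which, by smoothness over the field $\kappa_q(A)$, is part of a regular system of parameters of the regular local ring $\overline T_{\overline Q}$, hence a regular sequence — Nakayama together with the lemma that regular sequences lift along flat local maps shows $IT_Q$ is generated by a regular sequence of length $c$ in the catenary ring $T_Q$. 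Therefore $\hght_Q(I)=\mu_Q(I)=c=\hght(\overline I\,\overline T_{\overline Q})$, while $\rk_\mfq(J_{R/A})=c$ since the Jacobian criterion over $\kappa_q(A)$ is an equality there; this is the final display of (1).

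\textbf{Part (2).} Now $A$, hence $T=A[\seq{x}{n}]$, hence $T_Q$, is regular, and $\hght(qT_Q)=\hght(q)$ as above. Applying \cref{pre-cor-SerreIntersec} to $qT_Q$ and $IT_Q$ in the regular local ring $T_Q$ (their sum is proper, lying in $QT_Q$) gives $\hght(qT_Q)+\hght(IT_Q)\geqslant\hght(qT_Q+IT_Q)=\hght_Q(qT+I)$, so with \eqref{jcb-eq-RkLsHt} we get $\rk_\mfq(J_{R/A})\leqslant\hght_Q(qT+I)-\hght(q)\leqslant\hght(IT_Q)=\hght_Q(I)$. For the Jacobian ideal, since $\Omega_{R/A}=\Coker(J_{R/A})$ the ideal $\sJ_i(R/A)=\mF_i(\Omega_{R/A})$ is generated by the $(n-i)$-minors of $J_{R/A}$, so $\cJ_{R/A}=\sJ_{n-\rho}(R/A)$ where $\rho$ is the largest determinantal rank of $J_{R/A}$, attained at some $\mfp\in\Min(R)$. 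At such $\mfp$, with preimage $Q$ in $T$ and $q=Q\cap A$, generic smoothness lets us apply the smooth case of (1) to obtain $\rk_\mfp(J_{R/A})=\hght_Q(I)$; running over $\mfp\in\Min(R)$, whose preimages are exactly the minimal primes of $I$ that avoid $W$ (disjointness from $W$ being downward closed), gives $\rho=\bht(I)$ (the ordinary big height when $R$ is equiheight, which is the case of interest in later sections). Hence $\cJ_{R/A}=\sJ_{n-\bht(I)}(R/A)$.

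The step I expect to be the main obstacle is the bookkeeping comparing invariants of the fibre — $\hght(\overline I\,\overline T_{\overline Q})$, $\mu_{\overline Q}(\overline I)$, and the rank of its Jacobian — with the corresponding invariants of $I$ in $T$: this relies on the $\Tor_1$-vanishing coming from flatness, on the lemma that regular sequences lift along flat local maps, and on repeated use of catenarity. A secondary point of care is isolating the precisely correct form of the Jacobian criterion, namely the one characterizing geometric regularity (smoothness) over the possibly imperfect field $\kappa_q(A)$, rather than mere regularity.
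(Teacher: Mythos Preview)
Your proof is correct and follows the paper's route: both reduce Part~(1) to the Jacobian criterion on the fibre $\kappa_q(A)[\seq{x}{n}]$ (the paper cites \cite[7.14, 8.1]{Kunz1986} where you unpack the argument by hand via $\Tor_1$-vanishing and lifting of regular sequences along the flat local map $A_q\to T_Q$), and both handle Part~(2) by Serre's inequality in $T_Q$ followed by generic smoothness at a minimal prime realizing the big height. The one organisational difference is that the paper first replaces $R$ by $T/I$, so that every minimal prime of $I$ corresponds to a minimal prime of $R$; this sidesteps the $W$-versus-$\bht(I)$ discrepancy you flag in your parenthetical.
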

\begin{proof}
Let $R'=T/I$ and $\mfq'$ be the preimage of $\mfq$ in $R'$. Then $R'_{\mfq'}\cong R_{\mfq}$, and any statement about $R_{\mfq}$ can be proved using the affine $A$-algebra $R'$ and the prime ideal $\mfq'$. Hence, we may replace $R$ by $R'$ and $\mfq$ by $\mfq'$ without affecting anything.

(1): Since everything here is local, we can work over the field $\kappa_q(A)$. Then \cite[7.14]{Kunz1986} shows that $\mu_{\mfq}(\Omega_{R/A})\geqslant \dim_{\mfq}(\kappa_q(A)\otimes_A R)$. Since $\rk_{\mfq}(J_{R/A})=n-\mu_{\mfq}(\Omega_{R/A})$, we have $\rk_{\mfq}(J_{R/A})\leqslant n- \dim_{\mfq}(\kappa_q(A)\otimes_A R)$.

Since $\kappa_q(A)\otimes_A R\cong \kappa_A \otimes_A T/ I(\kappa_A \otimes_A T)$, we have
\begin{align*}
    \dim_{\mfq}\kappa_q(A)\otimes_A R &=\dim_Q \kappa_A \otimes_A T/ I(\kappa_A \otimes_A T)\\
    &=n - \hght_Q(I(\kappa_A \otimes_A T)) \\
    &=n - \hght_Q(I(T/qT)) \\
    &=n-\hght_Q(qT+I) +\hght_Q(qT) \\
    &=n-\hght_Q(qT+I) +\hght(q).
\end{align*}

So $\rk_{\mfq}(J_{R/A})\leqslant \hght_Q(qT+I)-\hght(q)$.

By \cite[8.1]{Kunz1986}, we know that $R$ is smooth over $A$ at $\mfq$ if and only if $A_q\to R_{\mfq}$ is flat and $\mu_q(\Omega_{R/A})\leqslant \dim_{\mfq} R_q/qR_q$. If any of these equivalent conditions is satisfied, then $IT_Q$ is generated by a $T_Q$-regular sequence of length $\dim(T_Q/qT_Q)-\dim (R_{\mfq}/qR_{\mfq})$.

We note that $\dim T_Q/qT_Q = \dim T_Q - \hght(qT_Q) =\hght(Q) - \hght_Q(qT)=\hght(Q)-\hght(q)$. Since $R_{\mfq}/qR_{\mfq} \cong T_Q/(I+pT)T_Q$, we have $\dim R_{\mfq}/qR_{\mfq} = \dim T_Q - \hght((qT+I)T_Q) =\hght(Q) - \hght_Q(qT+I)$. So when $R$ is smooth over $A$ at $\mfq$, we know that $\mu_Q(I)=\hght_Q(I) = \hght_Q(qT+I)-\hght(q)$. In this case, since $IT_Q$ is generated by a $T_Q$-regular sequence, we have $\rk_{\mfq}(J_{R/A}) = \hght_Q(I)$.

(2): By (1), we know that $\rk_{\mfq}(J_{R/A})\leqslant\hght_Q (qT+I)-\hght(q)$.

Since $A$ is regular, so is $T=A[\seq{x}{n}]$ and its localization $T_Q$. By \cref{pre-cor-SerreIntersec}, we have $\hght((qT+I)T_Q)\leqslant \hght(qT_Q)+\hght(IT_Q)$. Note that $qT+I\subseteq Q$, so $\hght((qT+I)T_Q)=\hght_Q(qT+I)$. Since $qT$ is a prime of $T$ contained in $Q$, we have $\hght(qT_Q)=\hght_Q(qT)=\hght(qT)= \hght(q)$. So
\[
\hght_Q(qT+I)\leqslant \hght(q)+\hght_Q(I) \Rightarrow \hght_Q(qT+I)- \hght(q) \leqslant\hght_Q(I)
\]
Hence, we have $\rk_{\mfq}(J_{R/A})=\hght_Q (qT+I)-\hght(q)\leqslant \hght_Q(I)$.

To prove the second statement, we need to show that the maximal rank, i.e., $\bht(I)$ can be achieved. Let $\mfq$ be a minimal prime of $R$ such that $\hght_Q(I)=\bht(I)=\hght(Q)$ where $Q$ is the preimage of $\mfq$ in $T$. Since $I$ is radical, we have $IT_Q=QT_Q$. Let $q:=Q\cap A$. Then 
\[
qA_q = QT_Q\cap A_q = IT_Q\cap A_q = (I\cap A)A_q = (0).
\]
Since $A\to R$ is generically smooth, we know that $A_q\to R_{\mfq}$ is smooth. Hence, in this case, $\rk_{\mfq}(J_{R/A})=\hght_Q(I)=\bht(I)$. Therefore $\sJ_{n-\bht(I)}(R/A)\neq (0)$.

On the other hand, any $i$th Jacobian ideal with $i<n-\bht(I)$ must be zero. If there is some $i_0<n-\bht(I)$ such that $\sJ_{i_0}(R/A)\neq (0)$, then there is some size $n-i_0$ minor nonzero, call it $\Delta$. Since $R$ is reduced, $\Delta$ is not nilpotent. So there is some minimal prime $\mfq'$ of $R$ such that $\Delta\nin\mfq'$. Then $\rk_{\mfq'}(J_{R/A})\geqslant n-i_0 > \bht(I)\geqslant \hght_{Q'}(I)$ where $Q'$ is the preimage of $\mfq'$ in $T$, which violates the first inequality.
\end{proof}

\begin{cor}\label{jcb-cor-RankOverField}
  Let $R$ be a $K$-algebra that is essentially of finite type over $K$, where $K$ is a perfect field. Suppose that we have a presentation $R=W^{-1}T/I$ where $T=K[\seq{x}{n}]$, $I\subseteq T$ an ideal and $W\subseteq T$ a multiplicatively closed subset. For a prime ideal $\mfq\in\Spec(R)$, let $Q$ be the preimage of $\mfq$ in $T$. We let $J_{R/K}$ be the Jacobian matrix. Then
  \begin{enumerate}
  \item  $\rk_{\mfq}(J_{R/K})\leqslant\hght_Q (I)$ and $R$ is smooth over $K$ if and only if the equality holds.
      \item $R$ is regular at $\mfq$ if and only if $\sJ_{\dim T-\hght_Q(I)}(R/K)\not\subseteq \mfq$ where $Q$ is the preimage of $\mfq$ in $T$.
      \item The Jacobian ideal is $\cJ_{R/K}=\sJ_{\dim T-\bht(I)}(R/K)$. If $R$ is equidimensional, then $\bht(I)=\hght(I)$ and $\dim R=\dim T-\hght(I)$. So the Jacobian ideal $\cJ_{R/K}=\sJ_{\dim R}(R/K)$.
      \item Let $\seq{\mfq}{t}$ be the set of all minimal primes of $R$ and let $Q_i$ be the preimage of $\mfq_i$ in $T$ ($1\leqslant i\leqslant t$). Then 
      \[
      \mrm{Sing}(R)= V\left(\prod_{i=1}^t (\sJ_{\dim T-\hght(Q_i)}(R/K) +\mfq_i)\right)= V\left(\bigcap_{i=1}^t (\sJ_{\dim T-\hght(Q_i)}(R/K) +\mfq_i)\right).  
      \]
      If $R$ is equidimensional, then the right-hand side of the above equality simplifies to $V(\cJ_{R/K})$, and we have $\mrm{Sing}(R)= V(\cJ_{R/K})$.
  \end{enumerate}
  where notation is from \cref{pre-defn-Notation}.
\end{cor}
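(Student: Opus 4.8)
The plan is to deduce all four parts from \cref{jcb-prop-RankOverReg}, specialized to the base $A = K$, together with two standard facts about perfect fields. Fix the notation $T = K[\seq{x}{n}]$, so $\dim T = n$, write $J := J_{R/K}$, and recall that $\Omega_{R/K}$ has a presentation $R^m \xrightarrow{\,J\,} R^n \to \Omega_{R/K} \to 0$, so that its $j$th Fitting ideal $\sJ_j(R/K)$ is generated by the $(n-j)$-minors of $J$; consequently, for $\mfq \in \Spec R$, we have $\sJ_j(R/K) \not\subseteq \mfq$ if and only if $\rk_\mfq(J) \geqslant n - j$. Two preliminary reductions make \cref{jcb-prop-RankOverReg} directly applicable. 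First, since $K$ is a field, for every $\mfq\in\Spec R$ with preimage $Q$ in $T$ the trace $q := Q\cap K$ is $(0)$; hence $\hght(q) = 0$, $qT + I = I$, the flatness of $K \to R_\mfq$ is automatic, and $I \cap K = (0)$ because $I$ is proper. Feeding this into \cref{jcb-prop-RankOverReg}(1) gives exactly part (1): $\rk_\mfq(J) \leqslant \hght_Q(I)$, with equality if and only if $R$ is smooth over $K$ at $\mfq$. Second, since $K$ is perfect, $R_\mfq$ is regular if and only if $R$ is smooth over $K$ at $\mfq$ (regularity of $R_\mfq$ forces geometric regularity over the perfect field $K$, hence smoothness; the converse is automatic); applying this at minimal primes, a reduced $K$-algebra essentially of finite type over $K$ is generically smooth over $K$, since each $\kappa_\mfq(R)$ at a minimal prime is a finitely generated, hence separably generated, extension of $K$.

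Part (2) is now immediate: $R$ is regular at $\mfq$ $\iff$ $R$ is smooth over $K$ at $\mfq$ $\iff$ equality holds in part (1) $\iff$ $\rk_\mfq(J) \geqslant \hght_Q(I)$ (the reverse inequality being automatic) $\iff$ $\sJ_{\dim T - \hght_Q(I)}(R/K) \not\subseteq \mfq$, by the Fitting-ideal/rank dictionary. For part (3): $K$ is regular, $I \cap K = (0)$, $R$ is reduced by hypothesis, and $R$ is generically smooth over $K$ by the second reduction, so \cref{jcb-prop-RankOverReg}(2) gives $\cJ_{R/K} = \sJ_{\dim T - \bht(I)}(R/K)$. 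When $R$ is equidimensional, the minimal primes of $I$ (equivalently, of $R$) have common height, so $\bht(I) = \hght(I)$, and $\dim R = \dim T - \hght(I)$ by the dimension formula in $T$ (an affine $K$-algebra, hence catenary and equidimensional); thus $\cJ_{R/K} = \sJ_{\dim R}(R/K)$.

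Part (4) requires genuine argument beyond citing the Proposition. Combining the proofs of (1) and (2), $\mrm{Sing}(R) = \set{\mfq : R_\mfq \text{ not regular}} = \set{\mfq : \rk_\mfq(J) < \hght_Q(I)}$. Since $Q\cap W = \emptyset$, the minimal primes of $I$ contained in $Q$ are exactly those $Q_i$ with $Q_i \subseteq Q$, so $\hght_Q(I) = \min\set{\hght(Q_i) : Q_i \subseteq Q}$; hence $\mfq \in \mrm{Sing}(R)$ if and only if $\rk_\mfq(J) < \hght(Q_i)$ for every $i$ with $\mfq_i \subseteq \mfq$, i.e.\ $\sJ_{\dim T - \hght(Q_i)}(R/K) \subseteq \mfq$ for all such $i$. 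The point that collapses this ``for all $i$'' into the stated product is that a regular local ring is a domain: if $\mfq$ contains two distinct minimal primes of $R$ then $R_\mfq$ is not a domain, so $\mfq \in \mrm{Sing}(R)$ and, consistently, $\rk_\mfq(J) < \hght_Q(I) \leqslant \hght(Q_i)$ for every relevant $i$; whereas if $\mfq$ contains a unique minimal prime $\mfq_{i_0}$ the condition reads $\sJ_{\dim T - \hght(Q_{i_0})}(R/K) \subseteq \mfq$ alone. In either case, $\mfq \in \mrm{Sing}(R)$ if and only if $\mfq \supseteq \sJ_{\dim T - \hght(Q_i)}(R/K) + \mfq_i$ for some $i$, i.e.\ $\mfq \in V\bigl(\prod_{i=1}^t (\sJ_{\dim T - \hght(Q_i)}(R/K) + \mfq_i)\bigr)$; and this closed set is unchanged if the product is replaced by the intersection, which has the same radical. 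Finally, if $R$ is also equidimensional, all $\hght(Q_i)$ equal $\hght(I)$, every factor becomes $\cJ_{R/K} + \mfq_i$ by part (3), and $V\bigl(\prod_i(\cJ_{R/K} + \mfq_i)\bigr) = V(\cJ_{R/K}) \cap \bigcup_i V(\mfq_i) = V(\cJ_{R/K})$ because $\mfq_1, \dots, \mfq_t$ exhaust $\Min(R)$.

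The main obstacle is part (4): there is no single deep step, but one must carefully match the ``rank drops below $\hght_Q(I)$'' description of $\mrm{Sing}(R)$ with the product of ideals, and the one non-formal point is reconciling the ``for some $i$'' in that product with the ``for all $i$'' in the rank condition — which works only because a regular local ring has a unique minimal prime. A pervasive, if standard, ingredient to handle carefully is the equivalence of regularity and smoothness over a perfect field (and the resulting generic smoothness of reduced essentially-of-finite-type $K$-algebras), best cited or briefly justified via separable generation of finitely generated field extensions of perfect fields.
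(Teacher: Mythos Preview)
Your proof is correct and follows essentially the same route as the paper: specialize \cref{jcb-prop-RankOverReg} to $A=K$ (so $q=0$), use perfectness of $K$ to identify regularity with smoothness and reducedness with generic smoothness, and translate via the Fitting-ideal/rank dictionary. The only minor difference is in part~(4), where the paper picks the minimal prime $\mfq_i\subseteq\mfq$ achieving $\hght(Q_i)=\hght_Q(I)$ directly rather than case-splitting on whether $\mfq$ contains a unique minimal prime; also, your ``$R$ is reduced by hypothesis'' in part~(3) is not literally in the corollary's statement, though the paper's own proof tacitly relies on the same assumption via \cref{jcb-prop-RankOverReg}(2).
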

\begin{proof}
For (1), let $A=K$ in \cref{jcb-prop-RankOverReg}. Then $q=0$ and the equality follows.

For (2), $R$ is smooth at $\mfq$ if and only if $\rk_{\mfq}(J_{R/K})=\hght_Q(I)$ if and only if there is a size $\hght_Q(I)$ minor of $J_{R/K}$ outside $\mfq$ if and only if $\sJ_{n-\hght_Q(I)}(R/K)\not\subseteq \mfq$.

For (3), the first statement follows directly from \cref{jcb-prop-RankOverReg}(2) and the fact that any field extension of a perfect field is separable, hence, geometrically regular \cite[5.18, 7.13]{Kunz1986}. The second statement follows from the first one.

For (4), let $\mfp\in\mrm{Sing}(R)$. By (3), we have $\sJ_{n-\hght_P(I)}(R/K)\subseteq \mfp$ where $P$ is the preimage of $\mfp$ in $T$. Suppose that $\mfq_i$ is the minimal prime of $R$ such that $Q_i$ is contained in $P$ and $\hght(Q_i)=\hght_P(I)$. Then $\mfp$ contains both $\mfq_i$ and $\sJ_{n-\hght_P(I)}(R/K)=\sJ_{n-\hght(Q_i)}(R/K)$, which shows that $\mfq_i+\sJ_{n-\hght(Q_i)}(R/K)\subseteq \mfp$. So $\subseteq$ is shown. 

On the other hand, suppose that $\mfp$ is not in the right-hand side of the equality. If $\mfp$ contains a minimal prime $\mfq_i$, then $\mfp$ cannot contain $\sJ_{n-\hght(Q_i)}(R/K)$. This is true for any minimal prime that $\mfp$ contains. So there is some $\mfq_i$ such that $\hght(Q_i)=\hght_P(I)$ and $\mfq_i\subseteq \mfp$. Then $\mfp$ does not contain $\sJ_{n-\hght_P(I)}(R/K)$, so $R$ is regular at $\mfp$, which shows the $\supseteq$ direction.

The last statement about equidimensional rings comes down to the following computation: let $h=\bht(I)=\hght(I)$, then
\begin{align*}
     \bigcap_{i=1}^t (\sJ_{\dim T-\hght(Q_i)}(R/K) +\mfq_i) &=\bigcap_{i=1}^t (\sJ_{n-h}(R/K) +\mfq_i)\\
     &=(\bigcap_{i=1}^t \mfq_i)+\sJ_{n-h}(R/K)\\
     &=(0)+\cJ_{R/K}.
\end{align*}
\end{proof}

\begin{ex}
Let $R=K[x,y,z]/(xz,yz)$ as an example. We have $T=K[X,Y,Z], I=(XZ,YZ)$ with $T\to R$ sending $X\mapsto x,Y\mapsto y,Z\mapsto z$. We know that $\hght(I)=\hght((z))=1,\bht(I)=\hght((x,y))=2$. The Jacobian matrix $J_{R/K}$ is computed to be
 \[
 \begin{pmatrix}
   z & 0 & y \\
   0 & z & x
 \end{pmatrix}.
 \]
 Let $\mfp_0=(x,y)R$ and $P_1=(z)R$ be the minimal primes of $R$ and let $P_0=(X,Y)T$, $P_1=(Z)T$ be their preimages in $T$ respectively. Then we have $\sJ_{3-2}(R/K)=(xz,yz,z^2)R$ and $\sJ_{3-1}(R/K)=(x,y,z)R$.
 
 The Jacobian ideal, by \cref{jcb-cor-RankOverField}(2), is $\sJ_{3-2}(R/K)=(z^2,xz,yz)$, which does not define the singular locus because $(z)\in V(\cJ_{R/K})$ but $R_{(z)}\cong K(Z)$ is regular. By \cref{jcb-cor-RankOverField}(3), we have 
 \[
 (\mfp_0+\sJ_1(R/K))\cap(\mfp_1+\sJ_2(R/K))=(x,y,z^2)\cap (x,y,z)=(x,y,z^2).
 \]
 So the singular locus of $R$ is $V((x,y,z^2))=V((x,y,z))=\set{(x,y,z)}$.
\end{ex}
\begin{rmk}
 In fact, when $R$ defined in \cref{jcb-cor-RankOverField} is not equidimensional, we always have a proper containment
 \[
 \mrm{Sing}(R)\subsetneqq V(\cJ_{R/K}).
 \]
 The containment part is easy as for any $\mfp\in\mrm{Sing}(R)$, by \cref{jcb-cor-RankOverField}(4), we have
 \begin{align*}
     \mfp & \supseteq \bigcap_{i=1}^t (\sJ_{\dim T-\hght(P_i)}(R/K) +\mfp_i) \\
     & \supseteq \bigcap_{i=1}^t (\sJ_{\dim T-\bht(I)}(R/K) +\mfp_i)= \bigcap_{i=1}^t (\cJ_{R/K} +\mfp_i)\\
     & \supseteq  \cJ_{R/K}.
 \end{align*}
 On the other hand, there is some minimal prime $\mfq$ of $R$ such that $\hght(Q)<\bht(I)$ where $Q$ is the preimage of $\mfq$ in $T$. Then since $R$ is regular at $\mfq$, we know that $\mu_{\mfq}(\Omega_{R/K})=\dim T-\hght_Q(I)=n-\hght(Q)$. By \cite[10.6]{Kunz1986}, we have $\sJ_{n-\hght(Q)-1}(R/K)\subseteq \mfq$ and $\sJ_{n-\hght(Q)}(R/K)\not\subseteq \mfq$. Since $\hght(Q)< \bht(I)\Rightarrow \hght(Q)+1\leqslant\bht(I)$, we have $\cJ_{R/K}=\sJ_{n-\bht(I)}(R/K)\subseteq \sJ_{n-\hght(Q)-1}(R/K)$. So $\mfq\in V(\cJ_{R/K})$ but $R$ is regular at $\mfq$.
\end{rmk}

 We generalize \cite[14.13]{Kunz1986} to the affine-analytic case.

\begin{thm}\label{jcb-thm-AbsRankOverField}
 Let $R$ be a reduced affine-analytic $K$-algebra where $K$ is a field. Suppose that we have a presentation $R=T/I$ where $T=K\ps{\seq{x}{n}}[\seq{z}{m}]$. Write $\cA=K\ps{\seq{x}{n}}$. For a prime ideal $\mfq\in\Spec(R)$, let $Q$ be the preimage of $\mfq$ in $T$. We let $J_{R/K}$ be the Jacobian matrix from $\widetilde{\Omega}_{R/K}$. Then the following are equivalent:
  \begin{enumerate}
  \item $R$ is absolutely regular at $\mfq$.
  \item $(\widetilde{\Omega}_{R/K})_{\mfq}$ is a free $R_{\mfq}$-module of rank $n+m-\hght_Q(I)$.
  \item $\mu_{\mfq}(\widetilde{\Omega}_{R/K})\leqslant n+m-\hght_Q(I)$.
  \end{enumerate}
 where notation is from \cref{pre-defn-Notation}.
\end{thm}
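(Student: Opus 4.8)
The plan is to convert the theorem into a statement about the Jacobian matrix of the presentation $R=T/I$ and then run the cycle $(2)\Rightarrow(3)\Rightarrow(1)\Rightarrow(2)$, with the two substantive implications handled by base changing to a perfect field and applying the classical Jacobian criterion for regular local rings. First I would record that $\widetilde{\Omega}_{K\ps{\seq{x}{n}}/K}$ is free of rank $n$ on $\dd x_1,\dots,\dd x_n$ (Kunz, Chapter~12), so that by transitivity of universal finite extensions along the polynomial extension $\cA:=K\ps{\seq{x}{n}}\hookrightarrow T=\cA[\seq{z}{m}]$ the module $\widetilde{\Omega}_{T/K}$ is $T$-free of rank $n+m$ on $\dd x_1,\dots,\dd x_n,\dd z_1,\dots,\dd z_m$. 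The second fundamental (conormal) sequence for universally finite differentials then gives $I/I^2\to R\otimes_T\widetilde{\Omega}_{T/K}\to\widetilde{\Omega}_{R/K}\to 0$, so $\widetilde{\Omega}_{R/K}=\Coker(J_{R/K})$, where $J_{R/K}$ is the $(n+m)\times r$ matrix whose columns are the gradients (in the derivations $\partial/\partial x_i,\partial/\partial z_k$) of a generating set of $I$. By Nakayama, $\mu_{\mfq}(\widetilde{\Omega}_{R/K})=(n+m)-\rk_{\mfq}(J_{R/K})$ for every $\mfq$, and $(\widetilde{\Omega}_{R/K})_{\mfq}$ is $R_{\mfq}$-free of rank $n+m-\hght_Q(I)$ exactly when $\rk_{\mfq}(J_{R/K})=\hght_Q(I)$ and this cokernel is free; in particular $(2)\Rightarrow(3)$ is immediate.

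Next I would isolate two auxiliary facts. The first is an \emph{unconditional} upper bound $\rk_{\mfq}(J_{R/K})\le\hght_Q(I)$: localizing $T$ at $Q$ makes $T_Q$ a regular local ring (a localization of the regular ring $K\ps{\seq{x}{n}}[\seq{z}{m}]$), the span of $\{\dd f\bmod\mfq:f\in I\}$ is the image under $\dd$ of the image of $I$ in $\mm_{T_Q}/\mm_{T_Q}^2$, and any elements of $I$ with linearly independent linear parts form part of a regular system of parameters contained in $IT_Q$, so that image has dimension at most $\hght(IT_Q)=\hght_Q(I)$. The second is a \emph{perfect-field complement}: if $L$ is perfect, every localization $(T_L)_{Q'}$ of $T_L:=L\ps{\seq{x}{n}}[\seq{z}{m}]$ is geometrically regular over $L$, so the fundamental sequence $0\to\mm/\mm^2\to\widetilde{\Omega}_{(T_L)_{Q'}/L}\otimes\kappa\to\widetilde{\Omega}_{\kappa/L}\to 0$ is left exact (Kunz's differential criterion, using analytic separability over a perfect field); combining this with the first fact, for an ideal $I\subseteq T_L$ and $h:=\hght_{Q'}(I)$ the Jacobian rank of $T_L/I$ over $L$ at $Q'$ equals $h$ if and only if $I(T_L)_{Q'}$ is generated by part of a regular system of parameters of $(T_L)_{Q'}$, if and only if $T_L/I$ is regular at $Q'$, and in that case the split conormal sequence makes $\widetilde{\Omega}_{(T_L/I)/L}$ free of rank $n+m-h$ at $Q'$.

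With these in hand, for $(3)\Rightarrow(1)$ note that $(3)$ says $\rk_{\mfq}(J_{R/K})\ge\hght_Q(I)$, hence $\rk_{\mfq}(J_{R/K})=\hght_Q(I)$ by the upper bound. Given any field extension $L/K$ and $\mfp\in\Spec(R_L)$ over $\mfq$, enlarge $L$ to an algebraic closure $\overline L$ (which is perfect) and choose $\mfp'$ over $\mfp$; since $R_L\to R_{\overline L}$ is faithfully flat it suffices to prove $(R_{\overline L})_{\mfp'}$ regular and descend regularity. By \cref{jcb-prop-ConstFieldExt}(1), $R_{\overline L}=T_{\overline L}/IT_{\overline L}$; the Jacobian of $R_{\overline L}$ over $\overline L$ is the image of $J_{R/K}$ (the continuous $x$-derivations extend), so its determinantal rank at $\mfp'$ is still $\hght_Q(I)$, while the height of $IT_{\overline L}$ at the preimage $Q_{\overline L}$ of $\mfp'$ equals $\hght_Q(I)$ by the faithfully flat base change $T\to T_{\overline L}$ (\cref{jcb-lem-HeightUnderFlat}, or by comparing the fibers of $T_Q\to(T_{\overline L})_{Q_{\overline L}}$ and $R_{\mfq}\to(R_{\overline L})_{\mfp'}$). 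The perfect-field fact then gives $(R_{\overline L})_{\mfp'}$ regular. For $(1)\Rightarrow(2)$, fix any perfect $L\supseteq K$ and any $\mfp\in\Spec(R_L)$ over $\mfq$ (such $\mfp$ exists by faithful flatness); by hypothesis $(R_L)_{\mfp}$ is regular, so by the perfect-field fact $\widetilde{\Omega}_{(R_L)_{\mfp}/L}$ is free of rank $n+m-\hght_Q(I)$ (again using that the relevant height is preserved). On the other hand \cref{jcb-prop-ConstFieldExt}(6) together with compatibility of $\widetilde{\Omega}$ with localization gives $\widetilde{\Omega}_{(R_L)_{\mfp}/L}\cong(R_L)_{\mfp}\otimes_{R_{\mfq}}(\widetilde{\Omega}_{R/K})_{\mfq}$; since $R_{\mfq}\to(R_L)_{\mfp}$ is faithfully flat and $(\widetilde{\Omega}_{R/K})_{\mfq}$ is finitely presented, faithfully flat descent of freeness forces $(\widetilde{\Omega}_{R/K})_{\mfq}$ to be free of rank $n+m-\hght_Q(I)$, which is $(2)$, closing the cycle.

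The main obstacle, and the reason ``absolutely'' is needed rather than just ``regular'', is the non-perfect residue-field phenomenon: over a general $K$, regularity of $R_{\mfq}$ by itself does not force $\dd\colon\mm/\mm^2\to\widetilde{\Omega}_{R_{\mfq}/K}\otimes\kappa_{\mfq}$ to be injective, so the Jacobian rank can drop below $\hght_Q(I)$ and $\widetilde{\Omega}$ need not be free of the expected rank. Circumventing this forces the passage to perfect field extensions, hence the use of faithfully flat descent (of regularity for $(3)\Rightarrow(1)$, of freeness for $(1)\Rightarrow(2)$), and it requires that the constant-field-extension formalism of \cref{jcb-prop-ConstFieldExt} --- the identification $R_L=T_L/IT_L$, the base change $\widetilde{\Omega}_{R_L/L}\cong R_L\otimes_R\widetilde{\Omega}_{R/K}$, and the preservation of $\hght_Q(I)$ --- behaves as in the finite-type case. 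Verifying the inputs on differentials for power series rings (freeness of $\widetilde{\Omega}_{K\ps{\seq{x}{n}}/K}$, analytic separability over perfect fields, and left exactness of the fundamental sequence for geometrically regular analytic algebras) is the place where one must genuinely invoke Kunz's theory rather than the elementary polynomial picture.
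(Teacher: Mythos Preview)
Your proposal is correct and follows a route that is close in spirit to the paper's but differs in execution. The paper does not run the cycle $(2)\Rightarrow(3)\Rightarrow(1)\Rightarrow(2)$ explicitly; instead it first establishes the identity
\[
\dim(R_{\mfq})+\atdeg(F/K)+\tdeg(\kappa_{\mfq}(R)/F)=n+m-\hght_Q(I),
\]
with $F=\Fr(\mA(\kappa_{\mfq}(R)))$, identifying the candidate rank $n+m-\hght_Q(I)$ with the intrinsic quantity appearing in Kunz's rank formula for semianalytic residue fields, and then simply declares that the proof of \cite[14.13]{Kunz1986} goes through verbatim once one substitutes \cref{jcb-prop-ConstFieldExt} for \cite[14.11]{Kunz1986} and plugs in this rank. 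Your argument bypasses the analytic transcendence degree computation entirely: you work directly with the conormal presentation $\widetilde{\Omega}_{R/K}=\Coker(J_{R/K})$, prove the unconditional bound $\rk_{\mfq}(J_{R/K})\le\hght_Q(I)$ by the regular-system-of-parameters trick in $T_Q$, and then reduce both nontrivial implications to the perfect-field Jacobian criterion together with faithfully flat descent (of regularity for $(3)\Rightarrow(1)$, of freeness for $(1)\Rightarrow(2)$), using \cref{jcb-prop-ConstFieldExt} and height preservation under the flat base change $T\to T_L$. The paper's approach is terser and makes the link to Kunz's transcendence-degree formalism explicit; yours is more self-contained, makes visible exactly where the word ``absolutely'' enters (through descent along $R_{\mfq}\to (R_L)_{\mfp}$), and avoids invoking the structure theory of semianalytic field extensions. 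Both ultimately rest on the same technical input you flag at the end: left exactness of the second fundamental sequence for localizations of $L\ps{x}[z]$ when $L$ is perfect.
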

\begin{proof}
Let $R'=T/I$, and let $\mfq'$ be the preimage of $\mfq$ in $R'$. Since $R'_{\mfq'}\cong R_{\mfq}$. Note that $R$ may not be reduced. But $R$ is still a finitely generated affine algebra over $A$. Since both \cite[13.15, 13.16]{Kunz1986}, which we use in the following proof, work for the setting whenever $R$ is essentially of finite type over $A$, we will replace $R$ and $\mfq$ by $R'$ and $\mfq'$ respectively and still write them as $R$ and $\mfq$.

Let $q=Q\cap \cA$. We note that $R/\mfq\cong T/Q$. Since $T/Q$ is finitely generated over $\cA/q$, we know that $\mA(\kappa_{\mfq}(R))$ is module-finite over $\cA/q$. Let $F:=\Fr(\mA(\kappa_\mfq(R)))$. Then $\atdeg(F/K)=\dim \cA/q=n-\hght q$.

Since $T$ is finitely generated over $\cA$, by the dimension formula, we have $\hght Q -\hght q = m - \tdeg(\kappa_{\mfq}(R)/F)$ since $\kappa_{\mfq}(R)\cong \Fr(T/Q)$.

So we can write
\begin{align*}
 \dim(R_{\mfq})&+\atdeg(F/K)+\tdeg(\kappa(\mfq)/F)\\
  &=\dim R_{\mfq}+(n-\hght q)+ (m-\hght Q+\hght q)\\
  &=n+m+\dim R_{\mfq} - \hght Q\\
  &=n+m+(\dim T_Q -\hght_Q I) - \dim T_Q\\
  &=n+m-\hght_Q (I).
\end{align*}

To prove the equivalence, we simply make the following modifications to the proof of \cite[14.13]{Kunz1986}
\begin{itemize}
    \item The reference to \cite[14.11]{Kunz1986} is replaced by references to \cref{jcb-prop-ConstFieldExt}.
    \item The reference to \cite[13.15, 13.16]{Kunz1986} for computing the rank is replaced by the calculation above and the rank is replaced by $n+m-\hght_Q(I)$.
\end{itemize}
and the same proof works.
\end{proof}

\begin{cor}\label{jcb-cor-AbsRankOverField}
  Using the notation and the assumption of \cref{jcb-thm-AbsRankOverField}, we have
  \begin{enumerate}
  \item  $\rk_{\mfq}(J_{R/K})\leqslant\hght_Q (I)$ and $R$ is smooth over $K$ if and only if the equality holds.
      \item $R$ is regular at $\mfq$ if and only if $\sJ_{\dim T-\hght_Q(I)}(R/K)\not\subseteq \mfq$ where $Q$ is the preimage of $\mfq$ in $T$.
      \item The Jacobian ideal $\cJ_{R/K}=\sJ_{\dim T-\bht(I)}(R/K)$. If $I$ has pure height $h$, then $\bht(I)=\hght(I)=h$. So the Jacobian ideal $\cJ_{R/K}=\sJ_{\dim T-h}(R/K)$.
      \item If $I$ does not have pure height, then $\mrm{Abs}\mrm{Sing}(R)\subseteq V(\cJ_{R/K})$. If $I$ has pure height, then $\mrm{Abs}\mrm{Sing}(R)= V(\cJ_{R/K})$. Here $\mrm{Abs}\mrm{Sing}$ is the ``absolute singular locus'', which coincides with the (usual) singular locus if $\chr(K)=0$.
  \end{enumerate}
\end{cor}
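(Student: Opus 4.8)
The plan is to mirror the proof of \cref{jcb-cor-RankOverField}, with \cref{jcb-thm-AbsRankOverField} playing the role that \cref{jcb-prop-RankOverReg} played there. First I would record the bookkeeping that identifies $\widetilde{\Omega}_{R/K}$ with a cokernel of the Jacobian matrix: by \cref{pre-cor-HeightOfPrimes}, $\dim T = n+m$, and $\widetilde{\Omega}_{T/K}$ is free of rank $n+m$ over $T$ (it decomposes as $T\otimes_{K\ps{\seq{x}{n}}}\widetilde{\Omega}_{K\ps{\seq{x}{n}}/K}\oplus\Omega_{T/K\ps{\seq{x}{n}}}\cong T^{n}\oplus T^{m}$, the first summand being the rank-$n$ power-series part and the second the rank-$m$ polynomial part). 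Hence for generators $f_1,\dots,f_k$ of $I$ the module $\widetilde{\Omega}_{R/K}$ is the cokernel of $R^{k}\xrightarrow{J_{R/K}}R^{n+m}$, so $\sJ_i(R/K)$ is generated by the $(n+m-i)$-minors of $J_{R/K}$ and $\mu_{\mfq}(\widetilde{\Omega}_{R/K})+\rk_{\mfq}(J_{R/K})=n+m$ for every $\mfq$.

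For (1): the inequality $\rk_{\mfq}(J_{R/K})\leqslant\hght_Q(I)$ is this identity together with the lower bound $\mu_{\mfq}(\widetilde{\Omega}_{R/K})\geqslant n+m-\hght_Q(I)$ that is already isolated inside the proof of \cref{jcb-thm-AbsRankOverField} (the ``$\mu\geqslant$ geometric rank'' estimate of \cite[13.15, 13.16]{Kunz1986} combined with the dimension computation carried out there), and \cref{jcb-thm-AbsRankOverField} tells us equality holds precisely when $(\widetilde{\Omega}_{R/K})_{\mfq}$ is free of that rank, i.e.\ when $R$ is absolutely regular at $\mfq$ (= smooth over $K$ at $\mfq$; = regular at $\mfq$ when $\chr(K)=0$). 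For (2): by (1), $R$ is absolutely regular at $\mfq$ iff $\rk_{\mfq}(J_{R/K})=\hght_Q(I)$, iff some $\hght_Q(I)$-minor of $J_{R/K}$ avoids $\mfq$, iff $\sJ_{(n+m)-\hght_Q(I)}(R/K)=\sJ_{\dim T-\hght_Q(I)}(R/K)\not\subseteq\mfq$; when $\chr(K)=0$ this is the regularity statement.

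For (3), I would take $R$ absolutely reduced (this is needed, and is present in every application of the corollary). Since $R$ is reduced, $\rho:=\max_{\mfq}\rk_{\mfq}(J_{R/K})$ is attained at some minimal prime and is $\leqslant\bht(I)$ by (1). Conversely, pick a minimal prime $\mfq$ of $R$ whose preimage $Q$ in $T$ is a minimal prime of $I$ with $\hght(Q)=\bht(I)$; then $\hght_Q(I)=\hght(Q)=\bht(I)$, and absolute reducedness makes $R_{\mfq}=\kappa_{\mfq}(R)$ absolutely regular over $K$ (the remark after \cref{jcb-defn-AbsoluteReduced}), so (1) forces $\rk_{\mfq}(J_{R/K})=\bht(I)$; hence $\rho=\bht(I)$. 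Now $\sJ_i(R/K)\neq(0)$ exactly when $i\geqslant n+m-\rho$ (a nonzero $(n+m-i)$-minor of $J_{R/K}$ would avoid some minimal prime of the reduced ring $R$, forcing $\rho\geqslant n+m-i$), so the first nonzero Fitting ideal is $\cJ_{R/K}=\sJ_{(n+m)-\bht(I)}(R/K)=\sJ_{\dim T-\bht(I)}(R/K)$, which is $\sJ_{\dim T-h}(R/K)$ when $I$ has pure height $h$.

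For (4): if $\mfp\in\mrm{Abs}\mrm{Sing}(R)$, then $R$ is not absolutely regular at $\mfp$, so by (2) $\sJ_{\dim T-\hght_P(I)}(R/K)\subseteq\mfp$ (with $P$ the preimage of $\mfp$ in $T$); since $\hght_P(I)\leqslant\bht(I)$ and Fitting ideals increase with the index, $\cJ_{R/K}=\sJ_{\dim T-\bht(I)}(R/K)\subseteq\sJ_{\dim T-\hght_P(I)}(R/K)\subseteq\mfp$, which proves $\mrm{Abs}\mrm{Sing}(R)\subseteq V(\cJ_{R/K})$ with no extra hypothesis. If $I$ has pure height $h$, then $\hght_P(I)=h$ for every prime $P\supseteq I$, so (2) reads ``$R$ is absolutely regular at $\mfp$ iff $\cJ_{R/K}\not\subseteq\mfp$'', that is $\mrm{Abs}\mrm{Sing}(R)=V(\cJ_{R/K})$, and this becomes $\mrm{Sing}(R)=V(\cJ_{R/K})$ when $\chr(K)=0$. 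The only genuine obstacle is the lower-bound step in (3): it really uses that the localizations at minimal primes are absolutely regular, so the absolutely-reduced hypothesis cannot be dropped from (3)--(4) --- e.g.\ for $R=K[x]/(x^p-t)$ with $K=\bF_p(t)$ one has $\widetilde{\Omega}_{R/K}\cong R$ free, hence $\cJ_{R/K}=R$, while $\sJ_{\dim T-\bht(I)}(R/K)=\sJ_{0}(R/K)=(0)$. Everything else is formal once \cref{jcb-thm-AbsRankOverField} is in hand.
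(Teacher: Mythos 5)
Your proof is correct and matches the paper's own approach, which is the one-line remark that the argument of \cref{jcb-cor-RankOverField} goes through with \cref{jcb-thm-AbsRankOverField} substituted for \cref{jcb-prop-RankOverReg}; you have simply supplied the bookkeeping faithfully.

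You also correctly flag an implicit hypothesis that the corollary's statement does not make explicit: for (3), and hence for (4) which uses (3), it is not enough that $R$ be reduced when $K$ is imperfect of characteristic $p$. One needs $R$ absolutely reduced so that the localizations at the minimal primes are absolutely regular and the Jacobian matrix attains rank $\bht(I)$ at one of them; otherwise the first nonzero Fitting ideal of $\widetilde{\Omega}_{R/K}$ can strictly exceed $\sJ_{\dim T-\bht(I)}(R/K)$. Your example $R=K[x]/(x^p-t)$ over $K=\bF_p(t)$, where $\cJ_{R/K}=R$ while $\sJ_{\dim T-\bht(I)}(R/K)=(0)$, makes this concrete, and it also breaks the containment in (4) since $\mrm{Abs}\mrm{Sing}(R)=\Spec(R)$ while $V(\cJ_{R/K})=\emptyset$. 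The affine analogue \cref{jcb-cor-RankOverField} avoids this by assuming $K$ perfect, which makes reduced imply absolutely reduced; that assumption was not carried over into \cref{jcb-thm-AbsRankOverField}. The paper's downstream uses are safe because they work either in characteristic $0$ or with an explicit absolute-reducedness hypothesis, but the corollary would be cleaner stated with ``absolutely reduced'' in place of ``reduced.'' One small caution in your writeup: the phrase asserting that the containment in (4) holds ``with no extra hypothesis'' should be read as ``without assuming pure height,'' not ``without absolute reducedness,'' since your derivation of that containment invokes the equality $\cJ_{R/K}=\sJ_{\dim T-\bht(I)}(R/K)$ from (3).
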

\begin{proof}
The proof is similar to the proof of \cref{jcb-cor-RankOverField} with the reference to \cref{jcb-prop-RankOverReg} replaced by \cref{jcb-thm-AbsRankOverField}.
\end{proof}


\section{Test Elements in Characteristic \texorpdfstring{$p$}{p}}\label{sn-TEP}
The purpose of this section is to generalize \cite[Corollary 1.5.5]{Huneke1999} to the complete case. We aim to prove the following theorem.

\begin{thm}\label{tep-thm-TEforAnalytic}
  Let $K$ be a field of characteristic $p$ and let $R$ be a $d$-dimensional complete $K$-algebra that is equidimensional and absolutely reduced over $K$. Then the Jacobian ideal $\cJ(R/K)$ is contained in the test ideal of $R$, and remains so after localization and completion.
\end{thm}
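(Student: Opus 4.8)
The plan is to run the Hochster--Huneke mechanism that produces test elements from Jacobian ideals---whose affine incarnation is \cite[Corollary~1.5.5]{Huneke1999}---after first reducing $R$, by means of the $\Gamma$-construction, to a complete local ring whose coefficient field is $F$-finite. Throughout I would aim for slightly more than is stated, namely that \emph{every} element of $\cJ(R/K)$ is a completely stable test element of $R$ (a test element in every localization and in the completion of every localization). Since localization and completion of an excellent local ring preserve reducedness, equidimensionality and absolute reducedness, and since Fitting ideals localize so that $\cJ(R/K)$ maps into the Jacobian ideal of each localization, the clause ``remains so after localization and completion'' is then automatic.

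First I would record the Jacobian criterion. Fix a coefficient field $K\subseteq R$ and a presentation $R=T/I$ with $T=K\ps{x_1,\dots,x_n}$. Since $R$ is equidimensional, $I$ has pure height $h=n-d$ by \cref{pre-cor-Equiheight}, so $\cJ(R/K)=\sJ_{n-h}(R/K)$ is generated by the size-$(n-h)$ minors of the Jacobian matrix of a generating set of $I$, and $V(\cJ(R/K))$ is exactly the absolute singular locus of $R$ (\cref{jcb-cor-AbsRankOverField}). As $\tau(R)$ is an ideal it suffices to treat a single such minor $c$: for such a $c$, the ring $R$ is absolutely regular at every prime not containing $c$, so $R_{c}$ is absolutely regular over $K$, hence geometrically regular, hence regular; and since $R$ is absolutely reduced, its residue field at each minimal prime is separably generated over $K$, so by \cref{jcb-thm-AbsRankOverField} $R$ is absolutely regular at every minimal prime and $\cJ(R/K)$ meets $R^{\circ}$.

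Next I would pass to an $F$-finite ground field. As $K$ need not be $F$-finite, fix a $p$-base $\Lambda$ of $K$ and, for cofinite $\Gamma\subseteq\Lambda$, the associated $F$-finite purely inseparable field extension $K^{\Gamma}\supseteq K$; set $R^{\Gamma}:=R_{K^{\Gamma}}=K^{\Gamma}\ps{x}/IK^{\Gamma}\ps{x}$, the constant field extension of $R$ along $K\hookrightarrow K^{\Gamma}$. By \cref{jcb-prop-ConstFieldExt}, $R\to R^{\Gamma}$ is faithfully flat, $R^{\Gamma}$ is equidimensional of dimension $d$, reduced (indeed absolutely reduced over $K^{\Gamma}$) because $R$ is absolutely reduced, $\cJ(R^{\Gamma}/K^{\Gamma})=\cJ(R/K)R^{\Gamma}$ (part~(6) together with base change of Fitting ideals), and $(R^{\Gamma})_{\mfp'}$ is regular for every prime $\mfp'$ of $R^{\Gamma}$ not containing $c$, since $R$ is absolutely regular at $\mfp'\cap R$. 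The analytic $K^{\Gamma}$-algebra $R^{\Gamma}$ is $F$-finite because $K^{\Gamma}$ is. Finally, $R$ being excellent, tight closure persists along $R\to R^{\Gamma}$ and $IR^{\Gamma}\cap R=I$, so any completely stable test element of $R^{\Gamma}$ contracts to one of $R$. It therefore suffices to prove the theorem when $K$ is $F$-finite.

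So assume $K$ is $F$-finite. Choose a Noether normalization $A=K\ps{z_1,\dots,z_d}\hookrightarrow R$, possible by completeness and equidimensionality; then $R$ is module-finite over the complete regular local domain $A$ and torsion-free over $A$, every minimal prime of $R$ contracting to $(0)$ in $A$. Absolute reducedness lets one pick $z_1,\dots,z_d$ so that $R$ is generically smooth over $A$, whence $\widetilde{\Omega}_{A/K}\otimes_{A}R\hookrightarrow\widetilde{\Omega}_{R/K}$ is split injective of free rank $d$ and $\cJ(R/K)=\cJ_{R/A}$. Now one applies \cite[Corollary~1.5.5]{Huneke1999}: although stated for affine algebras, its proof is local and uses only that $R$ is reduced, equidimensional, module-finite and generically smooth over a regular ring, together with (i) the Lipman--Sathaye Jacobian theorem, which shows $\cJ_{R/A}$ (and its Frobenius twists over $A^{1/q}$) carries the integral closure of $R$ in its total quotient ring back into $R$, and (ii) the approximately Gorenstein structure of the reduced excellent ring $R$, which reduces the verification of $cu\in I$ for $u\in I^{*}$ to $\mfm$-primary irreducible $I$; the argument is stable under localization and completion, so $c$ is a completely stable test element, as wanted. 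The real difficulty is concentrated here: obtaining the \emph{entire} Jacobian ideal, rather than its radical or a fixed power of each element, is exactly what forces the use of the sharp Lipman--Sathaye conductor estimate---a soft ``$R_{c}$ regular'' argument yields only some $c^{N}$---threaded carefully through the approximately Gorenstein reduction; a secondary chore is the $\Gamma$-construction bookkeeping, namely exhibiting one cofinal family of $\Gamma$ along which reducedness, equidimensionality, the regular locus through $c$, the identity $\widetilde{\Omega}_{R^{\Gamma}/K^{\Gamma}}=R^{\Gamma}\otimes_{R}\widetilde{\Omega}_{R/K}$, and persistence of tight closure all hold simultaneously.
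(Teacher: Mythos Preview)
Your proposal contains a genuine gap at the claim $\cJ(R/K)=\cJ_{R/A}$ for a single Noether normalization $A=K\ps{z_1,\dots,z_d}\hookrightarrow R$. The second fundamental sequence
\[
\widetilde{\Omega}_{A/K}\otimes_A R\longrightarrow \widetilde{\Omega}_{R/K}\longrightarrow \Omega_{R/A}\longrightarrow 0
\]
is \emph{not} split on the left in general, even when $R$ is generically \'etale over $A$, so the Fitting-ideal computation you invoke does not apply. Concretely, take $R=K\ps{x,y}/(x^2+y^3)$ (with $p\neq 2,3$) and $A=K\ps{x}$: then $R$ is generically \'etale over $A$, $\cJ_{R/A}=(3y^2)R$, but $\cJ(R/K)=(2x,3y^2)R$; and the map $R\,dx\to\widetilde{\Omega}_{R/K}$ fails to split because a retraction would force $2x\in(y^2)R$. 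Thus a single $A$ only recovers the minors of the Jacobian matrix taken from the $n-d$ columns complementary to the chosen parameters, which is a proper subideal of $\cJ(R/K)$.

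This is exactly the difficulty the paper's proof is organized around. Instead of passing to an $F$-finite field via the $\Gamma$-construction, the paper enlarges $K$ to an infinite perfect field (the algebraic closure) using the constant field extension of \cref{jcb-prop-ConstFieldExt}, and then runs a \emph{general position} argument: acting by a generic element of $\GL_n(K)$ on the coordinates, one arranges that \emph{every} choice of $d$ of the variables yields a Noether normalization $A$ over which $R$ is generically \'etale. For each such choice, \cref{noname} (the Lipman--Sathaye input) puts the corresponding $(n-d)\times(n-d)$ minors into the test ideal; varying the choice of $d$ variables then sweeps out all of $\cJ(R/K)$. Your Lipman--Sathaye step is correct for each fixed $A$, but the missing idea is this variation over Noether normalizations, and the reduction to an infinite field that makes the genericity argument possible.
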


Following \cite[Theorem 3.4]{Hochster2002b}, we want to state a more general version of \cite[Corollary 1.5.4]{Huneke1999}.

\begin{prop}\label{noname}
Let $A$ be a regular domain of characteristic $p$. Let $R$ be a module-finite extension of $A$ such that it is torsion-free and generically \'etale over $A$ . Then every element $c$ of $\cJ_{R/A}$ is such that $cR^{1/q}\subseteq A^{1/q}[R]$ for all $q=p^e$, and, in particular, $cR^{\infty}\subseteq A^{\infty}[R]$. Thus, if $c\in \cJ_{R/A
  }\cap R^{\circ}$, it is a completely stable test element.
\end{prop}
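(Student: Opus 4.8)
The plan is to follow the Lipman--Sathaye circle of ideas: for a fixed $q=p^{e}$, I would deduce the inclusion $cR^{1/q}\subseteq A^{1/q}[R]$ from a conductor statement about a finite birational extension over the regular domain $A^{1/q}$. First I would record the structural facts that cost nothing. Since $R$ is $A$-torsion-free, it embeds into $R\otimes_{A}\Fr(A)$, which by generic \'etaleness is a finite product of finite separable field extensions of $\Fr(A)$; hence $R$ is reduced and $R^{1/q}$ makes sense. Because $R$ is module-finite over the regular domain $A$, in a presentation $R=T/I$ with $T=A[x_{1},\dots,x_{n}]$ the ideal $I$ has pure height $n$, so $\cJ_{R/A}=\sJ_{0}(R/A)=\mF_{0}(\Omega_{R/A})$; this ideal is nonzero and meets $R^{\circ}$ because $\Omega_{R/A}$ vanishes at every minimal prime of the reduced ring $R$. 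Finally, a general element of $\cJ_{R/A}$ is an $R$-linear combination of maximal minors of the Jacobian matrix, so it suffices to prove $cR^{1/q}\subseteq A^{1/q}[R]$ when $c$ is one such minor.

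Next I would fix $q$ and set $A':=A^{1/q}$, $S:=A^{1/q}[R]$ and $S':=R^{1/q}$, so that $A'\subseteq S\subseteq S'$. The key point is that this tower reproduces the Lipman--Sathaye hypotheses over the regular domain $A'$ (which is abstractly isomorphic to $A$ via the $q$th power map): $S$ is module-finite over $A'$, being generated over $A'$ by any set of $A$-module generators of $R$, whose structure constants lie in $A\subseteq A'$; $S$ is $A'$-torsion-free, being a subring of $R^{1/q}$, the Frobenius twist of the $A$-torsion-free ring $R$; $S$ is generically \'etale over $A'$, because inverting $A'\setminus\{0\}$ produces a $\Fr(A')$-subalgebra of the \'etale algebra $R^{1/q}\otimes_{A'}\Fr(A')$; and $S\subseteq S'$ is module-finite and birational. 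This last assertion is the one that needs attention: $\Fr(A')=\Fr(A)^{1/q}$ is purely inseparable over $\Fr(A)$ and linearly disjoint from the (separable) total fraction ring of $R$ over $\Fr(A)$, and a count of dimensions over $\Fr(A')$ then forces the total fraction rings of $S$ and of $S'$ to coincide.

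With the tower in place, I would transfer the minor. The canonical surjection $A'\otimes_{A}R\twoheadrightarrow S$, together with base change for K\"ahler differentials, base change for Fitting ideals, and the fact that Fitting ideals only grow along surjections, gives $c\in\cJ_{R/A}\cdot S\subseteq\mF_{0}(\Omega_{S/A'})=\cJ_{S/A'}$; the last equality holds, again, because $S$ is module-finite over the regular domain $A'$ and $S$ is reduced with $\Omega_{S/A'}$ generically zero. Then I would invoke the Lipman--Sathaye Jacobian theorem in the form used in \cite[Theorem 3.4]{Hochster2002b}: for a module-finite, torsion-free, generically \'etale extension of a regular domain, the Jacobian ideal is contained in the conductor of the ring into any module-finite birational overring. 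Applied to $A'\subseteq S\subseteq S'$ this yields $\cJ_{S/A'}\cdot S'\subseteq S$, hence $cR^{1/q}\subseteq A^{1/q}[R]$; letting $q$ run over all powers of $p$ gives $cR^{\infty}\subseteq A^{\infty}[R]$.

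For the final assertion, assume in addition $c\in R^{\circ}$. The containments $cR^{1/q}\subseteq A^{1/q}[R]$ for all $q$, with $A$ regular---so Frobenius on $A$ is flat and every ideal of $A$ is tightly closed---are precisely the input of the standard Hochster--Huneke criterion producing completely stable test elements, which I would cite; the hypotheses in play (regularity of $A$, module-finiteness of $R$ over $A$, and the displayed family of containments) are stable under localization and completion, which is what ``completely stable'' demands. The genuinely deep ingredient is the Lipman--Sathaye theorem itself, which I treat as a black box through \cite{Hochster2002b}; within the present argument the only delicate point is verifying that $A^{1/q}[R]\subseteq R^{1/q}$ is birational over $A^{1/q}$, which is exactly where the inseparability bookkeeping enters.
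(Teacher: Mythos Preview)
Your proposal is correct and follows essentially the same approach as the paper. The paper's proof consists of a single sentence pointing to \cite[Corollary 1.5.4]{Huneke1999} with the Lipman--Sathaye input upgraded to the generalized version in \cite[Theorem 3.1]{Hochster2002}; you have faithfully unpacked that argument, setting up the tower $A^{1/q}\subseteq A^{1/q}[R]\subseteq R^{1/q}$, verifying the Lipman--Sathaye hypotheses (including the birationality step via separability/pure inseparability), transferring the Jacobian element via base change of Fitting ideals, and then invoking the standard Hochster--Huneke criterion for completely stable test elements.
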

\begin{proof}
  If we replace the reference to the usual Lipman-Sathaye theorem with the reference to the ``generalized Lipman-Sathaye Jacobian theorem'' \cite[Theorem 3.1]{Hochster2002}, the same proof in \cite[Corollary 1.5.4]{Huneke1999} works.
\end{proof}

We are ready to prove the main result, \cref{tep-thm-TEforAnalytic}.
\begin{proof}
  By Cohen's structure theorem, we can write $R=L\ps{\seq{X}{n}}/I$ and let $x_i$ be the image of $X_i$ in $R$. Since $R$ is absolutely reduced over $K$, the field extension $K\hookrightarrow L$ is separable. Note that we have a right exact sequence
  $$ \Omega_{L/K}\otimes_L R \to \Omega_{R/K} \to \Omega_{R/L}\to 0 $$
  and $\Omega_{L/K}=0$ due to separability. Thus $\Omega_{R/K} \cong  \Omega_{R/L}$. Since the Jacobian ideal is defined as the first nonzero Fitting ideal of the module of differentials, we conclude that $\cJ(R/K) = \cJ(R/L)$. We will work with $\cJ(R/L)$ from now on.
  
  By \cref{jcb-prop-ConstFieldExt}, we may assume without loss of generality that $L$ is infinite and perfect (Take, for example, the algebraic closure of $L$). Suppose that $I=(\seq{f}{r})$. Then the $(n-d)\times (n-d)$ minors of the Jacobian matrix $\left( \partial f_i / \partial x_j\right)$ generate the Jacobian ideal $\cJ(R/L)$. The calculation of the Jacobian ideal is independent of the choice of coordinates, so we are free to let $\GL_n(L)$ act on the set of variables.
  
  By the discussion \cite[12.14]{Kunz1986} we know that the universal finite differential module $\Omega_{R/L}^1$ is generated by these $\dd x_i$. The total quotient ring $\mQ(R)$ is a finite product of analytically separable field extensions of $L$ by \cite[Theorem 13.10]{Kunz1986}. By the proof of the same theorem, we can modify the generators $\seq{\dd x}{n}$ to get a sequence of elements $\seq{x'}{n}$ such that $\seq{x'}{d}$ form a system of parameters of $R$ and $\seq{\dd x'}{d}$ form a basis for $\Omega_{R/L}^1$.

  Then there is a Zariski dense open subset $U$ of $\GL_n(L)$ such that if we act on the set of variables by an element from $U$ and choose any $d$ of the (new) indeterminates, then the two conditions listed below hold: 
  \begin{enumerate}
  \item The set of $d$ elements form a system of parameters for $R$.
  \item Let $A$ be the complete regular ring generated over $L$ by these $d$ elements. Then $R$ is generically smooth over $A$.
  \end{enumerate}

  By a general position argument we see that there is a Zariski open subset of $\GL_n(L)$ such that the first condition is satisfied. The second condition is satisfied since the differential of any linear combination of $\seq{x'}{n}$ is the same linear combination of their differentials.
  
  Now suppose that a suitable change of coordinates has been made. For any choice of $d$ of these elements, say $\seq{x'}{d}$, let $A$ be the regular local ring $L\ps{\seq{x'}{d}}$. Then $R$ is module-finite over $A$ by the general position argument, and the Jacobian ideal $\cJ_{R/A}$ is generated by the $(n-d)$ size minors of the remaining $n-d$ variables. Since $R$ is equidimensional and reduced, it is likewise torsion-free over $A$. It is generically \'etale because of the general position of the variables. Then \cref{noname} finishes the proof.
\end{proof}

\begin{thm}\label{tep-thm-TEforSemiAnalytic}
Let $R$ be a semianalytic $K$-algebra that is the localization of a reduced equiheight affine-analytic $K$-algebra where $K$ is a field of characteristic $p$. Then the Jacobian ideal $\cJ(R/K)$ is contained in the test ideal of $R$.
\end{thm}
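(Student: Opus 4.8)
The plan is to reduce, via localization and completion, to the complete case already proved in \cref{tep-thm-TEforAnalytic} (the remark after that theorem in the introduction indicates this is the intended route). Write $R = W^{-1}S$ with $S$ a reduced equiheight affine-analytic $K$-algebra. First I would arrange that $K$ is perfect: for a perfect field $K' \supseteq K$ (say the perfect closure), the constant field extension $S_{K'}$ of \cref{jcb-prop-ConstFieldExt} is faithfully flat over $S$, again reduced equiheight affine-analytic over $K'$, and by part (6) of that proposition $\widetilde\Omega_{S_{K'}/K'} \cong S_{K'}\otimes_S\widetilde\Omega_{S/K}$, so $\cJ(S_{K'}/K') = \cJ(S/K)S_{K'}$, and likewise after inverting $W$. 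Since tight closure expands along the faithfully flat map $S\to S_{K'}$ (which carries $S^{\circ}$ into $(S_{K'})^{\circ}$ as $S$ is reduced) and ideals of $S$ contract back, an element of $\cJ(S/K)$ that is a test element of $W^{-1}S_{K'}$ is a test element of $R$. So I may assume $K$ is perfect; then separable base change preserves reducedness, so ``reduced'' and ``absolutely reduced'' coincide and $S$ is absolutely reduced over $K$.

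Next I would pass to a complete local ring. Being a test element is a local condition: since $R$ is excellent and reduced, it suffices to show the image of $\cJ(R/K)$ in $R_{\mfm}$ lies in $\tau(R_{\mfm})$ for every maximal ideal $\mfm$ of $R$ — if $u\in I^{*}$ then $u\in (IR_{\mfm})^{*}$, so $cu\in IR_{\mfm}$ for all $\mfm$ forces $cu\in I$ — and $\cJ(R/K)$ localizes into $\cJ(R_{\mfm}/K)$ because $\widetilde\Omega$ and Fitting ideals commute with localization. Writing $R_{\mfm} = S_{\mfp}$ for the corresponding prime $\mfp$ of $S$, and replacing $S_{\mfp}$ by its completion $\widehat{S_{\mfp}}$ (faithfully flat over the excellent reduced local ring $S_{\mfp}$, so test elements restrict), I am reduced to proving that the image of $\cJ(S/K)$ in $\widehat{S_{\mfp}}$ lies in $\tau(\widehat{S_{\mfp}})$. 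Fix $S = T/I$ with $T = K\ps{x_1,\dots,x_n}[z_1,\dots,z_m]$; by \cref{pre-cor-Equiheight} the ideal $I$ has pure height $h$, and if $Q$ is the preimage of $\mfp$ then $\widehat{S_{\mfp}} = \widehat{T_Q}/I\widehat{T_Q}$ with $\widehat{T_Q}$ complete regular local. Here $\widehat{S_{\mfp}}$ is reduced (excellence) and equidimensional: applying the height formula \eqref{pre-eq-HeightFormula} to the affine domains $S/\mfq$ with $\mfq\in\Min(S)$, $\mfq\subseteq\mfp$, shows $\hght(\mfp/\mfq)$ is independent of $\mfq$, so $S_{\mfp}$ and hence $\widehat{S_{\mfp}}$ are equidimensional and $I\widehat{T_Q}$ is of pure height.

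Choosing a coefficient field $\kappa = \kappa_{\mfp}(S)$ of $\widehat{T_Q}$ containing the image of $K$ presents $\widehat{T_Q}$ as a power series ring over $\kappa$ and $\widehat{S_{\mfp}}$ as an equidimensional analytic $\kappa$-algebra. Granting that $\widehat{S_{\mfp}}$ is absolutely reduced over $\kappa$ — which I would deduce from the absolute reducedness of $S$ over the perfect field $K$ and the excellence of $S_{\mfp}$, so that $\widehat{S_{\mfp}}\otimes_{\kappa}L$ arises by completing a reduced excellent algebra and stays reduced — \cref{tep-thm-TEforAnalytic} applied over $\kappa$ yields $\cJ(\widehat{S_{\mfp}}/\kappa)\subseteq\tau(\widehat{S_{\mfp}})$. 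It then remains to compare $\cJ(S/K)$, pushed into $\widehat{S_{\mfp}}$, with $\cJ(\widehat{S_{\mfp}}/\kappa)$: the canonical surjections $\widehat{S_{\mfp}}\otimes_S\widetilde\Omega_{S/K}\twoheadrightarrow\widetilde\Omega_{\widehat{S_{\mfp}}/K}\twoheadrightarrow\widetilde\Omega_{\widehat{S_{\mfp}}/\kappa}$ (the first an isomorphism since $\widetilde\Omega_{S/K}$ is finite, by \cite[12.5]{Kunz1986}; the second by right-exactness of the universally finite differential) give $\sJ_i(S/K)\widehat{S_{\mfp}}\subseteq\sJ_i(\widehat{S_{\mfp}}/\kappa)$ for all $i$, and I would pin down the relevant Fitting index using \cref{jcb-cor-AbsRankOverField}, noting also that $\cJ(S/K)\not\subseteq\mfq$ for every minimal prime $\mfq$ of $S$ (each is an absolutely regular point since $K$ is perfect), so that localizing at $\mfp$ does not move the first nonvanishing Fitting ideal of $\widetilde\Omega$.

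The hard part will be precisely this last comparison, together with the absolute reducedness of $\widehat{S_{\mfp}}$ over the possibly imperfect residue field $\kappa$. The Fitting index computing $\cJ(S/K)$ records the ambient dimension $\dim S = n+m-h$, whereas $\cJ(\widehat{S_{\mfp}}/\kappa)$ sits at Fitting index $\dim\widehat{S_{\mfp}}=\hght\mfp$, which may be strictly smaller; so one must show that the extra generators of the higher Fitting ideal of $\widetilde\Omega_{\widehat{S_{\mfp}}/\kappa}$ do not enlarge the image of $\cJ(S/K)$ beyond $\cJ(\widehat{S_{\mfp}}/\kappa)$. It is here that the equiheight hypothesis on $S$ — which forces $S_{\mfp}$, and the ideal $I\widehat{T_Q}$, to be of pure height — is indispensable, and where the genuine work of the proof is concentrated.
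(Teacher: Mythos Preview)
Your overall plan---localize at a maximal ideal, complete, and invoke \cref{tep-thm-TEforAnalytic}---matches the paper's, but you take a detour that creates a genuine gap. The paper never changes the coefficient field to the residue field $\kappa$. Instead it uses the characteristic-$p$ identity $\Omega_{R_\mm/K}=\Omega_{R_\mm/K[R^p]}$ (every $K$-derivation kills $p$-th powers) to see that the universally finite differential module, and hence the Jacobian ideal, passes to the completion over the \emph{same} field $K$: one gets $\cJ(\widehat{R_\mm}/K)=\cJ(R/K)\widehat{R_\mm}$ directly, and then applies \cref{tep-thm-TEforAnalytic} to $\widehat{R_\mm}$ viewed as a complete $K$-algebra. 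No Fitting-index bookkeeping over $\kappa$ is needed, and the preliminary reduction to perfect $K$ is unnecessary here (it is carried out inside the proof of \cref{tep-thm-TEforAnalytic}, not in this reduction step).

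The ``hard part'' you isolate is in fact an obstruction, not just a technicality. From the surjection $\widetilde\Omega_{\widehat{S_\mfp}/K}\twoheadrightarrow\widetilde\Omega_{\widehat{S_\mfp}/\kappa}$ you only obtain $\sJ_i(S/K)\widehat{S_\mfp}\subseteq\sJ_i(\widehat{S_\mfp}/\kappa)$ at the \emph{same} index $i$. But $\cJ(S/K)$ sits at index $n+m-h$ while $\cJ(\widehat{S_\mfp}/\kappa)$ sits at the smaller index $\dim\widehat{S_\mfp}$; since Fitting ideals increase with the index, the containment you need goes against the chain $\sJ_{\dim\widehat{S_\mfp}}\subseteq\sJ_{n+m-h}$. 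So your argument, as written, does not yield $\cJ(S/K)\widehat{S_\mfp}\subseteq\cJ(\widehat{S_\mfp}/\kappa)$, and without that the invocation of \cref{tep-thm-TEforAnalytic} over $\kappa$ does not give test elements for the original $\delta\in\cJ(S/K)$. The fix is precisely the paper's: stay over $K$ and use the $p$-th-power trick so that the Jacobian ideal and its Fitting index are literally unchanged by completion.
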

\begin{proof}
We can write $R=W^{-1}T/I$ where $T=K\ps{\seq{x}{n}}[\seq{z}{m}]$ and $I$ is of equiheight in $T$. Suppose that we have a counterexample: there is some element $u\in R$ and some ideal $J\in R$ such that $u\in J^*$ but $\delta u\nin J$ for some $\delta \in \cJ(R/K)$. We can choose a maximal ideal $\mm$ of $R$ such that both $u\in J^*$ and $\delta u\nin J$ still hold in $R_{\mm}$. Then we continue to have these two hold in $\widehat{R_{\mm}}$, the $\mm$-adic completion of $R_{\mm}$.

Since $R_{\mm}$ is reduced, equidimensional and excellent, so is $\widehat{R_{\mm}}$. Let $\mfn$ be the preimage of $\mm$ in $T$. Then $I \widehat{T_{\mfn}}$ is of pure height $\dim \widehat{T_{\mfn}}-\dim \widehat{R_{\mm}}=\dim T_{\mfn}-\dim R_{\mm}$. Since $\Omega_{R_{\mm}/K}=\Omega_{R_{\mm}/K[R^p]}$ is complete. We have $\Omega_{\widehat{R_{\mm}}/K}=\Omega_{R_{\mm}/K}=(\Omega_{R/K})_{\mm}$. Hence the Jacobian ideal $\cJ(R_{\mm}/K)=\cJ(R/K)R_{\mm}$ expands to the Jacobian ideal $\cJ(\widehat{R_{\mm}}/K)$. So $\delta\in \cJ(\widehat{R_{\mm}}/K)$. Since we also have $u\in (J\widehat{R_{\mm}})^*$, we conclude that $\delta u\in J\widehat{R_{\mm}}$ by \cref{tep-thm-TEforAnalytic}, which is a contradiction!
\end{proof}

\section{Definition of Tight Closure in Equal Characteristic 0}\label{sn-DTC}
There are several ways to define tight closure in equal characteristic 0. 
We focus here on $K$-tight closure and on small equational tight closure, which is the case when $K=\QQ$. The tight closure gets larger and the test ideal gets smaller if the field $K$ gets larger. These are the simplest notions to define and there does not appear to be much motivation to use more complicated notions.

In this section, we briefly introduce the definition of $K$-tight closure in equal characteristic 0. We usually omit the reference to $K$ in the definition. We start with affine $K$-algebras, then we pass to noetherian $K$-algebras. In the case $K=\QQ$, this is called the small equational tight closure.

\subsection{Tight closure for affine algebras over fields of characteristic 0}
Let $R$ be an affine $K$-algebra where $K$ is a field of characteristic $0$. Let $N\subseteq M$ be finitely generated $R$-modules and $u\in M$ an element in the module. We want to ``descend'' the data over $R$, i.e., the quintuple $(K,R,N,M,u)$, to some finitely generated $\ZZ$-subalgebra $\cA$ of $K$. Roughly speaking, the \emph{descent data} for the quintuple from $R$ to $\cA$ is also a quintuple $(\cA,R_{\cA},N_{\cA},M_{\cA},u_{\cA})$ such that when tensored with $K$ over $\cA$, we recover the original quintuple. The formal definition below is taken from \cite[Descent data 2.1.2]{Huneke1999}.
\begin{defn}
  Let a quintuple $(K,R,M,N,u)$ be defined as above. By \emph{descent data} for this quintuple, we mean a quintuple $(\cA,R_{\cA},N_{\cA},M_{\cA},u_{\cA})$ satisfying the following conditions:
  \begin{enumerate}
  \item $\cA$ is a finitely generated $\ZZ$-subalgebra of $K$.
  \item $R_{\cA}$ is a finitely generated $\cA$-subalgebra of $R$ such that the inclusion $R_{\cA}\subseteq R$ induces an isomorphism of $R_K$ with $R$. Moreover, $R_{\cA}$ is $\cA$-free.
  \item $M_{\cA},N_{\cA}$ are finitely generated $\cA$-submodules of $M,N$ respectively such that $N_{\cA}\subseteq M_{\cA}$ and all of the modules $M_{\cA},N_{\cA},M_{\cA}/N_{\cA}$ are $\cA$-free. Moreover, the diagram below
    \[
      \xymatrix{
        N_{\cA}\ar@{^{(}->}[r]\ar@{^{(}->}[d]  & M_{\cA}\ar@{^{(}->}[d] \\
        N\ar@{^{(}->}[r] & M
      } \quad\Rightarrow\quad
      \xymatrix{
        N_{\cA}\otimes_{\cA} K \ar@{^{(}->}[r]\ar[d]^{\cong}  & M_{\cA}\otimes_{\cA} K\ar[d]^{\cong} \\
        N\ar@{^{(}->}[r] & M
      } 
    \]
    as $R$-modules.
  \item The element $u\in M$ is in $M_{\cA}$ and $u=u_{\cA}$.
  \end{enumerate}
\end{defn}
The most important fact is that descent data do exist \cite[Discussion: the existence of descent data. 2.1.3]{Huneke1999}, and in fact there are a lot of them. We actually have $R=\varinjlim_B R_B$ where $B$ runs through all finitely generated $\ZZ$-subalgebras with $\cA\subseteq B\subseteq K$ and $R_B$ is the descent of $R$. Similarly we have $M = \varinjlim_B M_B $ and $N = \varinjlim_B N_B$ \cite[Proposition 2.1.9]{Huneke1999}. Let us give an example to illustrate this definition.
\begin{ex}
  Let $R=\QQ[x,y,z]/(x^2/2+y^3/3+z^5/5)$ and $K=\QQ$. Then we can take $\cA$ to be $\ZZ[1/2,1/3,1/5]$. Therefore $x^2/2+y^3/3+z^5/5$ makes sense in $\cA[x,y,z]$ and we can form $R_{\cA}=\cA[x,y,z]/(x^2/2+y^3/3+z^5/5)$. In fact, take $B$ to be any finitely generated $\cA$-algebra such that $\cA\subseteq B\subseteq \QQ$, then $R_B:=R_{\cA}\otimes_{\cA} B$ will be a descent of $R$ to $B$.
\end{ex}

We give the definition of the tight closure notion on a finitely generated $\ZZ$-algebra $\cA$ below. 

\begin{conv}\cite[2.2.2]{Huneke1999}\label{conv-almostall}
For a finitely generated $\ZZ$-algebra $A$, a property $\mathsf{P}$ holds for almost all $\mu\in\mrm{Max}\Spec(\cA)$ if there is some open dense subset $U$ of $\mrm{Max}\Spec(\cA)$ such that $\mathsf{P}$ holds for all $\mu\in U$. Let $\mathsf{Q}$ be a class of rings (e.g., all domains). By ``for almost all rings in $\mathsf{Q}$ that $A$ maps to'' we mean that ``there is an element $a\in A$ such that for all rings in $\mathsf{Q}$ that $A_a$ maps to.''
\end{conv}

\begin{defn}[Affine case]\label{dtc-defn-AffineCase}
  Let $\cA$ be a finitely generated $\ZZ$-algebra. Let $M_{\cA}$ be an $\cA$-module and $N_{\cA}\subseteq M_{\cA}$ a submodule. We say that $u_{\cA}\in M_{\cA}$ is in $\tca{(N_{\cA})}_{M_{\cA}}$ if for almost all (\cref{conv-almostall}) $\mu\in\mrm{Max}\Spec(\cA)$, $u_{\kappa}\in \ang{N_{\kappa}}^{*}_{M_{\kappa}}$ where $\kappa=\cA/\mu$.
\end{defn}

Based on the affine case, we define the tight closure for a finitely generated $K$-algebra $R$ as follows:
\begin{defn}[Finitely generated $K$-algebra]\label{dtc-defn-FGKAlgebraCase}
  Let $R$ be a finitely generated $K$-algebra and let $N\subseteq M$ be $R$-modules. We say that $u\in M$ is in the tight closure of $\tck{N}_M$ if there exists descent data $(\cA,R_{\cA},M_{\cA},N_{\cA},u_{\cA})$ for $(K,R,M,N,u)$ such that $u_{\cA}\in \tca{(N_{\cA})}_{M_{\cA}}$ over $R_{\cA}$ in the sense of \cref{dtc-defn-AffineCase}.
\end{defn}

\subsection{Test elements in the affine case}
We want to generalize both \cite[Theorem 2.4.9, Corollary 2.4.10]{Huneke1999} to the non-domain case.
\begin{prop}
Let $\cA$ be a finitely generated $\ZZ$-domain with fraction field $\cF$, and let $R_{\cA}$ containing $\cA$ be a finitely generated $\cA$-algebra. Suppose that $R_{\cA}$ is module-finite over a regular ring $\cT_{\cA}$ and that $R_{\cF}$ is geometrically reduced. Then the nonzero elements in the Jacobian ideal $\cJ(R_{\cA}/\cT_{\cA})$ are universal test elements for $\cA\to R_{\cA}$ in the sense of \cite[Definition 2.4.2]{Huneke1999}.
\end{prop}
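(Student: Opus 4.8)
The plan is to descend the whole situation to characteristic $p$ and apply \cref{noname} fiber by fiber, following the pattern of \cite[Theorem 2.4.9]{Huneke1999}; the new feature is that neither $R_{\cA}$ nor its closed fibers are domains. By \cref{conv-almostall} we may freely replace $\cA$ by a localization $\cA_{a}$ at a single nonzero element. Using the lemma of generic freeness together with the standard properties of descent data (\cite[Proposition 2.1.9]{Huneke1999}), after finitely many such replacements I would arrange that: $\cT_{\cA}$ is smooth over $\cA$, so that $\cT_{\kappa}$ is a regular $\kappa$-algebra for every $\mu\in\mrm{Max}\Spec(\cA)$ with $\kappa=\cA/\mu$ (decomposing into connected components we may further assume $\cT_{\kappa}$ is a regular domain); $R_{\cA}$ is $\cA$-free; $R_{\cF}$ is torsion-free over $\cT_{\cF}$ --- automatic in the equidimensional situations in which this proposition is used, by the argument in the proof of \cref{tep-thm-TEforAnalytic} --- so that, after spreading out, $R_{\kappa}$ is torsion-free over $\cT_{\kappa}$ for almost all $\mu$; $\Omega_{R_{\cA}/\cT_{\cA}}$ is $\cA$-flat and finitely presented over $R_{\cA}$, so its Fitting ideals commute with every base change and the index of the first nonzero one is constant in the family, whence $\cJ(R_{\kappa}/\cT_{\kappa})=\cJ(R_{\cA}/\cT_{\cA})R_{\kappa}$ for all $\mu$; and $R_{\kappa}$ is reduced for almost all $\mu$ since $R_{\cF}$ is geometrically reduced.

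Next I would note that, since $\chr(\cF)=0$, a reduced finitely generated $\cF$-algebra that is module-finite and torsion-free over the regular domain $\cT_{\cF}$ is automatically generically \'etale over $\cT_{\cF}$: its total quotient ring is a finite product of finite field extensions of $\Fr(\cT_{\cF})$, and each such extension is separable in characteristic $0$. As generic \'etaleness spreads out, one further shrinking of $\Spec(\cA)$ ensures that, for almost all $\mu$, the extension $\cT_{\kappa}\subseteq R_{\kappa}$ satisfies the hypotheses of \cref{noname}: $\cT_{\kappa}$ is a regular domain of characteristic $p$, and $R_{\kappa}$ is module-finite, torsion-free and generically \'etale over it.

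Now fix a nonzero $c_{\cA}\in\cJ(R_{\cA}/\cT_{\cA})$. Since $R_{\cA}$ is $\cA$-free, its image $c_{\kappa}$ in $R_{\kappa}=R_{\cA}/\mu R_{\cA}$ lies in $\cJ(R_{\kappa}/\cT_{\kappa})=\cJ(R_{\cA}/\cT_{\cA})R_{\kappa}$ and is nonzero for almost all $\mu$. By \cref{noname}, $c_{\kappa}R_{\kappa}^{1/q}\subseteq\cT_{\kappa}^{1/q}[R_{\kappa}]$ for every $q=p^{e}$; as in \cite[Corollary 1.5.4]{Huneke1999}, this single containment already forces $c_{\kappa}$ to multiply the tight closure of every ideal, and of every submodule, of $R_{\kappa}$ back into it, and likewise over every localization and completion of $R_{\kappa}$ --- and this step does not require $c_{\kappa}$ to avoid the minimal primes of $R_{\kappa}$, which is why no domain hypothesis is needed. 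In view of \cref{conv-almostall}, this is exactly the statement that $c_{\cA}$ is a universal test element for $\cA\to R_{\cA}$ in the sense of \cite[Definition 2.4.2]{Huneke1999}. Finally, every hypothesis above is preserved under base change along a finitely generated $\ZZ$-subalgebra $\cA\subseteq B\subseteq\cF$, so the same argument applies to each $R_{B}$, which gives the ``universal''/completely stable conclusion.

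The hard part is not any individual step but the non-domain bookkeeping of the first two paragraphs: one must check that after passing to the generic fiber over $\cF$ and then reducing modulo $p$ the extension $\cT_{\kappa}\subseteq R_{\kappa}$ still inherits torsion-freeness and generic \'etaleness, that $R_{\kappa}$ remains reduced, and that the first nonzero Fitting ideal of $\Omega_{R_{\cA}/\cT_{\cA}}$ is compatible with all of these base changes. Once this is in place, \cref{noname} supplies the whole characteristic-$p$ input and nothing further needs to be proved.
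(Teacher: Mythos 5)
Your proof follows the same route as the paper's: localize $\cA$ to make $\cA$ regular, $\cT_{\cA}$ smooth, $R_{\cA}$ free, spread out the remaining properties to almost all closed fibers, and then quote \cref{noname}. The paper's own argument is only four sentences and is correspondingly terse; you have expanded it by spelling out the spreading-out of each hypothesis, the commutation of Fitting ideals with the base changes, and the fact that torsion-freeness over $\cT_{\cF}$ forces generic \'etaleness in characteristic $0$. These are exactly the steps the paper compresses into ``for almost all regular domains $\Lambda$, $R_{\Lambda}$ is geometrically reduced and $\cT_{\Lambda}\subseteq R_{\Lambda}$ is module-finite. Now the result is immediate from \cref{noname}.''

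The most interesting thing you do, and the one genuine divergence, is to flag explicitly that torsion-freeness of $R_{\cF}$ over $\cT_{\cF}$ is \emph{not} a consequence of the hypotheses as written (geometric reducedness plus module-finiteness), and to patch it by observing it holds in the equidimensional settings where the proposition is actually invoked. You are right that it is not automatic: e.g.\ $\cT_{\cF}=\cF[x]\hookrightarrow R_{\cF}=\cF[x,y]/(y^2-y,\,xy)\cong\cF[x]\times\cF$ is module-finite and reduced but not torsion-free. The paper's proof simply does not address this: it passes directly to \cref{noname}, whose hypotheses include torsion-free and generically \'etale, without checking them. So your reading exposes a gap (or, more precisely, a pair of hypotheses that have been dropped from the statement relative to \cite[Theorem 2.4.9]{Huneke1999} but are still used in the proof). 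Your fix --- restricting to the equidimensional case by appeal to where the proposition is applied --- does not prove the proposition as stated; a cleaner repair would be to reinstate the torsion-free and generically smooth hypotheses in the statement, exactly as in Huneke--Hochster. Beyond that, your remaining bookkeeping (constancy of the index of the first nonzero Fitting ideal, nonvanishing of $c_\kappa$, preservation under $\cA\subseteq B\subseteq\cF$) is correct and is the right level of care.
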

\begin{proof}
  Localize at one element of $\cA^{\circ}$ so that $\cA$ is regular, $\cA\to \cT_{\cA}$ is smooth and also so that $R_{\cA}$ is $\cA$-free. For almost all (\cref{conv-almostall}) fields $\cL$ to which $\cA$ maps, $R_{\cL}$ is geometrically reduced \cite[(2.3.6)b]{Huneke1999}. If follows that for almost all (\cref{conv-almostall}) regular domains $\Lambda$, $R_{\Lambda}$ is geometrically reduced and the extension $\cT_{\Lambda}\subseteq R_{\Lambda}$ is module-finite. Now the result is immediate from \cref{noname}
\end{proof}
\begin{cor}\label{corcorcor}
Let $\cA$ be a finitely generated $\ZZ$-domain with fraction field $\cF$, and let $R_{\cA}$ containing $\cA$ be a finitely generated $\cA$-algebra. Suppose that $R_{\cF}$ is geometrically reduced and $d$-dimensional. Then the nonzero elements in the Jacobian ideal $\cJ(R_{\cA}/\cA)$ are universal test elements for $\cA\to R_{\cA}$ in the sense of \cite[Definition 2.4.2]{Huneke1999}.
\end{cor}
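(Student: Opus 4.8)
The plan is to deduce the corollary from the preceding proposition by a generic Noether normalization, in the style of the proof of \cref{tep-thm-TEforAnalytic}: I will choose many Noether normalizations $\cT_S$ of $R_{\cA}$, apply the proposition to each, and observe that the ideals $\cJ(R_{\cA}/\cT_S)$ collectively span $\cJ(R_{\cA}/\cA)$.

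Concretely, write $R_{\cA}=\cA[x_1,\dots,x_N]/(g_1,\dots,g_k)$, let $\Theta=(\partial g_i/\partial x_j)$ be the Jacobian matrix, and set $m=N-d$. Passing to the generic fiber $R_{\cF}$ (geometrically reduced and $d$-dimensional), a Noether-normalization-in-general-position argument produces a change of the variables $x_j$ after which \emph{every} $d$-element subset of the variables is a Noether normalization base for $R_{\cF}$. Clearing denominators, this change of coordinates is defined over $\cA_a$ for some nonzero $a\in\cA$; after inverting $a$ (enlarging $a$ so that $\cA_a$ is regular and, by generic freeness, so that $R_{\cA_a}$ is $\cA_a$-free) we obtain: for every $m$-element subset $S\subseteq\{1,\dots,N\}$ the polynomial subring $\cT_S:=\cA_a[x_j:j\notin S]\subseteq R_{\cA_a}$ is regular and $R_{\cA_a}$ is module-finite over it. Inverting a nonzero element of $\cA$ costs nothing: the Jacobian ideal commutes with localization, a universal test element for $\cA_a\to R_{\cA_a}$ is one for $\cA\to R_{\cA}$ (by \cref{conv-almostall}), and a nonzero element of $\cJ(R_{\cA}/\cA)$ stays nonzero in $\cJ(R_{\cA_a}/\cA_a)$ by $\cA$-freeness.

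Next comes the Fitting-ideal comparison. Since $R_{\cA_a}$ is module-finite over $\cT_S$, the module $\Omega_{R_{\cA_a}/\cT_S}$ has generic rank $0$, so by \cref{jcb-prop-RankOverReg} the ideal $\cJ(R_{\cA_a}/\cT_S)$ is generated by the $m\times m$ minors of $\Theta$ that use exactly the columns indexed by $S$; while $\cJ(R_{\cA_a}/\cA_a)$ is generated by the $m\times m$ minors of $\Theta$ using \emph{any} $m$ columns (again by \cref{jcb-prop-RankOverReg}, using that the generic fiber is $d$-dimensional). Hence
\[
\cJ(R_{\cA_a}/\cA_a)=\sum_{|S|=m}\cJ(R_{\cA_a}/\cT_S).
\]
By the preceding proposition, for each $S$ every nonzero element of $\cJ(R_{\cA_a}/\cT_S)$ is a universal test element for $\cA_a\to R_{\cA_a}$ in the sense of \cite[Definition 2.4.2]{Huneke1999}; as the universal test elements together with $0$ form an ideal, every element of the sum — that is, of $\cJ(R_{\cA_a}/\cA_a)$ — is a universal test element for $\cA_a\to R_{\cA_a}$, hence for $\cA\to R_{\cA}$. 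This gives the corollary.

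The step I expect to be the main obstacle is the general-position claim: arranging a \emph{single} coordinate change, defined over $\cA$ after inverting one element, for which \emph{every} $d$-element coordinate subset is simultaneously a module-finite Noether normalization base (so that the displayed equality of Jacobian ideals holds), together with the Fitting-index bookkeeping that identifies $\cJ(R_{\cA_a}/\cA_a)$ with the ideal of $m\times m$ minors of $\Theta$ — this is where the hypotheses that $R_{\cF}$ be geometrically reduced and $d$-dimensional enter. The remaining points — spreading out from $\cF$ to $\cA_a$, and checking that ``universal test element'' descends through the localization $\cA\hookrightarrow\cA_a$ — are routine given the conventions already set up.
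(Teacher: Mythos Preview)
The paper gives no explicit proof of this corollary; it is stated immediately after the proposition and left to the reader. Your approach---choose a presentation, make a generic linear change of coordinates so that every $d$-subset of the variables yields a Noether normalization $\cT_S$, apply the preceding proposition to each $\cT_S$, and recover $\cJ(R_{\cA}/\cA)$ as the sum of the $\cJ(R_{\cA}/\cT_S)$---is exactly the argument the paper uses in the analogous situation of \cref{tep-thm-TEforAnalytic}, and is clearly the intended deduction. So your proposal is correct and matches the paper's implicit reasoning.

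One small remark on the point you flagged. The corollary as stated only says ``$d$-dimensional,'' but the general-position Noether normalization step (and the identification of $\cJ(R_{\cA_a}/\cA_a)$ with the ideal of $m\times m$ minors, $m=N-d$) really uses that $R_{\cF}$ is equidimensional of dimension $d$; otherwise lower-dimensional components obstruct module-finiteness over a $d$-dimensional polynomial base and shift the Fitting index. In the paper's only application (\cref{HH99}) the ring is explicitly assumed equidimensional, so this is harmless there; just be aware that your argument, like the corollary itself, tacitly needs equidimensionality for the Fitting-index bookkeeping to come out right.
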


\begin{lem}\label{lemmaabove}
Let $\cA$ be a finitely generated $\ZZ$-domain and let $R_{\cA}$ containing $\cA$ be a finitely generated $\cA$-algebra. Suppose that $I_{\cA}\subseteq R_{\cA}$ is an ideal and $u_{\cA}\in R_{\cA}$ is an element. If $u_{\kappa}\in I_{\cA}R_{\kappa}$ for almost all (\cref{conv-almostall}) $\kappa\in\mrm{Max}\Spec(A)$, then $u_{\cA}\in I_{\cA}$.
\end{lem}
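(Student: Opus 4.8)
The plan is to translate the hypothesis into a statement about a single element of a free $\cA$-module, and then to invoke the fact that a finitely generated $\ZZ$-algebra is a Hilbert ring.

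First I would reformulate. Set $M:=R_{\cA}/I_{\cA}$ and let $\bar u\in M$ be the image of $u_{\cA}$. For a maximal ideal $\mu$ of $\cA$ with residue field $\kappa=\cA/\mu$ one has $R_{\kappa}=R_{\cA}/\mu R_{\cA}$ and $I_{\cA}R_{\kappa}=(I_{\cA}+\mu R_{\cA})/\mu R_{\cA}$, so $u_{\kappa}\in I_{\cA}R_{\kappa}$ if and only if $u_{\cA}\in I_{\cA}+\mu R_{\cA}$, i.e.\ if and only if $\bar u$ maps to $0$ in $M/\mu M=M\otimes_{\cA}\kappa$. Thus the hypothesis says that there is a dense open subset $U\subseteq\mrm{Max}\Spec(\cA)$ (\cref{conv-almostall}) with $\bar u\in\mu M$ for every $\mu\in U$, and the goal becomes $\bar u=0$, equivalently $u_{\cA}\in I_{\cA}$.

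Next I would make $M$ free over $\cA$. Since $R_{\cA}$ is a finitely generated $\cA$-algebra, $M$ is a finitely generated $R_{\cA}$-module, so by the lemma of generic freeness there is a nonzero $a\in\cA$ with $M_a$ free over $\cA_a$; replacing $\cA$ by $\cA_a$ and $R_{\cA},I_{\cA},u_{\cA}$ by the corresponding localizations — which only replaces $U$ by the still dense open subset $U\cap D(a)$ of $\mrm{Max}\Spec(\cA_a)$, and is harmless for the descent applications — I may assume $M$ is a free $\cA$-module with basis $(e_j)_{j\in\Lambda}$. Writing $\bar u=\sum_j c_j e_j$ with $c_j\in\cA$ (only finitely many nonzero), the containment $\bar u\in\mu M=\bigoplus_j\mu e_j$ for $\mu\in U$ forces $c_j\in\mu$ for every $j$. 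So each $c_j$ lies in every maximal ideal belonging to the dense open set $U$.

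Finally, because $\cA$ is a finitely generated $\ZZ$-algebra it is a Hilbert ring, and, being a domain, the intersection of the maximal ideals in any dense open subset of $\mrm{Max}\Spec(\cA)$ is $(0)$: if $c\in\cA$ lies in all such $\mu$, then $V(c)$ contains a subset dense in the irreducible space $\Spec(\cA)$, so $V(c)=\Spec(\cA)$ and $c\in\sqrt{(0)}=(0)$ (compare \cref{pre-lem-HilbertRing}). Hence every $c_j=0$, so $\bar u=0$ and $u_{\cA}\in I_{\cA}$. The part that needs real care is the reduction to the case where $R_{\cA}/I_{\cA}$ is $\cA$-free: some flatness input genuinely is needed — for a general ideal the bare statement can fail — and it is precisely generic freeness, together with the latitude to pass to a localization of $\cA$ that the descent framework provides, that supplies it. The closing Nullstellensatz-type step, by contrast, uses only that finitely generated $\ZZ$-algebras are Hilbert rings, and nothing special about $R_{\cA}$.
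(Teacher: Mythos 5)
Your proof is correct, and it is close in spirit to the paper's argument but differs in which module is made generically free. The paper looks at the cyclic $R_{\cA}$-module $N:=(I_{\cA}+u_{\cA}R_{\cA})/I_{\cA}$, invokes generic freeness over $\cA$ to make $N$ free, and then checks that $N\otimes_{\cA}\kappa=0$ for almost all $\kappa$, so $N$ has rank $0$ and hence vanishes. You instead make the \emph{ambient} quotient $M=R_{\cA}/I_{\cA}$ generically free, observe that the hypothesis is exactly $\bar u\in\mu M$ for almost all $\mu$, expand $\bar u$ in a free basis, and use the Jacobson/Hilbert-ring property of the finitely generated $\ZZ$-domain $\cA$ to kill the coordinates. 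Both routes lean on the same two ingredients (generic freeness plus the fact that a nonzero element of $\cA$ is outside some closed point of every dense open). Your version has a minor advantage in precision: to read off $N\otimes\kappa=0$ from $u_{\kappa}\in I_{\cA}R_{\kappa}$ in the paper's argument one also needs the inclusion $N\hookrightarrow M$ to stay injective after $\otimes_{\cA}\kappa$ (e.g.\ by making $M/N$ generically free as well); by working directly with a free $M$ and the coordinates of $\bar u$, you sidestep that flatness bookkeeping entirely. Your closing remark is also apt: some flatness input is genuinely necessary, and it is generic freeness that supplies it, while the final Nullstellensatz step uses only that $\cA$ is a Jacobson domain.
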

\begin{proof}
Assume for contradiction that $u_{\cA}\nin I_{\cA}$. Consider the $R_{\cA}$-module $(I_{\cA}+u_{\cA})/I_{\cA}$. By assumption it is a finitely generated nonzero module. By \cite[Lemma 8.1]{Hochster1974} we can localize at one element of $\cA$ to make it free. 
Hence its rank can be checked by base change to any $\cA_{\kappa}$. Then we conclude that it has rank 0, i.e., it is a zero module. So we have $u_{\cA}\in I_{\cA}$.
\end{proof}

By \cref{dtc-conv-TestIdeal}, in order to show that some element is a test element, we will show that it multiplies the tight closure of any ideal in the ring back to the ideal.

\begin{thm}\label{HH99}
  Let $R$ be a reduced finitely generated equidimensional $K$-algebra of Krull dimension $d$. Then $\cJ(R/K)$ is contained in the test ideal.
\end{thm}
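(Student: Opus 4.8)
The plan is a descent argument: I will show that every nonzero $c\in\cJ(R/K)$ is a test element, the case $c=0$ being trivial. By \cref{dtc-conv-TestIdeal} it suffices to fix an ideal $I\subseteq R$ and an element $u\in I^{*}$ and prove that $cu\in I$. By \cref{dtc-defn-FGKAlgebraCase} there is descent data $(\cA,R_{\cA},I_{\cA},R_{\cA},u_{\cA})$ for the quintuple $(K,R,I,R,u)$ with $u_{\cA}\in\tca{(I_{\cA})}_{R_{\cA}}$ over $R_{\cA}$ in the sense of \cref{dtc-defn-AffineCase}; here $\cA$ is automatically a domain, being a subring of the field $K$, and $\cA\to K$ is flat, since it factors as the localization $\cA\to\Fr(\cA)$ followed by a field extension.

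Since Fitting ideals of finitely presented modules and the module of K\"ahler differentials both commute with base change, one has $\mF_i(\Omega_{R/K})=\mF_i(\Omega_{R_{\cA}/\cA})R$ for every $i$; as $R_{\cA}\hookrightarrow R$ is injective, $\mF_i(\Omega_{R_{\cA}/\cA})$ is nonzero precisely when $\mF_i(\Omega_{R/K})$ is, so the indices of the first nonzero Fitting ideals agree and $\cJ(R_{\cA}/\cA)R=\cJ(R/K)$. Writing $c$ as a finite $R$-linear combination of the generating minors of $\cJ(R/K)$ and absorbing the coefficients into a larger finitely generated $\ZZ$-subalgebra of $K$ — which only enlarges the descent data and preserves all the conditions above — I may assume $c\in\cJ(R_{\cA}/\cA)$, and $c\neq 0$ in $R_{\cA}$ because $R_{\cA}\hookrightarrow R$. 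A further enlargement makes $R_{\cA}$ moreover $\cA$-free. Now I check the hypotheses of \cref{corcorcor} with fraction field $\cF=\Fr(\cA)$: the ring $R_{\cF}:=R_{\cA}\otimes_{\cA}\cF$ is geometrically reduced, since $R_{\cF}\otimes_{\cF}K\cong R$ is reduced, $\cF\to K$ is faithfully flat, and $\cF$ is perfect of characteristic $0$; and $R_{\cF}$ is equidimensional of dimension $d$, because $R_{\cA}$ is $\cA$-free, the minimal primes of $R$ lie over those of $R_{\cF}$, and a field extension preserves the dimension of a finitely generated algebra. Thus \cref{corcorcor} applies, so $c$ is a universal test element for $\cA\to R_{\cA}$; in particular $c_{\kappa}$ is a test element of $R_{\kappa}$ for almost all $\mu\in\mrm{Max}\Spec(\cA)$, where $\kappa=\cA/\mu$.

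To conclude, note that $u_{\cA}\in\tca{(I_{\cA})}_{R_{\cA}}$ means, by \cref{dtc-defn-AffineCase}, that $u_{\kappa}\in\ang{I_{\kappa}}^{*}_{R_{\kappa}}$ for almost all $\mu$; combining this with the previous step, $c_{\kappa}u_{\kappa}\in I_{\kappa}=I_{\cA}R_{\kappa}$ for almost all $\mu$. Applying \cref{lemmaabove} to the element $c_{\cA}u_{\cA}\in R_{\cA}$ and the ideal $I_{\cA}$ gives $c_{\cA}u_{\cA}\in I_{\cA}$, and tensoring with $K$ over $\cA$ yields $cu\in I_{\cA}R=I$, as desired. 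I expect the main obstacle to be purely organizational: arranging a single choice of descent data for which $c$ lands in the Jacobian ideal of $R_{\cA}/\cA$, $u_{\cA}$ lands in the equational tight closure, and $R_{\cA}$ is $\cA$-free all at once, together with checking that the finitely many ``almost all $\mu$'' conditions hold on a common dense open subset of $\mrm{Max}\Spec(\cA)$. Each individual requirement survives enlarging $\cA$, so this step is routine but needs care; the genuine mathematical content is already contained in \cref{corcorcor} (hence in \cref{noname} and the generalized Lipman--Sathaye theorem) and in \cref{lemmaabove}.
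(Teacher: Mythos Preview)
Your proposal is correct and follows essentially the same approach as the paper: descend to a finitely generated $\ZZ$-subalgebra $\cA$ of $K$, invoke \cref{corcorcor} to see that the Jacobian element becomes a universal test element, combine with $u_{\kappa}\in I_{\kappa}^{*}$ for almost all closed points, and apply \cref{lemmaabove} to pull the containment back to $R_{\cA}$ and thence to $R$. The only cosmetic difference is that the paper works directly with a single generating minor $\delta$ of $\cJ(R/K)$ (so the descent of $\delta$ is immediate once the $f_i$ descend), whereas you treat an arbitrary nonzero $c\in\cJ(R/K)$ via Fitting ideals and base change; note that in your sentence ``absorbing the coefficients into a larger finitely generated $\ZZ$-subalgebra of $K$'' the coefficients expressing $c$ in terms of the minors lie in $R$, not in $K$, so what you are really enlarging is $R_{\cA}$ (and $\cA$ along with it), which is equally routine.
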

\begin{proof}
  We write $R=K[\seq{x}{n}]/(\seq{f}{r})$. Then $(\seq{f}{r})$ has pure height $n-d$, and the Jacobian ideal $\cJ(R/K)$ is generated by the size $(n-d)$ minors of the Jacobian matrix. Let $\delta$ be one of the minors.
  
  Let $I$ be an ideal of $R$ and let $u\in \tck{I}$. Let $A$ be the finitely generated $\ZZ$-subalgebra of $K$ such that all polynomials of $\seq{f}{r}$ and all size $(n-d)$ minors of the Jacobian matrix are defined in $A[\seq{x}{n}]$. Localizing at finitely many nonzero elements (equivalently, one nonzero element) of $A$, we can make $(A,R_A=A[\seq{x}{n}]/(\seq{f}{r}),I_A,u_A)$ a descent of $(K,R,I,u)$. We will write $\delta\in R_A$ by abusing notation. Then clearly $\delta\in \cJ(R_A/A)$ and remains so after any base change $A\to B$ where $A\subseteq B\subseteq K$.
  
  By the definition of tight closure in characteristic zero, there exists a finitely generated $\ZZ$-subalgebra $A_0 \subseteq K$ and a descent $(A_0, R_{A_0}, I_{A_0}, u_{A_0})$ such that $u_{A_0}\in \tca[A_0]{I_{A_0}}$. By enlarging $A$ to include the generators of $A_0$ (and localizing further if necessary, as in \cite[2.1.6]{Huneke1999}), we may assume without loss of generality that $A_0 \subseteq A$. Over this enlarged ring $A$, both the Jacobian conditions and the tight closure relation $u_A\in \tca[A]{I_A}$ are simultaneously satisfied. Hence, for almost all $\mu\in\mrm{Max}\Spec(A)$, \cref{corcorcor} implies that the image of $\delta$ in $R_{\kappa}$ is a test element. Consequently, we have $\delta u_{\kappa} \in I_AR_{\kappa}$. By \cref{lemmaabove}, we conclude that $\delta u_A\in I_A$, which implies that $\delta u\in I$ in $R$. Since $\delta$ and $u$ are arbitrarily chosen, we conclude that $\cJ(R/K) \tck{I} \subseteq I$ for any ideal $I\subseteq R$.
\end{proof}

\subsection{Affine progenitors}
Let $K$ be a field and let $S$ be a noetherian $K$-algebra. Note that $S$ may not necessarily be finitely generated over $K$. Let $N\subseteq M$ be finitely generated $S$-modules, $\underline{u}$ a finite sequence of elements of $M$. We have the following definition of an affine progenitor.
\begin{defn}\cite[Definition 3.1.1]{Huneke1999}\label{dtc-defn-AffineProgenitor}
  By an \emph{affine progenitor} for $(S,M,N,\underline{u})$ we shall means a septuple $\cM=(R,M_R,N_R,\underline{u}_R,h,\beta,\eta_R)$ where
  \begin{itemize}
  \item $R$ is a finitely generated $K$-algebra.
  \item $h:R\to S$ a $K$-homomorphism.
  \item $M_R$ is a finitely generated $R$-module with an $R$-linear map $\beta:M_R\to M$ such that the induced map $\beta_{*}:S\otimes_R M_R\to M$ is an isomorphism.
  \item $\underline{u}_R$ is a finite sequence of elements of $M_R$ such that $\beta_{*}$ maps $\underline{u}_R$ to $\underline{u}$.
  \item $\eta_R$ is an $R$-linear map from $N_R$ to $M_R$ and the induced map $N_S\to M_S\to M$ maps $N_S$ onto $N$.
  \end{itemize}
  We refer to $R$ as the \emph{base ring} of the affine progenitor.
\end{defn}
The data of the affine progenitor is captured by the following diagram.
\[
  \xymatrix{
    N_R\ar[r]^{\eta_R}\ar@/^2em/[rrr]  & M_R\ar@/^2em/[rrr]^{\beta} & & N\ar[r] & M\\
    R\ar@{..>}[u]\ar@{..>}[ur]\ar[rrr]^h & & & S\ar@{..>}[u]\ar@{..>}[ur] & 
  }
\]
where the dashed arrow $R\dashedrightarrow M$ means $M$ is an $R$-module. Note that we do not require $\eta_R$ to be injective, nor do we require that $N_S$ be isomorphic to $N$. Also we do not require $R$ to be a subring of $S$.

\subsection{General noetherian \texorpdfstring{$K$}{K}-algebras}
We are ready to define a notion of tight closure, called direct tight closure, for a general noetherian $K$-algebra $S$.
\begin{defn}
  Let $S$ be a noetherian $K$-algebra and $N\subseteq M$ be finitely generated $S$-modules. Let $u\in M$. Then we say that $u$ is in the \emph{direct $K$-tight closure} $\dtc{N}$ of $N$ in $M$ if there exists an affine progenitor $(R,M_R,N_R,u_R)$ (\cref{dtc-defn-AffineProgenitor}) for $(S,M,N,u)$ such that $u_R\in \tck{\ang{N_R}}_{M_R}$ as in \cref{dtc-defn-FGKAlgebraCase}.
\end{defn}

We also have a notion of tight closure called ``formal $K$-tight closure,'' defined below.
\begin{defn}
  Let $R$ be a noetherian $K$-algebra. By a complete local domain $B$ (at prime $P$) of $R$ we mean $\widehat{R}_P$ modulo a minimal prime, where $P\subseteq R$ is a prime ideal. We say that $u$ is in the \emph{formal $K$-tight closure} $\ftc{N}$ of $N$ in $M$ if for every complete local domain $B$ of $R$, $1\otimes u$ is in the direct $K$-tight closure of $\ang{B\otimes_R N}$ in $B\otimes_R M$.  
\end{defn}

We will use the definition of formal $K$-tight closure for the actual definition for all cases. This will not cause any conflicts as we have the following remarkable result.
\begin{thm}\cite[Theorem 3.4.1]{Huneke1999}
  Let $S$ be a locally excellent noetherian algebra over a field $K$ of characteristic $0$. Let $N\subseteq M$ be finitely generated $S$-modules. Then the following three conditions on an element $u\in M$ are equivalent:
  \begin{enumerate}
  \item $u\in \dtc{N}_M$.
  \item For every maximal ideal $\mm$ of $S$, if $C=\widehat{S_{\mm}}$ then $u_C\in \dtc{\ang{N_C}}_{M_C}$.
  \item $u\in \ftc{N}_M$.
  \end{enumerate}
  In particular, we have $\dtc{N}_M=\ftc{N}_M$.
\end{thm}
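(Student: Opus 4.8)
The plan is to prove the cycle of implications $(1)\Rightarrow(2)\Rightarrow(3)\Rightarrow(1)$, where the first two arrows come essentially for free from base change of affine progenitors together with the excellence of $S$, and the third arrow carries all the content. Throughout, $M_C$ (resp. $M_B$) will denote $C\otimes_S M$ (resp. $B\otimes_S M$), with $N_C,N_B$ the images of $C\otimes_S N$, $B\otimes_S N$, and $u_C,u_B$ the images of $u$.

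\textbf{$(1)\Rightarrow(2)$ by base change.} Suppose $u\in\dtc{N}_M$, witnessed by an affine progenitor $\cM=(R,M_R,N_R,\underline{u}_R,h,\beta,\eta_R)$ with $h\colon R\to S$ and $u_R\in\tck{\ang{N_R}}_{M_R}$. Fix a maximal ideal $\mm$ of $S$ and put $C=\widehat{S_\mm}$, a noetherian $K$-algebra. Then the base ring $R$, the modules $M_R,N_R$, the sequence $\underline{u}_R$, and the composite $R\xrightarrow{h}S\to C$ together with the induced structure maps form an affine progenitor for $(C,M_C,N_C,u_C)$: indeed $C\otimes_R M_R\cong C\otimes_S(S\otimes_R M_R)\xrightarrow{\ \sim\ }C\otimes_S M=M_C$ via $\beta_*$, and right exactness of $C\otimes_S-$ identifies the image of $C\otimes_R N_R$ in $M_C$ with $N_C$. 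The membership condition $u_R\in\tck{\ang{N_R}}_{M_R}$ is intrinsic to the finitely generated $K$-algebra $R$ and is therefore untouched, so $u_C\in\dtc{\ang{N_C}}_{M_C}$. The same computation with $C$ replaced by any noetherian $K$-algebra receiving $S$ shows that direct $K$-tight closure is preserved under arbitrary base change, a fact reused below.

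\textbf{$(2)\Rightarrow(3)$ via the excellence bridge.} Let $B=\widehat{S_P}/\mfp$ be a complete local domain of $S$; choose a maximal ideal $\mm\supseteq P$ and set $C=\widehat{S_\mm}$, which is again excellent. By $(2)$, $u_C\in\dtc{\ang{N_C}}_{M_C}$, and the base-change observation just made (applied over $C$) propagates this to $1\otimes u$ over every complete local domain of $C$. It then suffices to locate $B$ inside such a complete local domain by a well-behaved map: since $S_\mm$ is excellent, its formal fibres are geometrically regular, so for a prime $\mfq$ of $C$ minimal over $PC$ the ring $C_\mfq/PC_\mfq$ is a field, whence $\widehat{S_P}\to\widehat{C_\mfq}$ is faithfully flat with regular (zero‑dimensional) closed fibre; passing to minimal primes exhibits each complete local domain of $S$ at $P$ as the source of a faithfully flat map, with geometrically regular fibres, into a complete local domain of $C$. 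Faithfully flat descent of direct $K$-tight closure along such maps—available from the descent-data formalism, since in characteristic $0$ geometrically regular fibres present no obstruction—pushes the membership down to $B$, giving $(3)$.

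\textbf{$(3)\Rightarrow(1)$, the crux, and the main obstacle.} Assume $1\otimes u$ lies in the direct $K$-tight closure over every complete local domain of $S$; the task is to assemble a single affine progenitor over $S$. I would fix an arbitrary affine progenitor $\cM_0=(R_0,\dots,h_0\colon R_0\to S)$ for $(S,M,N,u)$ and show that, after one enlargement inside the directed system of affine progenitors for $(S,M,N,u)$, the inclusion $u_R\in\tck{\ang{N_R}}_{M_R}$ holds over the enlarged base ring $R$. The engine is the affine theory of \cref{sn-DTC}: membership in $\tck{\ \cdot\ }$ over a finitely generated $K$-algebra is tested by descending to a finitely generated $\ZZ$-algebra $\cA$ and checking, after inverting one element of $\cA$, at almost all closed points $\mu\in\mrm{Max}\Spec(\cA)$ (\cref{dtc-defn-AffineCase,dtc-defn-FGKAlgebraCase}), and that positive-characteristic condition is governed by the complete local domains of the reductions. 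Pulling these local conditions back through $h_0$ and matching them—via the excellence bridge of the previous paragraph—with the complete local domains of $S$ turns hypothesis $(3)$ into the statement that the required inclusion holds at almost all $\mu$; since $\Spec S$ is quasi-compact and the ambient data (the modules, their relations, and $\cA$) are finitely generated, the finitely many local witnesses can be merged, after one further localization of $\cA$ and one enlargement of $R_0$, into a global one, with \cref{lemmaabove}-type vanishing arguments excluding the degenerate cases. The genuinely hard point—indeed the whole substance of the theorem—is exactly this uniformity: a priori each complete local domain supplies an affine progenitor over \emph{its own} finitely generated $K$-algebra and \emph{its own} $\ZZ$-descent, and one must extract one $R$, one $h\colon R\to S$, and one $\cA$ witnessing tight-closure membership simultaneously at almost all closed points while keeping track of how the complete local domains of $S$ sit inside those of the $\widehat{S_\mm}$. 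This is where the generic-freeness and noetherian-patching machinery of \cite[\S 2]{Huneke1999} is indispensable.
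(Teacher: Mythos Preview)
The paper does not prove this theorem at all: it is stated with the citation \cite[Theorem 3.4.1]{Huneke1999} and quoted without proof, as background for the definition of $K$-tight closure in equal characteristic $0$. So there is no ``paper's own proof'' to compare against; the actual proof lives in the Hochster--Huneke manuscript.

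That said, your sketch has the correct architecture, and your $(1)\Rightarrow(2)$ is genuinely complete. The other two implications, however, are not yet proofs. In $(2)\Rightarrow(3)$ you assert that direct $K$-tight closure \emph{descends} along a faithfully flat map $B\to B'$ with geometrically regular fibres, where $B$ is a complete local domain of $S$ and $B'$ one of $C$. This descent is exactly the kind of statement that requires the affine descent-data machinery you invoke only by name; it is not a formal consequence of the definitions, since an affine progenitor for $(B',M_{B'},N_{B'},u_{B'})$ need not factor through $B$. You also need to check carefully that minimal primes of $\widehat{S_P}$ match up with minimal primes of $\widehat{C_\mfq}$ in the way you want, which uses that the formal fibres of $S_\mm$ are geometrically regular (hence reduced) --- you gesture at this but do not verify the bijection on minimal primes.

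For $(3)\Rightarrow(1)$ you correctly identify the difficulty --- uniformizing the local witnesses into a single affine progenitor --- but your paragraph is a description of what needs to happen rather than an argument that it does. In particular, ``pulling these local conditions back through $h_0$'' and ``matching them with the complete local domains of $S$'' hides the entire content: one must show that the characteristic-$p$ tight-closure condition at closed points of $\Spec(\cA)$ is controlled by finitely many complete local domains of $S$, uniformly in $\mu$, and this is precisely the hard theorem in \cite{Huneke1999}. Your closing sentence concedes as much. So as written, your $(3)\Rightarrow(1)$ is an outline that points at the right tools but does not supply a proof.
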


Consequently, for all three cases we have the following corollary.
\begin{cor}\cite[Corollary 3.4.2]{Huneke1999}
  Let $R$ be a finitely generated algebra over a field $K$ of characteristic zero. Let $N\subseteq M$ be finitely generated $R$-modules. Then $\tck{N}_M=\dtc{N}_M=\ftc{N}_M$.
\end{cor}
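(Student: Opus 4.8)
The plan is to derive the corollary from \cite[Theorem 3.4.1]{Huneke1999} together with the fact that the equational tight closure $\tck{(-)}$ of a finitely generated $K$-algebra is insensitive to passing to an affine progenitor. I would check three things: $\tck{N}_M\subseteq\dtc{N}_M$, $\dtc{N}_M=\ftc{N}_M$, and $\dtc{N}_M\subseteq\tck{N}_M$; together these force all three to agree. The first inclusion is immediate: since $R$ is itself finitely generated over $K$, the trivial septuple $(R,M,N,u,\id_R,\id_M,\iota)$, with $\iota\colon N\hookrightarrow M$ the inclusion (so that $\beta_*\colon R\otimes_R M\to M$ is the identity and $N_S=N$ maps onto $N$), is an affine progenitor for $(R,M,N,u)$ in the sense of \cref{dtc-defn-AffineProgenitor}, and $\ang{N}=N$, so $u\in\tck{N}_M$ exhibits $u\in\dtc{N}_M$. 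The second equality is \cite[Theorem 3.4.1]{Huneke1999} applied with $S=R$, since a finitely generated algebra over a field is excellent, hence locally excellent.

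The content lies in the third inclusion $\dtc{N}_M\subseteq\tck{N}_M$, which is a persistence statement for the structure map of an affine progenitor. Let $u\in\dtc{N}_M$, witnessed by a progenitor $(R',M_{R'},N_{R'},u_{R'},h,\beta,\eta_{R'})$ with $R'$ finitely generated over $K$, $h\colon R'\to R$, and $u_{R'}\in\tck{\ang{N_{R'}}}_{M_{R'}}$ over $R'$. By \cref{dtc-defn-FGKAlgebraCase} there is a finitely generated $\ZZ$-subalgebra $\cA\subseteq K$ and descent data $(\cA,R'_{\cA},M_{R',\cA},N_{R',\cA},u_{R',\cA})$ with $u_{R',\cA}\in\tca{(\ang{N_{R',\cA}})}_{M_{R',\cA}}$ over $R'_{\cA}$ (see \cref{dtc-defn-AffineCase}). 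Using \cite[2.1.6]{Huneke1999} and \cite[Proposition 2.1.9]{Huneke1999} I would enlarge $\cA$ finitely many times so that, in addition, $h$, the $R'$-module $M$ (hence $N$), and the maps $\beta$, $\eta_{R'}$ all descend to $\cA$; so that the descended base-change map $R_{\cA}\otimes_{R'_{\cA}}M_{R',\cA}\to M_{\cA}$ is an isomorphism; and so that the descended map $R_{\cA}\otimes_{R'_{\cA}}\ang{N_{R',\cA}}\to M_{\cA}$ has image $N_{\cA}$ — all of which hold after inverting a single element of $\cA$, since the relevant kernels and cokernels are finitely generated over $\cA$ and vanish upon $\otimes_{\cA}K$. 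Then for almost all $\mu\in\mrm{Max}\Spec(\cA)$, writing $\kappa=\cA/\mu$, we have $u_{R',\kappa}\in\ang{N_{R',\kappa}}^{*}_{M_{R',\kappa}}$ over the finitely generated $\kappa$-algebra $R'_{\kappa}$.

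Now I would apply persistence of characteristic-$p$ tight closure to the $\kappa$-algebra map $R'_{\kappa}\to R_{\kappa}$ (valid here because $R'_{\kappa}$ is of finite type over the field $\kappa$; see \cite{Huneke1999}): the image of $u_{R',\kappa}$ in $R_{\kappa}\otimes_{R'_{\kappa}}M_{R',\kappa}\cong M_{\kappa}$ lies in the tight closure of the image of $\ang{N_{R',\kappa}}$, which by the arrangement above is $\ang{N_{\kappa}}=N_{\kappa}$; hence $u_{\kappa}\in\ang{N_{\kappa}}^{*}_{M_{\kappa}}$ for almost all $\mu$. Reading this through \cref{dtc-defn-FGKAlgebraCase} with the descent data just constructed yields $u\in\tck{N}_M$ over $R$. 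Combining the three steps gives $\tck{N}_M=\dtc{N}_M=\ftc{N}_M$.

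The main obstacle is the bookkeeping in the second paragraph: one must verify that finitely many localizations of $\cA$ suffice to make \emph{every} piece of the affine-progenitor structure — especially the isomorphism induced by $\beta$ and the surjectivity of $N_S\to N$ — persist after reduction modulo almost all closed points, and one must pin down the precise form of characteristic-$p$ persistence being invoked (compatibility of tight closure with homomorphisms of finite-type algebras over a field). Once these are in place, the argument is a routine synthesis of \cref{dtc-defn-FGKAlgebraCase}, the descent machinery of \cref{sn-DTC}, and \cite[Theorem 3.4.1]{Huneke1999}.
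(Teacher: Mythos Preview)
The paper does not supply its own proof of this corollary; it simply quotes the statement as \cite[Corollary~3.4.2]{Huneke1999} immediately after the theorem it follows from. Your proposal is essentially the argument one finds in \cite{Huneke1999}: the trivial progenitor gives $\tck{N}_M\subseteq\dtc{N}_M$, \cite[Theorem~3.4.1]{Huneke1999} gives $\dtc{N}_M=\ftc{N}_M$, and the remaining inclusion is a descent-plus-persistence argument. Your sketch of the third step is sound, and you correctly identify the only real work as the bookkeeping needed to descend the full affine-progenitor structure (including the isomorphism $\beta_*$ and the fact that $N_S$ surjects onto $N$) to a common $\cA$ so that characteristic-$p$ persistence for maps of finite type over a field applies fiberwise.
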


Finally, we are ready to give the definition of (small) equational tight closure in equal characteristic 0.
\begin{defn}\cite[Definition 3.4.3]{Huneke1999}
  Let $R$ be a noetherian $K$-algebra, where $K$ is a field of characteristic zero, and let $N\subseteq M$ be finitely generated $R$-modules.
  \begin{enumerate}
  \item We define the $K$-tight closure $\tck{N}$ of $N$ in $M$ to be the formal $K$-tight closure of $N$ in $M$.
  \item Every noetherian ring $R$ of equal characteristic 0 is (uniquely) a $\QQ$-algebra. When $K=\QQ$ we shall refer to the direct $\QQ$-tight closure of $N$ in $M$ as the \emph{direct equational tight closure} of $N$ in $M$, and denote it $\deqtc{N}_M$. we shall refer to the $\QQ$-tight closure of $N$ in $M$ as the \emph{equational tight closure} of $N$ in $M$, and denote it $\eqtc{N}_M$.
  \end{enumerate}
\end{defn}


\section{Test Elements in Characteristic 0}\label{sn-TEZA}
We aim to prove \cref{teza-thm-TEforAnalytic}. Before that, we have to make great use of the Artin-Rotthaus theorem (\cref{teza-thm-FlatGeoRegFiber}) to discuss a series of descent results ((A1) - (A11)).
\subsection{Descent data} We need the following Artin-Rotthaus theorem \cite{Artin1988}.
\begin{thm}[Artin-Rotthaus]\label{teza-thm-FlatGeoRegFiber}
  Let $K$ be a field. Then the power series ring $K\ps{\seq{x}{n}}$ is a direct limit of smooth $K[\seq{x}{n}]$-algebras.
\end{thm}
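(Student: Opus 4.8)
The plan is to deduce the statement from the N\'eron--Popescu desingularization theorem, which asserts that a homomorphism $A\to B$ of noetherian rings is \emph{regular} --- flat with geometrically regular fibers --- if and only if $B$ is a filtered direct limit of smooth finitely generated $A$-algebras. Granting this, it suffices to verify that the canonical inclusion $\varphi\colon K[\seq{x}{n}]\to K\ps{\seq{x}{n}}$ is a regular homomorphism; the conclusion of the theorem is then immediate.

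First I would check flatness. The ring $K\ps{\seq{x}{n}}$ is the completion of the noetherian ring $K[\seq{x}{n}]$ along the ideal $\mm:=(\seq{x}{n})$, and completion of a noetherian ring along an ideal is flat; equivalently, $\varphi$ factors as the localization $K[\seq{x}{n}]\to K[\seq{x}{n}]_{\mm}$ followed by the completion $K[\seq{x}{n}]_{\mm}\to \widehat{K[\seq{x}{n}]_{\mm}}=K\ps{\seq{x}{n}}$ of a noetherian local ring, and each of these maps is flat. Next I would check the fibers. Let $\mfp\in\Spec K[\seq{x}{n}]$. If $\mfp\not\subseteq\mm$, choose $f\in\mfp$ with $f\notin\mm$; then $f$ has nonzero constant term, hence is a unit in $K\ps{\seq{x}{n}}$, so $\mfp K\ps{\seq{x}{n}}$ is the unit ideal and the fiber $K\ps{\seq{x}{n}}\otimes_{K[\seq{x}{n}]}\kappa(\mfp)$ is the zero ring, which is vacuously geometrically regular. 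If $\mfp\subseteq\mm$, then $\mfp$ remains prime in $\cA:=K[\seq{x}{n}]_{\mm}$, and one checks that the fiber of $\varphi$ over $\mfp$ is canonically the fiber of the completion map $\cA\to\widehat{\cA}$ over $\mfp\cA$, i.e.\ a formal fiber of $\cA$. Since $\cA$ is a localization of a finitely generated $K$-algebra it is excellent, so its formal fibers are geometrically regular. This verifies that $\varphi$ is regular, and N\'eron--Popescu then yields the desired expression of $K\ps{\seq{x}{n}}$ as a direct limit of smooth $K[\seq{x}{n}]$-algebras.

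The genuine difficulty in this route is entirely packaged inside the invocation of N\'eron--Popescu desingularization, which is itself a deep theorem. A self-contained argument --- which is essentially what \cite{Artin1988} carries out --- would instead replace that black box with a direct inductive construction of the smoothing system, built from the Artin approximation property of excellent Henselian local rings together with a careful passage to the limit; that construction is where all the real work lies, and it is the main obstacle to a short proof. I would also flag one minor care-point: rather than asserting that the $I$-adic completion of an arbitrary excellent ring is a regular morphism, I reduce the fiber computation to the local ring $\cA=K[\seq{x}{n}]_{\mm}$, for which geometric regularity of the formal fibers is immediate from the definition of excellence, and dispose of the primes not contained in $\mm$ separately by the unit argument above.
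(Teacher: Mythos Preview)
Your proposal is correct and matches the paper's own treatment: the paper does not prove this theorem directly but cites \cite{Artin1988} and explicitly remarks that it is a consequence of N\'eron--Popescu desingularization, which is exactly the route you take. Your verification that $K[\seq{x}{n}]\to K\ps{\seq{x}{n}}$ is regular (flatness via completion, geometric regularity of fibers via excellence of the localized polynomial ring, and the vacuous case for primes outside $\mm$) is clean and complete.
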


The Artin-Rotthaus theorem is also a consequence of N\'eron-Popescu desingularization \cite[Theorem 1.1]{Swan1998}.
\begin{thm}
  Let $f:A\to B$ be a ring homomorphism.  Then $f$ is flat with geometrically regular fibers if and only if $B$ is a filtered colimit of smooth $A$-algebras.
\end{thm}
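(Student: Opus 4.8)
The plan is to treat the two implications separately. The implication from ``$B$ is a filtered colimit of smooth $A$-algebras'' to ``$f$ is flat with geometrically regular fibers'' is the routine one: writing $B=\varinjlim_i B_i$ with each $B_i$ a finitely presented smooth $A$-algebra, $B$ is $A$-flat since each $B_i$ is $A$-flat and a filtered colimit of flat modules is flat; and for the fiber condition one base changes to a field $L$ over $A$ and uses that $B\otimes_A L=\varinjlim_i(B_i\otimes_A L)$ is a filtered colimit of smooth, hence geometrically regular, $L$-algebras, geometric regularity over a field being stable under filtered colimits. I would dispatch this direction in a few lines.

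The content lies in the converse, which is exactly the N\'eron--Popescu desingularization theorem. The first step is a reformulation: it suffices to prove the \emph{desingularization statement} that for every finitely presented $A$-algebra $C$ equipped with an $A$-algebra map $C\to B$, there is a factorization $C\to D\to B$ with $D$ smooth and finitely presented over $A$. Indeed, given this, the smooth finitely presented $A$-algebras mapping compatibly to $B$ form a filtered system whose colimit is $B$. (One also reduces first, by standard arguments, to the case where $A$ and $B$ are noetherian.)

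To prove the desingularization statement I would follow the streamlined treatment of \cite{Swan1998} (building on Popescu, Ogoma, Andr\'e, Spivakovsky). The organizing device is, for a presentation $C=A[\seq{X}{n}]/(\seq{f}{m})$, the ``smoothing'' ideal $H_{C/A}\subseteq C$ whose vanishing locus is the locus where $\Spec(C)\to\Spec(A)$ fails to be smooth; the goal becomes to enlarge $C$ inside $B$, i.e. to find $C\to C'\to B$ with $C'$ finitely presented and $H_{C'/A}B=B$, because once the smoothing ideal generates the unit ideal of $B$ the map $C'\to B$ factors through a principal localization of $C'$ that is standard smooth over $A$. The enlargement $C'$ is produced by N\'eron's original codimension-one (``$p$-desingularization'') argument together with an induction that strictly shrinks the non-smooth locus; this is where both hypotheses on $f$ are used, namely flatness, to lift relations from the fibers, and geometric regularity of the fibers, to straighten the relevant locally complete intersection pieces and to run the needed prime-avoidance arguments fiberwise.

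The main obstacle is precisely this last step: constructing the enlargement $C'$ and proving that the induction terminates. This is the technical heart of the N\'eron--Popescu theorem and occupies most of \cite{Swan1998}; rather than reproduce it, I would cite \cite[Theorem 1.1]{Swan1998} and use it as a black box, as is done elsewhere in this paper.
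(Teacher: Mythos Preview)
The paper does not prove this theorem at all; it is stated as a known result and attributed to \cite[Theorem 1.1]{Swan1998}. Your proposal ultimately does the same thing---citing Swan as a black box for the hard direction---while also supplying a brief sketch of the easy direction and of the overall architecture of the Popescu--Swan argument, which is more than the paper provides.
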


We first note that maps in the Artin-Rotthaus theorem are not necessarily injective. Furthermore, when $n > 0$, the Krull dimension of the algebras occurring in the direct limit must become arbitrarily large for large indices. This is because the fraction field of $K\ps{\seq{x}{n}}$ has infinite transcendence degree over $K(\seq{x}{n})$. If an algebra in the direct limit system has $m$ elements whose images in $K\ps{\seq{x}{n}}$ are algebraically independent over $K(\seq{x}{n})$, then the Krull dimension of that algebra must be at least $m+n$. Because of this dimension blow-up, we will primarily study the height of ideals generated by certain elements in these algebras, rather than the Krull dimension of the quotient rings.

We will be able to preserve many properties while passing to a larger algebra, i.e., these properties will hold for all algebras occurring after a certain index $\nu$.

Let $R$ be a reduced, equidimensional complete local ring of dimension $d$ with coefficients field $K$, i.e., $R=K\ps{\seq{x}{n}}/(\seq{f}{m})$. We write $A=K[\seq{x}{n}]$. Then $\widehat{A}=K\ps{\seq{x}{n}}$, and $f_i\in \widehat{A}$ for each $i$.
\begin{enumerate}[left=0pt,label=(A\arabic*)]
\item (Eventually injective) Note that for any element $a\in \widehat{A}$, there is some $\nu$ and some $\widetilde{a}\in A_{\nu}$ mapping to $a$. Note that $A_{\nu}\to \widehat{A}$ may not be injective. But the kernel is a finitely generated ideal and maps to zero in $\widehat{A}$, hence it must be zero when we map to a large enough algebra $A_{\mu}$. Moreover, the image of $A_{\nu}$ in $A_{\mu}$ maps injectively to $\widehat{A}$, and remains so mapping to any $A_{\lambda}$ where $\lambda\geqslant \mu$. Therefore, we will denote the image of $\widetilde{a}$ in these $A_{\lambda}$ by $a$ directly.\label{AR1}
\item (Descent of elements) We shall write ``for all $\mu\gg\nu$'' to mean that ``there exists some $\lambda> \nu$ and for all $\mu\geqslant \lambda$.'' By \ref{AR1}, for any element $a\in \widehat{A}$, there exists some $\nu$ such that $a\in A_{\mu}$ for all $\mu\gg\nu$. Hence, we can say that there exists some $\mu$ such that $a\in A_{\mu}$. Of course, for any $A_{\lambda}$ where $\lambda\geqslant\mu$, we have $a\in A_{\lambda}$.\label{AR2}
\item (Notation) We shall frequently use the following notation:
  \begin{itemize}
  \item We write $\mm$ for the maximal ideal of $\widehat{A}$ and let $\mm_{\nu}$ denote the contraction of $\mm$ in $A_{\nu}$. Since $A_{\nu}/\mm_{\nu}\hookrightarrow \widehat{A}/\mm = K$, and so must be $K$. Hence $\mm_{\nu}$ is a maximal ideal of $A_{\nu}$.
  \item Note that any element $a$ in $A_{\nu}-\mm_{\nu}$ maps to an element in $\widehat{A}-\mm$. So its image is necessarily a unit, and we actually have $(A_{\nu})_a\to \widehat{A}$. Also, we have such maps for $(A_{\nu})_a\to A_{\mu}$ for all $\mu\gg\nu$. Moreover, we can do this for finitely many such elements by localizing at their product.
  \item By the bullet point above, the localizations we obtained above are cofinal with the direct system. So we can always assume for each $\mu\gg\nu$, a localization is made, if needed, and we shall indicate this by ``for all $\mu\gg_{\loc}\nu$.''
  \end{itemize}\label{AR3}
\item (Descent of ideals) For any ideal $\mfa\subseteq \widehat{A}$, if $\mfa$ is generated by $\seq{a}{k}$, then there exists a $\nu$ such that all of the $a_i$ are in $A_{\nu}$. Therefore we have $\mfa_{\mu}:=(\seq{a}{k})A_{\mu}$ for all $\mu\geqslant\nu$. If in addition, we have an element $u\in \mfa\subseteq \widehat{A}$, we can write $u=z_1a_1+\cdots+z_ka_k$ for some $\seq{z}{k}\in \widehat{A}$. By choosing a larger $\mu$ we have $u,\seq{z}{k}\in A_{\mu}$. Then the element $u-\sum_{i=1}^k z_ia_i\in A_{\mu}$ maps to zero in $\widehat{A}$. By passing to a larger $\mu$ we may assume that this is honestly zero. Therefore for all $\mu\gg\nu$, we have $u\in\mfa_{\mu}$.\label{AR4}
\item (Description of the maximal ideal) Any element in $\mm_{\nu}$ is in $(\seq{x}{n})A_{\mu}$ for all $\mu\gg \nu$. Let $\widetilde{a}\in \mm_{\nu}$ be an element. This is trivial if $\widetilde{a}$ maps to zero. If $\widetilde{a}$ maps to a nonzero element $a$ in $\mm=(\seq{x}{n})\widehat{A}$, by \ref{AR4}, there exists $\mu$ such that $a\in (\seq{x}{n})A_{\mu}$.\label{AR5}
\item (Descent of radicals) Let $\seq{u}{n}\in \widehat{A}$ be a system of parameters. Then by \ref{AR2} and \ref{AR5}, there exists some $\nu$ such that $\seq{u}{n}\in A_{\mu}$ and $(\seq{u}{n})A_{\mu}\subseteq (\seq{x}{n})A_{\mu}$ for all $\mu\geqslant \nu$. Since each $x_i$ has some power in $ (\seq{u}{n})\widehat{A}$, we choose a larger $\mu$ such that each $x_i$ also has a power in $(\seq{u}{n})A_{\mu}$. Then this implies that $(\seq{u}{n})A_{\mu}$ and $(\seq{x}{n})A_{\mu}$ have the same radical.\label{AR6}
\end{enumerate}

The next proposition says that we can descend regular sequences in $\widehat{A}$.
\begin{prop}\label{teza-AR-RegSeqAlmostAll}
  If elements $\seq{u}{k}$ in $\widehat{A}$ form a regular sequence, then there exists $\nu$ such that they form a regular sequence in $A_{\mu}$ for all $\mu\gg_{\loc}\nu$.
\end{prop}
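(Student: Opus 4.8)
The plan is to reduce the statement, by means of \ref{AR6}, to a height computation in the regular local rings $(A_\mu)_{\mm_\mu}$, and then to recognise a regular sequence there from the Cohen--Macaulay property. Since $\widehat A$ is local, a regular sequence in it lies in $\mm$, so $\seq{u}{k}\in\mm$ and $(\seq{u}{k})\widehat A$ has height $k$; hence $\widehat A/(\seq{u}{k})$ is a complete local ring of dimension $n-k$, and one can choose $u_{k+1},\dots,u_n\in\mm$ whose classes form a system of parameters of $\widehat A/(\seq{u}{k})$. Then $\seq{u}{n}$ is a system of parameters of $\widehat A$, so by \ref{AR6} there is a $\nu$ such that for all $\mu\gg\nu$ the ideals $(\seq{u}{n})A_\mu$ and $(\seq{x}{n})A_\mu$ have the same radical. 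As each $u_i$ maps into $\mm$, it lies in the contraction $\mm_\mu$, so we may also assume $\seq{u}{n}\in\mm_\mu$.

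Next I would examine the local ring $S:=(A_\mu)_{\mm_\mu}$. By \cref{teza-thm-FlatGeoRegFiber} each $A_\mu$ is smooth --- hence flat and of finite type --- over the regular ring $A=K[\seq{x}{n}]$, so $A_\mu$ is regular and noetherian and $S$ is a regular, in particular Cohen--Macaulay, local ring. Every minimal prime of $(\seq{x}{n})S$ contracts to the maximal ideal $(\seq{x}{n})$ of $A$, so going-down for the flat map $A\to S$ shows it has height at least $n$; thus $\hght((\seq{x}{n})S)\geqslant n$, and since $(\seq{u}{n})S$ has the same radical, $\hght((\seq{u}{n})S)\geqslant n$ as well. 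On the other hand $\hght((\seq{u}{n})S)\leqslant n$ by Krull's height theorem, so equality holds. Hence $\seq{u}{n}$ are $n$ elements of the maximal ideal of $S$ generating an ideal of height $n$ in the Cohen--Macaulay local ring $S$, and therefore they form an $S$-regular sequence; a fortiori its initial segment $\seq{u}{k}$ is $S$-regular.

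It remains to upgrade this from $S$ to a partial localization of $A_\mu$, which is exactly what ``$\mu\gg_{\loc}\nu$'' permits (see \ref{AR3}). Let $\Sigma=\bigcup_{i=0}^{k-1}\Ass_{A_\mu}(A_\mu/(u_1,\dots,u_i))$, a finite set of primes of $A_\mu$; for each $P\in\Sigma$ with $P\not\subseteq\mm_\mu$ choose $a_P\in P\setminus\mm_\mu$, and let $a$ be the product of these elements (take $a=1$ if there are none). Then $a\notin\mm_\mu$, so $(\seq{u}{k})(A_\mu)_a$ is a proper ideal, while every associated prime of $(A_\mu)_a/(u_1,\dots,u_i)$ has the form $P(A_\mu)_a$ with $P\in\Ass_{A_\mu}(A_\mu/(u_1,\dots,u_i))$ and $P\subseteq\mm_\mu$; for such a $P$, the prime $PS$ is an associated prime of $S/(u_1,\dots,u_i)$, and since $\seq{u}{k}$ is $S$-regular, $u_{i+1}\notin PS$, hence $u_{i+1}\notin P$. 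Thus $u_{i+1}$ is a nonzerodivisor on $(A_\mu)_a/(u_1,\dots,u_i)$ for every $i<k$, i.e. $\seq{u}{k}$ is a regular sequence in $(A_\mu)_a$. Since these localizations are cofinal in the direct system, the statement follows.

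The crux, I expect, is the second paragraph: the $A_\mu$, and hence the $(A_\mu)_{\mm_\mu}$, may have arbitrarily large Krull dimension, so one has to check carefully that $\mm_\mu$ is a maximal ideal of $A_\mu$ lying over the maximal ideal of $A$ and that the height of $(\seq{x}{n})(A_\mu)_{\mm_\mu}$ is nonetheless at least $n$ --- this is what forces $\hght((\seq{u}{n})(A_\mu)_{\mm_\mu})=n$ after \ref{AR6} and makes $\seq{u}{n}$ part of a system of parameters. The other ingredients --- extending a regular sequence in $\widehat A$ to a full system of parameters, the fact that in a Cohen--Macaulay local ring $j$ elements of the maximal ideal cutting out an ideal of height $j$ form a regular sequence, and the prime-avoidance localization --- are routine.
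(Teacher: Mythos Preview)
Your proof is correct and follows essentially the same outline as the paper's: extend $\seq{u}{k}$ to a full system of parameters, invoke \ref{AR6} to get the same radical as $(\seq{x}{n})$, deduce that $\seq{u}{n}$ is a regular sequence in $(A_\mu)_{\mm_\mu}$, and then localize at one element of $A_\mu-\mm_\mu$ to spread this out. The only substantive difference is in the third step: the paper observes that $\seq{x}{n}$ is already an $A_\mu$-regular sequence (by flatness over $A$), so the Koszul homology $H_i(\seq{x}{n};A_\mu)$ vanishes, and since Koszul vanishing depends only on the radical of the ideal, $H_i(\seq{u}{n};A_\mu)$ vanishes as well, whence regularity after localizing at $\mm_\mu$; you instead argue via going-down that $\hght((\seq{x}{n})S)\geqslant n$ and then use that $S=(A_\mu)_{\mm_\mu}$ is Cohen--Macaulay. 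Both are standard and reach the same conclusion, and your final localization step (killing the associated primes of $A_\mu/(\seq{u}{i})$ not contained in $\mm_\mu$) is just an unpacking of the paper's observation that the finitely generated modules $\bigl((\seq{u}{i}):_{A_\mu}u_{i+1}\bigr)/(\seq{u}{i})$ vanish after localizing at $\mm_\mu$ and hence after inverting a single element outside $\mm_\mu$.
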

\begin{proof}
  We can extend $\seq{u}{k}$ to a full system of parameters $\seq{u}{n}\in \widehat{A}$. If we can show that $\seq{u}{n}$ form a regular sequence in $A_{\mu}$ for all $\mu\gg_{\loc}\nu$, then the conclusion follows immediately.
  
  By \ref{AR6} we can find $\mu$ such that $(\seq{u}{n})A_{\mu}\subseteq(\seq{x}{n})A_{\mu}$ and they have the same radical. By the construction of $A_{\mu}$, we know that $\seq{x}{n}$ form a regular sequence on $A_{\mu}$, which implies that the Koszul homology $\mH_i(\seq{x}{n};A_{\mu})$ vanishes for each $i$. Since they have the same radical, we also have $\mH_i(\seq{u}{n};A_{\mu})$ vanish. Then $\seq{u}{n}$ form a regular sequence on $(A_{\mu})_{\mm_{\mu}}$.

  The modules $\dfrac{(\seq{u}{h}):_{A_{\mu}}u_{h+1}}{(\seq{u}{h})A_{\mu}}$ are finitely generated and become zero once we localize at $\mm_{\mu}$. It is clear that we can localize at one element $r$ to make all these modules zero. Thus $\seq{u}{n}$ form a regular sequence on $(A_{\mu})_r$.
\end{proof}

Next we observe that we can descend ideals while preserving their heights, see \ref{AR7}; if the ideal has pure height, we can preserve that, see \ref{AR11}. For this purpose, we need the following fact.
\begin{fact}\cite[Facts 2.3.7]{Huneke1999}\label{teza-AR-Height}
  Let $R$ be a noetherian ring and $I$ an ideal of $R$. 
  \begin{enumerate}
  \item If $I$ is proper then $I$ has height at least $h$ if and only if there is a sequence of elements $\seq{x}{h}$ in $I$ such that for all $i$, $0\leqslant i\leqslant h-1$, $x_{i+1}$ is not in any minimal prime of $(\seq{x}{i})R$.
  \item $I$ has height at most $h$ if and only if there exists a proper ideal $J$ containing $I$ and an element $y$ of $R$ not a zerodivisor on $J$ such that $yJ\subseteq \sqrt{I}$ and $yJ$ is contained in the radical of an ideal generated by at most $h$ elements.
  \end{enumerate}
\end{fact}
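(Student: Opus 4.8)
\emph{Proof proposal.} Both statements are elementary in the sense that their only engine is Krull's height theorem together with prime avoidance; no closure operation enters. I would argue as follows. For part (1), backward direction: I would prove by induction on $i$ that the hypothesis ``$x_{i+1}$ lies in no minimal prime of $(x_1,\dots,x_i)R$'' (with $(x_1,\dots,x_0)R:=(0)$, whose minimal primes are the elements of $\Min(R)$) forces $\hght\!\bigl((x_1,\dots,x_i)R\bigr)\ge i$: a minimal prime $P$ of $(x_1,\dots,x_{i+1})R$ contains some minimal prime $Q$ of $(x_1,\dots,x_i)R$, and $P\neq Q$ because $x_{i+1}\in P\setminus Q$, so $\hght P\ge\hght Q+1\ge i+1$; since $(x_1,\dots,x_h)R\subseteq I$ this gives $\hght I\ge h$. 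For the forward direction I would build the sequence greedily inside $I$: given $x_1,\dots,x_i\in I$ (so $\hght((x_1,\dots,x_i)R)\le i$ by Krull's height theorem), the inequality $\hght I\ge h>i$ prevents $I$ from lying in any of the finitely many minimal primes of $(x_1,\dots,x_i)R$ (each of height $\le i<h$), so prime avoidance produces $x_{i+1}\in I$ outside all of them, and after $h$ steps we are done.

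For part (2) the conceptual point is that, for a proper ideal $I$, the condition $\hght I\le h$ is equivalent to ``$I$ is contained in some minimal prime of an ideal generated by $h$ elements'' (the nontrivial implication being Krull's height theorem), and the datum $(J,y,z_1,\dots,z_h)$ is a reformulation of this engineered to survive the direct-limit arguments that follow. For the backward direction I would observe that, $y$ being a nonzerodivisor on $R/J$, it avoids every minimal prime of $J$; since $V(yJ)=V((y))\cup V(J)$, each minimal prime of $J$ is then already a minimal prime of $yJ$, and combining this with the hypothesis that $yJ$ is cut out, up to radical, by $h$ elements together with Krull's height theorem, every minimal prime of $J$ — hence of $I\subseteq J$ — has height $\le h$. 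For the forward direction I would choose a minimal prime $P$ of $I$ with $\hght P=\hght I\le h$, take $J$ to be an ideal sandwiched between $I$ and $P$ whose minimal primes all have height $\le h$ (e.g.\ a suitable $P$-primary ideal, or $P$ itself), use prime avoidance to pick $y$ outside $P$ but inside every other minimal prime of $I$ so that $yJ\subseteq\sqrt I$ and $y$ remains a nonzerodivisor modulo $J$, take $z_1,\dots,z_{\hght P}$ to be parameters cutting out $PR_P$ up to radical and pad with zeros, and finally verify that $yJ$ lands in $\sqrt{(z_1,\dots,z_h)}$.

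I expect part (1) to be routine once the inductive invariant above is written down, and the entire difficulty to sit in the forward direction of part (2): the auxiliary ideal $J$ must be close enough to $I$ that a nonzerodivisor $y$ with $yJ\subseteq\sqrt I$ exists, which forces $y$ into all the remaining minimal primes of $I$ and therefore requires those primes not to engulf $J$, while at the same time $yJ$ must be controllable by only $h$ elements up to radical. Balancing these two demands is where the bookkeeping lives, and it is exactly here that one must be precise about the meaning of ``nonzerodivisor on $J$'' and of ``contained in the radical of an ideal generated by at most $h$ elements'' (and about the arithmetic-rank behaviour of $yJ$); in particular the naive choices $J=I$ or $J=P$ need not work, and the case where $I$ has several minimal primes of the same minimal height has to be treated by passing to the correct primary component and choosing $y$ and the $z_i$ in sufficiently general position.
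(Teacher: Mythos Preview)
Your treatment of part~(1) and of the backward direction of part~(2) is essentially the paper's (indeed, the paper only writes out the forward direction of~(1), so you are supplying more). One phrasing slip in your backward argument for~(2): from ``every minimal prime of $J$ has height $\le h$'' you cannot conclude the same for the minimal primes of $I$, since those need not be minimal over $J$; what you \emph{can} conclude is $\hght I\le h$, because $I\subseteq J$ lies in any minimal prime of $J$. The paper reaches the same conclusion by localizing at $y$ and observing that $\sqrt{IR_y}=\sqrt{JR_y}$, rather than by your $V(yJ)=V(y)\cup V(J)$ argument; both routes are fine.

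Where you diverge from the paper is the forward direction of~(2), and there you are anticipating difficulties that do not arise. The paper takes $J=P$ outright (a minimal prime of $I$ of height $\hght I$) and does \emph{not} manufacture $y$ by prime avoidance inside the other minimal primes of $I$. Instead it works in $R_P$: there one has both $PR_P=\sqrt{IR_P}$ and $PR_P=\sqrt{(z_1,\dots,z_h)R_P}$ for some $z_i\in R$ forming a system of parameters. Each of these finitely many radical containments is witnessed after inverting a single element of $R\setminus P$, so there is a single $y\in R\setminus P$ with $PR_y\subseteq\sqrt{IR_y}$ and $PR_y\subseteq\sqrt{(z_1,\dots,z_h)R_y}$; replacing $y$ by a power then yields $yP\subseteq\sqrt{I}$ and $yP\subseteq\sqrt{(z_1,\dots,z_h)}$ in $R$ itself, and $y\notin P$ makes $y$ a nonzerodivisor on $R/P$. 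So the ``naive choice $J=P$'' \emph{does} work, there is no balancing act, no general-position argument, and no special case when several minimal primes of $I$ share the minimal height. Your proposed prime-avoidance choice of $y$ (outside $P$, inside the other minimal primes of $I$) would handle $yJ\subseteq\sqrt I$ but, as you implicitly notice, would not by itself force $yJ$ into $\sqrt{(z_1,\dots,z_h)}$; the denominator-clearing choice handles both at once.
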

\begin{proof}
  (1) Since $I$ is not in any minimal prime of $(0)R$, we can choose $x_1\in I$ avoiding all minimal primes of $(0)R$. Suppose that $\seq{x}{i}$ are chosen. Since $\hght(I)\geqslant h >i$, it is not contained in any minimal primes of $(\seq{x}{i})R$. Hence, it is not contained in the union of all these minimal primes. So we can choose $x_{i+1}\in I$ avoiding all minimal primes.
  
  (2) If $I\subseteq J$, and $y$ is a nonzerodivisor on $J$, then $y$ avoids all associated primes of $I$ and $J$. So we have $\hght(I)=\hght(IR_y)$ and $\hght(J)=\hght(JR_y)$. Note that in $R_y$, we have $JR_y\subseteq \sqrt{I}R_y\subseteq \sqrt{JR_y}$. Since $JR_y$ is contained in the radical of an ideal generated by at most $h$ elements, it has height at most $h$. So $\hght(I)=\hght(IR_y)=\hght(\sqrt{I}R_y)\leqslant\hght(JR_y)\leqslant h$. Now we assume that $I$ has height at most $h$. Let $P$ be one of the minimal primes of $I$ such that $\hght(P)=\hght(I)\leqslant h$. Then $IR_P$ is $PR_P$-primary. So $PR_P=\sqrt{I}R_P$. We also know that $PR_P$ is the radical of at most $h$ elements (a system of parameters) in $R_P$. By looking at the generators, we can localize at one element $y\in R-P$ such that $PR_y\subseteq \sqrt{I}R_y$ and $PR_y$ is contained in the radical of an ideal generated by at most $h$ elements. If we raise $y$ to a power if necessary and let $J=P$, then we have $yJ\subseteq \sqrt{I}$ and $yJ$ is contained in the radical of an ideal generated by at most $h$ elements.
\end{proof}
\begin{enumerate}[resume, left=0pt, label=(A\arabic*)]
\item (Preserving height while descending) For any ideal $\mfa=(\seq{a}{n})\widehat{A}$ of height $h$, we have $\mfa_{\mu}$ for all $\mu\gg\nu$. On the one hand, let $\seq{x}{h}$ be a maximal regular sequence in $\mfa$. Then we have $(\seq{x}{h})\subseteq(\seq{a}{n})$ in $A_{\mu}$, and $\seq{x}{h}$ continue to be a regular sequence for all $\mu\gg_{\loc} \nu$ by \cref{teza-AR-RegSeqAlmostAll}. So we have $\hght(\mfa_{\mu})\geqslant h$ for all $\mu\gg_{\loc} \nu$. On the other hand, by \cref{teza-AR-Height}, there is some ideal $J$ and an element $y$ in $\widehat{A}$ such that
  \begin{itemize}
  \item $I\subseteq J$.
  \item $yJ\subseteq \sqrt{I}$.
  \item There is some power $l$ such that $(yJ)^l\subseteq K$ where $K$ is generated by $h$ elements.
  \item $y$ is a nonzerodivisor on $J$.
  \end{itemize}
  All these except the last bullet point can be achieved using \ref{AR4}. The last one is done by \cref{teza-AR-RegSeqAlmostAll}. Hence for all $\mu\gg_{\loc}\nu$, we also have $\hght(a_{\mu})\geqslant h$. \label{AR7}
\end{enumerate}

Next we want to prove some results about descending modules and exact sequences by descending their presentations.

\begin{enumerate}[resume, left=0pt, label=(A\arabic*)]
\item (Descent of finitely generated modules and maps in-between) Let $M$ be a finitely generated $\widehat{A}$-module. Since $\widehat{A}$ is noetherian, we can write $M$ as the cokernel of a matrix $\alpha$. For large enough $\nu$, we have all entries of $\alpha$ in $A_{\nu}$, so we can form $M_{\nu}:=\Coker(\alpha_{\nu})$. For a map $f:M\to N$ between $\widehat{A}$-modules $M,N$, we have lifting of maps
  \[
    \xymatrix{
      F_1\ar@{>>}[r]\ar@{.>}[d]^h & K\ar@{^{(}->}[r]\ar@{.>}[d]^{g|_K} & F_0\ar@{>>}[r]\ar@{.>}[d]^g & M\ar[d]^f \\
      F_1'\ar@{>>}[r] & K'\ar@{^{(}->}[r] & F_0'\ar@{>>}[r] & N
    }
  \]
  where all $F_1,F_0,F_1',F_0'$ are free $\widehat{A}$-modules. So we have commutative diagram
  \[
    \xymatrix{
      F_1\ar[r]^{\alpha}\ar[d]^h & F_0\ar[d]^g \\
      F_1'\ar[r]^{\beta} & F_0'
    }.
  \]
  Choose $\nu$ large enough such that all matrices makes sense and the corresponding diagram commutes. Then we get a map from $M_{\nu}=\Coker(\alpha_{\nu})$ to $N_{\nu}=\Coker(\beta_{\nu})$, which recovers $M\to N$ once we tensor with $\widehat{A}$.\label{AR8}
\item (Descent of finite free resolutions) A finite free resolution of a module $\Coker(\alpha)$ over $\widehat{A}$ descends to a finite free resolution over some $A_{\nu}$.\label{AR9}
  \begin{proof}
    We need the Buchsbaum-Eisenbud criterion for acyclicity \cite{Buchsbaum1973}. Let $F_{\bullet}=0\to F_n\to \cdots\to F_0$ be a complex of finite free $\widehat{A}$-modules and let $\varphi_i:F_i\to F_{i-1}$ be the maps. We can descend each map $\varphi_i$ (as a matrix) to a large enough $A_{\nu}$. Call the descended complex $F_{\bullet}'$ and the corresponding maps $\varphi_i'$. Then after passing to some $\mu\geqslant \nu$, we have
    \begin{itemize}
    \item The compositions of consecutive maps are zero.
    \item The determinantal rank of $\varphi_i'$ is the same as over $\widehat{A}$. So $\rk(\varphi_{i+1}')+\rk(\varphi_i')=\rk(F_i')$.
    \item The ideal generated by the rank size minors at $i$th spot $I_{\rk(\varphi_i')}(\varphi_i')$ is either the whole ring or contains a regular sequence of length $i$.
    \end{itemize}
    The first and the second bullet points are achieved by \ref{AR2} and the facts that $\varphi_{i+1}\circ\varphi_i=0,I_{\rk(\varphi_i)+1}(\varphi_i)=0$ over $\widehat{A}$. For the third bullet point, the proof splits into two cases:
    \begin{itemize}
        \item If $I_{\rk(\varphi_i)}(\varphi_i)=\widehat{A}$, then $I_{\rk(\varphi_i')}(\varphi_i')$ contains some element that maps to a unit in $\widehat{A}$. Hence this element will become invertible after a suitably large index $\nu$.
        \item If $I_{\rk(\varphi_i)}(\varphi_i)$ contains a regular sequence of length at least $i$ in $\widehat{A}$, by \cref{teza-AR-RegSeqAlmostAll} we know that this holds in a suitable localization of $A_{\mu}$ for large enough $\mu$.
    \end{itemize}
  So acyclicity follows for $\mu\gg_{\loc}\nu$.
  \end{proof}
\item (Descent of short exact sequences) A short exact sequence of finitely generated modules over $\widehat{A}$ descends to a short exact sequence over some $A_{\nu}$.\label{AR10}
  \begin{proof}
    We take finite free resolutions of the first and the third modules of the sequence, and fill in maps for the direct sums of the free modules for a given degree to give a free resolution of the middle module in the sequence. Then we can descend the whole resolution and the maps between them by \ref{AR9}.
  \end{proof}
\end{enumerate}

\begin{lem}\label{teza-AR-PureHeight}
  Let $T$ be a Gorenstein local ring. A finitely generated $T$-module $M$ has pure dimension $\dim(T)-h$ if and only if it embeds in a finite direct sum of modules of the form $T/(\seq{x}{h})T$, where each $\seq{x}{h}$ is a regular sequence in $T$.
\end{lem}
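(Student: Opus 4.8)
The plan is to treat the two implications separately. Sufficiency is essentially formal: if $M$ embeds in a finite direct sum $\bigoplus_i T/(\underline{x}^{(i)})T$ with each $\seq{x^{(i)}}{h}$ a regular sequence in $T$, then, since $T$ is Gorenstein and hence Cohen--Macaulay, each $T/(\underline{x}^{(i)})T$ is Cohen--Macaulay of dimension $\dim(T)-h$, so all of its associated primes have dimension $\dim(T)-h$. Because $\Ass$ turns finite direct sums into unions and only shrinks under passage to a submodule, every associated prime of $M$ has dimension $\dim(T)-h$; as $M\neq 0$ this says exactly that $M$ has pure dimension $\dim(T)-h$.

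For necessity, assume $M\neq 0$ has pure dimension $\dim(T)-h$, so $\dim M=\dim(T)-h$. Since $T$ is Cohen--Macaulay it is catenary and equidimensional, whence $\hght(\Ann_T M)=\dim(T)-\dim(T/\Ann_T M)=\dim(T)-\dim M=h$, and then $\operatorname{grade}(\Ann_T M)=\hght(\Ann_T M)=h$ because $T$ is Cohen--Macaulay. Thus $\Ann_T M$ contains a regular sequence $\seq{x}{h}$, and $M$ is a module over $\overline{T}:=T/(\seq{x}{h})T$, a Gorenstein local ring of dimension $e:=\dim(T)-h$. Every associated prime of $M$ over $\overline{T}$ has dimension $e=\dim\overline{T}$ and is therefore a minimal prime of $\overline{T}$ (again by equidimensionality and catenaricity, now of $\overline{T}$). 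So the whole statement reduces to the case $h=0$: a nonzero finitely generated module $N$ over a Gorenstein local ring $S$ all of whose associated primes are minimal embeds in a finite free $S$-module. Applied to $N=M$ over $S=\overline{T}$ this yields $M\hookrightarrow\overline{T}^{\,r}=\bigl(T/(\seq{x}{h})T\bigr)^{r}$, as wanted.

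To settle the case $h=0$, let $W$ be the set of nonzerodivisors of $S$, that is, the complement of $\bigcup_{\mfq\in\Min(S)}\mfq$. The hypothesis $\Ass(N)\subseteq\Min(S)$ says precisely that no element of $W$ is a zerodivisor on $N$, so the localization map $N\to W^{-1}N$ is injective. Now $W^{-1}S$ is zero-dimensional, hence Artinian, and its localizations at maximal ideals are the rings $S_{\mfq}$ for $\mfq\in\Min(S)$, each of which is Gorenstein; therefore $W^{-1}S$ is a Gorenstein Artinian ring, hence self-injective, and consequently every finitely generated $W^{-1}S$-module embeds in a finite free $W^{-1}S$-module. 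Applying this to $W^{-1}N$ gives $W^{-1}N\hookrightarrow(W^{-1}S)^{r}=W^{-1}(S^{r})$ for some $r$; the images of a finite generating set of $N$ admit a common denominator $w\in W$, so $N$ lands inside $\tfrac1w S^{r}$, and multiplying by the nonzerodivisor $w$ produces the desired embedding $N\hookrightarrow S^{r}$.

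The crux is the case $h=0$, and within it the self-injectivity of $W^{-1}S$: this is exactly where the Gorenstein hypothesis, rather than merely Cohen--Macaulay, is indispensable. Over a Cohen--Macaulay local ring that is not Gorenstein the localization $W^{-1}S$ need not be self-injective, and a finitely generated module of pure top dimension need not embed in a free module (for instance $S/(x)$ does not, over $S=k[x,y]/(x,y)^{2}$); so the argument should be organized to invoke Gorensteinness precisely at that step and nowhere weaker.
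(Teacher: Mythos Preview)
Your proof is correct and takes a genuinely different route from the paper's. The paper localizes $M$ at each of its associated primes $P_i$, embeds $M$ into $\bigoplus_i M_{P_i}$, then embeds each finite-length $M_{P_i}$ into copies of the injective hull $E_i$ of $\kappa(P_i)$, and finally exploits the Gorenstein hypothesis via the direct-limit description $E_i=\varinjlim_s T/(u_1^s,\ldots,u_h^s)$ for a regular sequence $u_1,\ldots,u_h\in P_i$ to catch the finitely generated image in a single stage. Your argument instead kills a single regular sequence $\seq{x}{h}\subseteq\Ann_T M$ once and for all, reducing to the $h=0$ case, and then uses that the total ring of fractions of a Gorenstein ring is Artinian Gorenstein, hence self-injective, to embed into a free module. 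Both arguments pinpoint the Gorenstein hypothesis at the same moment (self-injectivity in dimension $0$, equivalently the structure of the injective hull), but yours is somewhat cleaner and in fact establishes a bit more than the lemma states: $M$ embeds in $\bigl(T/(\seq{x}{h})T\bigr)^r$ for a \emph{single} regular sequence $\seq{x}{h}$, whereas the paper's construction a priori produces different regular sequences for different associated primes. The paper's approach, on the other hand, is more explicit about which quotients appear and avoids the small detour through annihilators and grade.
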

\begin{proof}
  If $M$ embeds into such a direct sum, then $M$ clearly has pure height $h$. Assume that $M$ has pure height $h$, and let $\seq{P}{n}$ be the set of associated primes of $M$. Then each $P_i$ has height $h$ for $1\leqslant i\leqslant n$. Consider the natural map $M\to \oplus_{i=1}^n M_{P_i}$. The kernel of this map consists of elements killed by $W=T-\cup_{i=1}^n P_i$. Since $W$ consists of only nonzerodivisors on $M$ (as all associated primes are minimal over the annihilator of $M$), the map $M\to \oplus_{i=1}^n M_{P_i}$ is injective. 
  
  Each $M_{P_i}$ is a module of finite length over the local ring $T_{P_i}$. Therefore, $M_{P_i}$ embeds into a finite direct sum of copies of the injective hull $E_i$ of the residue field of $T_{P_i}$. We can track the image of the original module $M$ into each copy of the injective hull separately. So, for a fixed $i$ and a fixed copy of $E_i$, consider the composite map $M \to M_{P_i} \hookrightarrow E_i$.
  
  Since $T$ is Gorenstein, $T_{P_i}$ is a Gorenstein local ring of dimension $h$. Let $\seq{x}{h} \in P_i$ be elements in $T$ that form a system of parameters for $T_{P_i}$ (hence, they form a regular sequence in $T_{P_i}$). The injective hull $E_i$ over $T_{P_i}$ can be expressed as the direct limit of the $T_{P_i}$-modules $T_{P_i}/(\seq{x^s}{h})T_{P_i}$ as $s \to \infty$. 
  
  Crucially, $M$ is a finitely generated $T$-module. Therefore, its image in the direct limit $E_i$ must land inside a single term of the direct system for some large integer $s$. Thus, the map $M \to E_i$ factors through a $T$-module homomorphism $M \to T_{P_i}/(\seq{x^s}{h})T_{P_i}$. 
  
  Because $M \to E_i$ was injective when restricted to the component $M \to M_{P_i}$, the elements of $M$ are not killed by any element outside of $P_i$. Therefore, the map actually factors through the unlocalized quotient $T/(\seq{x^s}{h})T$. Since the sum of these maps over all $i$ and all copies of $E_i$ separates the points of $M$, we obtain an embedding of $M$ into a finite direct sum of modules of the form $T/(\seq{x^s}{h})T$. Since $\seq{x}{h}$ is a regular sequence in the Gorenstein local ring $T$, so is $\seq{x^s}{h}$, completing the proof.
\end{proof}

\begin{enumerate}[resume,left=0pt,label=(A\arabic*)]
\item Let $\mfa\subseteq \widehat{A}$ be an ideal of pure height $h$, then for
  $\mu\gg_{\loc} \nu$, the descent ideal $\mfa_{\mu}$ also has pure height $h$.\label{AR11}
  \begin{proof}
    This is equivalent to $\widehat{A}/\mfa$ having pure height $h$. By \cref{teza-AR-PureHeight}, this is equivalent to $\widehat{A}/\mfa$ injecting into a finite direct sum of modules obtained by killing a regular sequence in $\widehat{A}$. By \cref{teza-AR-RegSeqAlmostAll}, \ref{AR8} and \ref{AR10} we can descend the presentation of $\widehat{A}/\mfa$ as well as the injection map. Then $\mfa_{\mu}$ will have pure height $h$ as well for all $\mu\gg_{\loc}\nu$.
  \end{proof}
\end{enumerate}

\subsection{The complete local case}
We would like to prove the following result which is analogous to \cite[Corollary 2.4.10]{Huneke1999}.
\begin{thm}\label{teza-thm-TEforAnalytic}
  Suppose that $R$ is a reduced, equidimensional, complete local ring of dimension $d$ over $K$. Then the Jacobian ideal $\cJ(R/K)$ is contained in the test ideal for $K$-tight closure, and, hence, the test ideal for small equational tight closure.
\end{thm}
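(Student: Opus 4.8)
The plan is to deduce the complete local case from the affine case \cref{HH99} by means of the Artin--Rotthaus presentation \cref{teza-thm-FlatGeoRegFiber} and the descent facts \ref{AR1}--\ref{AR11}. By Cohen's structure theorem write $R=\widehat A/\mfa$ with $\widehat A=K\ps{\seq{x}{n}}$ and $\mfa=(\seq{f}{m})\subseteq(\seq{x}{n})\widehat A$. Since $R$ is reduced and equidimensional of dimension $d$ and $\chr K=0$, the ideal $\mfa$ has pure height $n-d$ and, by the characteristic-zero case of \cref{jcb-cor-AbsRankOverField}(3), $\cJ(R/K)=\sJ_d(R/K)$ is the ideal of size $n-d$ minors of the Jacobian matrix $(\partial f_i/\partial x_j)$, which presents the universal finite differential module $\widetilde{\Omega}_{R/K}$ over $R$. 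Since the test ideal is an ideal, it suffices to fix one such minor $\delta$, an ideal $I\subseteq R$, and an element $u\in\tck I$, and show that $\delta u\in I$.

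Put $A=K[\seq{x}{n}]$, so that $\widehat A=\varinjlim_\nu A_\nu$ with each transition $A\to A_\nu\to A_\mu$ smooth. Once all $f_i$ lie in $A_\nu$ (by \ref{AR1}, \ref{AR2}), set $\mfa_\nu=(\seq{f}{m})A_\nu$ and $R_\nu=A_\nu/\mfa_\nu$; then $\mfa_\mu=\mfa_\nu A_\mu$, so each $R_\nu\to R_\mu$ is smooth, $R=\varinjlim_\nu R_\nu$, and each $R_\nu\to R$ is flat. I claim that for $\nu$ large, after localizing ($\gg_{\loc}$), $R_\nu$ is a reduced, equidimensional, finitely generated $K$-algebra with $\cJ(R_\nu/K)R=\cJ(R/K)$. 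First, the nilradical of $R_\nu$ is finitely generated and each of its generators maps to a nilpotent of $R$, hence to $0$; so it dies in some $R_\mu$, and since $R_\nu\to R_\mu$ is smooth one has $\mathrm{nil}(R_\mu)=\mathrm{nil}(R_\nu)R_\mu=0$, so $R_\mu$ is reduced. Second, localizing at an idempotent (of which $\widehat A$ has none) makes $\Spec A_\nu$ connected, so $A_\nu$, being regular, is a normal affine $K$-domain, in particular equidimensional and catenary; by \ref{AR11} the ideal $\mfa_\nu$ has pure height $n-d$, whence $R_\nu$ is equidimensional. Third, comparing universal finite differential modules gives, after a further localization of $A_\nu$, a split surjection $\Omega_{R_\nu/K}\otimes_{R_\nu}R\twoheadrightarrow\widetilde{\Omega}_{R/K}$ whose kernel is free of rank $\dim A_\nu-n$ (the graph of the derivative of $A_\nu\to\widehat A$ along the fibre directions of $A_\nu/A$); by the resulting Fitting-ideal shift together with \cref{jcb-cor-RankOverField}(3) and \cref{jcb-cor-AbsRankOverField}(3), $\cJ(R_\nu/K)R=\mF_{\dim R_\nu}(\Omega_{R_\nu/K})R=\mF_d(\widetilde{\Omega}_{R/K})=\cJ(R/K)$. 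Fix such a $\nu$; in particular $\delta\in\cJ(R_\nu/K)R$.

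Since $R$ is complete local, hence excellent, \cite[Theorem 3.4.1]{Huneke1999} gives $\tck I=\dtc I$, so there is an affine progenitor for $(R,R,I,u)$ with base ring a finitely generated $K$-algebra $B$ and data $(B,M_B,J_B,u_B,h\colon B\to R)$ with $u_B\in\tck{\ang{J_B}}$ in $B$ and $J_BR=I$. As $B$ is finitely generated and $R=\varinjlim_\mu R_\mu$, the map $h$ factors as $B\to R_\mu\to R$ for some $\mu$; enlarging the $\nu$ of the previous paragraph to dominate $\mu$, the same data with base map $B\to R_\nu$ is an affine progenitor over $R_\nu$, so the image over $R_\nu$ of the datum lies in the direct $K$-tight closure of the corresponding submodule, which equals its $K$-tight closure because $R_\nu$ is a finitely generated $K$-algebra (\cite[Corollary 3.4.2]{Huneke1999}). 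Now apply \cref{HH99} to the reduced, equidimensional, finitely generated $K$-algebra $R_\nu$, which is approximately Gorenstein, so that its test ideal also multiplies tight closures of modules into the corresponding submodules: every generator $c$ of $\cJ(R_\nu/K)$ is such a test element, hence $c$ carries the $R_\nu$-datum into the submodule, and mapping along $R_\nu\to R$ gives $cu\in J_BR=I$ for every $c\in\cJ(R_\nu/K)$. Writing $\delta=\sum_l\rho_lc_l$ with $c_l\in\cJ(R_\nu/K)$ and $\rho_l\in R$ yields $\delta u=\sum_l\rho_l(c_lu)\in I$, as required. Finally, since enlarging the coefficient field only shrinks the test ideal, the same containment holds for small equational ($K=\QQ$) tight closure.

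The crux is forcing the two descents above to cohabit: \cref{HH99} applies only to a ring that is simultaneously reduced and equidimensional, while the affine progenitor furnished by \cite[Theorem 3.4.1]{Huneke1999} has an unspecified base ring, so one must be able to enlarge the index $\nu$ without bound while preserving reducedness, equidimensionality, pure height of $\mfa_\nu$, and the identity $\cJ(R_\nu/K)R=\cJ(R/K)$ all at once. What makes this bookkeeping go through is that the structure maps $R_\nu\to R_\mu$ are smooth and $R_\nu\to R$ is flat with geometrically regular fibres. A secondary but genuine technical point is the comparison of $\widetilde{\Omega}_{R/K}$ with $\Omega_{R_\nu/K}\otimes_{R_\nu}R$ through the smooth, non-polynomial base change $A\to A_\nu$ --- identifying the kernel of the comparison map as a free module --- so that the relevant Fitting ideals agree after the dimension shift.
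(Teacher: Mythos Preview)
Your overall strategy is sound and, once repaired, is in fact more direct than the paper's, but the argument you give for the crucial comparison of Jacobian ideals is incorrect as stated.

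\textbf{The gap.} You assert that after localizing there is a \emph{split} surjection $\Omega_{R_\nu/K}\otimes_{R_\nu}R\twoheadrightarrow\widetilde{\Omega}_{R/K}$ with \emph{free} kernel, and deduce the equality $\cJ(R_\nu/K)R=\cJ(R/K)$. Neither the splitting nor the freeness holds in general, and the equality is false. Over $\widehat A$ one does have a split surjection $\Omega_{A_\nu/K}\otimes_{A_\nu}\widehat A\twoheadrightarrow\widetilde{\Omega}_{\widehat A/K}$ with free kernel $\Omega_{A_\nu/A}\otimes\widehat A$; but after passing to $R=\widehat A/\mfa$ and modding out by the $dF_i$, the induced kernel is the quotient of that free module by the image of the syzygies of $(df_1,\dots,df_r)$ under the map $e_i\mapsto dF_i-\iota(df_i)$, which is typically not free and the section does not descend. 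Concretely, take $n=r=h=1$, $A_\nu=K[x,y]$ mapping $y\mapsto g(x)\in\widehat A$, and $R_\nu=K[x,y]/(F_1)$: then $\cJ(R_\nu/K)R=(\partial F_1/\partial x,\ \partial F_1/\partial y)R$, while $\cJ(R/K)=(\partial F_1/\partial x+g'(x)\,\partial F_1/\partial y)R$, a proper subideal in general.

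\textbf{Why the strategy still works.} You only need the inclusion $\cJ(R/K)\subseteq\cJ(R_\nu/K)R$ to write $\delta=\sum_l\rho_lc_l$ with $c_l\in\cJ(R_\nu/K)$, and that inclusion \emph{is} true. Present $A_\nu=A[y_1,\dots,y_s]/(G_1,\dots,G_t)$; over $R$, do the invertible column operation replacing column $x_j$ by $x_j+\sum_k(\partial y_k/\partial x_j)\cdot y_k$ on the full Jacobian of $R_\nu$. By the chain rule (using $f_i=F_i(x,y(x))$ and $G_l(x,y(x))=0$) the matrix becomes
\[
\begin{pmatrix}(\partial f_i/\partial x_j)&(\partial F_i/\partial y_k)\\ 0&(\partial G_l/\partial y_k)\end{pmatrix},
\]
so every product of an $h\times h$ minor of $(\partial f_i/\partial x_j)$ with a $t\times t$ minor of $(\partial G_l/\partial y_k)$ is an $(h+t)\times(h+t)$ minor; since $\cJ(A_\nu/A)$ is the unit ideal, $\cJ(R/K)\subseteq\cJ(R_\nu/K)R$. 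With this fix your proof goes through unchanged.

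\textbf{Comparison with the paper.} The paper reaches the same inclusion, but only modulo $\mm^N$: it chooses a \emph{specific} lift $F_i=f_i^{\leqslant N}+\sum_{|\alpha|=N+1}x^{\alpha}u_{i,\alpha}$ so that the $y$-derivatives of $F_i$ lie in $\mm^{N+1}$, making the block form hold in $R_\mu/\mm_\mu^N$ without any column operation; one then gets $\delta u\in I+\mm^N$ for every $N$ and intersects. Your route, once corrected as above, avoids the truncation and the Krull-intersection step by performing the chain-rule column operation directly over $R$; the price is that you must justify the inclusion carefully, which your Fitting-ideal argument does not do.
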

\begin{proof}
  By Cohen's structure theorem, the complete local ring $R$ has a coefficient field $L$ such that $K \subseteq L \cong R/\mathfrak{m}$. Because $K$ is a field of characteristic $0$, the field extension $L/K$ is separable. Consequently, the module of differentials $\Omega_{L/K} = 0$. From the fundamental exact sequence of modules of differentials 
  $$ \Omega_{L/K} \otimes_L R \to \Omega_{R/K} \to \Omega_{R/L} \to 0, $$
  we obtain an isomorphism $\Omega_{R/K} \cong \Omega_{R/L}$. Since the Jacobian ideal is the first nonzero Fitting ideal of the module of differentials, this implies $\cJ(R/K) = \cJ(R/L)$. Therefore, we may replace $K$ with $L$ and assume without loss of generality that $K$ is the coefficient field of $R$.
  
Suppose that we have a presentation $R=K \ps{\seq{x}{n}}/(\seq{f}{r})$. Then the Jacobian ideal is generated by $(n-d)\times (n-d)$ minors of the Jacobian matrix $\left( \partial f_i / \partial x_j \right)$. 

  Write $A=K[\seq{x}{n}]$ and then $\widehat{A}=K\ps{\seq{x}{n}}\twoheadrightarrow R$. Since $R$ is equidimensional, the kernel $(\seq{f}{r})$ has pure height $h=n-d$. Let $\delta$ be an $h\times h$ minor of $\left( \frac{\partial f_i}{\partial x_j} \right)$. Let $u\in I^*$ where $I\subseteq R$ an ideal.
  
  For each positive integer $N$, we aim to prove that $\delta u\in I+\mm^N$. Fix an $N$. Since each $f_i$ is a power series in $x_i$, we can truncate $f$ at degree $N$, i.e., let $f_i^{\leqslant N}$ be the sum of terms in $f_i$ of degree at most $N$ and each term in $f_i-f_i^{\leqslant N}$ is divisible by a $N+1$ power of $x_i$. So we can write
  \[
    f_i = f_i^{\leqslant N} + \sum_{\underline{\alpha}}\underline{x}^{\underline{\alpha}}u_{i,\underline{\alpha}}
  \]
  where $\underline{\alpha}\in \NN^n$ with $\abs{\underline{\alpha}}=N+1$, for some $u_{i,\underline{\alpha}}\in \widehat{A}$. Note that each $f_i^{\leqslant N}$ is in $A$.

  By \ref{AR2}, we can fix an index $\nu_0$ such that for all $\nu\geqslant \nu_0$, $A_{\nu}$ contains the generators of $I$, $u$ and all these $u_{i,\alpha}$. For each $\nu$, consider a presentation $A[\seq{y}{s}]\twoheadrightarrow A_{\nu}$. It has a kernel generated by $\seq{G}{t}$, i.e., $A_{\nu}\cong A[\seq{y}{s}]/(\seq{G}{t})$. Since all $u_{i,\underline{\alpha}}$ and $x_i$ are in $A_{\nu}$, we can write $F_i =  f_i^{\leqslant N} + \sum_{\underline{\alpha}}\underline{x}^{\underline{\alpha}}u_{i,\underline{\alpha}}$. Let $R_{\nu}=A_{\nu}/(\seq{F}{r})A_{\nu}=A[\seq{y}{s}]/(\seq{F}{r},\seq{G}{t})$. Since $(\seq{F}{r})$ is a descent of the ideal $(\seq{f}{r})$, by \ref{AR11}, the ideal $(\seq{F}{r})A_{\mu}$ also has pure height $h$ for $\mu\gg_{\loc}\nu$. Equivalently, $R_{\mu}$ is equidimensional for all $\mu\gg_{\loc}\nu$. Now consider the Jacobian matrix of $R_{\mu}$ over $k$. Let $h+s$ be the height of the ideal $(\seq{F}{r},\seq{G}{t})$ in the ring $A_{\mu}$. Then the Jacobian ideal $\cJ(R_{\mu}/K)$ is generated by the $h+s$ minors of the $(r+t)\times (n+s)$ matrix
  \begin{equation}\label{teza-eq-JacobianMatrix}
    \begin{pmatrix}
      \left( \frac{\partial F_i}{\partial x_j} \right)_{r\times n} & \left( \frac{\partial F_i}{\partial y_k}  \right)_{r\times s} \\
      \left( \frac{\partial G_l}{\partial x_j} \right)_{t\times n} & \left( \frac{\partial G_l}{\partial y_k}  \right)_{t\times s} 
    \end{pmatrix}.
  \end{equation}
  Since $u\in (IR)^{*K}$, there is some affine progenitor $R'$ such that this holds. We can make $A_{\mu}$ large enough to contain all the generators of $R'$ over $K$ to get a map $A'\to A_{\mu}$. Then we also have $u\in (IR_{\mu})^*$. By \cref{HH99}, the image in $R_{\mu}$ of the elements in $\cJ(R_{\mu}/k)$ multiplies the tight closure back into the ideal itself. We know that $\cJ(R_{\mu}/k)u\subseteq IR_{\mu}$.

  Note that in the matrix \eqref{teza-eq-JacobianMatrix}, the lower-right corner $\left( \frac{\partial G_l}{\partial y_k} \right)_{t\times s}$ is the Jacobian matrix of $A_{\mu}/A$. Since $A_{\mu}$ is smooth over $A$, the Jacobian ideal is the unit ideal.

  For each $F_i$, we have
  \begin{align*}
    \frac{\partial F_i}{\partial x_j}&=\frac{\partial f_i^{\leqslant N}}{\partial x_j} + \mm^N, \\
    \frac{\partial F_i}{\partial y_j}&=0+\sum_{\underline{\alpha}}\underline{x}^{\underline{\alpha}}\frac{\partial u_{i,\underline{\alpha}}}{\partial y_k}.
  \end{align*}
  So there is some $h\times h$ minor $\widetilde{\delta}$ of $\left( \frac{\partial F_i}{\partial x_j} \right)_{r\times n}$ such that $\widetilde{\delta}-\delta\in\mm^N$. Thinking of the matrix \eqref{teza-eq-JacobianMatrix} in the ring $R_{\mu}/\mm_{\mu}^N$ where $\mm_{\mu}$ is the image in $R_{\mu}$ of the descent of $\mm$ to $A_{\mu}$ by \ref{AR5}, we have
  \[
    \begin{pmatrix}
      \overline{J(R/K)} & 0\\
      * & \cQ
    \end{pmatrix},
  \]
where $\cQ$ is the image of the Jacobian matrix $J(A_{\mu}/A)$. Hence, the $s\times s$ minors of $\cQ$ generate the unit ideal. Since the product of any $h\times h$ minor of $\overline{J(R/K)}$ and $s\times s$ minor of $\cQ$ is in $\cJ(R_{\mu}/K)$, we have 
  \begin{align*}
  \overline{\delta}\cdot \cJ(R_{\mu}/K)&\subseteq \cJ(R_{\mu}/K)R_{\mu}/\mm_{\mu}^N,\\
  \widetilde{\delta}&\in \cJ(R_{\mu}/K)R_{\mu}/\mm_{\mu}^N.
  \end{align*}
  Therefore, we have $\widetilde{\delta} u \in (I +\mm_{\mu}^N)R_{\mu}$, which implies that $\delta u\in I+\mm^N$ in $R$. Since this is true for any $N$, we conclude that $\delta u\in \bigcap_N (I+\mm^N)R=I$.
\end{proof}


\subsection{The affine-analytic case}
We want to prove
\begin{thm}\label{tezs-thm-TEforSemianalytic}
  Suppose that $R$ is a reduced affine-analytic $K$-algebra that is equiheight. Then the Jacobian ideal $\cJ(R/K)$ is contained in the test ideal for $K$-tight closure, and, hence, the test ideal for small equational tight closure.
\end{thm}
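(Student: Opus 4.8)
The plan is to imitate the characteristic-$p$ argument of \cref{tep-thm-TEforSemiAnalytic}: localize at a maximal ideal, complete, and reduce to the complete local case \cref{teza-thm-TEforAnalytic}. Fix a presentation $R=T/I$ with $T=K\ps{x_1,\dots,x_n}[z_1,\dots,z_m]$ and $I=(F_1,\dots,F_r)$. Since $R$ is equiheight, $I$ has pure height $h$ by \cref{pre-cor-Equiheight}, so by \cref{jcb-cor-AbsRankOverField}(3) the Jacobian ideal $\cJ(R/K)=\sJ_{n+m-h}(R/K)$ is generated by the $h\times h$ minors of the Jacobian matrix $\bigl(\partial F_i/\partial x_j\mid\partial F_i/\partial z_k\bigr)$. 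Fix such a minor $\delta$, an ideal $J\subseteq R$, and $u\in J^{*K}$; since $\delta$, $J$, $u$ are arbitrary it is enough to prove $\delta u\in J$. The finitely generated $R$-module $(J+\delta uR)/J$ is zero if and only if $(JC+\delta uC)/JC$ is zero for every maximal ideal $\mm$ of $R$ (with $C:=\widehat{R_\mm}$), so it suffices to show $\delta u\in J\widehat{R_\mm}$ for every $\mm$.

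Fix $\mm$, write $\mfn$ for its preimage in $T$ and $k:=\kappa(\mm)$ for its residue field, and set $C=\widehat{R_\mm}=\widehat{T_\mfn}/I\widehat{T_\mfn}$. As $\widehat{T_\mfn}$ is a complete regular local ring and $\chr K=0$, Cohen's structure theorem (with a coefficient field containing $K$) gives $\widehat{T_\mfn}\cong k\ps{y_1,\dots,y_N}$, so $C=k\ps{\underline y}/\mfa$ where $\mfa=I\widehat{T_\mfn}$. Now $C$ is reduced, since $R$, hence $R_\mm$, is reduced and excellent, and $C$ is equidimensional, since $\mfa$ has pure height $h$: indeed $IT_\mfn$ has pure height $h$ because $I$ does, and \cref{jcb-lem-HeightUnderFlat} applies to the faithfully flat map $T_\mfn\to\widehat{T_\mfn}$. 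Hence \cref{teza-thm-TEforAnalytic}, applied over the field $k$, shows that $\cJ(C/k)$ is contained in the test ideal of $C$ for $k$-tight closure. It remains to identify $\cJ(R/K)C$ with $\cJ(C/k)$ and to carry $u$ into $C$. The universally finite module $\widetilde\Omega_{R/K}$ is finite (as $R$ is affine-analytic) and is compatible with localization and $\mm$-adic completion by \cite[12.5]{Kunz1986}, so $\cJ(R/K)C$ is the first nonzero Fitting ideal of $\widetilde\Omega_{C/K}$; and because $\chr K=0$ the extension $k/K$ is separable, hence formally smooth, so the transitivity sequence $0\to\widetilde\Omega_{k/K}\otimes_kC\to\widetilde\Omega_{C/K}\to\widetilde\Omega_{C/k}\to 0$ is split exact with free left-hand term, and comparing first nonzero Fitting ideals yields $\cJ(R/K)C=\cJ(C/k)$. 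Finally, $K$-tight closure is compatible with localization and completion, so $u\in(JC)^{*K}$, and $K$-tight closure is contained in $k$-tight closure, so $u\in(JC)^{*k}$. Thus $\delta\in\cJ(R/K)C=\cJ(C/k)$ multiplies $(JC)^{*k}$ into $JC$, giving $\delta u\in JC=J\widehat{R_\mm}$, as required. (That $\cJ(R/K)$ lies in the test ideal for small equational tight closure then follows because $\QQ$-tight closure is contained in $K$-tight closure.)

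I expect the main obstacle to be the identification $\cJ(R/K)\widehat{R_\mm}=\cJ(\widehat{R_\mm}/\kappa(\mm))$: this is what makes \cref{teza-thm-TEforAnalytic} --- whose proof presents the ring as a homomorphic image of a power series ring over its \emph{coefficient} field --- applicable here even though $\widehat{R_\mm}$ is not a quotient of a power series ring over $K$ when $\mm$ is a typical maximal ideal (in which case $\kappa(\mm)$ has positive transcendence degree over $K$). Establishing it cleanly needs both the localization/completion compatibility of the universally finite K\"ahler differentials (\cite[12.5]{Kunz1986}) and the splitting of the transitivity triangle, where $\chr K=0$ is used. An alternative that avoids completion is to transcribe the Artin--Rotthaus descent of \cref{teza-thm-TEforAnalytic} directly: write $T=\varinjlim_\nu A_\nu[z_1,\dots,z_m]$ with $A_\nu$ smooth over $K[x_1,\dots,x_n]$, so that each $A_\nu[\underline z]$ is smooth over $K[\underline x,\underline z]$, and rerun steps \ref{AR1}--\ref{AR11}, \cref{HH99}, and the block-triangular minor computation after localizing at $\mm$; the parallel difficulty there is that the descended rings are only essentially of finite type over $K$, so \cref{HH99} must be replaced by its localization, which is harmless because both the Jacobian ideal and test elements localize. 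The remaining points --- permanence of reducedness, equidimensionality, and tight closure under the base changes used --- are routine.
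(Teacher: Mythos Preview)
Your primary approach --- localize at a maximal ideal, complete, and invoke \cref{teza-thm-TEforAnalytic} over the residue field $k=\kappa(\mm)$ --- is genuinely different from the paper's proof, which instead carries out the Artin--Rotthaus descent directly in the affine-analytic setting (your ``alternative''). The paper writes $T=\widehat{A}[\underline z]$ as a filtered limit of $T_\nu=A_\nu[\underline z]$ with $A_\nu$ smooth over $K[\underline x]$, sets up analogues (B1)--(B3) of (A1)--(A11) together with a height-preservation lemma tailored to localizing at a prime $Q\supseteq J:_R\delta u$, descends the data to a reduced equidimensional affine $K$-algebra $R_\mu$, applies \cref{HH99} there, and runs the block-triangular minor computation modulo $\mm^N$ to obtain a contradiction. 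So your alternative is essentially the paper's argument, though the paper works with the auxiliary prime $Q$ (not just a maximal ideal) to control heights after localization.

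The obstacle you flag is real, and your justification does not clear it. For a \emph{typical} maximal ideal $\mm$, the completion $C=\widehat{R_\mm}$ need not be a semianalytic $K$-algebra at all (already for $T=K\ps{x}[z]$ and $\mfn=(xz-1)$ one finds $\widehat{T_\mfn}\cong K((x))\ps{t}$, which is not essentially of finite type over any $K\ps{X_1,\dots,X_d}$), so ``$\widetilde\Omega_{C/K}$'' and the transitivity triangle for \emph{universally finite} differentials are not covered by the framework you invoke; \cite[12.5]{Kunz1986} gives $\widetilde\Omega_{R/K}\otimes_R C$ as a completion, but does not place it in your short exact sequence. More concretely, writing $J_{C/k}=J_{R/K}\cdot B$ via the chain rule (with $B$ the $(n+m)\times N$ matrix $\bigl(\partial(x_s,z_l)/\partial y_j\bigr)$), Cauchy--Binet yields only the easy inclusion $\cJ(C/k)\subseteq\cJ(R/K)C$. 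The inclusion you actually need is the opposite one, $\cJ(R/K)C\subseteq\cJ(C/k)$, and when $\mfn$ is typical one has $N=n+m-1$, so $B$ admits no left inverse and this does not follow formally. Establishing equality would require showing that the free kernel of the split surjection $\widetilde\Omega_{T/K}\otimes\widehat{T_\mfn}\twoheadrightarrow\widetilde\Omega_{\widehat{T_\mfn}/k}$ survives as a free summand after passing to $C$, which your transitivity sketch does not prove. The paper's descent route sidesteps this Jacobian comparison entirely; if you want to pursue your first approach, this is the lemma you must supply.
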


We need to establish results (B1) - (B3) similar to (A1) - (A3). Let us begin with some discussion on the setup. Let $R=T/I$ where $T=K \ps{\seq{x}{n}}[\seq{z}{m}]$ and $I=(\seq{f}{r})T$. Then by assumption, all minimal primes of $I$ in $T$ are of the same height $h$. Write $\underline{z}$ for the sequence of elements $\seq{z}{m}$ and using the notation from previous section, we write $A=K[\seq{x}{n}]$ and $\widehat{A}=K\ps{\seq{x}{n}}$. Then $R=T/I$ where $T=\widehat{A}[\underline{z}]$. By hypothesis, all minimal primes of $I$ in $T$ have the same height and $I$ has no embedded primes. Suppose that $\hght(I)=h$ and $I=(\seq{f}{r})T$. Then the complete Jacobian matrix is given by
\[
\left(
\begin{array}{ccc:ccc}
\fpd[x_1]{f_1} & \cdots & \fpd[x_n]{f_1} & \fpd[z_1]{f_1} & \cdots & \fpd[z_m]{f_1}  \\
\vdots & \ddots & \vdots & \vdots & \ddots & \vdots \\
\fpd[x_1]{f_r} & \cdots & \fpd[x_n]{f_r} & \fpd[z_1]{f_r} & \cdots & \fpd[z_m]{f_r} 
\end{array}
\right).
\]

We will prove this using the Artin-Rotthaus theorem to approximate $\widehat{A}$ and also $T$. Therefore we will establish (A1)-(A11) as before for $T$. We first establish (A1)-(A5), summarized in the following bullet point.
\begin{enumerate}[left=0pt,label=(B\arabic*)]
    \item Note that any element in $\widehat{A}$ will be in the image of some $A_{\nu}$ and the map is eventually injective in the sense of \ref{AR1}, i.e., there exists some $\mu\geqslant \nu$ such that for all $\gamma \geqslant \mu$, the image of $A_{\nu}$ in $A_{\gamma}$ maps injectively into $\widehat{A}$. So any polynomial in $T$ will necessarily be in $T_{\nu}$. In particular, we can define $\seq{f}{r}$ in $T_{\nu}:=A_{\nu}[\underline{z}]$ and form $R_{\nu}=T_{\nu}/(\seq{f}{r})$.\label{B1}
    \item The statements in \ref{AR4}, \ref{AR5} and \ref{AR6} also hold here: we can descend an ideal by descending its generators. In particular, we can descend the maximal ideal. We can also descend an ideal and its radical. We will write $\mm$ for the maximal ideal of $\widehat{A}$ and $\mm_{\nu}$ for its contraction back to $A_{\nu}$. Then one is allowed to localize at any elements in $A_{\nu}-\mm_{\nu}$ for any $T_{\nu}$.\label{B2}
\end{enumerate}

Let us work with a counter-example to \cref{tezs-thm-TEforSemianalytic}, i.e., there is some ideal $J\subseteq R$ and $u\in R$ such that $u\in J^{*}$, and some $\delta\in\cJ(R/K)$ such that $\delta u\nin J$. Let $Q$ be a minimal prime of the proper ideal $J:_R\delta u$. This continues to be a counterexample in $R_Q$. We will make repeated use of the fact that we can localize at finitely many elements (in fact, one by localizing at the product of them) outside $Q$.

\begin{enumerate}[resume,left=0pt,label=(B\arabic*)]
    \item We will localize at finitely many elements in $T_{\nu}-Q^{\mathrm{c}}T_{\nu}$ where $Q^\mathrm{c}$ is the contraction of $Q$ in $T_{\nu}$. We will use the notation $\mu\gg_{Q-\loc}\nu$ to indicate this.\label{B3}
\end{enumerate}

We need the following lemma to deal with preserving height while descending.
\begin{lem}\label{tezs-B-Height}
  Let $Q\subseteq T$ be fixed and let $\seq{I}{k}$ be finitely many ideals contained in $Q$. Suppose that $\seq{J}{k}$ are ideals in $T_{\nu}$ such that $J_iT=I_i$, $1\leqslant i\leqslant k$. Then for all $\mu\gg_{Q-\loc}\nu$, all associated primes of each $J_iT_{\mu}$ are contained in the contraction of $Q$, and the height of $J_iT_{\mu}$ is the same as the height of $I_iT_Q$. If $\seq{g}{h}$ form a regular sequence in $QT_Q$, then for all $\mu\gg_{Q-\loc} \nu$, their images in $T_{\mu}$ also form a regular sequence.
\end{lem}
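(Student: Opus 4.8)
The plan is to carry the descent apparatus \ref{AR1}--\ref{AR11} (and in particular \cref{teza-AR-RegSeqAlmostAll}, \cref{teza-AR-Height}), built from \cref{teza-thm-FlatGeoRegFiber}, over to the ring $T=\widehat A[\underline z]$, with the rôle of the maximal ideal of $\widehat A$ now played by the fixed prime $Q$. Write $Q_\mu$ for the contraction of $Q$ to $T_\mu=A_\mu[\underline z]$. The basic observation is that $T_Q=\varinjlim_\mu (T_\mu)_{Q_\mu}$ is a filtered colimit of \emph{regular} (hence Cohen--Macaulay) local rings — each $T_\mu$ is regular because $A_\mu$ is smooth over $K[\underline x]$ — and that the operations needed here (formation of annihilators, ideal quotients, radicals, kernels and cokernels of maps of finitely generated modules, finite free resolutions) commute with this colimit and with localization, just as in \ref{AR8}--\ref{AR10}. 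The phrase $\mu\gg_{Q-\loc}\nu$ of \ref{B3} moreover allows us, at each index $\mu$, to replace $T_\mu$ by its localization at a single element of $T_\mu\setminus Q_\mu$; since a finitely generated $T_\mu$-module that vanishes after localizing at $Q_\mu$ is annihilated by one such element, this repeatedly lets us promote statements from $(T_\mu)_{Q_\mu}$ back to a principal localization of $T_\mu$.

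I would prove the last assertion first, the descent of regular sequences, which I expect to be the main obstacle. Given $g_1,\dots,g_h\in QT_Q$ forming a $T_Q$-regular sequence, after multiplying them by units of $T_Q$ (harmless for regularity) and enlarging $\nu$ we may assume $g_i\in Q_\nu T_\nu$. Following the proof of \cref{teza-AR-RegSeqAlmostAll} but localizing at $Q_\mu$ instead of at the contracted maximal ideal: extend $\underline g$ to a system of parameters of the regular local ring $T_Q$, use the descent of radicals (the $Q$-localized form of \ref{AR6}) to see that the corresponding ideal of $(T_\mu)_{Q_\mu}$ has the same radical as the maximal ideal for $\mu\gg_{Q-\loc}\nu$, and invoke Cohen--Macaulayness of $(T_\mu)_{Q_\mu}$ (an ideal generated by $r$ elements whose radical has height $r$ in a Cohen--Macaulay local ring is generated by a regular sequence) to conclude that $g_1,\dots,g_h$ is a regular sequence there; then the finitely generated modules $\bigl((g_1,\dots,g_j):_{T_\mu}g_{j+1}\bigr)/(g_1,\dots,g_j)T_\mu$, being zero after localizing at $Q_\mu$, are killed by one element of $T_\mu\setminus Q_\mu$, and localizing at it makes $g_1,\dots,g_h$ a regular sequence on $T_\mu$ itself. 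The delicate point — where I anticipate spending the most effort — is that $Q$ need not contain the ``structural'' regular sequence $\underline x$ on $T_\mu$, so the comparison of the descended ideal with the maximal ideal of $(T_\mu)_{Q_\mu}$ requires fixing, once and for all, a compatible system of generators of the $Q_\mu$ along the direct system.

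With this in hand, the remaining assertions follow the template of \ref{AR7}. Fix $i$ and put $h_i:=\hght(I_iT_Q)$. For the associated-primes claim, note that $\Ker\bigl(T_\mu/J_iT_\mu\to(T_\mu/J_iT_\mu)_{Q_\mu}\bigr)$ is precisely the $(T_\mu\setminus Q_\mu)$-torsion submodule of $T_\mu/J_iT_\mu$, hence finitely generated and annihilated by a single element of $T_\mu\setminus Q_\mu$; localizing $T_\mu$ at that element forces every associated prime of $J_iT_\mu$ to lie in $Q_\mu$, and in particular every minimal prime of $J_iT_\mu$ lies in $Q_\mu$, so $\hght(J_iT_\mu)=\hght\bigl(J_i(T_\mu)_{Q_\mu}\bigr)$. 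It remains to see this common value equals $h_i$. For the lower bound, choose a maximal $T_Q$-regular sequence inside $I_iT_Q$ (of length $h_i$, since $T_Q$ is Cohen--Macaulay), clear denominators so that its terms lie in $I_i=J_iT$, descend it by the previous paragraph, observe by \ref{AR4} that the descended terms lie in $J_iT_\mu$, and conclude $\hght(J_iT_\mu)\ge h_i$ from \cref{teza-AR-Height}(1). For the upper bound, apply \cref{teza-AR-Height}(2) in $T_Q$: there are an ideal $\mathcal J\supseteq I_iT_Q$, an element $y$ — which may be taken to be \emph{any} nonzero element, hence automatically a nonzerodivisor since $T_Q$ and the $(T_\mu)_{Q_\mu}$ are domains — and relations $y\mathcal J\subseteq\sqrt{I_iT_Q}$ and $y\mathcal J\subseteq\sqrt{(c_1,\dots,c_{h_i})T_Q}$; these data are finitary and descend by \ref{AR4}, so \cref{teza-AR-Height}(2) gives $\hght(J_iT_\mu)\le h_i$ for $\mu\gg_{Q-\loc}\nu$. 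Combining the two bounds yields $\hght(J_iT_\mu)=h_i=\hght(I_iT_Q)$, which together with the associated-prime statement completes the proof.
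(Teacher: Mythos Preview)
Your overall architecture is sound, and the associated-primes step is essentially the paper's argument. The difficulty you flag in the regular-sequence descent is real, however, and you do not resolve it. The argument of \cref{teza-AR-RegSeqAlmostAll} works because $\underline{x}$ is a fixed regular sequence in every $A_\mu$ generating the contracted maximal ideal; here there is no such structural system for $Q$. Even after descending generators $q_1,\dots,q_s$ of $Q$, the ideal $(q_1,\dots,q_s)T_\mu$ is only contained in the contraction $Q_\mu$, not equal to it, and $\dim (T_\mu)_{Q_\mu}$ need not match $\dim T_Q$. So extending $\underline{g}$ to a system of parameters of $T_Q$ and descending the radical relation gives you $(q_1,\dots,q_s)^N\subseteq (g_1,\dots,g_r)$ in $T_\mu$, but not that $(g_1,\dots,g_r)(T_\mu)_{Q_\mu}$ is $Q_\mu$-primary.

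The paper avoids this entirely by invoking \cref{pre-thm-RegHeight}. After $Q$-localizing so that every associated prime $P$ of $(g_1,\dots,g_i)T_\mu$ lies in $Q_\mu$, one has $\hght(PT_Q)\leqslant\hght(P)$ since $T_\mu$ is regular; but $PT_Q\supseteq (g_1,\dots,g_i)T_Q$ forces $\hght(PT_Q)\geqslant i$, and Krull's height theorem gives $\hght(P)\leqslant i$. Hence $\hght(P)=i$, and Cohen--Macaulayness of $T_\mu$ yields the regular sequence directly, with no reference to generators of $Q_\mu$ at all.

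There is also a slip in your upper-bound step: in \cref{teza-AR-Height}(2) the condition is that $y$ be a nonzerodivisor on $T_Q/\mathcal J$, not on $T_Q$, so ``$T_Q$ is a domain'' does not help, and descending that condition is itself work. The paper again routes around this via \cref{pre-thm-RegHeight}: it first treats prime ideals (height cannot increase under $T_\mu\to T_Q$ by Serre, and the already-proved regular-sequence descent gives the reverse inequality), then handles a general $J_i$ by passing to its minimal primes in $T_\mu$.
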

\begin{proof}
There are only finitely many ideals $\seq{J}{k}$ and each has finitely many associated primes. For each $J_i$, we can choose an element in all associated primes not contained in $Q^{\mathrm{c}}$ and avoiding associated primes contained in $Q^{\mathrm{c}}$. Then by localizing at these elements (or their product), we assume that all associated primes are contained in $Q^{\mathrm{c}}$.

  Let $\seq{g}{h}$ be a regular sequence in $QT_Q$. This implies that all associated primes of $(\seq{g}{i})$ contained in $Q$ have height $i$. For each associated prime $P$ of $(\seq{g}{i})T_{\nu}$ in $T_{\nu}$, since the height of $PT_Q$ is $i$ by \cref{pre-thm-RegHeight}, $P$ has height at least $i$. On the other hand, $P$ cannot have height more than $i$ due to the Krull's height theorem. So $P$ has height $i$. Since $T_{\mu}$ is Cohen-Macaulay, $\seq{g}{h}$ is a regular sequence.

  For preservation of height, we start with prime ideals. Let $P$ be a prime ideal in $T_{\nu}$. The height of $P$ cannot increase when expand to $T_Q$. Choose a maximal regular sequence in $PT_Q$. For all $\mu\gg_{Q-\loc}\nu$, they will form a regular sequence in $T_{\mu}$. Hence $\hght(PT_{\mu})=\hght(PT_Q)$.
  
  For general ideal $J$, we first choose $T_{\mu}$ where $\mu\gg_{Q-\loc}\nu$ such that all minimal primes of $JT_{\mu}$ are contained in $Q^{\mathrm{c}}$. We will also replace $J$ by its radical in $T_{\mu}$, i.e., the intersection of all minimal primes of $J$. Let $\seq{P}{l}$ be all the minimal primes of $J$. Then
  \[
  P_1T_Q\cap\cdots\cap P_lT_Q \subseteq JT_Q\subseteq P_iT_Q
  \]
  for each $i$. So we have $\hght(JT_Q)\leqslant \min\set{\hght(P_iT_Q}$. For any minimal prime of $JT_Q$, it must also contain some $P_iT_Q$. Therefore $\hght(JT_Q)\geqslant \min\set{\hght(P_iT_Q}$. Hence, $\hght(JT_Q)= \min\set{\hght(P_iT_Q}=\hght(J)$.
\end{proof}

\begin{proof}[Proof of \cref{tezs-thm-TEforSemianalytic}]
  Write $A=K[\seq{x}{n}]$ and $T=\widehat{A}[\seq{z}{m}]$. Then $T\twoheadrightarrow R$. The kernel $(\seq{f}{r})$ has pure height $h=n-d$. Let $\delta$ be a $h\times h$ minor of $\left( \frac{\partial f_i}{\partial x_j} \right)$. Let $u\in J^*$ where $J\subseteq R$ is an ideal.
  
  We work with a counterexample as before. Let $Q$ be a prime ideal containing $J:_R \delta u$. Then $\delta u\nin JR_Q$. Since $\mm\subseteq Q$, we have $\delta u\nin (J+\mm^N)R_Q$ for some $N$. Fix this $N$.
  
  We aim to prove that $\delta u\in I+\mm^N$. Since each $f_i$ is a polynomial in $z$ with coefficients being power series in $x_i$, we can truncate each coefficient of $f$ at degree $N$, i.e., if
  \[
  f_i(\seq{z}{m}) = \sum_{\text{finitely many }\underline{\beta}\in \NN^m} c_{\underline{\beta}} \underline{z}^{\underline{\beta}},
  \]
  where each $c_{\underline{\beta}}$ is in $\widehat{A}$, then we can form $c_{\underline{\beta}}^{\leqslant N}$ and the difference $c_{\underline{\beta}}-c_{\underline{\beta}}^{\leqslant N}$ is in $\mm^{N+1}T$. So we can write
  \begin{equation}\label{tezs-eq-ReductionForm}
      f_i(\seq{z}{m}) = \underbrace{\sum_{\text{finitely many }\underline{\beta}\in \NN^m} c_{\underline{\beta}}^{\leqslant N} \underline{z}^{\underline{\beta}}}_{f_i^{\leqslant N}} \quad + \quad r_i,
  \end{equation}
  for some $r_i\in \mm^{N+1}T$. Let $f_i^{\leqslant N}$ be the summation in \eqref{tezs-eq-ReductionForm}. The difference $r$ is the polynomial in $\seq{z}{m}$ with coefficients in $\mm$. So we can write
  \[
    f_i = f_i^{\leqslant N} + \sum_{\underline{\alpha}}\underline{x}^{\underline{\alpha}}u_{i,\underline{\alpha}}(\seq{z}{m}),
  \]
  where $\underline{\alpha}\in \NN^n$ with $\abs{\underline{\alpha}}=N+1$, for some $u_{i,\underline{\alpha}}(\seq{z}{m})\in T$. Note that each $f_i^{\leqslant N}$ is in $A[\seq{z}{m}]$.

  By \ref{B1}, we can fix an index $\nu_0$ such that for all $\nu\gg_{Q-\loc} \nu_0$, $T_{\nu}$ contains the generators of $J$, $u$ and all these $u_{i,\alpha}$. For each $\nu$, consider a presentation $A[\seq{y}{s}]\twoheadrightarrow A_{\nu}$. It has a kernel generated by $\seq{G}{t}$, i.e., $A_{\nu}\cong A[\seq{y}{s}]/(\seq{G}{t})$, and we have $T_{\nu}=A_{\nu}[\seq{z}{m}]=A[\seq{y}{s},\seq{z}{m}]/(\seq{G}{t})$.

  Since all $u_{i,\underline{\alpha}}(\seq{z}{m})$ and $x_i$ are in $T_{\nu}$, we can form $F_i =  f_i^{\leqslant N} + \sum_{\underline{\alpha}}\underline{x}^{\underline{\alpha}}u_{i,\underline{\alpha}}$. Let $R_{\nu}=T_{\nu}/(\seq{F}{r})T_{\nu}$. Since $(\seq{F}{r})$ is a descent of the ideal $(\seq{f}{r})$, by \cref{tezs-B-Height}, the ideal $(\seq{F}{r})A_{\mu}$ also has pure height $h$ for $\mu\gg_{\loc}\nu$. Now consider the Jacobian matrix of $R_{\mu}$ over $k$. Let $h+s$ be the height of the ideal $(\seq{F}{r},\seq{G}{t})$ in the ring $A_{\mu}$. Then the Jacobian ideals $\cJ(R_{\mu}/K)$ are generated by the $h+s$ minors of the $(r+t)\times (n+m+s)$ matrix
  \begin{equation}\label{tezs-eq-JacobianMatrix}
    \begin{pmatrix}
      \left( \frac{\partial F_i}{\partial x_j} \right)_{r\times n} & \left( \frac{\partial F_i}{\partial z_l} \right)_{r\times m} & \left( \frac{\partial F_i}{\partial y_k}  \right)_{r\times s} \\
      \left( \frac{\partial G_l}{\partial x_j} \right)_{t\times n} & \left( \frac{\partial G_l}{\partial z_l} \right)_{t\times m} & \left( \frac{\partial G_l}{\partial y_k}  \right)_{t\times s} 
    \end{pmatrix}.
  \end{equation}
  Since $u\in (JR)^{*K}$, there is some affine progenitor $R'$ such that $u\in (JR')^{*K}$ holds. We can make $T_{\mu}$ large enough to contain all the generators of $R'$ over $k$ to get a map $R'\to T_{\mu}$. Then we also have $u\in (JR_{\mu})^*$. By \cref{HH99}, the image in $R_{\mu}$ of the elements in $\cJ(R_{\mu}/K)$ multiplies the tight closure of any ideal back into the ideal. We know that $\cJ(R_{\mu}/K)u\subseteq JR_{\mu}$.

  Note that in the matrix \eqref{tezs-eq-JacobianMatrix}, the lower-right corner $\left( \frac{\partial G_l}{\partial y_k} \right)_{t\times s}$ is the Jacobian matrix of $A_{\mu}/A$. Since $A_{\mu}$ is smooth over $A$, the Jacobian ideal is a unit ideal.

  For each $F_i$, we have
  \begin{align*}
    \frac{\partial F_i}{\partial x_j}&=\frac{\partial f_i^{\leqslant N}}{\partial x_j} + \mm^N, \\
    \frac{\partial F_i}{\partial z_l}&=\frac{\partial f_i^{\leqslant N}}{\partial z_l} + \mm^{N+1}, \\
    \frac{\partial F_i}{\partial y_k}&=0+\sum_{\underline{\alpha}}\underline{x}^{\underline{\alpha}}\frac{\partial u_{i,\underline{\alpha}}}{\partial y_k}.
  \end{align*}

So there is some $h\times h$ minor $\widetilde{\delta}$ of $\left( \frac{\partial F_i}{\partial x_j} \right)_{r\times n}$ such that $\widetilde{\delta}-\delta\in\mm^N$. Thinking of the matrix \eqref{tezs-eq-JacobianMatrix} in the ring $R_{\mu}/\mm_{\mu}^N$ where $\mm_{\mu}$ is the image in $R_{\mu}$ of the descent of $\mm$ to $A_{\mu}$ by \ref{B2}, we have
  \[
    \begin{pmatrix}
      \overline{J(R/K)} & 0\\
      * & \cQ
    \end{pmatrix},
  \]
  where $\cQ$ is the image of the Jacobian matrix $J(A_{\mu}/A)$. Since the product of any $s\times s$ minor of $\cQ$ and any $h\times h$ minor of $\overline{J(R/K)}$ is in the image of $\cJ(R_{\mu}/K)$, lifting this relation back to $R_{\mu}$ gives
  \[
    \widetilde{\delta} \cJ(A_{\mu}/A) R_{\mu} \subseteq \cJ(R_{\mu}/K) + \mm_{\mu}^N R_{\mu}.
  \]
  Because $A_{\mu}$ is smooth over $A$, the Jacobian ideal $\cJ(A_{\mu}/A)$ is the unit ideal in $A_{\mu}$. Thus, $\cJ(A_{\mu}/A) R_{\mu} = R_{\mu}$, which yields
  \[
    \widetilde{\delta} \in \cJ(R_{\mu}/K) + \mm_{\mu}^N R_{\mu}.
  \]
  Therefore, we have $\widetilde{\delta} u \in (I +\mm_{\mu}^N)R_{\mu}$, which implies that $\delta u\in I+\mm^N$ in $R$. So we obtain a contradiction! We conclude that $\delta u\in J$.
\end{proof}

\subsection{The semianalytic case}
We want to show:
\begin{thm}\label{tczs-thm-GeoRegFlatExt}
Suppose that we have a flat map with geometrically regular fibers $R\to R'$ where $R$ is a reduced affine-analytic equiheight $K$-algebra. Then the expansion of the Jacobian ideal $\cJ(R/K)R'$ is contained in the test ideal of $R'$ for $K$-tight closure, and, hence, the test ideal for small equational tight closure.
\end{thm}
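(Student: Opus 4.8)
The plan is to deduce this from \cref{tezs-thm-TEforSemianalytic} via N\'eron--Popescu desingularization. Since $R\to R'$ is flat with geometrically regular fibers, write $R'=\varinjlim_{\lambda}R_{\lambda}$ as a filtered colimit of smooth $R$-algebras $R_{\lambda}$ of finite type. Each $R_{\lambda}$ is then of finite type over $R$, hence of finite type over a power series ring over $K$, so it is a reduced affine-analytic $K$-algebra (reducedness ascends along the flat map $R\to R_{\lambda}$ because its fibers are regular). Decomposing $\Spec(R_{\lambda})$ into its connected components $\Spec(R_{\lambda,i})$, each $R_{\lambda,i}$ is a reduced, affine-analytic, equiheight $K$-algebra --- equiheight because $R_{\lambda,i}$ is smooth over the equiheight algebra $R$ with connected spectrum (the proposition on smooth extensions preserving equiheight). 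Therefore \cref{tezs-thm-TEforSemianalytic} gives $\cJ(R_{\lambda,i}/K)\subseteq\tau(R_{\lambda,i})$, and hence $\cJ(R_{\lambda}/K)\subseteq\tau(R_{\lambda})$ for every $\lambda$.

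Next I would compare Jacobian ideals along the smooth map $R\to R_{\lambda}$. The second fundamental exact sequence of universally finite differentials
\[
0\longrightarrow R_{\lambda}\otimes_{R}\widetilde{\Omega}_{R/K}\longrightarrow \widetilde{\Omega}_{R_{\lambda}/K}\longrightarrow \Omega_{R_{\lambda}/R}\longrightarrow 0
\]
is exact and split, with $\Omega_{R_{\lambda}/R}$ finitely generated projective. Since $\Omega_{R_{\lambda,i}/R}$ has constant rank $e_i$ on each connected component, computing Fitting ideals of a split extension by a projective module of rank $e_i$ gives $\mF_k(\widetilde{\Omega}_{R_{\lambda,i}/K})=\mF_{k-e_i}(\widetilde{\Omega}_{R/K})R_{\lambda,i}$; because $R$ and $R_{\lambda,i}$ are equiheight, the first nonzero Fitting ideals on both sides are exactly the respective Jacobian ideals (\cref{jcb-cor-AbsRankOverField}(3)), so $\cJ(R/K)R_{\lambda,i}=\cJ(R_{\lambda,i}/K)$ whenever the left side is nonzero (and there is nothing to prove otherwise). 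Thus $\cJ(R/K)R_{\lambda}\subseteq\cJ(R_{\lambda}/K)\subseteq\tau(R_{\lambda})$, i.e. the image in $R_{\lambda}$ of every element $c\in\cJ(R/K)$ is a test element of $R_{\lambda}$.

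Finally I would pass to the colimit. It suffices to show each $c\in\cJ(R/K)$ has image in $R'$ which is a test element, so fix an ideal $I\subseteq R'$ and $u\in\tck{I}$. By the coincidence of formal and direct $K$-tight closure for $R'$, there is an affine progenitor $(R'',R'',I_{R''},u_{R''},h,\beta,\eta_{R''})$ for $(R',R',I,u)$, with $R''$ of finite type over $K$, $I_{R''}R'=I$, and $u_{R''}\in\tck{\ang{I_{R''}}}_{R''}$. Because $R''$ is finitely generated and $R'=\varinjlim R_{\lambda}$, the map $h$ factors as $R''\to R_{\lambda}\to R'$ for some $\lambda$; setting $I_{\lambda}:=I_{R''}R_{\lambda}$ and letting $u_{\lambda}\in R_{\lambda}$ be the image of $u_{R''}$, the same data, now with base map $R''\to R_{\lambda}$, is an affine progenitor for $(R_{\lambda},R_{\lambda},I_{\lambda},u_{\lambda})$, so $u_{\lambda}\in\dtc{I_{\lambda}}_{R_{\lambda}}=\tck{I_{\lambda}}_{R_{\lambda}}$ (using that $R_{\lambda}$ is excellent). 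Since the image of $c$ is a test element of $R_{\lambda}$ we get $cu_{\lambda}\in I_{\lambda}$, hence $cu\in I_{\lambda}R'=I$. As $I$, $u$ were arbitrary and $c$ ranges over $\cJ(R/K)$, we conclude $\cJ(R/K)R'\subseteq\tau(R')$; enlarging the ground field only enlarges the tight closure, so this is also the test ideal for small equational tight closure.

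The main obstacle is this last step: tight closure is not well behaved under passage to a general filtered colimit, so the argument must exploit the fact that a $K$-tight closure relation is certified by data of finite type (an affine progenitor with finitely generated base ring, itself resting on a descent to a finitely generated $\ZZ$-algebra), which therefore automatically lives on one of the $R_{\lambda}$. Some care is needed to realize not just the map $h$ but also the modules $M_{R''},N_{R''}$ and the identifications $\beta_*,\eta_*$ at a finite stage; for ideals --- the only case needed here by \cref{dtc-conv-TestIdeal} --- this is straightforward since one may take the middle module to be the ring itself, so the only genuine choice is that of $I_{R''}$ with $I_{R''}R'=I$. The Fitting-ideal bookkeeping in the second step is routine, but it is precisely where the equiheight hypothesis intervenes, ensuring that the first nonzero Fitting ideals of $\widetilde{\Omega}_{R/K}$ and $\widetilde{\Omega}_{R_{\lambda}/K}$ really are the Jacobian ideals named in the statement.
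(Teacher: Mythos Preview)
Your proof follows the same overall strategy as the paper: write $R'$ as a filtered colimit of smooth $R$-algebras via N\'eron--Popescu, show $\cJ(R/K)R_\lambda\subseteq\cJ(R_\lambda/K)$ for each smooth approximation $R_\lambda$, and invoke \cref{tezs-thm-TEforSemianalytic}. The chief difference is in the middle step. You argue via the split fundamental exact sequence of (universally finite) differentials and the resulting shift in Fitting ideals; the paper instead reduces by the local structure theorem for smooth morphisms to the standard-\'etale-over-polynomial case and reads off the containment from the block form of the Jacobian matrix. Your route is cleaner and more conceptual, while the paper's explicit matrix argument avoids having to track which Fitting index is the ``first nonzero'' one on each side.

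You are also more explicit than the paper about the passage to the colimit: the paper simply asserts that \cref{tezs-thm-TEforSemianalytic} ``finishes the proof'' once the Jacobian containment is known, whereas you spell out the affine-progenitor factorization through some $R_\lambda$. One caveat worth flagging: your sentence ``by the coincidence of formal and direct $K$-tight closure for $R'$'' uses \cite[Theorem 3.4.1]{Huneke1999}, which requires $R'$ to be locally excellent, and that is not part of the hypotheses. The paper's proof does not address this point either (the colimit step is left implicit), and every application in the paper has $R'$ locally excellent, so this is a shared subtlety rather than a defect of your argument relative to the paper's.
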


Since $R$ is a reduced affine-analytic $K$-algebra, it is approximately Gorenstein. We need to show that $R'$ is also approximately Gorenstein so that \cref{dtc-conv-TestIdeal} makes sense. Hence, we prove the following proposition.

\begin{prop}\label{tczs-prop-AppGro}
Let $S\to T$ be flat with Gorenstein fibers. If every local ring of $S$ is approximately Gorenstein (this condition holds, for example, if $S$ is excellent and reduced), then $T$ is also approximately Gorenstein.
\end{prop}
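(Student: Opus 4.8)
The plan is to verify approximate Gorensteinness of $T$ one maximal ideal at a time, reduce to the case of a flat local homomorphism with Gorenstein closed fibre, and then, for each $N$, manufacture an $\mfn$-primary irreducible ideal of $T$ contained in $\mfn^{N}$ by combining the analogous ideal downstairs with a high power of a system of parameters for the closed fibre. Concretely, fix $\mfn\in\mrm{Max}(T)$ and set $\mm=\mfn\cap S$. Then $S_{\mm}\to T_{\mfn}$ is flat and local, and each of its fibres is a localization of a fibre of $S\to T$, hence Gorenstein; in particular the closed fibre $F:=T_{\mfn}/\mm T_{\mfn}$ is a Gorenstein, hence Cohen--Macaulay, local ring, say of dimension $e$. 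Since $S_{\mm}$ is approximately Gorenstein by hypothesis, after replacing $S,T$ by $S_{\mm},T_{\mfn}$ it suffices to show: if $(S,\mm)\to(T,\mfn)$ is flat local with Gorenstein closed fibre $F$ and $S$ is approximately Gorenstein, then for every $N$ there is an $\mfn$-primary irreducible ideal of $T$ contained in $\mfn^{N}$.

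\textbf{Construction of the ideal.} Fix $N$. As $S$ is approximately Gorenstein, choose an $\mm$-primary irreducible ideal $J\subseteq\mm^{N}$, so that $S/J$ is an Artinian Gorenstein local ring. Pick $\theta_{1},\dots,\theta_{e}\in\mfn$ whose images in $F$ form a system of parameters; since $F$ is Cohen--Macaulay, $\theta_{1}^{N},\dots,\theta_{e}^{N}$ is a regular sequence on $F$. Put
\[
I \;:=\; JT+(\theta_{1}^{N},\dots,\theta_{e}^{N})T .
\]
Then $I\subseteq\mfn^{N}$, because $JT\subseteq\mm^{N}T\subseteq\mfn^{N}$ and each $\theta_{i}^{N}\in\mfn^{N}$; and $\sqrt{I}=\mfn$, because $\sqrt{JT}=\sqrt{\mm T}$ (as $\mm^{k}\subseteq J\subseteq\mm$ for some $k$) while modulo $\mm T$ the ideal $(\theta_{1}^{N},\dots,\theta_{e}^{N})$ is primary to the maximal ideal of $F=T/\mm T$. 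So $I$ is $\mfn$-primary.

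\textbf{Irreducibility via transfer of Gorensteinness.} It remains to see that $T/I$ is a zero-dimensional Gorenstein ring. The ring $T/JT=T\otimes_{S}S/J$ is flat over the Artinian local ring $S/J$ with closed fibre $F$; since $\theta_{1}^{N},\dots,\theta_{e}^{N}$ is a regular sequence on $F$, the standard criterion --- a regular sequence on the closed fibre of a flat local homomorphism lifts to a regular sequence upstairs, with flat quotient --- shows that $T/I$ is flat over $S/J$ with closed fibre $F/(\theta_{1}^{N},\dots,\theta_{e}^{N})F$, which is Artinian Gorenstein. Since $S/J$ is Gorenstein, the classical fact that a flat local extension of a Gorenstein local ring with Gorenstein closed fibre is again Gorenstein (see, e.g., Matsumura, \emph{Commutative Ring Theory}, Theorem 23.4) gives that $T/I$ is Artinian Gorenstein, i.e.\ $I$ is irreducible. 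As $N$ and $\mfn$ were arbitrary, $T$ is approximately Gorenstein.

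The two inputs --- flat-local transfer of Gorensteinness, and lifting a regular sequence from the closed fibre with flat quotient --- are both standard, so I do not expect a genuine obstacle. The only real care needed is the bookkeeping that makes the constructed ideal $I$ simultaneously $\mfn$-primary and contained in $\mfn^{N}$, which is precisely why one passes from the parameters $\theta_i$ to their $N$-th powers; everything else is routine localization.
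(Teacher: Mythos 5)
The proposal is correct and takes essentially the same approach as the paper: localize at a maximal ideal of $T$, expand an irreducible primary ideal from the (approximately Gorenstein) base $S$, adjoin high powers of a system of parameters for the Gorenstein closed fibre, and conclude via transfer of the Gorenstein property along a flat local map with Gorenstein closed fibre. Your version just spells out the two supporting facts (lifting a regular sequence from the closed fibre, and the Gorenstein transfer theorem) that the paper invokes implicitly.
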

\begin{proof}
Let $\mfn$ be a maximal ideal of $T$ and let $\mfq$ be its contraction in $S$. Then $S_{\mfq}\to T_{\mfn}$ is local and flat, with Gorenstein fibers. So we reset the notation and assume that $(S,\mfq)\to (T,\mfn)$ is flat local with Gorenstein fibers. By assumption $S$ is approximately Gorenstein. So there exists a sequence of irreducible ideals $I_t$ in $S$ cofinal with powers of $\mfq$. Let $\seq{x}{h}$ be a system of parameters in $T/\mfq T$. We claim that $I_tT+(\seq{x^t}{h})$ is a sequence of irreducible ideals in $T$ cofinal with powers of $\mfn$.

The ``cofinal'' part is trivial from the construction. For irreduciblity, note that $S/I_t$ is Gorenstein. Hence, $T/I_t T$ is Gorenstein as it is flat local over $S/I_t$ with Gorenstein fibers. By construction, $\seq{x^t}{h}$ is a regular sequence on $T/I_t T$. Hence, $T/(I_tT+(\seq{x^t}{h}))$ is Gorenstein, which implies that $I_tT+(\seq{x^t}{h})$ is irreducible.
\end{proof}

\begin{proof}[Proof of \cref{tczs-thm-GeoRegFlatExt}]
By \cref{tczs-prop-AppGro} we know that $R'$ is also approximately Gorenstein. By \cref{teza-thm-FlatGeoRegFiber}, $R'$ is the filtered direct limit of smooth $R$-algebras. Since any smooth extension $S$ of $R$ is an affine-analytic $K$-algebra, we only need to show that the $\cJ(R/K)S\subseteq \cJ(S/K)$. Then \cref{tezs-thm-TEforSemianalytic} will finish the proof.

Since an element is in an ideal if and only if it is so over each connected component, if $\Spec(R)$ has several connected components, we can deal with each component separately. So we assume that $\Spec(R)$ is connected. Let $S$ be a smooth extension of $R$. Again if $\Spec(S)$ has multiple connected components, we can deal with each component separately. So we assume that $\Spec(S)$ is connected as well.

We can find finitely many elements $g_j \in R$, $f_j$ in $S$ such the $f_jg_j$ generate the unit ideal of $S$, the $g_j$ generate the unit ideal of $R$, and each $S_{f_jg_j}$ is a special smooth extension of $R_{g_j}$., i.e., \'etale over a polynomial extension by \cite[Chapter III, Theorem 2.1]{Iversen1973}.

Let us deal with each piece $R_{g_i}\to S_{f_ig_i}$ separately. We write $R_i,S_i$ for $R_{g_i},S_{f_ig_i}$. Then $S_i$ is standard \'etale over a polynomial ring $T_i$ over $R_i$. We can write $S_i\cong (T_i[X]/H(X))_G$ where $H(X)$ is monic in $T_i$, and $G$ a multiple of $H'(X)$. If $H(X)$ in any minimal prime of $(T_i[X])_G$, say $\mfq$, then $\mfp:=\mfq\cap T_i$ is also a minimal prime and since $\mfp (T_i[X])_G$ is a prime. We conclude that $\mfq=\mfp (T_i[X])_G$. Then $H(X)$ has all its coefficients in $\mfp$, which implies that $H'(X)\in\mfp (T_i[X])_G=\mfq$. But this is a contradiction as $H'(X)$ is a unit in $(T_i[X])_G$.

If we work with a presentation $R=T/I$ of $R$ where $T=K\ps{\seq{x}{n}}[\seq{z}{m}]$, then we can assume that $I=(\seq{f}{r})$ has pure height $h$ in $T$ and $S=T[\seq{y}{\ell}]/I'$ where $I'=(\seq{f}{r},\seq{g}{s})$. Since $\Spec(S)$ is connected, the above argument shows that all minimal primes of $I'$ in $S$ will have the same height, which we denote by $h+t$. Then we can write the Jacobian matrix blockwise as
\[
\begin{pmatrix}
\fpd[x_j]{f_i} & \fpd[z_k]{f_i} & 0 \\
* & * & \fpd[y_a]{g_b}
\end{pmatrix}.
\]
The $s\times s$ minors of the right bottom block $(\fpd[y_a]{g_b})$ generate $\cJ(S/R)$, which is the unit ideal. The Jacobian ideal $\cJ(S/K)$ is generated by $h+s$ minors of this Jacobian matrix. Hence any $h\times h$ minor of the block $\left(\fpd[x_j]{f_i} \quad \fpd[z_k]{f_i}\right)$, multiplies $\cJ(S/K)$ into the Jacobian ideal $\cJ(S/K)$, which implies that $\cJ(R/K)S\subseteq \cJ(S/K)$.
\end{proof}

\begin{cor}\label{tczs-cor-Semianalytic}
If $R$ is a semianalytic $K$-algebra that is the localization of a reduced equiheight affine-analytic $K$-algebra, then the Jacobian ideal $\cJ(R/K)$ is contained in the test ideal. For any flat $K$-algebra morphism $R\to R'$ with geometrically regular fibers, the expansion of the Jacobian ideal $\cJ(R/K)R'$ is contained in the test ideal of $R'$. Here, all test ideals are for $K$-tight closure, which are contained in the corresponding test ideals for small equational tight closure.
\end{cor}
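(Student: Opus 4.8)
The plan is to reduce the whole statement to the affine‑analytic case already established in \cref{tczs-thm-GeoRegFlatExt}. Write $R=W^{-1}R_0$, where $R_0$ is a reduced equiheight affine‑analytic $K$‑algebra and $W\subseteq R_0$ is a multiplicative set, and fix a presentation $R_0=T/I$ with $T=K\ps{\seq{x}{n}}[\seq{z}{m}]$ and $I$ of pure height $h$ (possible by \cref{pre-cor-Equiheight}). First I would record the general fact that is exactly the content of \cref{tczs-thm-GeoRegFlatExt}: if $S$ is any noetherian ring and $R_0\to S$ is flat with geometrically regular fibers, then $\cJ(R_0/K)S$ is contained in the test ideal of $S$ for $K$‑tight closure (hence also for small equational tight closure). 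The point is that the localization map $R_0\to R$ is of this type: its fiber over $\mfp\in\Spec(R_0)$ is $W^{-1}\kappa_{\mfp}(R_0)$, which is either $0$ or $\kappa_{\mfp}(R_0)$, and $\kappa_{\mfp}(R_0)\otimes_{\kappa_{\mfp}(R_0)}L=L$ is regular for every finite extension $L$. So $\cJ(R_0/K)R$ is contained in the test ideal of $R$.

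Next I would identify $\cJ(R/K)$ with $\cJ(R_0/K)R$. Since $R$ is essentially of finite type over the analytic subalgebra $\mA(R_0)$, the transitive law for universal extensions, together with the fact that the universally finite differential module commutes with localization, gives $\widetilde{\Omega}_{R/K}=W^{-1}\widetilde{\Omega}_{R_0/K}$; as Fitting ideals commute with localization, $\mF_i(\widetilde{\Omega}_{R/K})=\sJ_i(R_0/K)R$ for all $i$. By \cref{jcb-cor-AbsRankOverField}(3) we have $\cJ(R_0/K)=\sJ_{n+m-h}(R_0/K)$ and $\sJ_i(R_0/K)=0$ for $i<n+m-h$. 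Because $\chr(K)=0$ and $R_0$ is reduced, $R_0$ is generically smooth over $K$, so $\cJ(R_0/K)$ is contained in no minimal prime of $R_0$ and hence contains a nonzerodivisor, whose image in $R$ is again a nonzerodivisor; thus $\cJ(R_0/K)R\neq 0$. It follows that the first nonzero Fitting ideal of $\widetilde{\Omega}_{R/K}$ still sits in degree $n+m-h$, i.e. $\cJ(R/K)=\sJ_{n+m-h}(R_0/K)R=\cJ(R_0/K)R$. Combined with the previous paragraph, $\cJ(R/K)$ is contained in the test ideal of $R$.

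For a flat $K$‑algebra morphism $R\to R'$ with geometrically regular fibers, I would first note that the composite $R_0\to R\to R'$ is again flat with geometrically regular fibers: its fiber over $\mfp\in\Spec(R_0)$ is $R'\otimes_R\bigl(R\otimes_{R_0}\kappa_{\mfp}(R_0)\bigr)$, which is either $0$ or the fiber of $R\to R'$ over $\mfp R$, and the latter is geometrically regular over $\kappa_{\mfp}(R_0)$ by hypothesis. Applying the general fact of the first paragraph to $R_0\to R'$ gives $\cJ(R_0/K)R'$ inside the test ideal of $R'$; since $\cJ(R/K)R'=(\cJ(R_0/K)R)R'=\cJ(R_0/K)R'$, we get $\cJ(R/K)R'$ inside the test ideal of $R'$. (For this to be meaningful one needs $R'$ approximately Gorenstein, which holds by \cref{tczs-prop-AppGro}, since $R$ is excellent and reduced — being a localization of such — and the fibers of $R\to R'$ are regular, hence Gorenstein.) In all cases the containments are for $K$‑tight closure, and the $K$‑test ideal lies inside the small equational test ideal because $K\supseteq\QQ$ and enlarging the coefficient field shrinks the test ideal, exactly as in the statements of \cref{tezs-thm-TEforSemianalytic} and \cref{tczs-thm-GeoRegFlatExt}.

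The one step requiring genuine care is the identification $\cJ(R/K)=\cJ(R_0/K)R$: a priori localization can only destroy Fitting ideals, so the index of the first nonzero one could increase, and it is precisely generic smoothness of $R_0$ over $K$ in characteristic $0$ — equivalently, that $\cJ(R_0/K)$ avoids every minimal prime of $R_0$ — that pins the index down and prevents this. Everything else is a routine assembly of \cref{tczs-thm-GeoRegFlatExt}, \cref{tczs-prop-AppGro}, and \cref{jcb-cor-AbsRankOverField}.
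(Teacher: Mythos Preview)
Your proposal is correct and follows essentially the same approach as the paper: reduce to the affine-analytic ring $R_0=T/I$, observe that both $R_0\to R$ (localization) and $R_0\to R'$ (composite) are flat with geometrically regular fibers, and invoke \cref{tczs-thm-GeoRegFlatExt}. The paper's proof is terser, asserting $\cJ(R/K)=\cJ((T/I)/K)R$ and the geometric regularity of the composite without further comment; your additional care about the Fitting-ideal index not jumping under localization (via generic smoothness in characteristic~$0$) and your explicit appeal to \cref{tczs-prop-AppGro} for approximate Gorensteinness of $R'$ fill in details the paper leaves implicit, but the architecture is identical.
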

\begin{proof}
Write $R=W^{-1}T/I$ where $T=K\ps{\seq{x}{n}}[\seq{z}{m}]$. The assumption is that $I$ has pure height $h$ in $T$. Since $T/I\to R$ is a flat map with geometrically regular fibers, using \cref{tczs-thm-GeoRegFlatExt}, we conclude that the Jacobian ideal expanded $\cJ((T/I) / K)R$ is contained in the test ideal. Since $R$ is a localization of $T/I$, we have $\cJ(R/K)=\cJ((T/I) / K)R$.

The composition of the maps $T/I\to R\to R'$ is still flat. Since $\Spec(R)$ is a subset of $\Spec(T/I)$, the composition map $T/I\to R'$ has geometrically regular fibers. Hence, by \cref{tczs-thm-GeoRegFlatExt}, the Jacobian ideal expanded $\cJ((T/I)/K)R'$ is contained in the test ideal of $R'$. Since $\cJ((T/I)/K) R'=\cJ(R/K)R'$, the conclusion is proved.
\end{proof}

\subsection{The affine over excellent local case}
\begin{thm}\label{teza-thm-GeoFlatPower}
Let $K$ be a field of characteristic $0$, and let $A$ be a noetherian excellent local $K$-algebra. Let $R$ be an equiheight $A$-algebra of finite type, and let $S$ be geometrically regular over $R$. Suppose that $c \in S$ is an element such that $S_c$ is regular. Then $c$ has a power that is a test element for $K$-tight closure in $S$.
\end{thm}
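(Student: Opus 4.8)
The plan is to peel off a short sequence of faithfully flat reductions until we are exactly in the situation of \cref{tczs-thm-GeoRegFlatExt}, and then to pin a power of $c$ inside the expanded Jacobian ideal $\cJ(R/K)S$ by identifying its vanishing locus with $\mrm{Sing}(S)$.

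First I would reduce to the case where $A$, $R$ and $S$ are all reduced, and then further to the case where $A$ is complete. Since $S_c$ is regular, hence reduced, some power of $c$ annihilates $\mrm{nil}(S)$; after replacing $c$ by that power we may assume $c\cdot\mrm{nil}(S)=0$, and then if some power $\bar c^{\,m}$ of the image of $c$ is a test element in $S_{\red}$, persistence of $K$-tight closure along $S\to S_{\red}$ gives $\bar c^{\,m}\bar u\in\bar I$ for $u$ in the $K$-tight closure of $I\subseteq S$, i.e.\ $c^{m}u\in I+\mrm{nil}(S)$, and one more multiplication by $c$ yields $c^{m+1}u\in I$. Moreover the reduced ring $S_{\red}$ equals $R_{\red}\otimes_R S$, so it is geometrically regular over $R_{\red}$, while $R_{\red}$ is finite type and equiheight over $A_{\red}$ (passing from $A$ to $A_{\red}$ changes neither the special/typical dichotomy nor $\ddim$). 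So it suffices to prove the theorem when $A$, $R$, $S$ are reduced. For completeness of $A$: since $A$ is excellent, $A\to\widehat A$ is faithfully flat with geometrically regular fibers, and $\widehat A$ is reduced local with residue field $K$ of characteristic $0$. Put $R':=\widehat A\otimes_A R$ and $S':=\widehat A\otimes_A S$. Then $R'$ is reduced, finite type over $\widehat A$, and equiheight over $\widehat A$ by \cref{pre-thm-EquiheightPreserved}; $R'\to S'$ is flat with geometrically regular fibers; and $S'_{c'}=\widehat A\otimes_A S_c$ is regular, being a flat base change of the regular ring $S_c$ with geometrically regular fibers. If a power of $c'$ is a test element in $S'$, then, as $S\to S'$ is faithfully flat, persistence together with $IS'\cap S=I$ gives the same power of $c$ as a test element in $S$; so we may assume $A$ complete.

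Now $A$ is a complete local ring with coefficient field $K$ of characteristic $0$, hence an analytic $K$-algebra, so writing $A=K\ps{\seq{w}{N}}/\mfa$ exhibits $R=K\ps{\seq{w}{N}}[\seq{z}{m}]/(I_0,\mfa)$ as a reduced affine-analytic $K$-algebra; being equiheight over $A$ is the same as being equiheight in the sense of \cref{pre-cor-Equiheight} (the local base may be enlarged from $A$ to $K\ps{\seq{w}{N}}$ without affecting $\ddim$, and the latter condition is precisely pure-heightedness of the defining ideal of any presentation). Applying \cref{tczs-thm-GeoRegFlatExt} to the flat, geometrically-regular-fibered map $R\to S$ gives $\cJ(R/K)S\subseteq$ (test ideal of $S$ for $K$-tight closure). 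It remains to show $c^{n}\in\cJ(R/K)S$ for some $n$. Since $R$ is equiheight and $\chr(K)=0$, \cref{jcb-cor-AbsRankOverField}(4) gives $V(\cJ(R/K))=\mrm{Sing}(R)$ in $\Spec R$; and for the structural map $\varphi\colon\Spec S\to\Spec R$, a prime $\mfq$ over $\mfp$ has $S_{\mfq}$ regular if and only if $R_{\mfp}$ is regular (one direction is flat local base change of a regular ring with regular fibers, the other is descent of finite global dimension along the faithfully flat map $R_{\mfp}\to S_{\mfq}$), so $\varphi^{-1}(\mrm{Sing}(R))=\mrm{Sing}(S)$ and hence $V(\cJ(R/K)S)=\varphi^{-1}(V(\cJ(R/K)))=\mrm{Sing}(S)$. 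As $S_c$ is regular, $\mrm{Sing}(S)\subseteq V(c)$, so $c\in\sqrt{\cJ(R/K)S}$, and some $c^{n}$ lies in $\cJ(R/K)S$, hence in the test ideal of $S$; that is, $c^{n}$ is a test element for $K$-tight closure in $S$.

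The substance of the argument is entirely in the reductions: keeping reducedness, the equiheight hypothesis (via \cref{pre-thm-EquiheightPreserved}), and geometric regularity of the relevant maps alive through the passages to $A_{\red}$ and $\widehat A$, and checking that the test-element property descends along the faithfully flat maps involved (persistence of $K$-tight closure, plus $IS'\cap S=I$ for the completion step). The step I expect to be the main obstacle is making the translation ``$R$ equiheight over $A$'' $\Leftrightarrow$ ``$\widehat A\otimes_A R$ equiheight as an affine-analytic $K$-algebra'' fully rigorous — this is exactly the role of \cref{pre-thm-EquiheightPreserved} together with \cref{pre-cor-Equiheight}, but both must be applied with care to the possibly non-domain ring $\widehat A\otimes_A R$. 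Once past that, combining \cref{tczs-thm-GeoRegFlatExt} with the characteristic-zero identity $V(\cJ(R/K)S)=\mrm{Sing}(S)$ coming from \cref{jcb-cor-AbsRankOverField} is routine.
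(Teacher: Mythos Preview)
Your argument is correct and follows essentially the same line as the paper's proof: reduce to $A$ complete via \cref{pre-thm-EquiheightPreserved}, recognize $R$ as a reduced equiheight affine-analytic $K$-algebra, invoke \cref{tczs-thm-GeoRegFlatExt} to put $\cJ(R/K)S$ inside the test ideal, and then use \cref{jcb-cor-AbsRankOverField}(4) together with the fact that $\mrm{Sing}(S)=\varphi^{-1}(\mrm{Sing}(R))$ to trap a power of $c$ in $\cJ(R/K)S$. The paper compresses all of this into three sentences and omits the reduction to the reduced case; your explicit treatment of that step (via $S_{\red}=R_{\red}\otimes_R S$ and the observation that equiheight over $A$ coincides with equiheight over $K\ps{\seq{w}{N}}$, hence with pure-heightedness of the presentation ideal) is a genuine clarification rather than a deviation.
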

\begin{proof}
It suffices to prove that $c$ is a test element after base change to the completion of $A$, which preserves the hypotheses (see, e.g., \cref{pre-thm-EquiheightPreserved}), so this reduces at once to the case where $A$ is complete. Because $K$ is a field of characteristic $0$, it is a perfect field. Since $R$ is essentially of finite type over $K$, absolute regularity coincides with regularity, and the Jacobian ideal $\cJ(R/K)$ defines the singular locus in $R$. Hence, its extension $\cJ(R/K)S$ defines the singular locus in $S$. Therefore, $c$ has a power in the extended Jacobian ideal $\cJ(R/K)S$, and so is a test element for $K$-tight closure.
\end{proof}

\begin{cor}\label{teza-cor-EssAffineOverExcellent}
Let $K$ be a field of characteristic $0$, and let $A$ be a noetherian excellent local $K$-algebra. Let $R$ be an equiheight $A$-algebra essentially of finite type. Suppose that $c \in R$ is an element such that $R_c$ is regular. Then $c$ has a power that is a test element for $K$-tight closure in $R$.
\end{cor}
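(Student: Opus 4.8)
The plan is to deduce this from \cref{teza-thm-GeoFlatPower} by exhibiting $R$ as a flat, geometrically regular extension of a \emph{finitely generated} equiheight $A$-algebra. Since $R$ is essentially of finite type over $A$, write $R = W^{-1}R_0$ with $R_0$ a finitely generated $A$-algebra and $W\subseteq R_0$ a multiplicative set. The localization map $R_0\to R$ is flat, and its fiber over a prime $\mfp$ of $R_0$ is either zero (if $\mfp$ meets $W$) or the residue field of $R_0$ at $\mfp$ (if not); a field is geometrically regular over itself, so $R_0\to R$ is flat with geometrically regular fibers. The one thing preventing an immediate application of \cref{teza-thm-GeoFlatPower} with $S=R$ is that $R_0$ itself may fail to be equiheight over $A$, and the first real step is to repair this by shrinking $R_0$.

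First I would replace $R_0$ by a principal localization. Let $\mfp_1,\dots,\mfp_s$ be the finitely many minimal primes of the noetherian ring $R_0$, indexed so that $\mfp_i\cap W=\emptyset$ precisely for $i\le t$; these are exactly the minimal primes that contract from $\Min(R)$. For each $i>t$ choose $g_i\in\mfp_i\cap W$ and put $g=\prod_{i>t}g_i$ (so $g=1$ if $t=s$); then $g\in W$, $g\in\mfp_i$ for $i>t$, and $g\notin\mfp_i$ for $i\le t$. Consequently $(R_0)_g$ is again finitely generated over $A$, inverting $g$ does not change $R$ since $W^{-1}R_0=W^{-1}\big((R_0)_g\big)$, and $\Min\big((R_0)_g\big)$ consists precisely of the expansions of $\mfp_1,\dots,\mfp_t$, which are in natural bijection with $\Min(R)$.

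Next I would verify that $(R_0)_g$ is equiheight over $A$. Fix a minimal prime $\mfq$ of $(R_0)_g$; then $(R_0)_g/\mfq$ is an affine domain over the local ring $A$, which is universally catenary because excellent, and $R/\mfq R$ is a localization of it. By the presentation $\ddim_A(\cR)=\dim(\overline A)+\tdeg_{\Fr(\overline A)}(\Fr(\cR))$ of \cref{pre-defn-PrimeStructureInR}, which depends only on the fraction field and the image $\overline A$ of $A$ and hence is unaffected by passing to a localization (see \cref{pre-cor-PrimeHeightLocalized}(2)), we get $\ddim_A\big((R_0)_g/\mfq\big)=\ddim_A\big(R/\mfq R\big)$. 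Since $R$ is equiheight and $\Min\big((R_0)_g\big)\leftrightarrow\Min(R)$, the right-hand side is independent of $\mfq$, so $(R_0)_g$ is equiheight over $A$. Now $(R_0)_g$ is finitely generated and equiheight over $A$, the map $(R_0)_g\to R$ is flat with geometrically regular fibers (a localization, as above), and $R_c$ is regular; so \cref{teza-thm-GeoFlatPower} applies with $S=R$ and yields that a power of $c$ is a test element for $K$-tight closure in $R$.

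The only point requiring genuine care is the equiheight descent in the third paragraph — that equiheightness of the localization $R$ is inherited by the finitely generated algebra $(R_0)_g$ — which rests on the invariance of $\ddim_A$ under localization of a domain; the remainder is bookkeeping with minimal primes and multiplicative sets.
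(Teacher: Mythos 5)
Your proposal matches the paper's own proof: both write $R = W^{-1}R_0$, localize $R_0$ at one element to eliminate the minimal primes meeting $W$ (so that $\Min\bigl((R_0)_g\bigr)$ biject with $\Min(R)$ and equiheightness transfers via the localization-invariance of $\ddim_A$ from \cref{pre-cor-PrimeHeightLocalized}), and then apply \cref{teza-thm-GeoFlatPower} with $S=R$. The paper compresses the equiheight-repair step into the single remark ``we can localize at one more element,'' and your write-up simply supplies the bookkeeping that remark elides.
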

\begin{proof}
Let $R'$ be an affine algebra over $A$ such that $R=W^{-1}R'$. Since $R$ is equiheight, we can localize at one more element to assume that $R'$ is also equiheight. Then the localization map $R'\to R$ is flat with geometrically regular fibers. The conclusion then follows directly from \cref{teza-thm-GeoFlatPower}.
\end{proof}

\bibliography{tc}
\bibliographystyle{halpha}

\end{document}